\documentclass[12pt]{amsart}
\usepackage{amsmath,amssymb,amsfonts} 
\usepackage{graphics}                 
\usepackage[square, numbers]{natbib}
\usepackage[english]{babel}
\usepackage{tikz} 
\usepackage[utf8]{inputenc}
\usepackage[all]{xy}	
\usepackage[T1]{fontenc}
\usepackage{pifont}
\usepackage[margin=1.2in]{geometry}


\newcommand{\C}{\mathbb{C}}

\newcommand{\DCF}{\mathrm{DCF}}
\newcommand{\BE}{\mathrm{BE}}

\usepackage{hyperref}

\theoremstyle{definition}
\newtheorem{defn}{Definition}[section]

\newtheorem{exam}[defn]{Example}

\theoremstyle{plain}
\newtheorem{Prop}[defn]{Proposition}
\newtheorem{theorem}[defn]{Theorem}

\newtheorem{cor}[defn]{Corollary}
\newtheorem{lem}[defn]{Lemma}

\newtheorem{fact}[defn]{Fact}
\newtheorem{Assumption}[defn]{Assumption}

\newtheorem*{claim}{Claim}

\newtheorem{notation}[defn]{Notation}

\theoremstyle{exampstyle}\newtheorem{thmx}{Theorem}

\theoremstyle{exampstyle}

\newtheorem*{corx}{Corollary}

\def\Ind{\setbox0=\hbox{$x$}\kern\wd0\hbox to 0pt{\hss$\mid$\hss}
\lower.9\ht0\hbox to 0pt{\hss$\smile$\hss}\kern\wd0}
\def\Notind{\setbox0=\hbox{$x$}\kern\wd0\hbox to 0pt{\mathchardef
\nn=12854\hss$\nn$\kern1.4\wd0\hss}\hbox to
0pt{\hss$\mid$\hss}\lower.9\ht0 \hbox to
0pt{\hss$\smile$\hss}\kern\wd0}
\def\ind{\mathop{\mathpalette\Ind{}}}



\newcommand{\Gm}{\mathrm{G}_m}

\newcommand{\Ga}{\mathrm{G}_a}

\title{Integration in finite terms and exponentially algebraic functions}
\author{Rémi Jaoui}
\address{Institut Camille Jordan, Université Lyon 1 - CNRS, Villeurbanne, France}
\email{jaoui@math.univ-lyon1.fr}

\author{Jonathan Kirby}
\address{School of Engineering, Mathematics and Physics, University of East Anglia, Norwich, UK}
\email{jonathan.kirby@uea.ac.uk}
 

\subjclass[2020]{03C60, 12H05, 34A05, 03C64}
\keywords{model theory, integration in finite terms, differential Galois theory, functional transcendence}
\newtheoremstyle{exampstyle}
{3pt} 
{3pt} 
{\itshape} 
{} 
{\bfseries} 
{.} 
{.5em} 
{} 

\makeindex
\begin{document}
\maketitle
\begin{abstract}
We develop techniques at the interface between differential algebra and model theory to study the following problems of exponential algebraicity:
\begin{itemize}
\item[(1)] Does a given algebraic differential equation admits an {\em exponentially algebraic solution}, that is, a holomorphic solution which is definable in the structure of restricted elementary functions?
\item[(2)] Do solutions of a given list of algebraic differential equations share a nontrivial {\em exponentially algebraic relation}, that is, a nontrivial relation definable in the structure of restricted elementary functions?
\end{itemize}
These problems can be traced back to the work of Abel and Liouville on the problem of integration in finite terms. This article concerns generalizations of their techniques adapted to the study of exponential transcendence and independence problems for  more general systems of differential equations. 

As concrete applications, we obtain exponential transcendence and independence statements for several classical functions: the error function, the Bessel functions, indefinite integrals of algebraic expressions involving Lambert's $W$-function, the equation of the pendulum, as well as  corresponding decidability results.
\end{abstract}
 \setcounter{tocdepth}{1}

\tableofcontents

\section{Introduction}

In this article, we study the following ``functional'' problems of exponential algebraicity. Consider the o-minimal structure of {\em restricted elementary functions} $$\mathbb{R}_{\mathrm{RE}} := (\mathbb{R}, 0,1,+,\times, < , \text{restricted complex exp})$$  obtained by expanding semialgebraic geometry by the complex exponential function restricted to some (equivalently all) compact box \cite{vandenDries}. 
\begin{itemize}
\item[(1)] Does a given algebraic differential equation admits an  {\em exponentially algebraic solution}, that is, a holomorphic solution which is definable in the structure $\mathbb{R}_{\mathrm{RE}}$ of restricted elementary functions?
\item[(2)] Do solutions of a given list of algebraic differential equations share a nontrivial {\em exponentially algebraic relation}, that is, a nontrivial relation definable in the structure $\mathbb{R}_{\mathrm{RE}}$ of restricted elementary functions?
\end{itemize}
They are the natural respective analogues of the problem of finding the algebraic solutions and the algebraic relations between solutions of algebraic differential equations for the structure $\mathbb{R}_{\mathrm{sa}} = (\mathbb{R}, 0,1,+,\times, <)$. It is worth stressing that despite the appearance of the field of real numbers, the problems (1) and (2) are concerned with the {\em holomorphic solutions} of complex algebraic equations and hence with the definability properties of holomorphic functions in certain reducts of the o-minimal structure $\mathbb{R}_{an}$ of restricted analytic functions \cite{Wilkie}.  

In the {\em semialgebraic} case, decidability of (1) is a famously open problem known as Poincaré's problem \cite{Poincare}. For linear differential equations, the problems (1) and (2) are the subject of several important number-theoretic conjectures and {\em Picard-Vessiot differential Galois theory} \cite{vdP-Singer} provides effective tools to study linear versions of these problems in the semialgebraic case. The aim of this article is to develop a parallel Galois-theoretic framework, Theorem \ref{intro-thmA} and Theorem \ref{intro-thmB} below, to study the analogous problems of exponential algebraicity. 

Applications of this framework to the study of classical functions which already appears in Liouville's work \cite{Liouville2,Liouville} are discussed in the introduction and described in greater details in the fifth section of the text. 

\subsection{Some history} The study of the problems (1) and (2) in $\mathbb{R}_{\mathrm{RE}}$ can be traced back to the problem of integration in finite terms studied by Abel and Liouville. The classical framework focuses on the case of indefinite integrals and on a {\em subclass} of the class of exponentially algebraic functions, the class of {\em elementary functions}.  They are the analytic branches of the meromorphic functions which can obtained by iterated compositions of the complex exponential, the branches of the complex logarithm and algebraic functions involving possibly several complex variables.

A predecessor of the larger class of {\em exponentially algebraic functions} is what Liouville calls ``les fonctions finies implicites'' in \cite{Liouville}. They are the functions which can be implicitly defined from the class of elementary functions by ``using the implicit function theorem''. We refer to the main theorem of \cite{Wilkie} for a precise version of this equivalence. A prototype studied by Liouville is the Lambert $W$-function given by
\begin{equation}\label{equation-Lambert}
W(z) = \sum_{m = 1}^\infty \frac{(-m)^{m-1}}{m!} z^m
\end{equation}
which appears in the solutions of many problems in enumerative combinatorics, biology and physics \cite{Lambert-general,Lambert-quantum}. The ubiquity of (\ref{equation-Lambert}) comes from  its definition as a solution of the functional equation $w \cdot e^{w} =  z$ which provides its implicit definition as an exponentially algebraic function. Liouville \cite{Liouville} showed however that (\ref{equation-Lambert}) can not be explicitly defined within the smaller class of elementary functions. \\

The starting point of Liouville's theory of integration \cite{Liouville2} motivated by the integration of the pendulum concerns the {\em exponential transcendence} of the elliptic integrals:

\begin{equation}\label{equation-elliptic-integral-intro}
 \mathrm{sn}(z) = \int \frac{dz}{\sqrt{z^3 + az + b}} \text{ where } 4a^3 + 27b^2 \neq 0.
 \end{equation}
Liouville showed that this function is not elementary. As described in Ritt's monograph \cite{Ritt-book} on Liouville's theory and in \cite{Singer-survey}, the fact that (\ref{equation-elliptic-integral-intro}) is not even exponentially algebraic then follows from a statement written by Abel a few years before, which in the terminology of this paper can be reformulated as:
\begin{quote} 
\textit{if the indefinite integral of an algebraic function is exponentially algebraic then it is an elementary function}.
\end{quote}
Although there is no trace of Abel's proof of this statement, Ritt \cite{Ritt-book} and then Risch \cite{Risch3} have both established rigorous forms of Abel's statement. 

Since the seventies, the scope of Liouville's theory of integration has been greatly expanded in connection with Ax's proof of the functional version of Schanuel conjecture \cite{Ax-Theorem}. The connection between these two problems already appears implicitly in Risch's ``solution of the problem of integration in finite terms'' \cite{Risch2} and was made fully explicit by Rosenlicht in \cite{Rosenlicht}. These developments led to the development of complete decision procedures \cite{Risch2, Bronstein, Singer-Raab,Pila-Tsimerman} to decide if {\em the indefinite integral of an arbitrary elementary function} $f(z)$ is elementary and to compute it explicitly when it is.

\subsection{A generalization of Abel's result} The first result of this article is a generalization of Abel's result which is missing from Liouville's theory of integration. For instance, it is known since Liouville's work that the {\em error function}
$$\mathrm{erf}(z) = \int e^{-z^2} dz$$ 
is not elementary but is it exponentially algebraic? In other words, can it be {\em implicitly} constructed from the class of elementary functions? This strengthening is in general not automatic. For instance, all the solutions of the first-order equation associated to the Lotka-Volterra systems:
\begin{equation}\label{Lotka-intro}
\frac {dy} {dx} = \frac{\alpha x - \beta xy}{\gamma y - \delta xy} \text{ where } \alpha, \beta,\delta,\gamma \in \mathbb{C}^\ast \text{ and } \alpha \neq \gamma
\end{equation}
are exponentially algebraic but (\ref{Lotka-intro}) admits no elementary solutions, see  \cite{Duan-Nagloo,DEJ}. The following theorem provides a general dividing line which separates the algebraic differential equations which satisfy Abel's statement from the other ones.

\begin{thmx}[Theorem \ref{theoremA}] \label{intro-thmA}
Consider an algebraic differential equation 
\begin{equation} \label{equation-TheoremA} 
F(y,y',\ldots, y^{(n)}) = 0
\end{equation}
whose coefficients are elementary functions and assume that the equation is {\em internal to the constants}. Then every exponentially algebraic solution of the equation is an elementary function (in the sense of Liouville).
\end{thmx}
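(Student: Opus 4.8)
The plan is to work over a differential field $(K,\delta)$ whose field of constants is $\mathbb{C}$ and which contains the elementary coefficients of the equation, and to interpret ``exponentially algebraic'' and ``elementary'' model-theoretically inside a saturated model of $\dcf$. The key translation, which I would establish first, is that a holomorphic solution $y$ of the equation being exponentially algebraic means precisely that $y$ lies in some differential field extension obtained by iterating two operations starting from $K$: adjoining solutions of linear differential equations $u'/u \in K$ and $u' \in K$ (the exponential and primitive extensions, accounting for the exponential and logarithmic building blocks), together with algebraic extensions, but \emph{without} requiring the new constants to stay in $\mathbb{C}$ — whereas being \emph{elementary} in Liouville's sense requires building $y$ by such a tower in which \emph{no new constants are introduced}. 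The substance of the theorem is therefore that the internality-to-the-constants hypothesis forces any constant-enlarging tower witnessing exponential algebraicity to be replaceable by a constant-preserving one.

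The main mechanism I would use is the binding group. Since the equation (\ref{equation-TheoremA}) is internal to the constants, its solution set $Y$ carries a definable action of a binding group $G$, a differential algebraic group over $K$ that becomes, after base change to the constants, an algebraic group over $\mathbb{C}$; internality gives a $K$-definable isomorphism between $Y$ and a $\mathbb{C}$-variety twisted by this $G$-torsor. The strategy is to consider an exponentially algebraic solution $y$, let $L = K\langle y \rangle$ be the differential field it generates, and analyze the two possible sources of new constants appearing in the exponential/primitive tower that witnesses exponential algebraicity. I would argue that the binding group structure constrains these: the generic type of $Y$ over $K$ is governed by $G$, and the tower realizing $y$ must interact with $G$ through its abelian (commutative) quotients, since the exponential and primitive extensions are precisely the extensions whose Galois groups are $\mathbb{G}_m$ and $\mathbb{G}_a$. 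The goal is to show that every step of the tower can be taken inside the differential closure determined by $G$ acting on the constants, which does not enlarge $\mathbb{C}$.

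Concretely, the steps I would carry out are: (i) set up the differential-algebraic framework and record that exponential algebraicity of $y$ is equivalent to membership in an iterated exponential–primitive–algebraic tower over $K$, possibly with new constants; (ii) invoke internality to produce the binding group $G$ and the associated $G$-torsor description of the solution set, noting that $G$ is constant, i.e.\ defined over the constants; (iii) show that the constants generated by the tower over $K$ are themselves exponentially algebraic over $\mathbb{C}$ and, crucially, that a functional-transcendence input — an Ax–Schanuel / Ax–Lindemann type statement of the kind attributed to Ax in the introduction — forbids genuinely new constants from appearing unless they are already algebraic, thereby collapsing the tower to a constant-preserving one; (iv) conclude that a constant-preserving exponential–primitive–algebraic tower over $K$ expressing $y$ is exactly a witness that $y$ is elementary in Liouville's sense.

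The hard part will be step (iii): controlling the constant field generated by the witnessing tower and ruling out new constants. The difficulty is that exponential algebraicity allows arbitrary iterated implicit definitions, so a priori the tower could introduce transcendental constants through the exponential extensions, and one must show these are spurious. I expect this is where the interaction between the binding group (which encodes exactly the available ``Galois'' symmetries and forces any new construction to respect the $G$-action) and the Ax-type functional transcendence statement does the real work: internality pins the generic solution to a $\mathbb{C}$-variety, and functional transcendence prevents the exponential/logarithmic layers from manufacturing constants outside $\mathbb{C}$, so that the whole construction descends to a constant-free Liouville tower.
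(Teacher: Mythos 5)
Your step (i) --- the ``key translation'' on which everything else rests --- is false, and in both directions. Exponential algebraicity is \emph{not} equivalent to membership in an iterated exponential--primitive--algebraic tower over $K$ (with or without new constants). Lambert's $W$ is exponentially algebraic (it is an implicit function of elementary functions, and $\delta(\mathbb{C}(z,W)^{alg}/\mathbb{C}(z)^{alg})=0$ via the single exponential point $(z/W,W)\in\Gamma_{\BE}$), yet it lies in no such tower: its type over $\mathbb{C}(z)^{alg}$ is geometrically trivial, hence orthogonal to the constants, hence not analyzable in the constants, which any Liouvillian tower would force. Conversely, a single primitive extension $u'=e^{-z^2}$ produces $\mathrm{erf}$, which is \emph{not} exponentially algebraic --- that is one of the theorems being proved. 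The correct characterization, which the paper takes as its starting point (Corollary \ref{corollary-predimension}), is the predimension condition: $y\in\mathrm{ecl}^K(k)$ iff $y$ lies in a finite-dimensional extension $L/k$ with $\delta(L/k)=0$, equivalently (over a self-sufficient base, by Proposition \ref{proposition-predimension}) with $\Omega^1(L/k)$ spanned by the exponential forms $dy/y-dx$ for $(y,x)\in\Gamma_{\BE}(L)$. This is an ``entangled'', implicit condition coming from Khovanskii systems, not a tower, and it cannot be unwound into one.

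Because of this, your step (iii) aims at the wrong target: the obstruction is not the appearance of new constants in a witnessing tower (the paper works throughout in extensions without new constants), but the a priori possibility that the binding group of the internal type is a noncommutative linear group or an abelian variety. That is where the paper's actual work lies: it shows that an invariant one-form $\omega(\alpha)$ (respectively a component of the Maurer--Cartan coparallelism of a Picard--Vessiot extension) is flat for the Lie derivative $\mathcal L_\partial$, hence decomposes as a $C$-linear combination of exponential forms inside a $\delta=0$ extension; an Ax--Schanuel/Goursat/Zilber-indecomposability argument on $A\times G$ (Proposition \ref{proposition-linear}) then kills the abelian-variety case, and closedness of the Maurer--Cartan form (Lemma \ref{lemma-commutativity}) forces commutativity. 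Only \emph{after} the binding group is known to be $\Ga^r\times\Gm^s$ does the Galois correspondence produce the tower of integrals and exponentials of integrals, and Rosenlicht's lemma the explicit Liouville form; the passage from the internal case to the analyzable case is then a separate induction. Your proposal does gesture at the right ingredients (binding groups, the role of $\Gm$ and $\Ga$, Ax--Schanuel), but as written it assumes the tower structure that the theorem is supposed to deliver, so the argument does not get off the ground.
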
 

The assumption that the equation (\ref{equation-TheoremA}) is internal to the constants is an important model-theoretic assumption in differential algebra which allows for the development of a differential Galois theory \cite{Hrushovski-Galois,Pillay-Galois, Pillay-Leon-Sanchez}. The differential equation (\ref{Lotka-intro}) witnesses that it can not be avoided. An algebraic differential equation is {\em internal to the constants} if from finitely many particular solutions, one can express any other solution (in any differential field extension) as an algebraic combination of these particular solutions, their derivatives and constants. In particular, this theorem applies to many classical families of examples such as  {\em for the indefinite integrals of elementary functions, for the solutions of (possibly inhomogeneous) linear differential equations, Riccati equations and elliptic differential equations}.  

Using a combination of classical results and techniques from the problem of integration in finite terms, Theorem A implies the exponential transcendence of several important classical functions such as the expressions of the form
\begin{equation}\label{equation-intro-linear examples}
 \mathrm{Ei}(z) = \int \frac {e^z} {z} dz,  \mathrm{Ai}(z) = \frac 1 \pi \int_0^\infty \mathrm{cos}(t^3/3 + zt) dt \text{ and } \int\frac{W(z)}{z^2} dz.
 \end{equation}
The first two examples can be studied inside decidable frameworks: Risch's solution of the problem of integration in finite terms \cite{Singer-Raab} and Hrushovski's computation of differential Galois groups \cite{Hrushovski-Galois}. Corollary \ref{cor-Lambert} explains how to extend Risch's decidable framework to cover certain indefinite integrals involving local analytic inverses of elementary functions such as the last example in (\ref{equation-intro-linear examples}).

Another meaningful application of Theorem \ref{intro-thmA} is the exponential transcendence of the indefinite elliptic integrals also proved in \cite{Jones-Kirby-Servi} studied in our setting using  elliptic differential equations. 

\begin{corx}[Corollary \ref{cor-pendulum}]
Consider $\omega \in \mathbb{R}^\ast$ and the equation of the pendulum 
\begin{equation}\label{equation-intro-pendulum}
\theta'' + \omega^2 \cdot \mathrm{sin}(\theta) = 0.
\end{equation}
The exponentially algebraic solutions of (\ref{equation-intro-pendulum}) are exactly the solutions with energy $$H = 1/2 (\theta')^2 - \omega^2 \mathrm{cos}(\theta) = \pm \omega^2.$$
\end{corx}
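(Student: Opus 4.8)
The plan is to use the conserved energy $H = \frac{1}{2}(\theta')^2 - \omega^2 \mathrm{cos}(\theta)$, which is constant along every solution of (\ref{equation-intro-pendulum}), in order to replace the second-order equation by a one-parameter family of first-order equations indexed by the value $H = h$, and to recognise each member of this family as an elliptic differential equation to which Theorem \ref{intro-thmA} can be applied. Fixing $h$ and setting $u = e^{i\theta}$, so that $\theta$ is exponentially algebraic exactly when $u$ is, one has $\mathrm{cos}(\theta) = (u + u^{-1})/2$ and $(\theta')^2 = -(u'/u)^2$, and the energy relation becomes the autonomous equation
\[
(u')^2 = -\omega^2 u^3 - 2h u^2 - \omega^2 u = -u\,\bigl(\omega^2 u^2 + 2h u + \omega^2\bigr) =: P_h(u).
\]
First I would compute when the cubic $P_h$ has a repeated root. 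Since its constant coefficient $\omega^2$ is nonzero, $u = 0$ is never a root of the quadratic factor, so $P_h$ degenerates precisely when $\omega^2 u^2 + 2hu + \omega^2$ has a double root, that is when $h^2 - \omega^4 = 0$. This isolates $h = \pm\omega^2$ as exactly the energies for which the associated plane cubic acquires a node and becomes rational.

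For the generic energies $h \neq \pm \omega^2$ the cubic $P_h$ has three distinct roots, so $(u')^2 = P_h(u)$ is a genuine elliptic differential equation; I would then invoke the internality to the constants of elliptic differential equations, which this paper establishes, so that Theorem \ref{intro-thmA} applies and forces any exponentially algebraic solution to be elementary. It remains to exclude this. The nonconstant solutions of $(u')^2 = P_h(u)$ with $P_h$ squarefree are, up to an affine change of the dependent variable and a translation of the argument, Weierstrass $\wp$-functions, whose non-elementarity is the companion of Liouville's classical theorem on the elliptic integral (\ref{equation-elliptic-integral-intro}). Moreover the only constant solutions of (\ref{equation-intro-pendulum}) are the equilibria $\theta \equiv 0$ and $\theta \equiv \pi$, carrying energies $-\omega^2$ and $+\omega^2$; hence for $h \neq \pm\omega^2$ every solution is nonconstant, and therefore not exponentially algebraic.

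For the two distinguished energies I would instead exhibit the solutions explicitly. When $h = \omega^2$ the equation reads $(u')^2 = -\omega^2 u (u+1)^2$ and when $h = -\omega^2$ it reads $(u')^2 = -\omega^2 u (u-1)^2$; in either case the double root makes the curve rational, and the substitution $u = s^2$ turns the equation into a Riccati equation in $s$ (of the form $2s' = \pm i\omega(s^2 \pm 1)$) whose solutions are elementary. Integrating and returning to $\theta = -i \log u$ recovers, for $h = \omega^2$, the separatrix $\theta(t) = 4\arctan(e^{\pm \omega t}) - \pi$ together with the unstable equilibrium $\theta \equiv \pi$, and, for $h = -\omega^2$, the stable equilibrium $\theta \equiv 0$ and its nonconstant complex companions; all of these are elementary and a fortiori exponentially algebraic.

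The main obstacle I anticipate is not the degenerate case but the rigorous bookkeeping in the generic one. One must check that passing between $\theta$ and $u = e^{i\theta}$ preserves exponential algebraicity in both directions at the level of holomorphic germs, that $(u')^2 = P_h(u)$ genuinely falls under the paper's notion of an elliptic differential equation internal to the constants, and---most delicately---that the classical non-elementarity is applied to the elliptic \emph{function} $u(t)$ itself rather than to the inverse elliptic integral, the two not being interchangeable (as the example of Lambert's $W$-function already shows). Once these points are secured, the dichotomy follows by combining Theorem \ref{intro-thmA} with Liouville's theorem.
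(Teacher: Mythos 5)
Your reduction to the autonomous first-order family $(u')^2 = P_h(u)$ via $u = e^{i\theta}$ is correct, as is the identification of $h = \pm\omega^2$ as exactly the degenerate energies and the explicit elementary integration in the degenerate case. However, there is a genuine gap in the nondegenerate case, and it sits exactly at the point you flag as ``most delicate'' without resolving it. After applying Theorem \ref{intro-thmA} you are reduced to showing that no nonconstant solution of $(u')^2 = P_h(u)$ (a translate of a Weierstrass-type function) is \emph{elementary}. You attribute this to ``the companion of Liouville's classical theorem on the elliptic integral (\ref{equation-elliptic-integral-intro})'', but Liouville's theorem concerns the elliptic \emph{integral} $z = \int du/\sqrt{P_h(u)}$, i.e.\ the compositional inverse of your $u(z)$, and --- as you yourself observe via the example of Lambert's $W$ --- the class of elementary functions is not closed under compositional inversion. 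So the non-elementarity of the integral does not transfer to the non-elementarity of the elliptic function, and no argument for the latter is supplied. (Such a statement can be extracted from Rosenlicht--Singer or from Proposition \ref{proposition2-linear} of this paper, since $\mathbb{C}\langle u\rangle/\mathbb{C}$ is strongly normal with Galois group an elliptic curve, but that is precisely the nontrivial content you would need to prove or cite.)

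The paper sidesteps this by running the argument on the \emph{integral} rather than on the elliptic function: after the substitution $u = \cos(\theta/2)$ and separation of variables, $z$ becomes a Legendre elliptic integral of the first kind in $u$, i.e.\ an antiderivative of an algebraic function, to which Theorem \ref{intro-thmA} applies directly; Liouville's criterion shows this integral is elementary exactly when $h = \pm\omega^2$, hence it is exponentially algebraic exactly then, and one concludes using the fact that exponential algebraicity (unlike elementarity) \emph{is} stable under local compositional inversion. Your route is essentially the paper's advertised ``third, Galois-theoretic proof'' (Corollary \ref{cor-pendulum-ind}), and the clean way to close your gap in that spirit is to replace the appeal to Theorem \ref{intro-thmA} plus non-elementarity of $\wp$ by a direct appeal to Theorem \ref{intro-thmB}: the generic type of the nondegenerate equation $(u')^2 = P_h(u)$ is internal to the constants with binding group the elliptic curve $v^2 = P_h(u)$, which is not a linear algebraic group, so the type cannot be exponentially algebraic. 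Either repair makes your argument complete; as written, the nondegenerate case does not close.
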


The restriction to the real locus of the exponentially algebraic solutions of (\ref{equation-intro-pendulum}) are easy to detect on the usual phase portrait of  the pendulum: they are the so-called {\em homoclinic solutions} which separate the bounded solutions oscillating around the stable equilibrium positions from the unbounded ones. They can indeed be expressed using inverse trigonometric functions \cite{Chenciner}. 

We give two proofs of this result. The first proof based on Theorem \ref{intro-thmA} provides a {\em decision procedure} for finding the exponential algebraic solutions of  one-dimensional Hamiltonian systems of the form
\begin{equation}
H = \frac {y^2} 2 + V(x)\text{ where } V(x) \text{ is an elementary function.}
\end{equation}
This appears as Corollary \ref{cor-Hamiltonian} below. This procedure involves one application of the implicit function theorem and the computation of an indefinite integral. In the case of (\ref{equation-intro-pendulum}), the computation reduces to the computation of the Legendre's elliptic integral which were already known to Liouville.

The second Galois-theoretic proof uses Theorem \ref{intro-thmB} below. Using the change of variables $u = \mathrm{cos}(\theta/2)$ which preserves exponential algebraicity, (\ref{equation-intro-pendulum}) can be transformed into an autonomous algebraic differential equation given  as a {\em nonisotrivial family of elliptic differential equations} 
\begin{equation}\label{equation-intro-elliptic}
\Big(\frac{du}{dz}\Big)^2 = \omega^2(1-u^2)\Big( u^2 - \frac{\omega^2-h_0}{2\omega^2}\Big).
\end{equation}
The ``nondegeneracy condition'' $h_0 \neq \pm \omega^2$ describe the smooth fibers of this elliptic fibration, that is, those which do not degenerate as rational curves. Under this condition, (\ref{equation-intro-elliptic}) is an {\em elliptic differential equation}, that is, a differential equation which is internal to the constants and with Galois group an elliptic curve.  In fact, Theorem \ref{intro-thmB} leads to a stronger result namely that: 

\begin{quote} 
whenever $(\omega_i,h_i)$ are chosen so that $h_i \neq \pm \omega_i^2$ and the elliptic curves
$$ v^2 =  \omega_i^2(1-u^2)\Big(u^2 - \frac{\omega_i^2-h_i}{2\omega_i^2}\Big)$$
are pairwise nonisogeneous then corresponding solutions of (\ref{equation-intro-pendulum}) are exponentially algebraically independent over $\mathbb{C}(z)$.
\end{quote} 
The content of such exponential algebraic independence statements is described below before the statement of Theorem B.

\subsection{Exponential algebraic independence and differential Galois theory} \text{ } We say that meromorphic functions $f_1,\ldots, f_n \in \mathcal M(U)$ are {\em exponentially algebraically independent over $\mathbb{C}(z)$} if they satisfy no nontrivial exponentially algebraic relations in the sense that: 
\begin{quote} 
whenever $G$ is a holomorphic function definable on a connected open subset of $\mathbb{C}^{n+1}$ in $\mathbb{R}_{\mathrm{RE}}$ such that the function  
$z \mapsto G(z,f_1(z),\ldots, f_n(z))$  is well-defined and vanishes on a nontrivial open subset of $U$ then $G$ vanishes identically on $\mathbb{C}^{n+1}$. 
\end{quote} 
Natural problems of exponential algebraicity can be built from the same classical examples studied by Liouville: For instance, are $\mathrm{sn}(z)$ and $\mathrm{erf}(z)$ exponentially algebraically independent over $\mathbb{C}(z)$? What about $\mathrm{Ai}(z)$ and $\mathrm{Ai}'(z)$?  \\

We describe of framework using definable Galois theory \cite{Pillay-Galois,Pillay-Leon-Sanchez} which applies to (complete) types which are internal to the constants in the theory $\mathrm{DCF}$ of differentially closed fields. This generalizes Picard-Vessiot differential Galois theory to include elliptic equations such as (\ref{equation-intro-elliptic}). Following the model-theoretic terminology, we refer to the Galois groups as {\em binding groups}. Two essential ingredients of the Galois-theoretic treatment of the algebraic relations between solutions of linear differential equations are:
\begin{itemize}
\item[(a)] the field-theoretic algebraic closure $\mathrm{acl}(-)$ is a combinatorial {\em pregeometry }, that is, it is a monotone closure operator with finite character that satisfies the {\em exchange property}: $a \in \mathrm{acl}(A,b) \setminus  \mathrm{acl}(A)  \Rightarrow b \in \mathrm{acl}(A,a)$.

\item[(b)] a Galois-theoretic characterization of algebraicity: whenever $p = \mathrm{tp}^D(f/k)$ is a type internal to the constants realized in a differential field {\em without new constants} then $p$ is an algebraic type if and only if its Galois group is trivial. \end{itemize}
In practice, these properties are often used together with a group-theoretic lemma, {\em Goursat lemma} which describes the proper algebraic subgroups of a product $G_1 \times G_2$ projecting surjectively on both factors. This leads to automatic  algebraic independence statements, see \cite{Kolchin}.

In the exponential algebraic case, the first step (a) was adapted to the study of exponential algebraicity in \cite{Kirby-semiab} where a pregeometry called {\em exponential algebraic closure}
\begin{equation}\label{equation-intro-ecl}
 \mathrm{ecl}^K(C, - ) : \mathcal P(K) \rightarrow \mathcal P(K)
\end{equation}
is defined on any differentially closed field $K$ with field of constants $C$. 
This pregeometry will be described in greater details in subsection 1.4 of the introduction. Regarding the second step (b), we prove the following theorem:

\begin{thmx}[Theorem \ref{theoremB}]  \label{intro-thmB}
Let $k$ be a differential field with an algebraically closed field of constants and $f$ an element of a differential field extension of $k$ without new constants. Assume that the algebraic closure of $k$ is {\em self-sufficient} and that $f$ satisfies a differential equation {\em internal to the constants} with parameters in $k$. Then 
\begin{quote} if $f \in \mathrm{ecl}^K(k)$ then the binding group of $\mathrm{tp}^D(f/k)$ is linear and abelian-by-finite.
\end{quote}
\end{thmx}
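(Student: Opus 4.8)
The plan is to realize the binding group $G$ of $p := \tp^D(f/k)$ as (the $C$-points of) an algebraic group defined over the algebraically closed field of constants $C$ — this is exactly what internality to the constants and the absence of new constants provide — and then to read its structure off the exponential algebraicity of its realizations. The set $Q$ of realizations of $p$ is a homogeneous space under $G$, and the field-theoretic transcendence degree over $k$ of a fundamental system of realizations is controlled by $\dim G$. Since $G/G^0$ is finite, replacing $G$ by its identity component is harmless for the \emph{abelian-by-finite} conclusion, and since $G$ is linear if and only if $G^0$ is, it suffices to prove that $G^0$ is a commutative linear algebraic group. By Chevalley's theorem we have an exact sequence $1 \to L \to G^0 \to B \to 1$ with $L$ connected linear and $B$ an abelian variety, so the two points to establish are $B = 0$ and $[G^0,G^0] = 1$.

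The two hypotheses that do the work are $f \in \mathrm{ecl}^K(k)$ and the self-sufficiency of $\mathrm{acl}(k)$, and I would exploit them through the predimension inequality underlying the pregeometry $\mathrm{ecl}$ (the differential Ax--Schanuel statement of \cite{Kirby-semiab}). Self-sufficiency of $\mathrm{acl}(k)$ asserts that no finite tuple has negative predimension over $\mathrm{acl}(k)$; the hypothesis $f \in \mathrm{ecl}^K(k)$ produces a finite tuple containing $f$, obtained from $k$ by algebraic operations and the exponential/logarithmic-derivative operation, whose defining system is nonsingular and whose predimension over $k$ is zero. The strategy is to feed a generic realization of $p$ together with its exponential coordinates into this inequality and to argue that either a nontrivial $B$ or a nontrivial $[G^0,G^0]$ would drive the predimension of some subtuple strictly below zero, contradicting self-sufficiency.

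For linearity, a nonzero $B$ would yield, through the quotient $G^0 \to B$, an intermediate type internal to the constants whose binding group is $B$ and whose generic realization uniformizes a Zariski-open subset of $B$. As this realization lies in $\mathrm{ecl}^K(k)$, the analytic uniformization of $B$ would be exponentially algebraic, i.e.\ definable in $\mathbb{R}_{\mathrm{RE}}$; but the exponential map of an abelian variety is definable in no o-minimal expansion of the real field — its period lattice is incompatible with o-minimality — and, on the predimension side, the abelian coordinates contribute transcendence unmatched by any exponential and so force the predimension below zero. Hence $B = 0$ and $G$ is linear; this is the mechanism behind the non-exponential-algebraicity of elliptic integrals such as (\ref{equation-elliptic-integral-intro}).

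The delicate point, and the step I expect to be the main obstacle, is upgrading ``$G^0$ linear'' to ``$G^0$ abelian''. Liouville--Kolchin theory alone only gives solvability of $G^0$, and the nonabelian solvable group $\Ga \rtimes \Gm$ genuinely occurs as a differential Galois group — but precisely for functions like $\mathrm{erf}(z) = \int e^{-z^2}\,dz$, a primitive of an exponential that is \emph{not} exponentially algebraic. The plan is to show that a nontrivial derived subgroup $[G^0,G^0]$ forces a realization of exactly this ``primitive-of-an-exponential'' shape, which escapes $\mathrm{ecl}$: the only $\Ga$-extensions absorbed by $\mathrm{ecl}$ are the genuine logarithms (primitives of logarithmic derivatives of exponentially algebraic functions), and these assemble \emph{commutatively} with the toral part, whereas the semidirect assembly strictly lowers the exponential predimension. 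Carrying this out means matching $\dim[G^0,G^0]$ and the torus/unipotent decomposition of $G^0$ against the predimension count and checking that self-sufficiency excludes the noncommutative case. Throughout, internality is indispensable: it is what makes $G$ a genuine algebraic group acting on a homogeneous space $Q$ (the example (\ref{Lotka-intro}) shows that without it the conclusion fails), and it is this rigidity that turns the soft conclusion ``solvable'' into the sharp ``abelian-by-finite''.
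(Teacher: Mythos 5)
There are genuine gaps at both of the two decisive steps, even though your overall framing (algebraic binding group over the constants, Chevalley decomposition, predimension/self-sufficiency as the engine) is sound and close in spirit to the paper. For linearity, your o-minimality argument does not work as stated: membership in $\mathrm{ecl}^K(k)$ is a \emph{germ-level} condition (Theorem \ref{theorem-connection-o-minimality}), and the local inverse of the exponential of an abelian variety \emph{is} definable in $\mathbb{R}_{an}$ — the period lattice only obstructs global definability, which is not what is at stake. What must be shown is that this local uniformization is not captured by $\mathbb{R}_{\mathrm{RE}}$, and that is precisely the content of the theorem, so the argument is circular. Your parallel remark that ``the abelian coordinates contribute transcendence unmatched by any exponential'' is the right intuition but is asserted, not proved: the paper has to work for it, first reducing via the abelian reduction of \cite{Jaoui-Moosa} to a type whose binding group is a \emph{simple} abelian variety $A$, then producing (Proposition \ref{proposition-Kolchin}, via Kolchin's $A$-primitive elements) a nonzero invariant one-form $\omega(\alpha)$ killed by the Lie derivative, writing it as a $C$-linear combination of exponential forms using $M\cdot\Theta_{\BE}(M/k)=\Omega^1(M/k)$ (which is where self-sufficiency enters, through Proposition \ref{proposition-predimension}), and finally showing such an identity forces $\omega(\alpha)=0$ (Proposition \ref{proposition-linear}, via Zilber indecomposability, Goursat, and simplicity of $A$). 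None of these ingredients appears in your sketch, and without the reduction to a simple abelian quotient the Goursat step is unavailable.

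For commutativity you candidly identify the obstacle but do not overcome it: ``a nontrivial derived subgroup forces a realization of primitive-of-an-exponential shape'' is a heuristic that at best addresses $\Ga\rtimes\Gm$, and gives no handle on, say, a binding group $\mathrm{SL}_2$. The paper's mechanism is different and is the key idea you are missing: once $G$ is linear and connected, $L/k$ is Picard--Vessiot, and the commuting derivations of \cite[Proposition 1.24]{vdP-Singer} give a rational coparallelism $\theta=\sum\theta^i\otimes e_i$ of type $\mathfrak{g}$ whose component one-forms are horizontal ($\mathcal L_\partial(\theta^i)=0$, Proposition \ref{proposition-vanderput}). Exponential algebraicity plus self-sufficiency again forces each $\theta^i$ to be a $C$-linear combination of exponential forms $dy/y-dx$, which are \emph{closed}; the Maurer--Cartan equation $d\theta^i=-\tfrac12\sum c^i_{j,l}\,\theta^j\wedge\theta^l$ then annihilates all structure constants, so $\mathfrak{g}$ is abelian (Lemma \ref{lemma-commutativity}). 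This turns ``solvable'' into ``abelian'' in one stroke, with no case analysis on the torus/unipotent decomposition; your proposed matching of $\dim[G^0,G^0]$ against a predimension count would still need an argument of exactly this kind to get off the ground.
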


Here, an algebraically closed subfield $k$ extending the field $C$ of constants of some differentially closed field $K$ is {\em self-sufficient} if the analogue of the Ax-Schanuel Theorem holds over $k$ in the sense that:

\begin{quote} 
whenever $f_1,\ldots,f_n \in K$ and $e_1,\ldots,e_n \in K$ are exponentials of $f_1,\ldots, f_n$ respectively then 
$\mathrm{td}(f_1,\ldots,f_n,e_1,\ldots,e_n/k) \geq n$
provided that the $(f_i,e_i)$ are $\mathbb{Q}$-linearly independent in $(\Ga \times \Gm)(K)/(\Ga \times \Gm)(k)$.
\end{quote} 
Corollary \ref{cor-self-sufficient} shows that this condition does not depend on the choice of the differentially closed field $K$ extending $k$. 

As applications of Theorem \ref{intro-thmB}, we obtain the exponential algebraic independence of the solutions of  (\ref{equation-intro-pendulum}), as well as the exponential independence of $\mathrm{sn}(z)$,  $\mathrm{erf}(z)$ and $\mathrm{Ai}(z)$, see Corollary \ref{cor-elliptic-error-Airy}. In the case of the Bessel functions given by 
\begin{equation*}
J_\alpha(z) = \sum_{n = 0}^\infty \frac{(-1)^m}{m! \cdot \Gamma(\alpha + m + 1)} (\frac{z}{2})^{2m+ \alpha},
\end{equation*}
we get the following generalization of Kolchin's result \cite{Kolchin}.

\begin{corx}[Corollary \ref{cor-Bessel}]
Let $\alpha_1,\ldots, \alpha_n$ be complex numbers such that 
\begin{center} 
$\alpha_i \notin \mathbb{Z} + 1/2$ and $\alpha_i \pm \alpha_j \notin \mathbb{Z}$ for $i \neq j$. 
\end{center} 
Then the Bessel functions $J_{\alpha_1}(z), \ldots, J_{\alpha_n}(z)$
 and their first derivatives are $2n$ {\em exponentially algebraically independent} holomorphic functions over $\mathbb{C}(z)$. 
\end{corx}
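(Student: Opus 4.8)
The plan is to reduce the statement to a computation of the differential Galois group (binding group) of the system of Bessel equations and then feed the outcome into Theorem \ref{intro-thmB}. For each index $\alpha$ the Bessel function $J_\alpha$ satisfies the linear equation $z^2 y'' + z y' + (z^2 - \alpha^2) y = 0$, which is internal to the constants; hence the combined type $\mathrm{tp}^D(J_{\alpha_1}, J_{\alpha_1}', \ldots, J_{\alpha_n}, J_{\alpha_n}'/\mathbb{C}(z))$ is internal to the constants and its binding group $G$ is a linear algebraic group over $\mathbb{C}$, computed inside the Picard--Vessiot extension $L$ generated by a fundamental system of solutions of the $n$ equations. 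I would first verify the standing hypotheses of Theorem \ref{intro-thmB}: the base $k = \mathbb{C}(z)$ has algebraically closed field of constants $\mathbb{C}$, the Bessel functions live in the differential field of germs of meromorphic functions, which has no new constants, and the self-sufficiency of $\acl(\mathbb{C}(z))$ is exactly the Ax--Schanuel property over $\overline{\mathbb{C}(z)}$ established earlier (independent of the ambient differentially closed field by Corollary \ref{cor-self-sufficient}).

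The core computation is to show that under the stated arithmetic conditions the binding group is the full product $G = \mathrm{SL}_2(\mathbb{C})^n$. This proceeds in two stages, following Kolchin \cite{Kolchin}. First, for a single equation the condition $\alpha_i \notin \Z + 1/2$ guarantees that the Picard--Vessiot group of the $i$-th Bessel equation is $\mathrm{SL}_2(\mathbb{C})$ (when $\alpha_i \in \Z + 1/2$ the solutions are elementary and the group degenerates). Second, the combined group embeds in $\mathrm{SL}_2(\mathbb{C})^n$ and projects onto each factor, so by an iterated application of Goursat's lemma a proper inclusion would force the Picard--Vessiot extensions of two distinct equations to be linked by an isomorphism of their solution spaces. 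For Bessel functions such a link can only come from the contiguity relations, which express $J_{\alpha \pm 1}$ through $J_\alpha, J_\alpha'$ and identify the extensions attached to $\alpha$ and $-\alpha$; these relations are available precisely when $\alpha_i - \alpha_j \in \Z$ or $\alpha_i + \alpha_j \in \Z$. The hypothesis $\alpha_i \pm \alpha_j \notin \Z$ rules them out, forcing $G = \mathrm{SL}_2(\mathbb{C})^n$.

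With the group in hand, I would invoke the contrapositive form of Theorem \ref{intro-thmB}, packaged as an independence statement through the $\mathrm{ecl}$-pregeometry of \cite{Kirby-semiab}. Since $\mathrm{SL}_2(\mathbb{C})^n$ is semisimple it has no nontrivial abelian-by-finite quotient; consequently no nonalgebraic coordinate can fall into the exponential-algebraic closure of the remaining data, and exponential algebraic independence over $\mathbb{C}(z)$ reduces to ordinary algebraic independence over $\mathbb{C}(z)$. The latter is Kolchin's theorem: the orbit of one nonzero solution vector in each factor under $\mathrm{SL}_2(\mathbb{C})^n$ has dimension $2n$, so the $2n$ functions $J_{\alpha_1}, J_{\alpha_1}', \ldots, J_{\alpha_n}, J_{\alpha_n}'$ are algebraically independent, hence exponentially algebraically independent.

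The main obstacle is the Goursat step, that is, the faithful translation of the arithmetic conditions $\alpha_i \pm \alpha_j \notin \Z$ into the statement that the factors of the binding group are unlinked. One must be sure that the only algebraic correspondences between the solution spaces of two Bessel equations are those produced by the classical contiguity and reflection relations; ruling out exotic isomorphisms is exactly what makes the constraints on the $\alpha_i$ sharp. A secondary point requiring care is the passage from Theorem \ref{intro-thmB}, stated for the binding group of a single type, to the coordinatewise independence of the $2n$ functions: here one uses that a semisimple binding group admits no abelian-by-finite quotient, so the $\mathrm{ecl}$-closure cannot detect any intermediate structure and the $\mathrm{ecl}$-dimension is forced up to the full transcendence degree.
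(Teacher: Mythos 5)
Your overall strategy --- compute the Picard--Vessiot group of the combined system via Kolchin and Goursat/simplicity, then run the contrapositive of Theorem \ref{intro-thmB} --- is the same as the paper's, and the group computation itself is fine (the paper simply imports from Kolchin the facts that each factor is $\mathrm{SL}_2(\mathbb{C})$ and that the $3n$ functions $J_{\alpha_i}, J'_{\alpha_i}, B_{\alpha_i}$ are algebraically independent, which forces the full group to be $\mathrm{SL}_2(\mathbb{C})^n$). The genuine gap is in your last step. Theorem \ref{intro-thmB} applied to the full type over $\mathbb{C}(z)^{alg}$ only yields that the whole tuple is not in $\mathrm{ecl}^K(\mathbb{C}(z)^{alg})$, i.e.\ that the exponential transcendence degree is at least $1$; the assertion that ``a semisimple binding group admits no abelian-by-finite quotient, so the $\mathrm{ecl}$-dimension is forced up to the full transcendence degree'' is not a consequence of anything stated in the paper and is precisely the point that needs proof. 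To show that each of the $2n$ coordinates is exponentially transcendental over the field generated by the others, you must re-apply Theorem \ref{intro-thmB} over intermediate base fields, and for that you need (a) those intermediate fields to be \emph{self-sufficient differential fields} and (b) the binding group over them to remain non-abelian. Note in particular that $\mathbb{C}(z)(J'_{\alpha_1}, J_{\alpha_2},\ldots)^{alg}$ is not even a differential field (since $J''_{\alpha_1}$ involves $J_{\alpha_1}$), so the naive coordinatewise argument cannot be run directly.

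The paper resolves this with an induction that your proposal does not contain: it sets $u_\alpha = J'_\alpha/J_\alpha$ and builds the tower $k_i \subset (M_{i+1}\cdot k_i)^{alg} \subset k_{i+1}$ where $k_i = \mathbb{C}(z)^{alg}(J_{\alpha_j},J'_{\alpha_j}, j\le i)^{alg}$ and $M_{i+1} = k(u_{\alpha_{i+1}})$. At each stage, simplicity of $\mathrm{SL}_2(\mathbb{C})$ (Kolchin's normal-subgroup argument, playing the role of your Goursat step) shows the binding group of $\mathrm{tp}^D(u_{\alpha_{i+1}}/k_i)$ is still $\mathrm{SL}_2(\mathbb{C})$ and that of $\mathrm{tp}^D(J_{\alpha_{i+1}}/(M_{\alpha_{i+1}}\cdot k_i)^{alg})$ is $\mathrm{Aff}_2(\mathbb{C})$; both are non-abelian, so Theorem \ref{intro-thmB} makes each step exponentially transcendental, which simultaneously increments the exponential transcendence degree by $1$ and (via Example \ref{example-selfsufficient}) preserves self-sufficiency of the base for the next step. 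Your proof would be complete if you replaced the ``ecl-dimension is forced up'' sentence by this (or an equivalent) induction through a chain of self-sufficient differential subfields.
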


\subsection{Relation with the model theory of the blurred complex exponential} A fundamental model-theoretic input of our framework is the first-order theory $\BE$ of the {\em blurred complex exponential} \cite{Kirby-semiab,Kirby-blurred}. We briefly describe the central role of this theory in the rest of the paper.

While Zilber's first-order treatment of complex exponential algebra is conditional to the validity of several number-theoretic conjectures, this can be avoided for the functional problems (1) and (2) following the approach of \cite{Kirby-semiab, Kirby-blurred}. Roughly speaking, it consists in ``adding a field of constants  to exponential algebra'':
\begin{quote}
 A {\em blurred exponential field} $(k,C,\Gamma)$ is a pair $k/C$  of algebraically closed fields equipped with a divisible subgroup $\Gamma \subset \Gm(k) \times \Ga(k)$   extending the trivial correspondence $\Gm(C) \times \Ga(C)$ on the field $C$ of  constants.
\end{quote}
This approach comes with a direct connection with differential algebra: every algebraically closed differential field $(k,\partial)$ carries such a structure obtained by setting 
\begin{equation}\label{equation-intro-Gamma}
 \Gamma_\mathrm{BE} := \lbrace (y,x) \in \Gm(k) \times \Ga(k) \mid \partial(y)/y = \partial(x) \rbrace.
 \end{equation}
The field $C$ is then the field of constants of $k$. A fundamental result relates the reduct of the theory $\DCF$ of ordinary differentially closed fields in the language $\mathcal L_\Gamma = \lbrace 0,1,+,-,\times, \Gamma \rbrace$ where the interpretation of $\Gamma$ is given by (\ref{equation-intro-Gamma}) to the theory deduced from  $(\mathbb{C}, \mathrm{exp})$ by ``blurring'' the graph of the exponential function. In the second case, one first constructs a countable ``field of constants'' $k_0$ by taking intersection of constants fields of {\em $E$-derivations} lying in
$$ \mathrm{EDer}(\mathbb{C}) = \lbrace D \in \mathrm{Der}(\mathbb{C}) \mid \forall a \in \mathbb{C}, D(e^{a}) = e^{a} \cdot D(a) \rbrace$$
and then interprets the correspondence $\Gamma$ as
$$ \Gamma_0 = \lbrace (y,x) \in \Gm(\mathbb{C}) \times \Ga(\mathbb{C} \mid \exists c \in k_0 \mid y = c \cdot \mathrm{exp}(x) \rbrace.$$
The main result of \cite{Kirby-blurred} states the two constructions lead to elementary substructures in the language $\mathcal L_\Gamma$. The common $\omega$-stable theory $\BE$ is called the theory of the {\em blurred complex exponential}. 

We refer the reader to Section 2 for more details on this theory and in particular for the construction of the pregeometry $\mathrm{ecl}^K(C,-)$ presented in (\ref{equation-intro-ecl})  on any model $K$ of the theory $\BE$. On the standard models $K_0 = (\mathbb{C}, k_0, \Gamma_0)$, the pregeometry  $\mathrm{ecl}^{K_0}(k_0,-)$ admits a concrete description based on the o-minimal structure $\mathbb{R}_{\mathrm{RE}}$.  As a consequence of \cite{Wilkie, Kirby-alg},  it can be phrased as an equality of two pregeometries over the field $\mathbb{C}$ of complex numbers
\begin{equation}\label{equation-intro-egalite}
\mathrm{ecl}^{K_0}(k_0, - ) = \mathrm{hcl}_{\mathbb{R}_{\mathrm{RE}}}(-) 
\end{equation}
where $\mathrm{hcl}$  is a pregeometry called {\em holomorphic closure} associated to certain reducts of the o-minimal structure $\mathbb{R}_{an}$ , see also \cite{Jones-Kirby-Servi, LGKJS}. In the case of $\mathbb{R}_{\mathrm{RE}}$,  this second pregeometry is defined by closing a subset $A$ of $\mathbb{C}$ under the evaluation of $\mathrm{RE}$-definable holomorphic functions, that is,
$$ \mathrm{hcl}(A) = \left\{ f(a_1,\ldots,a_n) \mid \begin{cases} a_1,\ldots, a_n \in A \\ f \text{ is a } k_0\text{-definable  holomorphic function} \\   (a_1,\ldots,a_n) \in \mathrm{dom}(f) \end{cases}\right\}.$$
The main result of Section 3 is a functional variant of (\ref{equation-intro-egalite})  which describes the pregeometry $\mathrm{ecl}^K(\mathbb{C},-)$ on differentially closed fields $K$ with field of constants $\mathbb{C}$.

\begin{thmx}[Theorem \ref{theorem-connection-o-minimality}]
Let $f_1,\ldots,f_n$ be holomorphic functions of one variable defined on some complex domain $U$. Then $f_1,\ldots,f_n$ are exponentially algebraically dependent over $\mathbb{C}$ {\em if and only if} there exist a subdomain $V$ of $U$, a connected open subset $W$ of $\mathbb{C}^n$ containing the analytic curve $(f_1,\ldots,f_n)(V)$ and a nonzero holomorphic function $F$ of $n$ variables definable in $\mathbb{R}_{\mathrm{RE}}$ such that 
$$F(f_1(z),\ldots, f_n(z)) = 0 \text{ for all } z \in V.$$
Furthermore, if $f_1,\ldots,f_{n-1}$ are exponentially algebraically independent over $\mathbb{C}$, the previous condition is also equivalent to the existence of a subdomain $V$ of $U$, a connected open subset $W$ of $\mathbb{C}^{n-1}$ containing the analytic curve $(f_1,\ldots,f_{n-1})(V)$ and a  holomorphic function $G$ of $n-1$ variables definable in $\mathbb{R}_{RE}$ such that
$$ f_n(z) = G(f_1(z),\ldots,f_{n-1}(z)) \text{ for all } z \in V.$$  
\end{thmx}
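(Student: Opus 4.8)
The plan is to derive this functional statement from the pointwise identity of pregeometries in (\ref{equation-intro-egalite}), $\mathrm{ecl}^{K_0}(k_0,-) = \mathrm{hcl}_{\mathbb{R}_{\mathrm{RE}}}(-)$, together with Wilkie's description \cite{Wilkie} of the holomorphic functions definable in $\mathbb{R}_{\mathrm{RE}}$: these are exactly the \emph{exponentially algebraic functions}, those locally obtained from the complex exponential and algebraic functions by the implicit function theorem. The bridge between the analytic and the differential-algebraic pictures is that a germ of a holomorphic curve carries the derivation $d/dz$, and differentiating a relation $h = c\cdot e^{g}$ along the curve yields $\partial(h)/h = \partial(g)$; thus the germs of holomorphic functions, with $d/dz$ and the induced $\Gamma_{\mathrm{BE}}$ of (\ref{equation-intro-Gamma}), embed in a differentially closed field $K$ whose $\mathcal L_\Gamma$-reduct is a model of $\BE$ with field of constants $\mathbb{C}$ (this is precisely the passage from an $E$-derivation on $\mathbb{C}$ to the blurring $\Gamma_0$). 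I would work throughout with germs at a generic point $z_0 \in U$.

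First I would reduce the main equivalence to the ``furthermore'' statement. Since $\mathrm{ecl}^K(\mathbb{C},-)$ is a pregeometry with the exchange property, if $f_1,\ldots,f_n$ are dependent over $\mathbb{C}$ then after reordering I may assume $f_1,\ldots,f_{n-1}$ are independent while $f_n \in \mathrm{ecl}^K(\mathbb{C}, f_1,\ldots,f_{n-1})$; producing the function $G$ then gives the relation $F = x_n - G(x_1,\ldots,x_{n-1})$ required by the first assertion. The easy direction is the reverse reading: a nonzero holomorphic $\mathbb{R}_{\mathrm{RE}}$-definable $F$ vanishing on the curve is itself exponentially algebraic by \cite{Wilkie}, so its zero set is $\mathrm{hcl}$-definable; evaluating along the curve and lifting through the germ-to-$E$-derivation bridge and (\ref{equation-intro-egalite}), the nontrivial vanishing forces the $\mathrm{ecl}$-dimension to drop, hence dependence.

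The hard direction is the passage from the membership $f_n \in \mathrm{ecl}^K(\mathbb{C}, f_1,\ldots,f_{n-1})$ to the explicit holomorphic $G$. Here I would unwind the membership into a finite \emph{exponentially algebraic tower}: a chain of germs obtained from $f_1,\ldots,f_{n-1}$ and $\mathbb{C}$ by alternately adjoining exponentials, logarithms, and field-theoretic algebraic elements, at the top of which $f_n$ is algebraic, with the predimension inequality witnessing membership in $\mathrm{ecl}$ rather than mere generation. Each step is a holomorphic $\mathbb{R}_{\mathrm{RE}}$-definable operation, so the tower presents $f_n$ as a solution of a finite holomorphic system in $f_1,\ldots,f_{n-1}$; the independence of the latter keeps the analytic curve $(f_1,\ldots,f_{n-1})(V)$ inside the smooth, generic stratum of this system, and repeated use of the holomorphic implicit function theorem then selects a branch and yields $f_n = G(f_1,\ldots,f_{n-1})$ on a subdomain $V\subseteq U$, with $G$ holomorphic on a connected open $W \subseteq \mathbb{C}^{n-1}$ containing the curve. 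The converse half of Wilkie's theorem guarantees that such a $G$, built from exponentials and algebraic functions through the implicit function theorem, is definable in $\mathbb{R}_{\mathrm{RE}}$.

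The main obstacle will be the \emph{uniform} branch selection in this last step: the tower determines $f_n$ only up to finitely many algebraic branches and up to the multivaluedness of $\log$, and these must be patched into a single-valued holomorphic $G$ on one open neighbourhood $W$ of the whole curve, rather than producing a different relation at each point. The leverage is o-minimality, which supplies a cell decomposition with finitely many branches and forces the locus where the relevant Jacobians are nondegenerate to be open and dense, while the exponential algebraic independence of $f_1,\ldots,f_{n-1}$ keeps the curve within it; shrinking $U$ to a suitable subdomain $V$ makes the relation hold identically. A secondary point requiring care is the bookkeeping of the parameters from the countable constant field $k_0$ in (\ref{equation-intro-egalite}) against the full constant field $\mathbb{C}$ of $K$, which I would absorb using \cite{Kirby-alg}.
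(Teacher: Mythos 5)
Your overall architecture (reduce the biconditional to the ``furthermore'' clause via the exchange property, then go back and forth between $\mathrm{ecl}$ and definable holomorphic relations using Wilkie's local definability theory and the implicit function theorem) matches the paper's, and your worry about uniform branch selection is handled in the paper by a countability argument showing the curve passes through an $\mathrm{hcl}$-generic point (Lemma \ref{lemma-generic-points}). But there are two genuine gaps. First, the direction ``definable relation $\Rightarrow$ exponentially algebraically dependent'' is not a formal consequence of the pointwise identity $\mathrm{ecl}^{K_0}(k_0,-)=\mathrm{hcl}(-)$: that identity compares two pregeometries on the \emph{points} of $\mathbb{C}$ over a countable field $k_0$, whereas here one needs a statement about \emph{germs of functions} in a differentially closed field whose constants are all of $\mathbb{C}$; the paper explicitly presents Theorem \ref{theorem-connection-o-minimality} as a functional \emph{variant} of that identity requiring its own proof. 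The actual argument (Proposition \ref{o-minimal direction1}) presents the definable function $G$ at an $\mathrm{hcl}$-generic point of the curve via a Khovanskii system over $\mathbb{C}[z,e^z,w,e^w]$ (Fact \ref{fact-Wilkie-complete}(b)), and then shows by differentiating that system that every $\BE$-derivation of the generated extension vanishes, hence $\delta(L/k)=0$. Your ``lifting through the germ-to-$E$-derivation bridge'' does not supply this computation.

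Second, and more seriously, your unwinding of $f_n\in\mathrm{ecl}^K(\mathbb{C},f_1,\ldots,f_{n-1})$ into a chain obtained by ``alternately adjoining exponentials, logarithms, and field-theoretic algebraic elements'' is false: such towers produce exactly the \emph{elementary} functions, a strictly smaller class. The Lambert $W$-function (used as a running example in the paper) is exponentially algebraic but lies in no such tower. What $\mathrm{ecl}$-membership actually gives is an extension $L/k$ of predimension zero, generated by finitely many $\Gamma$-pairs $(g_i,h_i)$ satisfying a simultaneous, genuinely implicit system; the step you are missing is why the implicit function theorem applies to this system along the curve. In the paper this is precisely the content of $L\cdot\Theta_{\BE}(L/k)=\Omega^1(L/k)$ (a consequence of $\delta(L/k)=0$ over the self-sufficient base $k=\mathbb{C}(f_1,\ldots,f_{n-1})^{alg}$, via Proposition \ref{proposition-predimension} and Lemma \ref{lemma-exponentialalgebra}): it says the foliation cut out by the exponential one-forms $dh_i/h_i-dg_i$ is generically transverse to the projection onto the $(f_1,\ldots,f_{n-1})$-coordinates, whose leaves are locally $\mathbb{R}_{\mathrm{RE}}$-definable. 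Without identifying this transversality mechanism, the ``generic stratum'' step of your hard direction has no support.
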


Here, the meromorphic functions $f_1,\ldots,f_n$ are exponentially algebraically dependent over $\mathbb{C}$ if they are exponentially algebraically independent in the sense of $\mathrm{ecl}^K(\mathbb{C},-)$ in some/any differentially closed field $K$ extending $\mathcal M(U)$ without new constants. \\

Hence, Theorem C provides {\em a differential algebraic formulation} of the complex-analytic problems (1) and (2) concerning the definability properties of holomorphic functions in the o-minimal structure $\mathbb{R}_{\mathrm{RE}}$. Replacing formulas with complete types, these problems are fully encoded in the {\em reduct map}
\begin{equation}\label{equation-intro-reduct} 
\mathrm{red}: S_n^D(\mathbb{C}(z)^{alg}) \rightarrow S_n^\BE(\mathbb{C}(z)^{alg})
\end{equation}
defined by $tp^D(a/\mathbb{C}(z)^{alg}) \mapsto tp^{\BE}(a/\mathbb{C}(z)^{alg}$ using the construction of $\BE$ as a $\DCF$-reduct. The study of the {\em stability-theoretic} description of (\ref{equation-intro-reduct}) was our initial motivation to study the problem of integration in finite terms and Theorem A and Theorem B achieve an effective description of (\ref{equation-intro-reduct}) for types which are {\em internal to the constants}. 

It is natural to expect interesting counterparts for these results at the other ends of Zilber's trichotomy, as well, such as in the description of {\em strongly minimal sets with exponentially algebraic solutions}.  Classical examples of completely disintegrated strongly minimal equations connected to Liouville's work include:

 \begin{corx}[Corollary \ref{cor-Kepler}]
Any distinct solutions $W_1(z),\ldots, W_n(z)$ of Kepler's equation and Lambert's equation respectively
\begin{equation*}
w - \mathrm{sin}(w) = z \text{ and } w \cdot e^w = z
\end{equation*} are algebraically independent over $\mathbb{C}(z)$. 
\end{corx}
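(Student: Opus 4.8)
The plan is to reduce the statement to Ax's theorem on the functional Schanuel conjecture together with a combinatorial lemma asserting that distinct solutions are $\mathbb{Q}$-linearly independent modulo the constants. The first observation is that the defining equations force the relevant exponentials to be algebraic over $\mathbb{C}(z)$ jointly with the solution itself: for a solution $W$ of $w\cdot e^w = z$ one has $e^W = z/W \in \mathbb{C}(z,W)$, while for a solution $W$ of $w - \sin w = z$ the element $E = e^{iW}$ is a root of $X^2 - 2i(W-z)X - 1$, hence algebraic over $\mathbb{C}(z,W)$; in both cases $z$ is recovered algebraically from $W$ and its exponential. Writing $x_j$ for the exponents ($x_j = W_j$ in the Lambert case, $x_j = iW_j$ in the Kepler case) and $y_j = \exp(x_j)$, this yields the transcendence-degree identity $\mathrm{td}_{\mathbb{C}}(\bar x, \bar y) = 1 + \mathrm{td}_{\mathbb{C}(z)}(W_1,\ldots,W_n)$, using that $z$ is transcendental over $\mathbb{C}$.

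Next I would apply Ax's theorem to a maximal $\mathbb{Q}$-linearly independent (modulo $\mathbb{C}$) subfamily of the $x_j$. If $r$ denotes the dimension of the $\mathbb{Q}$-span of $x_1,\ldots,x_n$ modulo $\mathbb{C}$, then, since at least one $x_j$ is nonconstant, Ax's theorem gives $\mathrm{td}_{\mathbb{C}}(\bar x, \bar y) \geq r+1$. Combined with the identity above this forces $\mathrm{td}_{\mathbb{C}(z)}(W_1,\ldots,W_n) \geq r$, so that subfamily is in fact algebraically independent over $\mathbb{C}(z)$ and constitutes a transcendence basis of $\mathbb{C}(z, W_1,\ldots,W_n)$ over $\mathbb{C}(z)$. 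Consequently the whole family is algebraically independent over $\mathbb{C}(z)$ as soon as $r = n$.

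The crux is therefore to prove that distinct solutions are $\mathbb{Q}$-linearly independent modulo $\mathbb{C}$; this is also necessary, since any relation $\sum q_j W_j = c \in \mathbb{C}$ is already an algebraic dependence over $\mathbb{C}(z)$. Suppose, after relabelling, that $W_m = \sum_{j \leq r} q_j W_j + c$ with $q_j \in \mathbb{Q}$ and $c \in \mathbb{C}$, where $W_1,\ldots,W_r$ are algebraically independent over $\mathbb{C}(z)$ by the previous step. Substituting this relation into the defining equation of $W_m$ and replacing each exponential by its algebraic expression in $(z, W_j)$ turns the equation into an algebraic identity in the independent transcendentals $W_1,\ldots,W_r$ over $\mathbb{C}(z)$. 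In the Lambert case this compares an affine function $\sum q_j W_j + c$ with a Laurent monomial $\lambda\, z^{1-\sum q_j}\prod W_j^{q_j}$, which is impossible unless the relation degenerates to $W_m = W_j$ for a single $j$ --- contradicting distinctness. I expect the main obstacle to lie precisely here in the Kepler case, where the substitution involves the two exponentials $e^{\pm iW_j}$ and the branch of $\cos W_j$, so the resulting identity is more delicate; the plan is to clear the quadratic relations satisfied by the $e^{iW_j}$ and again isolate the forbidden monomial structure, the only surviving possibility being the coincidence of two branches. The same mechanism should rule out the mixed $\mathbb{Q}$-linear relations between a Lambert solution and a Kepler solution, because the Lambert equation forces its exponential to be the rational function $z/W$ whereas the Kepler exponential is genuinely quadratic, so no affine identification is compatible with both functional equations.

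Finally, I would record the structural reading that motivates the statement: these computations are the concrete manifestation of the fact that the two equations define completely disintegrated strongly minimal sets orthogonal to the constants, the $\mathbb{Q}$-linear independence modulo $\mathbb{C}$ and the non-coincidence of branches being exactly the non-interalgebraicity over $\mathbb{C}(z)$ of distinct generic realizations in a trivial geometry.
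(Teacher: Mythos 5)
Your argument is correct, but it takes a genuinely different route from the paper's. The paper isolates the order-one differential equation satisfied by the solutions, shows that its generic type is strongly minimal and geometrically trivial (via Zilber's trichotomy for order-one equations, with Theorem A plus Liouville's result that neither equation has an elementary solution used to exclude the ``internal to the constants'' branch), and then only has to verify \emph{pairwise} algebraic independence --- which it does by the same Ax--Schanuel-plus-explicit-computation mechanism you use. You bypass the model theory entirely: applying Ax's theorem to the whole family of exponents at once reduces $n$-wise algebraic independence over $\mathbb{C}(z)$ directly to the $\mathbb{Q}$-linear independence of the $W_j$ modulo $\mathbb{C}$, and an affine relation $W_m = \sum q_j W_j + c$ is then killed by substituting into the functional equation and comparing Laurent monomials in transcendentals already known (from the Ax step) to be algebraically independent over $\mathbb{C}(z)$. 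Your route is more elementary and self-contained --- no trichotomy theorem, no Theorem A, no appeal to the nonelementarity of the solutions --- but it does not recover the stronger structural fact the paper extracts in passing, namely that both equations define completely disintegrated strongly minimal sets.

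The one genuinely incomplete point is the Kepler case of the $\mathbb{Q}$-linear independence lemma, which you leave as a plan; it does close along the lines you indicate. Setting $E_j = e^{iW_j}$, the relation $W_m = \sum_j q_j W_j + c$ gives $E_m = \gamma\prod_j E_j^{q_j}$ for some constant $\gamma$, and Kepler's equation in the form $E - E^{-1} = 2i(W-z)$ turns this into
\[
\gamma \prod_j E_j^{q_j} \;-\; \gamma^{-1}\prod_j E_j^{-q_j} \;=\; 2i\Bigl(c + \bigl(\sum_j q_j - 1\bigr)z\Bigr) \;+\; \sum_j q_j\bigl(E_j - E_j^{-1}\bigr),
\]
an identity of Laurent monomials (after replacing each $E_j$ by an $N$-th root to make the exponents integral) in elements that are algebraically independent over $\mathbb{C}(z)$. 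Matching monomials forces either all $q_j = 0$ (so $W_m$, hence $z = W_m - \sin W_m$, is constant --- absurd) or $q_j = 0$ for all but one index $j_0$ with $q_{j_0} = \pm 1$; the constant term then forces $c = (1 - q_{j_0})z$, which lies in $\mathbb{C}$ only when $q_{j_0} = 1$ and $c = 0$, i.e. $W_m = W_{j_0}$, contradicting distinctness. One further small omission: in the Lambert case the unique-factorization step leaves not only $W_m = W_{j_0}$ but also $W_m = q_{j_0}W_{j_0}$ with $q_{j_0} \neq 1$; you should note that the latter, fed back into $e^{W} = z/W$, makes $W_{j_0}$ algebraic over $\mathbb{C}(z)$ and is therefore impossible.
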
 
The investigation of these other stability-theoretic properties  of the reduct map (\ref{equation-intro-reduct}) is postponed until a future work. This includes important questions about the theory $\BE$ such as: are the solutions of minimal locally-modular and not geometrically trivial types {\em exponentially transcendental}? Can exponentially algebraic functions satisfy {\em strongly minimal} algebraic differential equations of arbitrary high order? Are strongly minimal systems with exponentially algebraic solutions necessarily {\em $\omega$-categorical}?

\subsection*{Acknowledgments} The authors are grateful to the University of Helsinki for its hospitality where this project started during the beginning of 2023. We are also grateful to Guy Casale, Julien Roques and Michael Singer for several comments, discussions and suggestions concerning the results of this manuscript. The first author is supported by ANR MAS 25-CE40-5294.
\section{Preliminaries on the blurred exponential reduct} 

In this section, we recall several results from \cite{Kirby-semiab} concerning exponential algebraicity in differential fields and how it is encoded in the blurred exponential reduct $\mathrm{BE}$ of the theory $\mathrm{DCF}$ of differentially closed fields of characteristic zero. Here, we only consider the exponential map of the multiplicative group $\mathrm{G}_m$ but the set up of \cite{Kirby-semiab} is more general  allowing for semiabelian varieties defined over the constants.  We refer the reader to \cite{Rosenlicht} for background on differential forms.

\subsection{Blurred exponential fields}  

\begin{defn}\label{definition-blurredexpfields}
A {\em blurred exponential field} $(k,C,\Gamma)$ is a pair $k/C$  of algebraically closed fields of characteristic zero  and equipped with a $2$-ary predicate $\Gamma$ satisfying  
\begin{itemize}
\item[(i)] $\Gamma$ is a divisible subgroup of $T\Gm(k) = \Gm(k) \times \Ga(k)$, 
\item[(ii)] $\Gamma \cap (\Gm(k) \times \Ga(C)) = \Gm(C) \times \Ga(C)$ and symmetrically $\Gamma \cap (\Gm(C) \times \Ga(k))  = \Gm(C) \times \Ga(C)$.
\end{itemize}
\end{defn} 
The second property ensures that the algebraically closed field $C$ which we call the {\em field of constants of } $(k,\Gamma)$ is definable from the $2$-ary predicate $\Gamma$ as 
$$ C = \lbrace x \in k \mid (1,x) \in \Gamma \rbrace.$$ 
For this reason, we consider blurred exponential fields as structures in the language $\mathcal L_\Gamma = \lbrace 0,1,+,-,\times, \Gamma \rbrace$. Morphisms of blurred exponential fields are morphisms of $\mathcal L_{\Gamma}$-structures.  


\begin{exam}\label{example-blurreddifferentialfields}  The correspondence $$(k,\Delta) \mapsto (k^{alg}, C(k^{alg}), \Gamma_{BE})$$
where $\Gamma_{\BE}:= \lbrace (y,x) \in T\Gm(k^{alg}) = \Gm(k^{alg}) \times \Ga(k^{alg}) \mid \frac{ \partial(y) } y = \partial(x) \text{ for all } \partial \in \Delta \rbrace$  
defines a functor for the category of differential fields with $s$ commuting derivations to the category of blurred exponential fields. 
\end{exam}

We shall focus on the case of ordinary differential fields and on models of the complete theory $\mathrm{DCF}$ of ordinary differentially closed fields of characteristic zero.

\begin{defn}\label{definition-BE}
The  {\em theory $\mathrm{BE}$ of the blurred exponential reduct} is the common and complete $\mathcal L_\Gamma$-theory of all blurred exponential fields  $(K,C,\Gamma_{\BE})$ obtained by applying the functor from Example \ref{example-blurreddifferentialfields} to ordinary differentially closed fields $(K,\partial) \models \mathrm{DCF}$.
\end{defn} 

In the language $\mathcal L_\Gamma$, an axiomatization of the theory of $(K,C,\Gamma_{\BE})$ is presented in \cite[Fact 2.3]{Kirby-blurred} as follows: the universal part of the theory  contains a countable scheme of axioms expressing the {\em Ax-Schanuel property}:

\begin{quote}
If $a \in \Gamma_{\BE}^n$ satisfies $\mathrm{td}(a/C) \leq n$ then there is a proper algebraic subgroup $J$ of $\Gm^n$ such that $a \in TJ + T\Gm(C)$.
\end{quote}
The existential part of $\BE$ expresses that $(K,C,\Gamma)$ a
is a nontrivial blurred exponential field and contains additional {\em existential closedness axioms}

\begin{quote} 
for every free and rotund subvariety $V$ of $(\Gm \times \Ga)^n$, the intersection $V \cap\Gamma^n$ is Zariski-dense in $V$.
\end{quote} 
See \cite{Kirby-blurred} for the terminology on free and rotund subvarieties. An important consequence of this explicit axiomatization is that one also obtains models of the theory $\BE$ which do not come from ordinary differential fields. 

\begin{fact}[{\cite[Theorem 1.4]{Kirby-blurred}}]\label{fact-standardmodel}
 Let $(\mathbb{C},\mathrm{exp})$ be the field of complex numbers. Consider $k_0$ the intersection of the constant fields of all the derivations in  
$$  \mathrm{EDer}(\mathbb{C}) := \lbrace D \in \mathrm{Der}(\mathbb{C}) \mid D(\mathrm{exp}(x)) = D(x) \cdot \mathrm{exp}(x) \text{ for all } x \in \mathbb{C} \rbrace $$
and the subgroup $\Gamma_0$ of $\mathbb{C}^\ast \times \mathbb{C}$ defined by:
$$\Gamma_0 := \lbrace (x,y) \in \mathbb{C}^\ast \times \mathbb{C} \mid \exists c \in k_0, y = c \cdot \mathrm{exp}(x) \rbrace $$
Then $(\mathbb{C},k_0,\Gamma_0)$ is a model of the theory $\BE$.
\end{fact}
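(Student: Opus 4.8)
The plan is to verify, one family at a time, the three groups of axioms recalled above for the structure $(\mathbb{C},k_0,\Gamma_0)$: first the axioms of a nontrivial blurred exponential field (Definition \ref{definition-blurredexpfields}), then the Ax--Schanuel scheme, and finally the existential closedness scheme. The first two are essentially formal once one isolates the right elementary facts about $k_0$ and invokes Ax's theorem \cite{Ax-Theorem}; the third is where the genuine work lies.

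For the blurred exponential field axioms, I would first record that $k_0$ is a \emph{proper} algebraically closed subfield: each $\ker D$ is the constant field of a derivation of the algebraically closed field $\mathbb{C}$, hence algebraically closed, an intersection of algebraically closed fields is algebraically closed, and $k_0 \neq \mathbb{C}$ since $\mathrm{EDer}(\mathbb{C}) \neq 0$. The pivotal elementary observation is that $k_0$ is closed under $\exp$ and its local inverses: for $D \in \mathrm{EDer}(\mathbb{C})$ the identity $D(\exp x) = D(x)\exp(x)$ together with $\exp(x) \neq 0$ gives $x \in \ker D \Leftrightarrow \exp(x) \in \ker D$, and intersecting over all $D$ yields $x \in k_0 \Leftrightarrow \exp(x) \in k_0$. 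From this, closure of $\Gamma_0$ under the group law $(y_1,x_1)\cdot(y_2,x_2) = (y_1 y_2, x_1 + x_2)$ and under inverses is immediate because $k_0$ is a field; divisibility follows by taking $n$-th roots in the algebraically closed fields $\mathbb{C}$ and $k_0$ and adjusting by a root of unity (which lies in $k_0$); and both intersection conditions $\Gamma_0 \cap (\Gm(\mathbb{C}) \times \Ga(k_0)) = \Gm(k_0)\times\Ga(k_0)$ and its symmetric analogue reduce exactly to the biconditional $x \in k_0 \Leftrightarrow \exp(x) \in k_0$. One also checks $k_0 = \{x : (1,x) \in \Gamma_0\}$, so $k_0$ is the field of constants named by $\Gamma_0$.

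For the Ax--Schanuel scheme I would write a tuple $a \in \Gamma_0^n$ as $a = ((c_i\exp(x_i), x_i))_{i=1}^n$ with $c_i \in k_0$, so that $\mathrm{td}(a/k_0) = \mathrm{td}(x_1,\dots,x_n,\exp(x_1),\dots,\exp(x_n)/k_0)$, and argue by contrapositive. Assuming no proper algebraic subgroup $J \leq \Gm^n$ satisfies $a \in TJ + T\Gm(k_0)$ translates, after unwinding $\Gamma_0$ and using $k_0$-closure under $\log$, into the statement that $x_1,\dots,x_n$ are $\mathbb{Q}$-linearly independent modulo $k_0$. The obstacle here is that Ax's theorem \cite{Ax-Theorem} is relative to $\ker D$ for a \emph{single} derivation, whereas $k_0 = \bigcap_D \ker D$, and independence modulo the intersection does not obviously give independence modulo some single $\ker D$. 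The bridge is a genericity argument: $\mathrm{EDer}(\mathbb{C})$ is a $\mathbb{C}$-subspace of $\mathrm{Der}(\mathbb{C})$ (closed under $c \mapsto cD$ since $(cD)(\exp x) = (cD)(x)\exp(x)$), so the evaluation map $D \mapsto (Dx_1,\dots,Dx_n)$ is $\mathbb{C}$-linear with image $W \subseteq \mathbb{C}^n$, and independence of the $x_i$ modulo $k_0$ says precisely that $W$ lies in no rational hyperplane $q^\perp$, $q \in \mathbb{Q}^n \setminus \{0\}$. Since $\mathbb{C}$ is uncountable, the nonzero space $W$ is not covered by the countably many proper subspaces $W \cap q^\perp$, so some single $D \in \mathrm{EDer}(\mathbb{C})$ makes $x_1,\dots,x_n$ $\mathbb{Q}$-linearly independent modulo $\ker D$. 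Ax's theorem for $(\mathbb{C},D)$ then gives $\mathrm{td}_{\ker D}(x_i,\exp(x_i)) \geq n+1$, and since $k_0 \subseteq \ker D$ we get $\mathrm{td}(a/k_0) \geq n+1 > n$, the desired contrapositive.

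The main obstacle is the existential closedness scheme: for every free and rotund $V \subseteq (\Gm \times \Ga)^n$ one must show $V \cap \Gamma_0^n$ is Zariski dense in $V$. Here the blurring is what makes the statement accessible: a point of $V \cap \Gamma_0^n$ is a point $((y_i,x_i)) \in V$ with $c_i := y_i \exp(-x_i) \in k_0$, so the constants $c_i$ range over the large algebraically closed field $k_0$ rather than being pinned to $1$. I would fix a sufficiently generic additive part $x = (x_i)$ in the projection of $V$ and solve for the multiplicative part, using rotundity to guarantee that the relevant projections are dominant of the correct dimension so the fibre system is neither over- nor under-determined, and freeness to exclude the degeneracies coming from proper algebraic subgroups. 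The heart of the matter is then an \emph{unconditional} existence theorem for solutions of the resulting blurred exponential-polynomial system over $\mathbb{C}$ — the analogue, with constants blurred by $k_0$, of strong exponential-algebraic closedness — which is exactly where the analytic and topological input enters. I expect this to be the only genuinely hard step, and it is precisely the content that \cite[Theorem 1.4]{Kirby-blurred} supplies; the verifications of the blurred field axioms and of Ax--Schanuel above being formal given Ax's theorem.
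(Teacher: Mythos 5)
The paper supplies no proof of this statement: it is imported wholesale as a Fact from \cite[Theorem 1.4]{Kirby-blurred}, so there is no internal argument to compare against. On the first two axiom groups your write-up adds genuine content that the paper omits, and it is correct. The biconditional $x \in k_0 \Leftrightarrow \exp(x) \in k_0$ is indeed the single observation from which divisibility and both intersection conditions of Definition \ref{definition-blurredexpfields} follow, and your Ax--Schanuel verification is sound: the real issue is, as you say, passing from $\mathbb{Q}$-linear independence modulo the intersection $k_0 = \bigcap_D \ker D$ to independence modulo a single $\ker D$, and the argument that the nonzero $\mathbb{C}$-subspace $W = \{(Dx_1,\dots,Dx_n) : D \in \mathrm{EDer}(\mathbb{C})\}$ cannot be covered by the countably many proper subspaces $W \cap q^{\perp}$ is exactly the right bridge to Ax's theorem. (Two small points you gloss over: the nontriviality $k_0 \neq \mathbb{C}$, equivalently $\mathrm{EDer}(\mathbb{C}) \neq 0$, itself needs a word --- it comes from a cardinality argument producing exponentially transcendental complex numbers and an $E$-derivation sending one of them to $1$; and $k_0$ is in fact \emph{countable}, so the constants $c_i$ in your discussion of existential closedness range over a countable, not a "large", field.)

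The genuine gap is the existential closedness scheme. Your outline --- fix a generic additive part, solve for the multiplicative part, invoke rotundity for dimension counts and freeness to rule out degeneracies --- is a plausible shape but not an argument, and your final sentence defers the key existence statement to \cite[Theorem 1.4]{Kirby-blurred}, which is precisely the statement under proof; as a standalone proof this is circular. Since the paper itself treats the whole Fact as a black-box citation, this deferral matches the paper's level of detail, but you should say explicitly that the existential closedness of $(\mathbb{C},k_0,\Gamma_0)$ is the entire nontrivial content of the cited theorem and is not established by anything in your text.
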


Although it is a reduct of the theory $\DCF$, the theory $\BE$ does not share many of its desirable properties: the theory $\BE$ does not eliminate quantifiers in the language $\mathcal L_\Gamma$ (it is not even model-complete) and does not eliminate imaginaries.  To make up for the failure of these properties, we consider the continuous surjective {\em reduct  map} defined at the level of the type spaces
\begin{equation}\label{equation-reductmap}
\mathrm{red}: S^D(k) \rightarrow S^{\mathrm{BE}}(k)
\end{equation}
 coming from the realization of $\BE$ as a reduct of the theory $\DCF$. For an arbitrary reduct $\mathcal R$ of the theory $\mathrm{DCF}$ obtained by specifying a language $\mathcal L_\mathcal F = \lbrace 0,1,+,\times, \mathcal F \rbrace$  where $\mathcal F$ is a collection $\emptyset$-definable sets in $\mathrm{DCF}$, it is defined as follows: 
\begin{defn} \label{definition-reducts}
Let $k$ be a differential field. The $\mathcal F$-reduct map associated with the reduct $\mathrm{DCF} \rightarrow \mathcal R$ is the continuous surjective function $\mathrm{red} : S^\mathrm{D}(k) \rightarrow S^{\mathcal R}(k)$ 
defined by:
\begin{quote} 
if $p = \mathrm{tp}^{\mathrm{D}}(a/k)$ is any realization of $p$ in some $K \models \mathrm{DCF}$ extending $k$ then $\mathrm{red}(p) = \mathrm{tp}^{\mathcal R}(a/k)$ is the type of the same element computed in $(K,\mathcal F) \models \mathcal R$.
\end{quote}  
\end{defn}

\noindent We denote by $\mathcal L_D = \lbrace 0,1,+,\times,\partial\rbrace$ the language of differential rings.

\begin{lem} 
The reduct quotient map  $\mathrm{red} : S^\mathrm{D}(k) \rightarrow S^{\mathcal R}(k)$  is well-defined and surjective. 
\end{lem}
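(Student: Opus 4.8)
The plan is to prove well-definedness and surjectivity of the reduct map $\mathrm{red}: S^{\mathrm{D}}(k) \to S^{\mathcal R}(k)$ directly from the definition in terms of realizations. First I would address \emph{well-definedness}: the map is defined by choosing a realization $a$ of $p = \mathrm{tp}^{\mathrm{D}}(a/k)$ in some $K \models \mathrm{DCF}$ and setting $\mathrm{red}(p) = \mathrm{tp}^{\mathcal R}(a/k)$, so I must check this does not depend on the choice of realization. Suppose $a$ and $a'$ realize the same $\mathcal L_D$-type over $k$, possibly in different models $K, K' \models \mathrm{DCF}$. By definition of $\mathrm{tp}^{\mathrm{D}}$, there is a $k$-elementary bijection (an isomorphism of the $\mathcal L_D$-substructures they generate over $k$, or more cleanly a back-and-forth / an elementary map extending to an automorphism of a common elementary extension) sending $a \mapsto a'$. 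Since $\mathcal R$ is a \emph{reduct} of $\mathrm{DCF}$, every $\mathcal L_{\mathcal F}$-formula is equivalent modulo $\mathrm{DCF}$ to an $\mathcal L_D$-formula (the symbols in $\mathcal F$ are $\emptyset$-definable in $\mathrm{DCF}$); hence any $\mathcal L_D$-elementary map is automatically $\mathcal L_{\mathcal F}$-elementary. Therefore $a$ and $a'$ have the same $\mathcal R$-type over $k$, and $\mathrm{red}(p)$ is well-defined.

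Next I would prove \emph{surjectivity}. Let $q \in S^{\mathcal R}(k)$ be an arbitrary $\mathcal R$-type over $k$. Realize it by some $b$ in a model $M \models \mathcal R$ extending $k$ as an $\mathcal L_{\mathcal F}$-structure. The key point is that $\mathcal R = \mathrm{BE}$ (and more generally any reduct considered here) arises from $\mathrm{DCF}$, so every model of $\mathcal R$ embeds into, or is the reduct of, a model of $\mathrm{DCF}$; concretely, one applies the functor of Example \ref{example-blurreddifferentialfields} only after first finding a differential structure. More carefully: by Definition \ref{definition-reducts} the models of $\mathcal R$ are exactly the structures $(K,\mathcal F)$ for $K \models \mathrm{DCF}$, so any $M \models \mathcal R$ containing $b$ can be taken of the form $(K,\mathcal F)$ with $K \models \mathrm{DCF}$ extending $k$ as a differential field. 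Then setting $p := \mathrm{tp}^{\mathrm{D}}(b/k) \in S^{\mathrm{D}}(k)$ gives a differential type whose image under $\mathrm{red}$ is, by construction, $\mathrm{tp}^{\mathcal R}(b/k) = q$. Hence $\mathrm{red}$ is surjective.

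The main obstacle I expect lies in making precise the claim ``every model of $\mathcal R$ is of the form $(K,\mathcal F)$ with $K \models \mathrm{DCF}$,'' since $\mathcal R$ was defined as the \emph{common theory} of such reducts rather than as the class of all such reducts. A model of the theory $\mathcal R$ need not literally be a $\mathrm{DCF}$-reduct. To handle this cleanly I would instead argue at the level of types using saturation: fix a sufficiently saturated monster $\mathbb{M} \models \mathrm{DCF}$ containing $k$, and note that since $\mathrm{red}$ is defined via realizations in $\mathbb{M}$, surjectivity onto $S^{\mathcal R}(k)$ reduces to showing that the restriction-to-$\mathcal L_{\mathcal F}$ map on type spaces over $k$, induced by the identity $\mathbb{M} \to (\mathbb{M},\mathcal F)$, is surjective. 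This is immediate: every $\mathcal R$-type over $k$ is realized in $(\mathbb{M},\mathcal F)$ because $(\mathbb{M},\mathcal F)$ is a sufficiently saturated model of $\mathcal R$, and any realizing element $b \in \mathbb{M}$ carries a full $\mathcal L_D$-type $\mathrm{tp}^{\mathrm{D}}(b/k)$ reducing to it. The continuity of $\mathrm{red}$, if desired, then follows since the preimage of a basic clopen $[\varphi]$ for $\varphi \in \mathcal L_{\mathcal F}$ is the clopen determined by the $\mathcal L_D$-formula $\mathrm{DCF}$-equivalent to $\varphi$.
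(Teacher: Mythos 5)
Your proof is correct and follows essentially the same route as the paper: well-definedness is obtained by amalgamating the two models over $k$ into a common $\mathcal L_D$-elementary extension (using completeness and quantifier elimination of $\mathrm{DCF}$) and noting that $\mathcal L_{\mathcal F}$-formulas are $\mathrm{DCF}$-equivalent to $\mathcal L_D$-formulas. For surjectivity the paper simply says it is clear; your saturation argument, which correctly flags that a model of the common theory $\mathcal R$ need not literally be a $\mathrm{DCF}$-reduct and works instead in $(\mathbb{M},\mathcal F)$ for a saturated $\mathbb{M}\models\mathrm{DCF}$, is a valid way to make that claim precise.
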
 

\begin{proof} 
Assume that $\mathrm{tp}^{\mathrm{D}}(a/k) = \mathrm{tp}^{\mathrm{D}}(b/k)$ with $a \in K \models \mathrm{DCF}$ and $b \in L \models \mathrm{DCF}$. Since $\mathrm{DCF}$ is complete and has quantifier elimination, we can find $N \models \mathrm{DCF}$ extending $k$ such that $K \preceq_{\mathcal L_D} N$ and $L \preceq_{\mathcal L_D} N$. It follows that $K \preceq_{\mathcal L_\mathcal F} N$ and $L \preceq_{\mathcal L_\mathcal F} N$ and hence that $\mathrm{tp}^{\mathcal{R}}(a/k) = \mathrm{tp}^{\mathcal{R}}(b/k)$ as both types can be computed in $N$. The fact that it is surjective is clear.
\end{proof}

\begin{notation} \label{notation-universal}
Fix $\kappa$ a cardinal larger than the cardinal of the structures considered (e.g., one may take $\kappa = \lvert \mathbb{C} \rvert ^+$) and a saturated model $\mathcal U$ for the theory $\DCF$ of size $\kappa$ with field of constants $\mathcal C$. Hence, $(\mathcal U, \mathcal C, \Gamma_{BE})$ is also a saturated model of $\BE$.

\noindent By a substructure, we always mean a blurred exponential field $(k,C,\Gamma)$ equipped with an embedding  $i: k \rightarrow \mathcal U$ of $\mathcal L_\Gamma$-structures.
\end{notation}

It will be important to work in elementary (in the sense of model theory) submodels $K \prec_{\mathcal L_{\Gamma}} \mathcal U$ which do not extend the field of constants rather than in the universal model. The types that are realized in such models can be characterized as follows: 

\begin{lem}\label{lemma-weak-orthogonality}
Assume that $k$ is a substructure with field of constants $C$ contained in some elementary model $K$ without new constants and let  $p \in S^{\BE}(k)$. The following are equivalent: 
\begin{itemize}
\item[(i)] for some/any $a$ realizing $p$, we have $a \ind^{\BE}_k \mathcal C$.
\item[(ii)] for some/any $a$ realizing $p$, we have $\mathrm{dcl}^{\BE}(k,a) \cap \mathcal C = C,$
\item[(iii)] for some/any $a$ realizing $p$, $k(a)^{alg}$ is contained  in an elementary submodel with field of constants $C$.
 \end{itemize}
\end{lem}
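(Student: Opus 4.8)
The plan is to prove the cycle of implications (i) $\Rightarrow$ (ii) $\Rightarrow$ (iii) $\Rightarrow$ (i), after first disposing of the \emph{some/any} quantifier. Since the set of constants $\mathcal{C}$ is $\emptyset$-definable in $\BE$ (as $\lbrace x \mid (1,x) \in \Gamma \rbrace$), every $\mathcal L_\Gamma$-automorphism of $\mathcal U$ fixes $\mathcal{C}$ setwise. Two realizations $a,a'$ of $p$ are conjugate over $k$ by such an automorphism $\sigma$, which fixes $C \subseteq k$ pointwise; as $\sigma(\mathcal{C}) = \mathcal{C}$, it sends $\acl^{\BE}(ka)$ to $\acl^{\BE}(ka')$ and any model with constant field $C$ containing $k(a)^{alg}$ to one containing $k(a')^{alg}$. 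Hence each of (i)--(iii) holds for one realization iff it holds for all, and we fix a realization $a$.

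For (i) $\Rightarrow$ (ii): let $c \in \mathrm{dcl}^{\BE}(ka) \cap \mathcal{C} \subseteq \acl^{\BE}(ka)$. From $a \ind^{\BE}_k \mathcal{C}$ we get $a \ind^{\BE}_k c$, hence $c \ind^{\BE}_k a$; since $c \in \acl^{\BE}(ka)$ this forces $c \in \acl^{\BE}(k)$ by the usual forking calculus in the $\omega$-stable theory $\BE$. But $\acl^{\BE}(k) \subseteq K$ because the elementary submodel $K$ is algebraically closed, and $K \cap \mathcal{C} = C$ by hypothesis, so $c \in \acl^{\BE}(k) \cap \mathcal{C} = C$. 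Thus $\mathrm{dcl}^{\BE}(ka) \cap \mathcal{C} \subseteq C$, and the reverse inclusion is clear. Note that this is precisely the step where the hypothesis $k \subseteq K$ is used, to guarantee $\acl^{\BE}(k) \cap \mathcal{C} = C$.

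For (ii) $\Rightarrow$ (iii): I first upgrade (ii) to the field-theoretic algebraic closure. If $c \in \mathcal{C} \cap k(a)^{alg}$, its finitely many conjugates over $k(a)$ are again constants (being a constant is $\emptyset$-definable), so the coefficients of its minimal polynomial over $k(a)$ are constants lying in $k(a) \subseteq \mathrm{dcl}^{\BE}(ka)$, hence in $C$ by (ii); since $C$ is algebraically closed, $c \in C$. Therefore $k(a)^{alg}$ has constant field $C$, and it remains to enclose it in an elementary submodel of $\mathcal U$ without enlarging the constants. Working in $\BE$, one takes the prime model $M$ over $k(a)^{alg}$, which embeds as an elementary submodel $M \prec_{\mathcal L_\Gamma} \mathcal U$; the crux is that $M$ adds no constants, i.e. $M \cap \mathcal{C} = C$. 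This is the $\BE$-analogue of the fact that the differential closure of a differential field with algebraically closed constant field introduces no new constants, and I expect it to follow from the existential-closedness (free and rotund) axioms of $\BE$ together with the control on constants coming from the realization of $\BE$ as a reduct of $\DCF$. \emph{This constant-preserving model construction is the main obstacle}; the remaining implications are routine once it is in place.

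For (iii) $\Rightarrow$ (i): let $M \prec_{\mathcal L_\Gamma} \mathcal U$ with $k(a)^{alg} \subseteq M$ and $M \cap \mathcal{C} = C$. I claim $\mathcal{C} \ind^{\BE}_C M$. Since $\mathcal{C}$ is stably embedded in $\BE$ with induced structure a pure algebraically closed field (the axioms force $\Gamma$ to restrict to the full correspondence on $\mathcal{C}$, so $\mathcal{C}$ carries no extra structure and eliminates imaginaries), the forking of a tuple from $\mathcal{C}$ over $C$ with parameters in $M$ is controlled by $\acl^{eq}(M) \cap \mathcal{C}^{eq}$; as $M$ is a model, $\acl^{\BE}(M) = M$ and $M \cap \mathcal{C} = C$, so this trace reduces to $C^{eq}$ and no forking occurs. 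Hence $\mathcal{C} \ind^{\BE}_C M$. Finally $k \cup \lbrace a \rbrace \subseteq M$, so by monotonicity $\mathcal{C} \ind^{\BE}_C ka$, and transitivity over $C \subseteq k$ yields $\mathcal{C} \ind^{\BE}_k a$, which is (i).
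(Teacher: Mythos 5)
Your treatment of the some/any reduction, of (i) $\Rightarrow$ (ii), and of (iii) $\Rightarrow$ (i) is essentially correct and matches the paper's argument in substance (the paper runs (iii) $\Rightarrow$ (i) by picking a minimal-Morley-rank formula over $L$ with parameters pushed into $C$ by stable embeddedness, rather than via canonical bases, but these are the same standard fact). The problem is (ii) $\Rightarrow$ (iii): you explicitly flag the claim that the prime model over $k(a)^{alg}$ adds no new constants as ``the main obstacle'' and then do not prove it, only conjecturing that it follows from the existential-closedness axioms and the $\DCF$-reduct. That claim is exactly the content of this implication, so the proof is incomplete at its central point.

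Moreover, the route you sketch would not obviously work as stated. Your ``upgrade'' shows only that the \emph{field} $k(a)^{alg}$ has constant field $C$, and you then want an analogue of ``the differential closure of a field with algebraically closed constants has no new constants.'' But $\BE$ does not eliminate quantifiers, so $\mathrm{dcl}^{\BE}(k,a)$ can be much larger than $k(a)^{alg}$, and controlling the constants of the prime model requires the full strength of hypothesis (ii), i.e.\ $\mathrm{dcl}^{\BE}(k,a)\cap\mathcal C = C$, not merely $k(a)^{alg}\cap\mathcal C=C$. The paper's argument is: if $c$ is a constant in the prime model $L$ over $k(a)$, its type over $k(a)$ is isolated by some $\phi(x)$ implying ``$x$ is a constant''; the trace $\phi(\mathcal U)\cap\mathcal C$ cannot be cofinite in the strongly minimal set $\mathcal C$ (it would then contain infinitely many elements of $C$, contradicting that $\phi$ isolates a complete type over $k(a)\supseteq C$), hence it is finite; by stable embeddedness and elimination of imaginaries in the pure field $\mathcal C$, this finite set has a code $d\in\mathcal C$ with $d\in\mathrm{dcl}^{\BE}(k,a)\cap\mathcal C=C$ by (ii); and since $C$ is algebraically closed, $c\in C$. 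You need to supply an argument of this kind (or an equivalent one) to close the gap.
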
 
This is a general fact about $\omega$-stable theories interpreting a pure algebraically closed field $\mathcal C$. In particular, $\mathcal C$ is stably embedded and the induced structure on $\mathcal C$ eliminates imaginaries.
\begin{proof} 
(i) $\Rightarrow$ (ii) Consider $d \in \mathrm{dcl}^{\BE}(k,a) \cap \mathcal C$. Since $a \ind^{\BE}_k \mathcal C$ and $d \in \mathrm{dcl}^{\BE}(k,a)$, we have that $d \ind^{\BE}_k \mathcal C$. This means that $\mathrm{RM}^{\BE}(d/k) = \mathrm{RM}^{\BE}(d/k \cdot \mathcal C) = 0$, so that $d \in \mathrm{acl}^{\BE}(k) \cap \mathcal C \subset K \cap \mathcal C = C$.

(ii) $\Rightarrow$ (iii) Consider $L$ the prime model of $\BE$ over $k(a)$. We claim that $L$ has field of constants $C$. Indeed, if $c \in \mathcal C(L)$ then $\mathrm{tp}^{\BE}(c/k(a))$ is isolated by a formula $\phi(x)$ with parameters in $k(a)$ which (in particular) implies that $x$ is a constant. If $\phi(\mathcal U)$ is cofinite in $\mathcal C$ then it is satisfied by infinitely many elements of $C$. This contradicts that $\phi(x)$ isolates a complete type over $k(a)$ and hence $\phi(\mathcal U)$ is finite. Since $\mathcal C$ is stably embedded and eliminates imaginaries, there is an $\mathcal L$-formula $\phi(x,d)$ parametrized by its code $d \in \mathcal C$ such that $\phi(\mathcal U) = \psi(\mathcal U,d).$
By construction, we have $d \in \mathrm{dcl}^{\BE}(k,a) \cap \mathcal C = C$ and since $C$ is algebraically closed, it follows that $c \in C$ as well.

(iii) $\Rightarrow$ (i) Denote by $L$ an elementary submodel containing $k(a)^{alg}$ with field of constants $C$ and consider $c_1,\ldots,c_p \in \mathcal C$. Consider an $L$-formula $\phi(x_1,\ldots,x_p)$ of minimal Morley rank in the type $$p = tp^{\BE}(c_1,\ldots,c_p/L).$$
Since $\mathcal C$ is stably embedded, we can assume that the parameters in $\phi(x_1,\ldots,x_p)$ live in $C$ and hence in $k$. It follows that $$\mathrm{RM}^{\BE}(\phi(x_1,\ldots,x_p)) = \mathrm{RM}^{\BE}(c_1,\ldots,c_p/L) \geq \mathrm{RM}^{\BE}(c_1,\ldots,c_p/k)$$
and hence that $c_1,\ldots,c_p \ind^{\BE}_k L$. By symmetry, we have that $x \ind^{\BE}_k c_1,\ldots,c_p$ as required. 
\end{proof}

\subsection{Reformulation of the Ax-Schanuel Theorem}  Since the extensions of blurred exponential fields are algebraically closed, we say that they are {\em finite-dimensional} if they have finite transcendence degree.

\begin{defn}
Let $L/k$ be a finite-dimensional extension of blurred exponential fields with the same field of constants. The {\em exponential predimension of $L/k$} is given by:
$$ \delta(L/k) = \mathrm{td}(L/k) - \mathrm{ldim}_\mathbb{Q}(\Gamma(L)/\Gamma(k)) \in \mathbb{Z} \cup \lbrace - \infty \rbrace.$$

 Let $k$ be a substructure contained in an elementary submodel $K$ without new constants. We say that $k$ is {\em self-sufficient in $K$} if the analogue of Schanuel's conjecture holds true over $k$ in $K$, that is,
\begin{quote}
for all finite-dimensional subextensions $L/k$ with $k \subset L \subset K$, we have that $\delta(L/k) \geq 0$.
\end{quote}
\end{defn}
\noindent Corollary \ref{cor-self-sufficient} below states that this condition does not depend on the choice of the elementary submodel $K$ extending $k$ without new constants. 
\begin{exam}[Construction of self-sufficient substructures] \label{example-selfsufficient} Take $K$ to be a differentially closed field with field of constants $\mathbb{C}$ extending the field of meromorphic functions of some adequate complex domain. 

The differential fields $k = \mathbb{C}$ and $k = \mathbb{C}(z)^{alg}$ are self-sufficient in $K$. This follows from the Ax-Schanuel property stated after Definition \ref{definition-BE}. Building on these, one can construct many other examples using that: 

\begin{quote}
\textit{If $k$ is self-sufficient in $K$, any finite-dimensional extension of blurred exponential field $L/k$ in $K$ satisfying $\delta(L/k) = 0$ is self-sufficient in $K$.}
\end{quote} 
\begin{proof}
$\delta(M/L) = \delta(M/k) - \delta(M/L) = \delta(M/k) \geq 0$ by self-sufficiency of $k$.
\end{proof} 
In particular, every elementary extension of $\mathbb{C}(z)$ is self-sufficient in $K$.
Other examples where this lemma applies include the differential field $ \mathbb{C}(z,W(z))^{alg}$ generated by the Lambert function. A second process that we will use to construct self-sufficient substructures is: 
\begin{quote} 
\textit{any extension $L/k$ of blurred exponential field  obtained by adjoining an exponentially transcendental element in $K$ (see below) to a self-sufficient substructure $k$ in $K$ is also self-sufficient}. 
\end{quote}
Using this, we will prove that the differential fields  $ \mathbb{C}(z, \mathrm{sn}(z))^{alg}$ and  $\mathbb{C}(z)\langle \mathrm{erf}(z) \rangle^{alg}$ are self-sufficient. Similarly, the differential field $k \langle y \rangle^{alg}$ where $y \in K$ is differentially transcendental over $k$ and $k$ is self-sufficient is also self-sufficient in $K$.

On the other hand, an obvious example where self-sufficiency fails is the substructure $k = \mathbb{C}(z,e^{e^{z}})^{alg}$ as if $L = k(e^z)^{alg}$, we have $\delta(L/k) = - 1$. Note that however in this example the substructure is not a {\em differential field}. An example of a {\em non-self-sufficient differential field} referred as ``Singer's example'' in \cite[Remark 1]{Rosenlicht-Singer} is 
$$k = \mathbb{C}(z, \mathrm{ln}(z) + \alpha \mathrm{ln}(z+1))^{alg}$$
where $\alpha \in \mathbb{C}$ is any irrational number.
\end{exam}

Proposition \ref{proposition-predimension} below which generalizes Proposition 3.7 from \cite{Kirby-semiab} and provides a geometric characterization of this predimension $\delta(-/k)$ over a self-sufficient differential field $k$.

\begin{lem} \label{lemma-theta-definition}
Let $L/k$  be an extension of algebraically closed field of characteristic zero. The function $\theta : ( \mathrm{G}_m \times \mathrm{G}_a)(L) /(\mathrm{G}_m \times \mathrm{G}_a)(k) \rightarrow \Omega^1(L/k)$ defined by $$\theta(y,x) = dy/y - dx$$is an injective group homomorphism.
\end{lem}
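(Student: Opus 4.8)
The plan is to verify the three assertions in turn — that $\theta$ descends to the quotient, that it is a group homomorphism, and that it is injective — with essentially all the content residing in injectivity. I would first record that the source $(\Gm\times\Ga)(L) = L^\times\times L$ is an abelian group under coordinatewise multiplication and addition, while the target $\Omega^1(L/k)$ is an additive $L$-vector space on which the universal derivation $d\colon L\to\Omega^1(L/k)$ is $k$-linear and satisfies the Leibniz rule. Well-definedness is then immediate: for $(y,x)\in k^\times\times k$ one has $dy=dx=0$ because $d$ kills $k$, so $\theta$ factors through the quotient by $(\Gm\times\Ga)(k)$. The homomorphism property is the routine computation $\theta(y_1y_2,\,x_1+x_2)=d(y_1y_2)/(y_1y_2)-d(x_1+x_2)=(dy_1/y_1+dy_2/y_2)-(dx_1+dx_2)=\theta(y_1,x_1)+\theta(y_2,x_2)$, using Leibniz for the logarithmic term and additivity of $d$.

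For injectivity it suffices to show that the lift $\tilde\theta\colon L^\times\times L\to\Omega^1(L/k)$ has kernel exactly $k^\times\times k$. Here I would dualize: since $\Omega^1(L/k)$ is an $L$-vector space with dual $\operatorname{Hom}_L(\Omega^1(L/k),L)=\operatorname{Der}_k(L,L)$, an element vanishes iff every functional kills it, so $\theta(y,x)=0$ is equivalent to $\partial(y)=y\,\partial(x)$ for \emph{every} $k$-derivation $\partial$ of $L$. I would then invoke two standard characteristic-zero facts: first, since $k$ is algebraically closed in $L$, an element $a\in L$ lies in $k$ iff $\partial(a)=0$ for all $\partial$ (equivalently $da=0$); second, derivations extend freely along the separable extension $L/k$, so prescribed values on an algebraically independent set are realizable.

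The argument then splits according to the transcendence behavior of $x,y$ over $k$. If $x\in k$, then $\partial(x)=0$ for all $\partial$, whence $\partial(y)=0$ for all $\partial$ and $y\in k^\times$, giving $(y,x)\in k^\times\times k$. If $x$ is transcendental over $k$ and $y$ is transcendental over $k(x)$, then $x,y$ are algebraically independent, so there is a $k$-derivation with $\partial(x)=0$ and $\partial(y)=1$, and the relation forces $1=0$. The remaining case is $x$ transcendental over $k$ with $y$ algebraic over $k(x)$: choosing $\partial$ with $\partial(x)=1$ yields $\partial(y)=y$, and since $\partial(x),\partial(y)\in F:=k(x,y)$ this $\partial$ restricts to a $k$-derivation of $F$ extending $d/dx$ on $k(x)$. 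Passing to the Galois closure and using uniqueness of the extension of $d/dx$ to a separable algebraic extension, every conjugate of $y$ also has logarithmic derivative $1$, so the norm $N=N_{F/k(x)}(y)\in k(x)^\times$ satisfies $N'/N=[F:k(x)]=:n\ge 1$. This is impossible for a nonzero rational function, since a partial-fraction expansion gives $N'/N=\sum_i e_i/(x-a_i)$, which vanishes at infinity and hence cannot equal the nonzero constant $n$.

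The main obstacle is exactly this last case: ruling out an algebraic $y$ with $d\log y=dx$ is a baby instance of the transcendence of the exponential over the field of rational functions (an Ostrowski/Ax-type statement), and it is precisely here that the characteristic-zero and algebraic-closedness hypotheses are used. The other two cases are pure linear algebra in $\Omega^1(L/k)$ once the duality reformulation is in place.
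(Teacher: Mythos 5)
Your proof is correct, but it takes a genuinely different route from the paper's. The paper disposes of injectivity in one line by invoking Rosenlicht's Proposition~4 (from \emph{On Liouville's theory of elementary functions}): the relation $dy/y - dx = 0$ is the $n=1$ instance of the statement that a vanishing combination $\sum c_i\,du_i/u_i + dv = 0$ with $\mathbb{Q}$-linearly independent $c_i$ forces $du_i = dv = 0$, whence $x,y \in k^{\mathrm{alg}} = k$. You instead reprove this special case from scratch: you dualize against $\mathrm{Der}_k(L)$ to turn the vanishing of the form into the identity $\partial(y) = y\,\partial(x)$ for all $k$-derivations, split on the transcendence behaviour of $x$ and $y$, and kill the only nontrivial case ($x$ transcendental, $y$ algebraic over $k(x)$) by a norm-and-partial-fractions argument showing that no nonzero element of $k(x)$ has logarithmic derivative equal to a positive integer. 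All the steps check out — in particular the use of uniqueness of the extension of $d/dx$ to the Galois closure to see that every conjugate of $y$ has logarithmic derivative $1$, and the observation that $N'/N$ is a proper rational function and so cannot be a nonzero constant. The trade-off is the expected one: the paper's proof is shorter and fits its systematic reliance on Rosenlicht's proposition elsewhere in Section~2 and Section~4, while yours is self-contained and makes visible exactly where algebraic closedness of $k$ and characteristic zero enter (the identification of $\ker d$ with $k$, and the factorization of the norm over $k$). Your closing remark correctly identifies the hard case as an Ostrowski-type statement, which is precisely the content of the cited result.
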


\begin{proof} 
The fact that it is a group morphism is clear. To see that it is injective, note that if $(y,x) \in (\mathrm{G}_m \times \mathrm{G}_a)(L)$ satisfies $dy/y - dx = 0 \in \Omega^1(L/k)$ then  \citep[Proposition 4]{Rosenlicht} implies that $dy = dx = 0 \in \Omega^1(L/k)$ so that $x,y \in k^{alg} = k$.
\end{proof}

\begin{defn}
Let $L/k$ be an extension of blurred exponential fields. The image of the $\mathbb{Q}$-vector space $\Gamma(L)/\Gamma(k)$ in $\Omega^1(L/k)$ under the group homomorphism $\theta$ given by Lemma \ref{lemma-theta-definition} is denoted $\Theta_{\BE}(L/k)$ and called the space of {\em exponential forms} on $L/k$.
\end{defn}

\begin{Prop} \label{proposition-predimension}
Assume that $k$ is a {\em self-sufficient} differential field embedded in a differentially closed field $K$ without new constants. 

\begin{quote} If $\theta_1,\ldots, \theta_n \in \Theta_{\BE}(K/k)$ are $\mathbb{Q}$-linearly independent then they remain $K$-linearly independent in $\Omega^1(K/k)$.
\end{quote}
\end{Prop}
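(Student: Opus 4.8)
The plan is to prove the contrapositive in the sharper form that any $K$-linear dependence among the $\theta_i$ must already be a $\mathbb{Q}$-linear dependence. The mechanism is that exponential forms are \emph{flat} for the connection induced by $\partial$ on $\Omega^1(K/k)$; this lets me first reduce field-linear dependence to dependence over the constants $C$, and only then invoke self-sufficiency to descend from $C$-coefficients to $\mathbb{Q}$-coefficients.

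First I would set up the connection. Since $k$ is a differential subfield, $\partial$ lifts to a well-defined additive operator on $\Omega^1(K/k)$ via the Leibniz rule $\partial(a\,db)=\partial(a)\,db+a\,d(\partial b)$, well-defined because $\partial(dc)=d(\partial c)=0$ for $c\in k$. Writing $\theta_i=\theta(y_i,x_i)=dy_i/y_i-dx_i$ with $(y_i,x_i)\in\Gamma_{\BE}(K)$, so that $\partial(y_i)=w_iy_i$ and $\partial(x_i)=w_i$ where $w_i:=\partial(y_i)/y_i$, a direct computation gives $\partial(dy_i/y_i)=dw_i=\partial(dx_i)$, hence $\partial\theta_i=0$. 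Thus every exponential form is horizontal, lying in the $C$-vector space $H:=\ker(\partial\colon\Omega^1(K/k)\to\Omega^1(K/k))$.

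Second comes a Wronskian-type argument. Given a nontrivial $K$-linear relation $\sum a_i\theta_i=0$ of minimal length, applying $\partial$ and using $\partial\theta_i=0$ produces the shorter relation $\sum\partial(a_i)\,\theta_i=0$, so minimality forces all $\partial(a_i)=0$, i.e. $a_i\in C$ (here one uses that $K$ has no new constants, so $\ker\partial=C$). This reduces the claim to showing that $\mathbb{Q}$-linearly independent exponential forms are $C$-linearly independent. Supposing otherwise, take a relation $\sum_{i<m}c_i\theta_i=\theta_m$ with $c_i\in C$ of minimal length $m$. Projecting along the surjection $\Omega^1(K/k)\twoheadrightarrow\Omega^1(K/P)$, where $P:=k(y_1,x_1,\dots,y_{m-1},x_{m-1})^{alg}$, kills $\theta_1,\dots,\theta_{m-1}$, so the image of $\theta_m$ vanishes; by injectivity of $\theta$ over $P$ (Lemma \ref{lemma-theta-definition} applied to $K/P$) this forces $(y_m,x_m)\in\Gamma_{\BE}(P)$, i.e. $y_m,x_m\in P$.

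The last step, converting this into a $\mathbb{Q}$-linear relation, is where self-sufficiency enters and is the main obstacle. At this point the pairs $(y_1,x_1),\dots,(y_m,x_m)$ all lie in $\Gamma_{\BE}(P)$ and are $\mathbb{Q}$-independent, while $P$ is generated by the first $m-1$ of them; a naive transcendence count is too weak, since each pair may contribute up to $2$ to $\mathrm{td}(P/k)$, so one cannot merely invoke $\delta(P/k)\ge 0$. Instead I would argue by induction on the number of forms (over all self-sufficient bases at once), passing to the relative setting over the self-sufficient hull $F$ of $k(y_m,x_m)^{alg}$: the relation descends to $\sum_{i<m}c_i\bar\theta_i=0$ in $\Omega^1(K/F)$, and the inductive hypothesis applied to the $m-1$ relative forms over the self-sufficient base $F$ yields $c_i=0$ for all $i$, hence $\theta_m=0$, a contradiction, \emph{provided} $(y_1,x_1),\dots,(y_{m-1},x_{m-1})$ remain $\mathbb{Q}$-independent modulo $\Gamma_{\BE}(F)$. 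Establishing this residual $\mathbb{Q}$-independence is precisely where self-sufficiency of $k$, through additivity of $\delta$ in towers and the resulting control on $\dim_{\mathbb{Q}}\Gamma_{\BE}(F)/\Gamma_{\BE}(k)$, is indispensable: it rules out that the hull $F$ silently absorbs a new rational relation among the remaining pairs. I expect the bookkeeping around these self-sufficient hulls, rather than any single computation, to be the delicate part of the argument.
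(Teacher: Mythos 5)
Your first two steps are sound, but they only reprove the part of the proposition that the paper simply cites from Ax and Rosenlicht: the Lie derivative $\mathcal L_\partial$ on $\Omega^1(K/k)$ annihilates every exponential form (since $\partial(y_i)/y_i=\partial(x_i)$ gives $\mathcal L_\partial(dy_i/y_i-dx_i)=d(\partial(y_i)/y_i)-d(\partial(x_i))=0$), and the Wronskian/minimal-length argument then reduces any $K$-linear relation to a $C$-linear one. The genuine content of the proposition, and the only place self-sufficiency is needed, is the descent from $C$-linear independence to $\mathbb{Q}$-linear independence — and that is exactly the step your proposal does not complete. Your projection onto $\Omega^1(K/P)$ correctly yields $y_m,x_m\in P$, but as you yourself observe this gives no usable transcendence bound, and the induction over self-sufficient hulls that you sketch hinges on the ``residual $\mathbb{Q}$-independence'' of $(y_1,x_1),\dots,(y_{m-1},x_{m-1})$ modulo $\Gamma_{\BE}(F)$, which you explicitly leave unestablished. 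That claim is essentially as hard as the proposition itself: the hull $F$ of $k(y_m,x_m)^{alg}$ can contain many new points of $\Gamma_{\BE}$, and ruling out that it absorbs a rational relation among the remaining pairs is not a matter of bookkeeping but requires precisely the kind of input you are trying to prove. There is also a secondary issue that $F$ need not be a differential subfield of $K$, so the inductive hypothesis as you state the proposition (for a self-sufficient \emph{differential} field) does not directly apply to it.

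The paper closes this step with a single application of Rosenlicht's Proposition 4 (the Ostrowski--Kolchin theorem on logarithmic differentials), which your argument avoids and which is the missing idea. Take the $C$-relation $c_1\theta_1+\dots+c_n\theta_n=0$ of minimal length among $\mathbb{Q}$-linearly independent families of exponential forms; minimality forces the coefficients $c_i$ themselves to be $\mathbb{Q}$-linearly independent (otherwise one substitutes a rational relation among the $c_i$ and produces a shorter relation among new, still $\mathbb{Q}$-independent, exponential forms). Rewriting the relation as $\sum_i c_i\,dy_i/y_i=d\bigl(\sum_i c_i x_i\bigr)$, Rosenlicht's result applies because the $c_i$ are $\mathbb{Q}$-linearly independent and yields $dy_i/y_i=0$ for every $i$ and $d\bigl(\sum_i c_i x_i\bigr)=0$, hence $y_1,\dots,y_n$ and $\sum_i c_i x_i$ all lie in $k$. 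Then $L=k(x_i,y_i)^{alg}$ satisfies $\mathrm{td}(L/k)\leq n-1$ while $\mathrm{ldim}_{\mathbb{Q}}(\Gamma_{\BE}(L)/\Gamma_{\BE}(k))\geq n$, so $\delta(L/k)<0$, contradicting self-sufficiency of $k$. I would recommend replacing your inductive scheme by this argument; without it, or some substitute for the logarithmic-differential input, the proof has a real gap.
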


Combining  Proposition \ref{proposition-predimension} with the previous lemma, we can compute the exponential predimension of any finite-dimensional blurred exponential field extension $L/k$ in $K$ as follows: 
$$ \delta(L/k) = \mathrm{ldim}_L(\Omega^1(L/k)) - \mathrm{ldim}_L (L \ \cdot \Theta_{\BE}(L/k)) = \mathrm{ldim}_L(\mathrm{Der}_{\BE}(L/k))$$
where $\mathrm{Der}_{\BE}(L/k)$ denotes the annihilator of $\Theta_{\BE}(L/k)$ in the perfect pairing 
$$ \Omega^1(L/k) \times \mathrm{Der}(L/k) \rightarrow L $$
In particular, $\delta(L/k) \geq 0$, hence the hypothesis that $k$ is self-sufficient in $K$ \textit{can not be avoided} in the assumptions of Proposition \ref{proposition-predimension}. In fact, this argument shows that an algebraically closed differential field satisfies the conclusion of Proposition \ref{proposition-predimension} if and only if it is self-sufficient in $K$. See also Lemma \ref{lemma-exponentialalgebra}.

\begin{proof}[Proof of Proposition \ref{proposition-predimension}] The classical statement which goes back to \cite{Ax-Theorem, Rosenlicht} is that if  $\theta_1,\ldots, \theta_n$ are $C$-linearly independent then they are $K$-linearly independent. Here, we assert that one can even descend to $\mathbb{Q}$ assuming that $k$ is self-sufficient in $K$. 

This is done as follows: Otherwise, by contradiction, in $\Omega^1(K/k)$, we can find $(x_i,y_i) \in \Gamma_{\BE}(K)$ such that the $\theta_i = dy_i/y_i - dx_i$ are $\mathbb{Q}$-linearly independent but satisfy a nontrivial $C$-linear relation of the form: 
$$ c_1 \cdot \theta_1 + \ldots + c_n \cdot \theta_n = 0 \in \Omega^1(K/k).$$
Assuming that this relation is minimal, we have furthermore that all the $c_i$ are $\mathbb{Q}$-linearly independent. Set $L = k(x_i,y_i, i = 1,\ldots, n)^{alg}$. We claim that that $\delta(L/k) < 0$ which contradicts the assumption that $k$ is self-sufficient in $K$.

Indeed, on the one hand, \cite[Proposition 4]{Rosenlicht} implies that the previous equality can be rewritten as
$$ d(c_1 \cdot x_1 + \ldots + c_n \cdot x_n) = \frac{dy_1}{y_1} = \cdots = \frac{dy_n}{y_n} = 0 \in \Omega^1(K/k).$$
This means that $y_1,\ldots,y_n, c_1 \cdot x_1 + \ldots + c_n \cdot x_n \in k$ and hence that $\mathrm{td}(L/k) \leq n - 1$. On the other hand, the assumption that $\theta_1,\ldots,\theta_n$ are $\mathbb{Q}$-linearly independent implies that $\mathrm{ldim}_\mathbb{Q}(\Gamma_{\BE}(L)/\Gamma_{\BE}(k)) \geq n$. The two together imply that $\delta(L/k) < 0$. This contradicts that $k$ is self-sufficient in $K$ and concludes the proof.
\end{proof}

The following lemma which will be used in Section 3 has a similar flavor. The differential assumptions from Proposition \ref{proposition-predimension} are removed but the conclusion is also weaker.

\begin{lem}\label{lemma-exponentialalgebra} 
Let $L/k$ be an extension of blurred exponential fields with the same field of constants. The space $\mathrm{Der}_{\BE}(L/k)$ of $\BE$-derivations is an $L$-vector space. A lower bound for its dimension is given by:
$$ \delta(L/k) \leq \mathrm{ldim}_L(\mathrm{Der}_{\BE}(L/k)).$$
Furthermore, if $k$ is self-sufficient in $L$ in the sense that $\delta(M/k) \geq 0$ for all intermediate extension contained in $L$ then 
$$ \delta(L/k) = 0 \text{ if and only if } \mathrm{Der}_{\BE}(L/k) = \lbrace 0 \rbrace.$$
\end{lem}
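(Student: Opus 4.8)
The plan is to treat the first two assertions as linear algebra over the perfect pairing $\Omega^1(L/k) \times \mathrm{Der}(L/k) \to L$, and to obtain the equivalence by rerunning the Rosenlicht--Ax argument already carried out for Proposition~\ref{proposition-predimension}. First I would observe that, by definition, $\mathrm{Der}_{\BE}(L/k)$ is the annihilator of $\Theta_{\BE}(L/k)$, equivalently of its $L$-span $L\cdot\Theta_{\BE}(L/k)$; being the annihilator of a subset under an $L$-bilinear pairing, it is automatically an $L$-subspace of $\mathrm{Der}(L/k)$, which settles the first claim. For the inequality I would use perfectness of the pairing together with $\mathrm{ldim}_L \Omega^1(L/k) = \mathrm{td}(L/k)$ (characteristic zero) to get $\mathrm{ldim}_L \mathrm{Der}_{\BE}(L/k) = \mathrm{td}(L/k) - \mathrm{ldim}_L(L\cdot\Theta_{\BE}(L/k))$. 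Since a $\mathbb{Q}$-basis of the $\mathbb{Q}$-vector space $\Theta_{\BE}(L/k)$ also $L$-spans $L\cdot\Theta_{\BE}(L/k)$, and since $\theta$ is injective by Lemma~\ref{lemma-theta-definition}, I have $\mathrm{ldim}_L(L\cdot\Theta_{\BE}(L/k)) \le \mathrm{ldim}_\mathbb{Q}\Theta_{\BE}(L/k) = \mathrm{ldim}_\mathbb{Q}(\Gamma(L)/\Gamma(k))$; substituting gives exactly $\delta(L/k) \le \mathrm{ldim}_L \mathrm{Der}_{\BE}(L/k)$. For this count to make sense I would first reduce to the finite-dimensional case, the general case following by finite character.

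For the equivalence, assume $k$ is self-sufficient in $L$. The implication $\mathrm{Der}_{\BE}(L/k) = \{0\} \Rightarrow \delta(L/k)=0$ is immediate: the inequality just proved gives $\delta(L/k)\le 0$, while self-sufficiency applied to $M = L$ gives $\delta(L/k)\ge 0$.

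The substantive direction is the converse. Assuming $\delta(L/k) = 0$, so that $\mathrm{td}(L/k) = \mathrm{ldim}_\mathbb{Q}(\Gamma(L)/\Gamma(k)) = \mathrm{ldim}_\mathbb{Q}\Theta_{\BE}(L/k)$, a nonzero $\BE$-derivation would force the strict inequality $\mathrm{ldim}_L(L\cdot\Theta_{\BE}(L/k)) < \mathrm{ldim}_\mathbb{Q}\Theta_{\BE}(L/k)$ in the dimension count above; that is, it would produce exponential forms $\theta_1,\dots,\theta_n \in \Theta_{\BE}(L/k)$ that are $\mathbb{Q}$-linearly independent yet $L$-linearly dependent. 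At this point I would run verbatim the argument from the proof of Proposition~\ref{proposition-predimension}: the classical Ax--Rosenlicht theorem \cite{Ax-Theorem,Rosenlicht} turns $L$-linear dependence into a nontrivial $C$-linear relation, whose minimal form has $\mathbb{Q}$-linearly independent constant coefficients $c_i$; writing $\theta_i = dy_i/y_i - dx_i$ with $(y_i,x_i)\in\Gamma(L)$, Rosenlicht's Proposition~4 \cite{Rosenlicht} forces $dy_i/y_i = 0$ for every $i$ and $d(\sum_i c_i x_i) = 0$, hence $y_i \in k$ and $\sum_i c_i x_i \in k$. The algebraically closed subfield $M = k(x_1,\dots,x_n,y_1,\dots,y_n)^{alg}$ then satisfies $\mathrm{td}(M/k)\le n-1$ while $\mathrm{ldim}_\mathbb{Q}(\Gamma(M)/\Gamma(k))\ge n$, so $\delta(M/k) \le -1 < 0$, contradicting the self-sufficiency of $k$ in $L$. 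Hence $\mathrm{Der}_{\BE}(L/k) = \{0\}$.

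The main obstacle is this converse implication, and it is exactly where the self-sufficiency hypothesis is indispensable: I must manufacture an intermediate extension of negative predimension out of a single nonzero $\BE$-derivation. The one point requiring care is that Proposition~\ref{proposition-predimension} is stated for a differential field inside a differentially closed field, whereas here there is no derivation on $L$ at all; I would stress that its proof never uses the derivation directly but only self-sufficiency together with the two inputs \cite{Ax-Theorem,Rosenlicht}, which are statements about K\"ahler differentials of field extensions and therefore apply unchanged. A secondary, purely bookkeeping obstacle is to handle possibly infinite-dimensional $L/k$, which I would dispose of by reducing every finite linear-dependence witness to a finitely generated subextension.
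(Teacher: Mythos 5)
Your first part (the $L$-subspace structure of the annihilator and the inequality $\delta(L/k) \le \mathrm{ldim}_L(\mathrm{Der}_{\BE}(L/k))$) and the easy implication $\mathrm{Der}_{\BE}(L/k)=\{0\}\Rightarrow\delta(L/k)=0$ coincide with the paper's argument. The gap is in the converse, at the step where you assert that ``the classical Ax--Rosenlicht theorem turns $L$-linear dependence into a nontrivial $C$-linear relation'' and that the proof of Proposition~\ref{proposition-predimension} ``never uses the derivation directly.'' That reduction from $L$-coefficients to constant coefficients is exactly the differential part of Proposition~\ref{proposition-predimension}: it is obtained by taking a minimal $L$-linear relation among forms that are horizontal for $\mathcal L_\partial$ and applying the Lie derivative along the ambient derivation, so it is a theorem about \emph{differential} fields, not about K\"ahler differentials of abstract field extensions. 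In the present lemma $L$ carries no derivation with constant field $C$, so this step has no justification; only Rosenlicht's Proposition~4 (your second input) is derivation-free. If you try to salvage the step by using the given nonzero $D\in\mathrm{Der}_{\BE}(L/k)$ as the derivation --- the forms in $\Theta_{\BE}(L/k)$ are indeed $\mathcal L_D$-horizontal --- the minimal-relation argument only places the coefficients in the constant field $M$ of $D$, which contains $k$ but may have large transcendence degree over it; since these coefficients need not lie in $k$, you cannot rewrite $c_i\,dx_i$ as $d(c_i x_i)$ in $\Omega^1(L/k)$, Rosenlicht's Proposition~4 does not apply, and you never reach a subextension of negative predimension.

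The paper's proof of this direction is different and shorter: given $0\ne D\in\mathrm{Der}_{\BE}(L/k)$, let $M\subsetneq L$ be its field of constants. Since $\Gamma(L)\subseteq\Gamma_{\BE}(L,D)$, the Ax--Schanuel property of the differential field $(L,D)$ gives $\delta(L/M)\ge 1$, while self-sufficiency of $k$ in $L$ --- applied to the \emph{intermediate} extension $M$, which is where the ``for all intermediate extensions'' clause of the hypothesis is really needed --- gives $\delta(M/k)\ge 0$; additivity of the predimension then yields $\delta(L/k)\ge 1$. You should replace your converse with this argument (or supply a genuinely derivation-free substitute for the $L$-to-$C$ reduction, which I do not see how to do).
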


\begin{proof} 
For the first part, we have that
$\mathrm{ldim}_Q(\Theta_{\BE}(L/k)) \geq \mathrm{ldim}_L(L\cdot\Theta_{\BE}(L/k))$
so that 
$$\delta(L/k) \leq \mathrm{ldim}_L( \Omega^1(L/k)) -  \mathrm{ldim}_L(L\cdot\Theta_{\BE}(L/k)).$$
Since $\mathrm{Der}_{\BE}(L/k)$ is the annihilator of $\Theta_{\BE}(L/k)$ in $\Omega^1(L/k)$, the first part follows. 

To prove the second part, consider a nontrivial $\BE$-derivation $D \in \mathrm{Der}_{\BE}(L/k)$. Denote by $M \subsetneq L$ its field of constants. The Ax-Schanuel Theorem applied to the differential field $(L,D)$ implies that $\delta(L/M) \geq 1$ as $M \neq L$ and self-sufficiency of $k$ in $L$ implies that $\delta(M/k) \geq 0$. From additivity of the predimension, we conclude 
$$ \delta(L/k) = \delta(L/M) + \delta(M/k) \geq 1.$$
This implies that $\delta(L/k) \neq 0$. The other direction follows from the first part.
\end{proof} 

\subsection{Exponential hulls} Fix $k$ a self-sufficient substructure contained in a model $K \models \BE$ without new constants.

\begin{lem}[{\citep[Lemma 3.9]{Kirby-semiab}}] \label{lemma-submodularity} The exponential predimension is {\em submodular },  that is,
$$\delta(M \cap N /k) + \delta((M \cdot N)^{alg}/k) \leq \delta(M/k) + \delta(N/k)$$
for all finite-dimensional blurred exponential field subextensions $M/k$ and $N/k$ in $K$ of finite transcendence degree.
\end{lem}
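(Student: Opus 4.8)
The plan is to split the predimension into its two additive constituents and verify the required inequality termwise. Write $\delta(L/k) = \mathrm{td}(L/k) - g(L)$, where $g(L) := \mathrm{ldim}_\mathbb{Q}(\Gamma(L)/\Gamma(k))$ and $\Gamma(L) := \Gamma \cap (\Gm(L) \times \Ga(L))$. Since every field in sight is algebraically closed, $\Gamma(L)$ is divisible (the coordinatewise roots of an element of $\Gamma$ stay in $L$) and $\Gamma(L)/\Gamma(k)$ is torsion-free, so it is a genuine $\mathbb{Q}$-vector space and $g(L)$ is its dimension. The submodularity to be proved then unwinds, after cancelling, to the assertion that
$$[\mathrm{td}(M \cap N/k) + \mathrm{td}((M\cdot N)^{alg}/k)] - [\mathrm{td}(M/k) + \mathrm{td}(N/k)] \leq [g(M \cap N) + g((M\cdot N)^{alg})] - [g(M) + g(N)],$$
and I would establish this by showing the left-hand bracket is $\leq 0$ and the right-hand bracket is $\geq 0$.

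For the transcendence-degree part I would invoke that the field-theoretic algebraic closure is a pregeometry (property (a) of the introduction), so that $\mathrm{td}(-/k)$ is the rank function of a matroid and is therefore submodular on flats. As $M \cap N$ and $(M\cdot N)^{alg}$ are precisely the meet and join of $M$ and $N$ in the lattice of algebraically closed subfields of $K$ containing $k$, this gives $\mathrm{td}(M \cap N/k) + \mathrm{td}((M\cdot N)^{alg}/k) \leq \mathrm{td}(M/k) + \mathrm{td}(N/k)$, i.e. the left bracket is nonpositive.

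For the $\Gamma$-part the two key facts are $\Gamma(M) \cap \Gamma(N) = \Gamma(M \cap N)$, which holds because $(\Gm \times \Ga)(M) \cap (\Gm \times \Ga)(N) = (\Gm \times \Ga)(M \cap N)$ is a coordinatewise statement, and $\Gamma(M) + \Gamma(N) \subseteq \Gamma((M\cdot N)^{alg})$, which is immediate from $M, N \subseteq (M\cdot N)^{alg}$. Writing $V_L := \Gamma(L)/\Gamma(k)$ and viewing all of these inside the $\mathbb{Q}$-vector space $\Gamma(K)/\Gamma(k)$, these translate to $V_M \cap V_N = V_{M \cap N}$ and $V_M + V_N \subseteq V_{(M\cdot N)^{alg}}$. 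Modularity of dimension for subspaces of a vector space then yields
$$g(M \cap N) + g((M\cdot N)^{alg}) \geq \dim(V_M \cap V_N) + \dim(V_M + V_N) = \dim V_M + \dim V_N = g(M) + g(N),$$
so the right bracket is nonnegative. Combining the two brackets gives the claim.

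The point to be careful about — and the only place where the argument is not purely formal — is the direction of the two estimates: transcendence degree is \emph{submodular} while vector-space dimension is \emph{modular}, and it is essential that the single inclusion $\Gamma(M) + \Gamma(N) \subseteq \Gamma((M\cdot N)^{alg})$ (which may well be strict) goes the favorable way, converting the modular identity into the supermodular inequality needed for $g$. I would also pause to justify the $\mathbb{Q}$-vector space structure, namely the divisibility and torsion-freeness of $\Gamma(L)/\Gamma(k)$, since this is exactly what licenses the use of modularity of dimension.
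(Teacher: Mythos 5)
Your proof is correct; the paper does not actually prove this lemma but imports it from \cite[Lemma 3.9]{Kirby-semiab}, and your argument --- splitting $\delta$ into the submodular transcendence-degree term (rank function of the algebraic-closure pregeometry evaluated on flats) and the supermodular $\mathbb{Q}$-linear-dimension term, the latter via $V_{M\cap N}=V_M\cap V_N$, $V_M+V_N\subseteq V_{(M\cdot N)^{alg}}$ and modularity of vector-space dimension --- is exactly the standard one. The only caveats, both supplied by the ambient hypotheses and worth keeping visible, are that all subextensions are algebraically closed (this is what gives divisibility and torsion-freeness of $\Gamma(L)/\Gamma(k)$, hence the $\mathbb{Q}$-vector space structure, and makes $M\cap N$ and $(M\cdot N)^{alg}$ the meet and join of flats) and that one works inside an elementary submodel without new constants, as the paper's remark immediately after the lemma emphasizes.
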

Importantly for the validity of this lemma, all the extensions of blurred exponential fields are assumed to be algebraically closed. Note also that taking 
$$ M = \mathbb{Q}(z + \pi)^{alg} \text{ and } N = \mathbb{Q}(z)^{alg}$$
then both $M/k$ and $N/k$ have no new constants but $(M \cdot N)^{alg}/k$ has $\pi$ as a new constant, hence working inside an elementary submodel $K$ without new constants is required.

Let $L/k$ be a  finite-dimensional extension of substructures in $K$. As a consequence of Lemma \ref{lemma-submodularity}, the set of finite-dimensional extensions $M/k$ in $K$ satisfying that:

\begin{quote}
$M$ extends $L$ and $\delta(M/k)$ is minimal among the finite-dimensional extensions of $L$ in $K$,
\end{quote}
is stable under finite intersections and algebraic closure of the compositum.

 \begin{defn} 
Let $L/k$ be a finite-dimensional extension of substructures in $K$. The {\em hull of $L/k$} in $K$ denoted $\mathrm{Hull}_K(L)$ is the smallest (necessarily finite-dimensional) extension of $k$ in $K$ containing $L$ with minimal predimension over $k$. When $a = a_1,\ldots, a_n \in K$, we also write $\mathrm{Hull}_K(a/k)$ for $\mathrm{Hull}_K(k(a)^{alg})$.
\end{defn}

The following lemma shows that $\mathrm{Hull}_K(L)$ does not depend on $k$. More generally, for a (not necessarily finite-dimensional) substructure $L$ extending the field $C$ of constants, one can define its hull as the smallest self-sufficient extension of $L$ in $K$. 

\begin{lem} \label{lemma-self-sufficient}
Let $L/k$ be a finite-dimensional extension in $K$. Then $\mathrm{Hull}_K(L)$ is the smallest self-sufficient extension of $L$ in $K$. 

In particular, $L$ is self-sufficient in $K$ if and only if for all tuple $a$ from $L$, we have $\mathrm{Hull}(a/k) \subset L$.
\end{lem}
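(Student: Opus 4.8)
The plan is to prove the three assertions in turn. Write $H := \mathrm{Hull}_K(L)$, and use freely the additivity of the predimension, $\delta(M/k) = \delta(M/N) + \delta(N/k)$ for $k \subseteq N \subseteq M$, together with the equivalent \emph{localised} form of submodularity (Lemma \ref{lemma-submodularity}): for a finite-dimensional extension $X/k$ and \emph{any} extension $Y/k$ in $K$,
\[
\delta((X \cdot Y)^{alg}/Y) \leq \delta(X/X \cap Y),
\]
both sides being finite since $(X\cdot Y)^{alg}/Y$ and $X/(X \cap Y)$ are finite-dimensional. This inequality is obtained from Lemma \ref{lemma-submodularity} through the additivity identities; when $Y$ is infinite-dimensional one first replaces $Y$ by a finite-dimensional subextension carrying the finitely many transcendence and $\Gamma$-relations that enter the two predimensions. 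Establishing this localised form for infinite-dimensional $Y$, via finite character, is the first technical point.

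First I would check that $H$ is self-sufficient in $K$. For any finite-dimensional $M$ with $H \subseteq M \subseteq K$, the field $M$ is a finite-dimensional extension of $L$, so minimality of $\delta(H/k)$ among the finite-dimensional extensions of $L$ gives $\delta(M/k) \geq \delta(H/k)$; additivity then yields $\delta(M/H) = \delta(M/k) - \delta(H/k) \geq 0$, which is exactly self-sufficiency of $H$. Next, to show $H$ is contained in any self-sufficient extension $H'$ of $L$ in $K$, set $F := H \cap H'$, a finite-dimensional extension of $k$ containing $L$. The localised submodularity applied with $X = H$ and $Y = H'$ (so that $X \cap Y = F$) gives $\delta((H \cdot H')^{alg}/H') \leq \delta(H/F)$; the left-hand side is $\geq 0$ because $(H\cdot H')^{alg}/H'$ is a finite-dimensional extension of the self-sufficient field $H'$, so $\delta(H/F) \geq 0$ and hence $\delta(F/k) = \delta(H/k) - \delta(H/F) \leq \delta(H/k)$. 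Since $F \supseteq L$, minimality forces $\delta(F/k) \geq \delta(H/k)$, so $\delta(F/k) = \delta(H/k)$; thus $F$ attains the minimal predimension over $L$, and as $H$ is the \emph{smallest} such extension we conclude $H \subseteq F \subseteq H'$. Together these two steps identify $H$ as the smallest self-sufficient extension of $L$ in $K$.

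For the ``in particular'' biconditional, the forward direction is immediate from what precedes: if $L$ is self-sufficient in $K$ and $a$ is a tuple from $L$, then $L$ is itself a self-sufficient extension of $k(a)^{alg}$ in $K$, so the minimality just established gives $\mathrm{Hull}_K(a/k) \subseteq L$. For the converse, assume $\mathrm{Hull}_K(a/k) \subseteq L$ for every tuple $a$ from $L$, and suppose for contradiction that some finite-dimensional $M$ with $L \subseteq M \subseteq K$ satisfies $\delta(M/L) < 0$. By finite character I would choose a finite tuple $a$ from $L$ and a finite tuple $c$ so that, writing $L_0 := k(a)^{alg}$ and $M_0 := L_0(c)^{alg}$, one has $M_0 \cap L = L_0$ and $\delta(M_0/L_0) = \delta(M/L) < 0$. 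Then $H_0 := \mathrm{Hull}_K(a/k) \subseteq L$ is self-sufficient, and since $H_0 \subseteq L$ we get $M_0 \cap H_0 = M_0 \cap L \cap H_0 = L_0$; localised submodularity then yields $\delta((H_0 \cdot M_0)^{alg}/H_0) \leq \delta(M_0/L_0) < 0$, contradicting the self-sufficiency of $H_0$. Hence no such $M$ exists and $L$ is self-sufficient in $K$.

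The main obstacle is the two finite-dimensional reductions underlying the localised submodularity inequality: extending Lemma \ref{lemma-submodularity} to the case of an infinite-dimensional $Y$ by passing to a finite-dimensional subextension, and producing in the converse the finite tuples $a,c$ with $M_0 \cap L = L_0$ and $\delta(M_0/L_0) = \delta(M/L)$. Both are routine consequences of finite character of transcendence degree and of $\mathbb{Q}$-linear dimension of $\Gamma$, but they must be set up carefully so that the intersection $M_0 \cap H_0 = L_0$ is exact; the remaining manipulations are purely formal applications of additivity and the minimality defining $\mathrm{Hull}_K$.
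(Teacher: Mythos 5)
Your proposal is correct and follows essentially the same route as the paper: self-sufficiency of the hull via minimality of its predimension plus additivity, and minimality among self-sufficient extensions via submodularity applied to $H$ and $H'$ (the paper's inequality $\delta(M\cap N/k)\leq \delta(N/k)-\delta(M\cdot N/M)$ is exactly your rearranged ``localised'' form). For the ``in particular'' clause the paper simply invokes that an increasing union of self-sufficient substructures is self-sufficient; your finite-character argument with the tuples $a,c$ is just that fact unwound, so the two proofs coincide in substance.
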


\begin{proof} 
Certainly, $N = \mathrm{Hull}_K(L)$ is self-sufficient in $K$ since $\delta(M/N) \leq 0$ implies that $$\delta(M/k) = \delta(M/N) + \delta(N/k) \leq \delta(N/k).$$
Hence, by minimality of $\delta(N/k)$, we must have $\delta(M/N) = 0$. Conversely, consider $M/L$ an algebraically closed extension of $L$ self-sufficient in $K$. By submodularity, we have:
$$\delta(M \cap N /k)  \leq \delta(M/k) + \delta(N/k) - \delta(M \cdot N/k) = \delta(N/k) - \delta(M \cdot N/M)$$
Since $M$ is self-sufficient in $K$, the latter is bounded from above by $\delta(N/k)$. By minimality of $\delta(N/k)$, we must that $\delta(M\cap N/k) = \delta(N/k)$ and by minimality of $N$, we have $N \subset M \cap N$ as required.

The second part follows from the fact that an increasing union of self-sufficient substructures is self-sufficient in $K$.
\end{proof}

{\exam Take $K$ to be a differentially  closed field extending $\mathcal M(U)$ for some well-chosen $U$ as previously and $k = \mathbb{C}(z)^{alg}$. If a meromorphic function $f$ is exponentially transcendental (see below) then $\mathrm{Hull}(f/k) = k(f)^{alg}$. On the other hand, for exponential algebraic functions, the exponential hull can be larger. For instance,
 $$\mathrm{Hull}_K( \mathrm{ln}(z) + \alpha \mathrm{ln}(z+1)/\mathbb{C}(z)^{alg}) = \begin{cases} \mathbb{C}(z, \mathrm{ln}(z),\mathrm{ln}(z+1))^{alg} \text{ if } \alpha \notin \mathbb{Q} \\ \mathbb{C}(z,\mathrm{ln}(z^q/(z+1)^p)^{alg} \text{ if } \alpha = p/q \end{cases} $$
Similarly, the weighted sums of logarithms appearing in Liouville's theorem \cite{Liouville2}
$$\mathrm{ln}(g_1(z)) + c_2 \cdot \mathrm{ln}(g_2(z)) + \ldots + c_n \cdot \mathrm{ln}(g_n(z)) $$
where $c_1 = 1,\ldots, c_n \in C$ are $\mathbb{Q}$-linearly independent, $g_1,\ldots,g_n$ multiplicatively independent modulo $\mathbb{C}$ give examples where the hull can be arbitrary large. Note that if $f$ is an elementary function (in the sense of Liouville) then $\mathrm{Hull}_K(f/k)$ is the algebraic closure of any minimal elementary differential field extension of $k$ in $K$ containing $f$. It is in general larger than the algebraic closure of the differential field generated by $f$ over $k$.}

\begin{fact} \label{fact-caracterizationoftypes}
Let $k$ be a substructure and let $K_i$ be elementary submodels extending $k$ without new constants such that $k$ is self-sufficient in both $K_1$ and $K_2$. 
Consider $a_i$ be tuple from $K_i$ for $i = 1,2$. Then
$$tp^{\BE}(a_1/k) = tp^{\BE}(a_2/k)$$
if and only if  there exists an isomorphism of $\mathcal L_\Gamma$-structures 
$$\phi: \mathrm{Hull}_{K_1}(a_1/k) \rightarrow \mathrm{Hull}_{K_2}(a_2/k)$$
fixing $k$ and sending $a_1$ to $a_2$.
\end{fact}

The fact follows from the  construction of the theory $\BE$ as the theory of the Hrushovski construction associated to the exponential predimension, see \citep[Theorem 2.22]{Kirby-semiab}.

\begin{cor} \label{cor-self-sufficient}
Let $k$ be a substructure and and let $K_i$ be elementary submodels extending $k$ without new constants. Then $k$ is self-sufficient in $K_1$ if and only if $k$ is self-sufficient in $K_2$.
\end{cor}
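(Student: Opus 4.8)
The plan is to show that self-sufficiency of $k$ is equivalent to a single condition formulated inside the monster model $\mathcal U$ that makes no reference to the ambient elementary submodel. Recall that by Lemma~\ref{lemma-weak-orthogonality} a finite tuple $a$ lies in some elementary submodel extending $k$ with field of constants $C$ if and only if $a \ind^{\BE}_k \mathcal C$, and in that case $k(a)^{alg}$ has no new constants, so that the predimension $\delta(k(a)^{alg}/k)$ is defined; moreover it is determined by $\tp^{\BE}(a/k)$, since both $\mathrm{td}(k(a)^{alg}/k)$ and $\mathrm{ldim}_{\mathbb Q}(\Gamma(k(a)^{alg})/\Gamma(k))$ are read off from the $\mathcal L_\Gamma$-type. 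I therefore introduce the intrinsic condition
$$(\star)\quad \delta(k(a)^{alg}/k)\ge 0 \text{ for every finite tuple } a \text{ of } \mathcal U \text{ with } a \ind^{\BE}_k \mathcal C,$$
and aim to prove that, for any elementary submodel $K$ extending $k$ without new constants, $k$ is self-sufficient in $K$ if and only if $(\star)$ holds. As $(\star)$ depends only on $k$ and $\mathcal U$, the corollary is then immediate by applying the equivalence to $K_1$ and to $K_2$.

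One implication is routine: if $(\star)$ holds and $k \subseteq L \subseteq K$ is finite-dimensional, then $L = k(a)^{alg}$ for a tuple $a \in K$, which satisfies $a \ind^{\BE}_k \mathcal C$ because $K$ has no new constants (Lemma~\ref{lemma-weak-orthogonality}), so $\delta(L/k)\ge 0$. For the converse I argue by contraposition: assuming $(\star)$ fails, I must exhibit a finite-dimensional $L/k$ inside the \emph{prescribed} submodel $K$ with $\delta(L/k)<0$. Fix a witness $a$ to the failure of $(\star)$, that is, $\Gamma$-pairs $(y_i,x_i) \in \Gamma(\mathcal U)$ for $i \le m$ that are $\mathbb Q$-linearly independent modulo $\Gamma(k)$ and satisfy $\mathrm{td}(\bar y, \bar x/k) = r < m$. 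By Lemma~\ref{lemma-theta-definition} this independence is exactly the $\mathbb Q$-linear independence of the exponential forms $dy_i/y_i - dx_i$ in $\Omega^1(\mathcal U/k)$, which is what the predimension counts.

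The transfer into $K$ is where the real work lies, and the difficulty is that $(\star)$ involves the \emph{infinitary} condition of $\mathbb Q$-linear independence modulo $\Gamma(k)$, which cannot be expressed by a single $\mathcal L_\Gamma$-formula. The strategy is to use the \emph{existential closedness axioms} of $\BE$ stated after Definition~\ref{definition-BE} (true in every model, in particular in $K$) rather than type realization, which would require saturation of $K$. Concretely, I would reorganize the witness so that the new $\Gamma$-points it requires are produced on a \emph{free and rotund} base variety $V_0$: there the existential closedness axioms guarantee a Zariski-dense set of $\Gamma$-points already in $K$, while the \emph{remaining} pairs that drop the predimension are forced not by genericity but by the $\mathbb Q$-linear relations already present in $\Gamma(k)$ --- exactly as in Singer's example of Example~\ref{example-selfsufficient}, where a single logarithm of $z$ provided by existential closedness, together with the relation $\ln z + \alpha\ln(z+1) \in k$, manufactures a second independent $\Gamma$-pair for free. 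The point to verify, and the main obstacle, is that a $\Gamma$-point of $V_0$ supplied by existential closedness in $K$ is generic enough that the resulting configuration retains the required $\mathbb Q$-independence: the degenerate locus where independence fails is cut out by relations that would have to hold already over $k$, hence is a proper subvariety of $V_0$ that every \emph{new} $\Gamma$-point avoids, so that Zariski-density of $\Gamma \cap V_0$ in $K$ suffices and no saturation of $K$ is needed. Finally, the additivity and submodularity of $\delta$ (Lemma~\ref{lemma-submodularity}) let me assemble the forced pairs into a single finite-dimensional $L/k$ in $K$ with $\delta(L/k)<0$, contradicting self-sufficiency of $k$ in $K$ and completing the equivalence.
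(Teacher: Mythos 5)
Your reduction to the intrinsic condition $(\star)$ is harmless but does not by itself gain any ground: the easy direction shows that $(\star)$ is equivalent to ``$k$ is self-sufficient in \emph{some} elementary submodel without new constants,'' so the hard direction of your equivalence is literally the corollary itself, and everything rests on the transfer step --- which is only described, not carried out. Two specific points are unjustified there. First, you assert that an arbitrary witness $(y_i,x_i)_{i\le m}$ with $\mathrm{td}(\bar y,\bar x/k)<m$ can be reorganized so that the genuinely new $\Gamma$-points lie on a \emph{free and rotund} variety $V_0$ while the predimension drop is forced by $\mathbb{Q}$-linear relations already visible over $k$. This works in Singer's example, but it is exactly the kind of structural claim that needs proof: the variety carrying the full witness is by design \emph{not} rotund (rotundity is precisely the condition ensuring $\delta\ge 0$ at generic $\Gamma$-points), so the existential closedness axioms say nothing about it directly, and the proposed decomposition is doing all the work. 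Second, even granting such a $V_0$, existential closedness only yields that $\Gamma^n\cap V_0$ is Zariski-dense in $V_0(K)$, whereas the locus you must avoid --- where $\mathbb{Q}$-linear independence modulo $\Gamma(k)$ fails --- is a union of proper subvarieties indexed by the nonzero $\bar q\in\mathbb{Q}^m$ and the elements of $\Gamma(k)$, not a single proper closed subset as you claim. A Zariski-dense set can easily be contained in such a countable union, so density alone does not produce the required point; one would need saturation of $K$ (which you rightly refuse to assume) or a uniformity argument collapsing the bad locus to finitely many subvarieties, and neither is supplied.

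The paper sidesteps the transfer problem by producing a \emph{canonical} witness instead of moving an arbitrary one. It first reduces to the case $K_1\prec K_2$ via the prime model over $k$, and then applies Fact \ref{fact-caracterizationoftypes} over the base $C$ --- which \emph{is} self-sufficient in every model by Ax--Schanuel, so the Fact is applicable --- to conclude that for any tuple $a$ from $k$ the hulls $\mathrm{Hull}_{K_1}(a/C)$ and $\mathrm{Hull}_{K_2}(a/C)$ have the same predimension and transcendence degree over $C$ and hence coincide; Lemma \ref{lemma-self-sufficient} then characterizes failure of self-sufficiency as ``the hull of some tuple from $k$ escapes $k$,'' a statement that is now manifestly independent of the ambient model. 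If you wish to keep your $(\star)$-framing, this is the missing ingredient: the object to transfer is not an arbitrary configuration of $\Gamma$-points of negative predimension, but $\mathrm{Hull}(a/C)$ for a tuple $a$ from $k$, whose isomorphism type over $C$ is pinned down by $\tp^{\BE}(a/C)$ and which therefore exists, and is the same, in every elementary submodel containing $k$ without new constants.
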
 

\begin{proof} 
Consider $K_3$ the prime model of $\BE$ which elementary embeds in both $K_1$ and $K_2$. It is enough to show that $k$ is self-sufficient in $K_1$ if and only if $k$ is self-sufficient $K_3$, so we may assume that $K_1 \prec K_2$ to prove the corollary.

The Ax-Schanuel property ensures that $C$ is self-sufficient in both $K_1$ and $K_2$. Take $a$ be a tuple from $k$. Since moreover
$$tp^{K_1}(a/C) = tp^{K_2}(a/C)$$
Fact \ref{fact-caracterizationoftypes} applies and we conclude that
$$ \delta(\mathrm{Hull}_{K_1}(a/C)/C) =  \delta(\mathrm{Hull}_{K_2}(a/C)/C).$$
Hence, $\mathrm{Hull}_{K_1}(a/C)$ is an extension of $C(a)^{alg}/C$ of minimal predimension in $K_2$ and it follows that $\mathrm{Hull}_{K_2}(a/C) \subset \mathrm{Hull}_{K_1}(a/C)$ by definition of the hull. Since Fact \ref{fact-caracterizationoftypes} also implies that 
$$\mathrm{td}(\mathrm{Hull}_{K_1}(a/C)/C) =  \mathrm{td}(\mathrm{Hull}_{K_2}(a/C)/C) $$
and both are finite-dimensional and algebraically closed, it follows that they are equal. We conclude by Lemma \ref{lemma-self-sufficient} that $k$ is self-sufficient in $K_1$ if and only if $k$ is self-sufficient in $K_2$.
\end{proof}

\subsection{Exponential algebraicity as a pregeometry} Let $K \models \BE$ be a model with field of constants $C$. Recall that a combinatorial pregeometry on $K$ is a closure operator
$$ \mathrm{cl}: \mathcal P(K) \rightarrow \mathcal P(K) $$
satisfying monotonicity, finite character and the exchange property: 
\begin{center}
$a \in \mathrm{cl}(Ab) \setminus \mathrm{cl}(B)$ if and only if $b \in \mathrm{cl}(Ab) \setminus \mathrm{cl}(A)$. 
\end{center} See for example \cite[Chapter 2, Section 1]{Pillay} for  the basic properties of pregeometries. 

\begin{exam}
Assume furthermore that $K \models \DCF$. The closure operator $\mathrm{ELA}$ on $K$ defined by: 
\begin{quote}
$b \in \mathrm{ELA}(C, A)$ if $b$ belongs to an elementary extension of the differential field $C \langle A \rangle$ generated by $A$,
\end{quote}
is {\em  not} a pregeometry. It satisfies monotonicity, finite character but not the exchange property. Indeed, take $a = z$ and $b = W(z)$ the Lambert $W$-function solution of $w \cdot e^w = z$. It follows from Liouville's result asserting that $W(z)$ is not an elementary function that $b \notin \mathrm{ELA}(C, a)$. But the functional equation above shows that $a \in \mathrm{ELA}(C,b)$.
\end{exam}

\begin{defn} 
Let $a$ be a tuple from $K$. The {\em exponential transcendence degree} of $a$ over $C$ is defined by
$$ \mathrm{etd}^K(a/C) = \delta(\mathrm{Hull}(C(a)^{alg}/C)) \geq 0.$$
Let $A$ be a subset of $K$. We say that $b$ is {\em exponentially algebraic} over $A,C$  and write $b \in \mathrm{ecl}^K(C,A)$ if for some finite tuple $a$ from $A$, we have  
$$ \mathrm{etd}^K(a,b/C) = \mathrm{etd}^K(a/C).$$ 
We also write $\mathrm{ecl}^K(k)$ for $\mathrm{ecl}^K(C,k)$ when $k$ is a substructure containing the field of constants $C$ of $K$.
\end{defn}

\begin{fact}[{\citep[Section 2.7]{Kirby-semiab}}]\label{fact-pregeometry}
Let $K$ be a model of $\BE$ with field of constants $C$. The closure operator
$$\mathrm{ecl}^K(C, -) : \mathcal P(K) \rightarrow \mathcal P(K)$$ is a pregeometry. Moreover, whenever $k$ is an algebraically closed substructure of $K$
\begin{quote}
the (type-definable) condition ``$y \notin \mathrm{ecl}^K(k)$'' isolates the unique complete one-type $p_\omega \in S^{\BE}(k)$ of rank $\omega$.
\end{quote}
\end{fact}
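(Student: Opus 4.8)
The plan is to route both assertions through the dimension function $d(A) := \mathrm{etd}^K(A/C) = \delta(\mathrm{Hull}(C(A)^{alg}/C))$, which is the $d$-function of the Hrushovski construction attached to the predimension $\delta$, and then to read the rank statement off a Cantor--Bendixson analysis.

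First I would verify that $d$, restricted to finite tuples over $C$, is a matroid rank function. Monotonicity $d(A) \leq d(Ab)$ is immediate from the minimality defining the hull together with Lemma \ref{lemma-self-sufficient}. The unit-increment bound $d(Ab) \leq d(A)+1$ follows by estimating, for the particular extension $\mathrm{Hull}(A)(b)^{alg}$ containing $C(Ab)^{alg}$, that $\delta(\mathrm{Hull}(A)(b)^{alg}/\mathrm{Hull}(A)) \leq \mathrm{td}(\mathrm{Hull}(A)(b)^{alg}/\mathrm{Hull}(A)) \leq 1$, so that $d(Ab) \leq d(A) + 1$ by minimality of the hull. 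Submodularity of $d$ is exactly Lemma \ref{lemma-submodularity} transported to the $d$-function. From these three properties the operator $\mathrm{cl}(A) = \lbrace b : d(Ab) = d(A)\rbrace$, which is precisely $\mathrm{ecl}^K(C,A)$ by definition, is a combinatorial pregeometry by the standard matroid argument: finite character is built into the definition, idempotence comes from submodularity, and exchange comes from monotonicity and the unit bound, since if $b \in \mathrm{cl}(Aa)\setminus\mathrm{cl}(A)$ then $d(A)+1 = d(Ab) \leq d(Aab) = d(Aa) \leq d(A)+1$, whence $d(Aab)=d(Ab)$ and $a \in \mathrm{cl}(Ab)$.

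For the second assertion I may assume $k$ self-sufficient: its hull $k' = \mathrm{Hull}_K(k)$ is self-sufficient (Lemma \ref{lemma-self-sufficient}) and exponentially algebraic over $k$, so $\mathrm{ecl}^K(k') = \mathrm{ecl}^K(k)$, and since the property $y \in \mathrm{ecl}^K(k)$ depends only on $\mathrm{tp}^{\BE}(y/k)$, a type over $k$ avoiding $\mathrm{ecl}^K(k)$ is recovered by restricting the corresponding type over $k'$. The core step is then to show that any two $y_1, y_2 \notin \mathrm{ecl}^K(k')$ realize the same $\BE$-type over $k'$. Such a $y$ is exponentially transcendental over $k'$, so $\mathrm{Hull}_K(y/k') = k'(y)^{alg}$ with $\delta(k'(y)^{alg}/k') = \mathrm{td}(k'(y)^{alg}/k') = 1$. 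The key point is that this extension creates no new $\Gamma$-points: by Lemma \ref{lemma-theta-definition} the group $\Gamma(k'(y)^{alg})/\Gamma(k')$ injects via $\theta$ into $\Omega^1(k'(y)^{alg}/k')$, which is torsion-free as an abelian group, yet $\mathrm{ldim}_{\mathbb{Q}}(\Gamma(k'(y)^{alg})/\Gamma(k')) = \mathrm{td} - \delta = 0$ makes it torsion, hence trivial, so $\Gamma(k'(y)^{alg}) = \Gamma(k')$. Consequently the field isomorphism $k'(y_1)^{alg} \to k'(y_2)^{alg}$ fixing $k'$ and sending $y_1 \mapsto y_2$ automatically respects $\Gamma$, i.e.\ is an $\mathcal L_\Gamma$-isomorphism of the two hulls, and Fact \ref{fact-caracterizationoftypes} yields $\mathrm{tp}^{\BE}(y_1/k') = \mathrm{tp}^{\BE}(y_2/k')$. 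This produces a single complete type $p_\omega$ realized by exactly the exponentially transcendental elements.

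Finally I would extract the rank and the isolation from the Cantor--Bendixson analysis of $S_1^{\BE}(k)$, using that $\BE$ is $\omega$-stable so that on one-types over the algebraically closed set $k$ the Cantor--Bendixson rank agrees with Morley rank. I would show that the exponentially algebraic one-types are precisely those of finite rank: an exp-algebraic $y$ has a finite-dimensional hull over $k$, and Fact \ref{fact-caracterizationoftypes} bounds $\mathrm{RM}^{\BE}(y/k)$ in terms of $\mathrm{td}(\mathrm{Hull}_K(y/k)/k)$, while iterated logarithm towers realize exp-algebraic types of unboundedly large finite rank. Hence the $\omega$-th Cantor--Bendixson derivative of $S_1^{\BE}(k)$ consists exactly of the types avoiding $\mathrm{ecl}^K(k)$, which by the previous paragraph is the single point $p_\omega$; being the unique, hence isolated, point of that derived space, $p_\omega$ has Cantor--Bendixson rank, and so Morley rank, exactly $\omega$, and the type-definable condition $y \notin \mathrm{ecl}^K(k)$ isolates it. I expect the main obstacle to be exactly the identification of the finite-rank locus with $\mathrm{ecl}^K(k)$ together with the $\Gamma$-triviality computation above: showing that exp-algebraic types genuinely have finite Morley rank, and not merely finite $\mathrm{etd}$, requires controlling the rank by the transcendence degree of the hull, which is where the $\omega$-stability of $\BE$ and the finite-dimensionality of hulls are essential.
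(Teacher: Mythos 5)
First, note that the paper does not prove this statement: it is quoted verbatim from \cite[Section 2.7]{Kirby-semiab}, so your proposal is being measured against the source rather than against an argument in the text. The first two thirds of your reconstruction are sound and follow the standard Hrushovski-construction route that Kirby himself uses: the dimension function $d(A)=\mathrm{etd}^K(A/C)$ is monotone, increases by at most one per point, and is submodular (via Lemma \ref{lemma-submodularity} and Lemma \ref{lemma-self-sufficient}), so $\mathrm{ecl}^K(C,-)$ is a pregeometry; and your identification of the hull of an exponentially transcendental element with $k'(y)^{alg}$, together with the computation $\Gamma(k'(y)^{alg})=\Gamma(k')$ (the quotient is divisible, torsion-free by Lemma \ref{lemma-theta-definition}, and of $\mathbb{Q}$-dimension $\mathrm{td}-\delta=0$) and Fact \ref{fact-caracterizationoftypes}, correctly yields a unique complete type over $k$ extending ``$y\notin\mathrm{ecl}^K(k)$''.

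The gap is in the rank computation, and it is genuine rather than cosmetic. (i) The claim that exponentially algebraic one-types have finite Morley rank is the load-bearing step, and the tool you invoke cannot deliver it: Fact \ref{fact-caracterizationoftypes} tells you \emph{when two types are equal} in terms of their hulls, but says nothing about how deep a tree of definable sets sits inside a given formula; knowing that $\mathrm{tp}^{\BE}(y/k)$ is determined by a hull of transcendence degree $n$ does not bound $\mathrm{RM}^{\BE}(y/k)$ by any function of $n$ without an actual stratification argument (for instance an induction on the tower of $\Gamma$-points generating the hull, showing each step contributes finitely to the rank). Nor does comparison with $\DCF$ help, since the Morley rank of a type can move in either direction under passage to a reduct. (ii) For the lower bound $\mathrm{RM}(p_\omega)\geq\omega$ you need that \emph{every formula in} $p_\omega$ contains exponentially algebraic types of unbounded finite rank; ``iterated logarithm towers'' is the right family of witnesses, but neither the unboundedness of their ranks nor the fact that they can be realized inside an arbitrary formula of $p_\omega$ is argued (compactness gives you \emph{some} exponentially algebraic realizations of any formula in $p_\omega$, not realizations of prescribed rank). (iii) A smaller point: identifying Cantor--Bendixson rank on $S_1^{\BE}(k)$ with Morley rank is standard over models but needs justification over an arbitrary algebraically closed substructure $k$. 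You flag (i) yourself as the main obstacle, which is the correct diagnosis; as written, the rank-$\omega$ assertion remains unproved.
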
 

The complete type $p_\omega \in S^{\BE}(k)$ is called the {\em exponentially transcendental type} over $k$. It is the image of the differentially transcendental type over  $k$ in $\DCF$ under the reduct map (\ref{equation-reductmap}). It is also the image of many other types under the reduct map. For instance, the meromorphic functions
$$ \mathrm{erf}(z) = \int e^{-z^2}dz, \mathrm{sn}(z) =  \int \frac{dz}{z^3 + az + b}$$
which will be shown to be exponentially transcendental all satisfy the type $p_\omega \in S^{\BE}(\mathbb{C}(z))$ in the blurred exponential reduct $\BE$. They of course all have different differential types. 

We summarize below the properties of this pregeometry that will be used in the rest of the text. 
\begin{cor} \label{corollary-predimension}
Let $K \models \BE$ with field of constants $C$ and let $k$ be a substructure extending $C$ and self-sufficient in $K$.
\begin{itemize} 
\item[(i)] If $a$ is a tuple from $K$ then $a \in \mathrm{ecl}^K(k)$ if and only if $a$ is contained in a subtructure $N$ of $K$ satisfying $\delta(N/k) = 0$ if and only if $\delta(\mathrm{Hull}_K(a/k)/k) = 0$.
\item[(ii)] An element $a \in K$ is exponentially transcendental over $k$ if and only if
\begin{center} 
$\delta(k(a)^{alg}/k) = 1$ and $k(a)^{alg}$ is self-sufficient.
\end{center}
\item[(iii)] If $a$ is a tuple from $K$ and $a \in \mathrm{ecl}^{K}(k)$ then for any realization $b$ of $tp^{\BE}(a/k)$ and any elementary submodel $L$ containing $b$ with field of constants $C$, we have that $a \in \mathrm{ecl}^{L}(k)$.
\item[(iv)] A tuple $a$ from $K$ is exponentially algebraic over $k$ if and only if $tp^{\BE}(a/k)$ has finite Morley rank.
\end{itemize} 
\end{cor}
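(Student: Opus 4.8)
The plan is to prove (i) first, since it identifies the pregeometry $\mathrm{ecl}^K(-)$ localised at the self-sufficient base $k$ with the vanishing of a relative predimension, and then to read off (ii)--(iv) from (i) together with the facts already established. Throughout I abbreviate $d(a/k):=\delta(\mathrm{Hull}_K(a/k)/k)$; since $k$ is self-sufficient in $K$ we have $d(a/k)\geq 0$, and by the very definition of the hull $d(a/k)\leq\delta(M/k)$ for every finite-dimensional $M\supseteq k(a)^{alg}$.

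For (i) I would first dispose of the equivalence between the second and third conditions. If $\delta(N/k)=0$ with $a\in N$, then $N/k$ is finite-dimensional and $N$ is self-sufficient in $K$ by Example \ref{example-selfsufficient}; since $N\supseteq k(a)^{alg}$, minimality of the hull (Lemma \ref{lemma-self-sufficient}) gives $\mathrm{Hull}_K(a/k)\subseteq N$, so by additivity $d(a/k)=\delta(N/k)-\delta(N/\mathrm{Hull}_K(a/k))\leq 0$, forcing $d(a/k)=0$. The converse takes $N=\mathrm{Hull}_K(a/k)$. To connect these to $a\in\mathrm{ecl}^K(k)$ I would use finite character of the pregeometry: $a\in\mathrm{ecl}^K(C,k)$ if and only if $a\in\mathrm{ecl}^K(C,c)$ for a finite tuple $c$ from $k$, i.e.\ $\mathrm{etd}^K(c,a/C)=\mathrm{etd}^K(c/C)$. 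Because $k$ is self-sufficient, $h:=\mathrm{Hull}_K(c/C)\subseteq k$ (Lemma \ref{lemma-self-sufficient}), and since $h\subseteq P:=\mathrm{Hull}_K(c,a/C)$, additivity rewrites this equality as $\delta(P/h)=0$. The remaining task is to compare the $C$-hull $P$ with the $k$-hull: applying submodularity (Lemma \ref{lemma-submodularity}) to $P$ and $k$ over a finite-dimensional self-sufficient subfield of $k$ large enough to contain $h$ and to account for $\Gamma(P)\cap\Gamma(k)$, one gets $\delta((P\cdot k)^{alg}/k)=0$, whence $a$ lies in a substructure of relative predimension $0$ and $d(a/k)=0$; conversely, if $d(a/k)=0$ a finite tuple $c\subseteq k$ capturing the relevant algebraic and $\Gamma$-relations inside $\mathrm{Hull}_K(a/k)$ realises $\mathrm{etd}^K(c,a/C)=\mathrm{etd}^K(c/C)$. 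This submodularity bookkeeping, which is what allows passage between the base $C$ in which $\mathrm{etd}$ is defined and the self-sufficient base $k$, is the one genuinely delicate point and the main obstacle; everything else is formal.

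Parts (ii) and (iii) follow quickly. For a single element $a$ one has $d(a/k)\leq\delta(k(a)^{alg}/k)\leq\mathrm{td}(k(a)^{alg}/k)\leq 1$, so by (i), $a$ is exponentially transcendental over $k$ exactly when $d(a/k)=1$. In that case both inequalities are equalities, so $\delta(k(a)^{alg}/k)=1=d(a/k)$; since $k(a)^{alg}$ then already realises the minimal predimension, minimality of the hull forces $\mathrm{Hull}_K(a/k)=k(a)^{alg}$, which is therefore self-sufficient, and conversely these two conditions give $\mathrm{Hull}_K(a/k)=k(a)^{alg}$ and $d(a/k)=1$. For (iii), self-sufficiency of $k$ is independent of the ambient model by Corollary \ref{cor-self-sufficient}, so $k$ is self-sufficient in $L$; since $\mathrm{tp}^{\BE}(b/k)=\mathrm{tp}^{\BE}(a/k)$, Fact \ref{fact-caracterizationoftypes} provides an $\mathcal L_\Gamma$-isomorphism $\mathrm{Hull}_K(a/k)\cong\mathrm{Hull}_L(b/k)$ fixing $k$. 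Such an isomorphism preserves transcendence degree and the $\mathbb{Q}$-dimension of $\Gamma$, hence $\delta$, so $d(b/k)=d(a/k)=0$ and (i) applied inside $L$ yields $b\in\mathrm{ecl}^L(k)$.

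For (iv) I would treat single elements first: over the algebraically closed substructure $k$, Fact \ref{fact-pregeometry} says that $a\notin\mathrm{ecl}^K(k)$ isolates the unique one-type $p_\omega$ of Morley rank $\omega$, so a one-type over $k$ has finite rank if and only if its realisations are exponentially algebraic over $k$. For a finite tuple $a=(a_1,\dots,a_n)$ I would build up along the tower of algebraically closed substructures $k(a_{<i})^{alg}$. If $a\in\mathrm{ecl}^K(k)$ then by monotonicity each $a_i\in\mathrm{ecl}^K(k(a_{<i})^{alg})$, so each $\mathrm{RM}(a_i/k(a_{<i})^{alg})$ is finite, and the upper Lascar inequality for Morley rank in the $\omega$-stable theory $\BE$ (a natural sum of finitely many finite ordinals) gives $\mathrm{RM}(a/k)<\omega$. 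Conversely, if $a\notin\mathrm{ecl}^K(k)$ then, $\mathrm{ecl}$ being a pregeometry, some $a_i\notin\mathrm{ecl}^K(k(a_{<i})^{alg})$, so $\mathrm{RM}(a_i/k(a_{<i})^{alg})=\omega$; since the Morley rank of a fibre is bounded by that of the ambient family we have $\mathrm{RM}(a_{\leq i}/k)\geq\mathrm{RM}(a_i/k(a_{<i})^{alg})=\omega$, and projecting onto the first $i$ coordinates gives $\mathrm{RM}(a/k)\geq\mathrm{RM}(a_{\leq i}/k)\geq\omega$, as required.
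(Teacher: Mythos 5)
Your proposal is correct and follows essentially the same route as the paper: the substance is in (i), where you use finite character together with submodularity to pass between the hull over $C$ and the relative predimension over the self-sufficient base $k$ (this is the paper's argument in outline, including the field-of-definition trick for the converse), and (ii)--(iv) are then read off exactly as the paper indicates, via Lemma \ref{lemma-self-sufficient}, Fact \ref{fact-caracterizationoftypes} and Fact \ref{fact-pregeometry} respectively. The only imprecision is in (iv), where the ``upper Lascar inequality'' you invoke is a property of $U$-rank rather than of Morley rank (for which the natural-sum bound can fail in a general $\omega$-stable theory); the paper's own one-line reduction to one-types glosses over the same point, and only the qualitative conclusion --- finitely many finite-rank steps yield a finite-rank tuple --- is actually needed.
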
 

\begin{proof}
(ii) follows from (i), (iii) follows from (i) together with Fact \ref{fact-caracterizationoftypes} and (iv) reduces to the case of one-types using the existence of bases for the pregeometry $\mathrm{ecl}^K(C,-)$ and hence follows from Fact \ref{fact-pregeometry}.  

We now show the first equivalence in (i) as the second follows directly from the definition. Assume first that $a \in \mathrm{ecl}^K(k)$. This means that we can find a finite tuple $b$ in $k$ such that $a \in \mathrm{ecl}^K(C,b)$. Setting $k_1 = \mathrm{Hull}_K(b/k)$, this means that $a$ is contained in a blurred exponential field extension $N$ of $k_1$ satisfying $\delta(N/k_1) = 0$.
By submodularity of the predimension function, for any finite-dimensional self-sufficient extension $k_1 \subset k_2 \subset K$, we have that 
$$\delta((k_2 \cdot N)^{alg}/C) \leq \delta(k_2/C) + \delta(k_1/C) - \delta(k_1 \cap k_2/C) = \delta(k_2/C).$$  
It follows that $\delta((k_2 \cdot N)^{alg}/k_2) = 0$ since $k_2$ is self-sufficient in $K$. Using the finite character of the predimension $\delta$, we obtain that $\delta((k \cdot N)^{alg}/k) = 0$. Hence, $(k \cdot N)^{alg}/k$ is an extension with predimension zero containing $a$. 

For the converse, assume now that $a$ is contained in $N$ such that $\delta(N/k) = 0$. Take $x_1,\ldots,x_n \in \Gamma(N)$ generators of $\Gamma(N)/\Gamma(k)$ where $n = \mathrm{td}(N/k)$ so that $N = k(x_1,\ldots,x_n)^{alg}$. Set $k_0$ to be the field of definition for the ideal 
$$ I(a,x_1,\ldots,x_n)  = \lbrace f \in k[X_0,X_1,\ldots,X_{2n}] \mid f(a,x_1,\ldots, x_n) = 0 \rbrace$$
and $k_1 = \mathrm{Hull}_K(k_0)$. By construction, we have that $a \in k_1(x_1,\ldots,x_n)^{alg}$ and $$\mathrm{td}(k_1(x_1,\ldots,x_n)^{alg}/k_1) = \mathrm{td}(k(x_1,\ldots,x_n)^{alg}/k).$$
It follows that $\delta(k_1(x_1,\ldots,x_n)^{alg}/k_1) \leq 0$ and hence equal to zero since $k_1$ is self-sufficient in $K$. Since $k_1$ is finitely generated say by some $b \in K$, we have shown that $a \in \mathrm{ecl}^K(C,b)$ as required.
\end{proof}

\section{Relation with the o-minimal structure $\mathbb{R}_{\mathrm{RE}}$}

An important feature of this pregeometry that we used to formulate our results in the introduction is its concrete description based on the o-minimal structure $$\mathbb{R}_{\mathrm{RE}} := (\mathbb{R}, 0,1,+,\times, < , \text{restricted complex exp})$$
described in \cite{Kirby-alg} for the standard model $(\mathbb{C}, k_0, \Gamma_0)$ of $\BE$ of \ref{fact-standardmodel}. To formulate a similar description in the differential context, we work with holomorphic functions $f_1,\ldots, f_n$ of one variable defined on some complex domain $U$ (i.e. a connected open subset of the complex plane). Fixing $K \models \DCF$ extending the differential field $\mathcal M(U)$ of meromorphic functions on $U$ without new constants, we say that $f_1,\ldots,f_n$ are {\em exponentially algebraic independent over $\mathbb{C}$} if they are independent for the pregeometry $\mathrm{ecl}^{K}(\mathbb{C},-)$ defined on $K$ in the previous section. This property does not depend on the choice of the differentially closed field $K$ extending $\mathcal M(U)$, see Corollary \ref{corollary-predimension}. 

The main theorem of this section describes this pregeometry as follows:

\begin{theorem}[Theorem C]\label{theorem-connection-o-minimality}
Let $f_1,\ldots,f_n$ be holomorphic functions of one variable defined on some complex domain $U$. Then $f_1,\ldots,f_n$ are exponentially algebraically dependent over $\mathbb{C}$ {\em if and only if} there exist a subdomain $V$ of $U$, a connected open subset $W$ of $\mathbb{C}^n$ containing the analytic curve $(f_1,\ldots,f_n)(V)$ and a nonzero holomorphic function $F$ of $n$ variables definable in $\mathbb{R}_{RE}$ such that 
$$F(f_1(z),\ldots, f_n(z)) = 0 \text{ for all } z \in V.$$
Furthermore, if $f_1,\ldots,f_{n-1}$ are exponentially algebraically independent over $\mathbb{C}$, the previous condition is also equivalent to the existence of a subdomain $V$ of $U$, a connected open subset $W$ of $\mathbb{C}^{n-1}$ containing the analytic curve $(f_1,\ldots,f_{n-1})(V)$ and a  holomorphic function $G$ of $n-1$ variables definable in $\mathbb{R}_{RE}$ such that
$$ f_n(z) = G(f_1(z),\ldots,f_{n-1}(z)) \text{ for all } z \in V.$$  
\end{theorem}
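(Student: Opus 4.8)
The plan is to transport the pointwise identity \eqref{equation-intro-egalite}, namely $\mathrm{ecl}^{K_0}(k_0,-)=\mathrm{hcl}_{\mathbb{R}_{\mathrm{RE}}}(-)$ on the standard model $K_0=(\mathbb{C},k_0,\Gamma_0)$, from complex points to germs of holomorphic functions of one variable. I would begin with the formal reductions. Since $\mathrm{ecl}^K(\mathbb{C},-)$ is a pregeometry (Fact \ref{fact-pregeometry}), the first assertion follows from the second: if $f_1,\dots,f_n$ are dependent, I extract a maximal exponentially algebraically independent subset $S$; any omitted $f_j$ lies in $\mathrm{ecl}^K(\mathbb{C},S)$, so the forward direction of the second assertion (after reindexing) writes $f_j=G(S)$ for an $\mathbb{R}_{\mathrm{RE}}$-definable holomorphic $G$, and $F:=x_j-G$, viewed as a function of all $n$ variables, is a nonzero relation. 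Conversely a relation $F(f_1,\dots,f_n)\equiv0$ with $F\neq0$ confines the analytic curve to the proper hypersurface $\{F=0\}$ and forces dependence. Thus everything reduces to the second assertion, whose nontrivial direction is: given $f_1,\dots,f_{n-1}$ exponentially algebraically independent over $\mathbb{C}$ and $f_n\in\mathrm{ecl}^K(\mathbb{C},f_1,\dots,f_{n-1})$, to produce an $\mathbb{R}_{\mathrm{RE}}$-definable holomorphic $G$ with $f_n=G(f_1,\dots,f_{n-1})$ near the curve.

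The bridge must be a substitution principle along the curve, not an evaluation at a point of $\mathbb{C}$ (which would collapse all transcendence over $\mathbb{C}$). Because $f_1,\dots,f_{n-1}$ are algebraically independent over $\mathbb{C}$, the assignment $x_i\mapsto f_i$ embeds $\mathbb{C}(x_1,\dots,x_{n-1})$ into $K$ and presents them as a generic point of $\mathbb{C}^{n-1}$; under it, the derivation $d/dz$ becomes the directional field $\sum_j f_j'\,\partial/\partial x_j$ tangent to the curve. The pointwise identity \eqref{equation-intro-egalite}, in its several-variable germ form over $\mathbb{C}^{n-1}$ and with complex parameters admitted — the field of constants $\mathbb{C}$ being matched by the $\mathbb{R}_{\mathrm{RE}}$-definability of all of $\mathbb{C}$, cf. \cite{Wilkie} — states that an element of $\mathrm{ecl}(\mathbb{C},x_1,\dots,x_{n-1})$ is exactly a germ $H(x_1,\dots,x_{n-1})$ with $H$ holomorphic and $\mathbb{R}_{\mathrm{RE}}$-definable. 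Transporting this through the substitution, the hypothesis $f_n\in\mathrm{ecl}^K(\mathbb{C},f_1,\dots,f_{n-1})$ yields such a germ $G$ at $w_0=(f_1,\dots,f_{n-1})(z_0)$, for generic $z_0\in V$, with $f_n=G(f_1,\dots,f_{n-1})$ near $z_0$; exponential algebraic independence of $f_1,\dots,f_{n-1}$, which keeps the curve out of every proper exponentially algebraic subvariety, makes $G$ single-valued along it, and the identity theorem propagates the equality to a subdomain $V$ while $G$ extends to a holomorphic $\mathbb{R}_{\mathrm{RE}}$-definable function on a connected open $W\supseteq(f_1,\dots,f_{n-1})(V)$.

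The principal obstacle is the faithfulness of this substitution: one must check that exponential algebraicity with respect to the single directional derivation $d/dz$ along an exponentially algebraically generic curve coincides with exponential algebraicity with respect to the full partial-derivative structure on germs over $\mathbb{C}^{n-1}$, so that the differential predimension on $K$ computed from $d/dz$ genuinely matches the holomorphic closure on $\mathbb{C}^{n-1}$. This is precisely where the independence hypothesis and the functional Ax--Schanuel input packaged in Proposition \ref{proposition-predimension} are indispensable, and it is also what powers the two easy directions (that $\mathbb{R}_{\mathrm{RE}}$-definable functions are exponentially algebraic, and that a generic curve avoids proper exponentially algebraic hypersurfaces). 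A secondary, analytic difficulty is to upgrade the germ of $G$ at $w_0$ to a single-valued $\mathbb{R}_{\mathrm{RE}}$-definable holomorphic function on an open set containing the entire curve, for which I would invoke Wilkie's global control of $\mathbb{R}_{\mathrm{RE}}$-definable holomorphic functions \cite{Wilkie} together with the identity theorem.
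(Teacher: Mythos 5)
Your reduction of the first assertion to the second via the pregeometry axioms is fine and matches the role of Proposition \ref{prop-from relation to functions} in the paper, and you have correctly located where the difficulty lies. But the proposal stops exactly there: the ``substitution principle along the curve'' --- that exponential algebraicity in the differential field $(K,d/dz)$ of germs along the curve coincides with $\mathrm{hcl}$-closure on $\mathbb{C}^{n-1}$ --- is not something that can be transported from the pointwise identity (\ref{equation-intro-egalite}); it is the theorem itself, and the paper proves it by two separate, genuinely nontrivial arguments that are absent from your proposal. In the direction ``$\mathbb{R}_{\mathrm{RE}}$-definable $\Rightarrow$ exponentially algebraic'' (which you list among the ``two easy directions''), a definable holomorphic function is in general only \emph{implicitly} defined from exponential polynomials: the paper invokes Wilkie's theorem in the form of Fact \ref{fact-Wilkie-complete}(b) to present $G$ at an $\mathrm{hcl}$-generic point of the curve by a Khovanskii system $F_i(z,e^z,g,e^g)=0$ with nonsingular Jacobian, and then shows $\delta(L/k)=0$ by proving that every $\BE$-derivation of $L=k(g,h)^{alg}$ is annihilated by that Jacobian (the Claim inside Proposition \ref{o-minimal direction1}). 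Nothing in your proposal carries out this computation, and without it the assertion that definable functions preserve $\mathrm{ecl}$ is unsupported.

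In the converse direction the paper does not evaluate (\ref{equation-intro-egalite}) at a germ; it converts the equality $L\cdot\Theta_{\BE}(L/k)=\Omega^1(L/k)$ coming from Lemma \ref{lemma-exponentialalgebra} into a transversality statement: the analytic curve lies on a leaf of the pullback of the exponential foliation on $T\Gm^r$, the leaves are locally $\mathbb{R}_{\mathrm{RE}}$-definable, and the equality of forms says the leaf projects locally biholomorphically onto $\mathbb{C}^n$ --- this is what actually manufactures the definable function $G$ (Proposition \ref{o-minimal direction2}). Your appeal to a ``several-variable germ form'' of (\ref{equation-intro-egalite}) presupposes a multivariable analogue of the very equivalence being proved and is not available as a black box: (\ref{equation-intro-egalite}) concerns the pregeometry on complex \emph{points} of the standard model $(\mathbb{C},k_0,\Gamma_0)$, built from $E$-derivations over a countable $k_0$, whereas here the ambient structure is a differentially closed field with field of constants all of $\mathbb{C}$ and the single derivation $d/dz$. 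So the proposal correctly names the obstacle but does not overcome it; the two missing arguments are precisely Propositions \ref{o-minimal direction1} and \ref{o-minimal direction2}.
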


\subsection{Wilkie's pregeometry}  We recall here Wilkie's results specialized to the o-minimal structure $\mathbb{R}_{\mathrm{RE}}$. A holomorphic function (of several variables) is said to be definable if it is definable with parameters in this structure. We say more precisely that it is $k_0$-definable to mean that the parameters are taken inside a subfield $k_0$ of $\mathbb{C}$. For example, the restrictions to open boxes of the holomorphic functions in the (differential) $\mathbb{C}$-algebras  $$\mathcal A = ( \mathbb{C}[z_1,\ldots,z_n,e^{z_1},\ldots, e^{z_n}], n \geq 0 )$$
are all definable. Fact \ref{fact-Wilkie-complete} below describes the germs of all definable functions at ``generic points'' using these $\mathbb{C}$-algebras. {\em Fix $k_0$ a countable algebraically closed subfield of $\mathbb{C}$ and expand the o-minimal structure $\mathbb{R}_\mathrm{RE}$ by constant symbols for the elements of $k_0$.}

\begin{defn} 
let $A$ be a subset of $\mathbb{C}$. The {\em holomorphic closure } of $A$ is the set of values of $k_0$-definable holomorphic functions evaluated at points in $A$, that is, 
$$ \mathrm{hcl}(A) = \lbrace f(a_1,\ldots,a_n) \mid a_1,\ldots,a_n \in A, f \text{ is } k_0\text{-definable and } a \in \mathrm{dom}(f) \rbrace.$$
We also write $\mathrm{hcl}_{k_0}(A)$ when it is desirable to stress more precisely the countable algebraically closed subfield $k_0$. This coincides with the notion of holomorphic closure from \cite{LGKJS, Jones-Kirby-Servi} for the o-minimal structure $\mathbb{R}_{\mathrm{RE}}$ where the elements of $k_0$ have been named by constant symbols.
\end{defn} 
Because $k_0$ is countable, the collection of $k_0$-definable function is countable. Hence if $A$ is finite or countable, then  $\mathrm{hcl}(A)$ is a countable algebraically closed subfield extending $k_0(A)^{alg}$.
\begin{fact} \label{fact-Wilkie-complete}
Let $k_0$ be a countable algebraically closed subfield of parameters.
The following properties hold:
\begin{itemize} 
\item[(a)] $\mathrm{hcl}_{k_0}(-): \mathcal P(\mathbb{C}) \rightarrow \mathcal P(\mathbb{C})$ is a pregeometry on $\mathbb{C}$. Furthermore, $a_1,\ldots,a_n \in \mathbb{C}$ are $\mathrm{hcl}$-independent if and only if for any $k_0$-definable holomorphic function $F$ with $(a_1,\ldots,a_n) \in \mathrm{dom}(F)$ then 
$$ F(a_1,\ldots,a_n) = 0 \Rightarrow F = 0.$$

\item[(b)] If $g$ is a $k_0$-definable holomorphic function of the variables $z = (z_1,\ldots, z_n)$ and $ b = (b_1,\ldots,b_n) \in \mathrm{dom}(g)$ are $\mathrm{hcl}$-independent then $g$ can be $m$-implicitly defined from $\mathcal A$ in a neighborhood $W$ of $b$ in the following sense: 

\begin{quote} 
there is $m \geq 1$, holomorphic functions $g_1 = g, g_2,\ldots,g_m$ on $W$ and $F_1,\ldots,F_m \in \mathcal A_{n + m} = \mathbb{C}[z,e^{z},w,e^{w}]$ such that for all $z \in W$ 
$$ F_i\Big(z,e^{z}, g_1(z), \ldots, g_m(z), e^{g_1(z)}, \ldots, e^{g_m(z)} \Big) = 0 $$
for $i = 1,\ldots, m$ and $\mathrm{det}\Big((\frac{\partial F_i} {\partial w_j})_{i,j \leq m}(\overline{z}, g_1(\overline{z}),\ldots, g_m(\overline{z})) \Big)\neq 0$.
\end{quote}
\end{itemize}
\end{fact}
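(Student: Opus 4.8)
The plan is to prove the two parts by different mechanisms: part (b) is the main structural theorem of \cite{Wilkie} read off for the specific generators of $\mathbb{R}_{\mathrm{RE}}$, whereas part (a) is obtained from (b) together with elementary manipulations of $k_0$-definable holomorphic functions and a germ-dimension argument. I would begin with the routine closure-operator axioms. Monotonicity and finite character are immediate, since a value $f(a_1,\ldots,a_n)$ involves only finitely many arguments. Idempotence, $\mathrm{hcl}(\mathrm{hcl}(A)) = \mathrm{hcl}(A)$, follows because a composite of $k_0$-definable holomorphic functions is again $k_0$-definable and holomorphic on a suitable polydisc: if $b_j = g_j(a)$ for $a$ from $A$ and $c = h(b_1,\ldots,b_r)$, then $c = h(g_1,\ldots,g_r)(a)$ near $a$, and the composite is $\mathbb{R}_{\mathrm{RE}}$-definable over $k_0$.

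Next I would establish the characterisation of $\mathrm{hcl}$-independence in (a). One direction is trivial: if $a_n = h(a_1,\ldots,a_{n-1})$ with $h$ a $k_0$-definable holomorphic function, then $F(z) = z_n - h(z_1,\ldots,z_{n-1})$ is a nonzero $k_0$-definable holomorphic function vanishing at $a$. For the converse, suppose some nonzero $k_0$-definable holomorphic $F$ vanishes at $a$. A holomorphic function all of whose derivatives vanish at a point is identically zero on its connected domain, so there is a partial derivative $G = \partial^\alpha F$, itself $k_0$-definable and holomorphic, with $G(a) = 0$ but $(\partial G/\partial z_i)(a) \neq 0$ for some $i$. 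The implicit function theorem, whose output is definable in any o-minimal structure over the same parameters, then expresses $z_i$ as a $k_0$-definable holomorphic function of the remaining coordinates near $a$, witnessing $a_i \in \mathrm{hcl}(\{a_j : j \neq i\})$.

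The exchange property is the substantial point, and I would follow \cite{Kirby-alg, Jones-Kirby-Servi, LGKJS}, where $\mathrm{hcl}$ is shown to be a pregeometry for o-minimal structures admitting analytic cell decomposition. It must be stressed that exchange cannot simply be inherited from the full o-minimal definable closure on $\mathbb{R}^2 \cong \mathbb{C}$: that closure includes non-holomorphic definable functions such as complex conjugation and is therefore a different, coarser closure operator, so exchange for $\mathrm{hcl}$ genuinely uses the complex-analytic structure. The argument instead attaches to a tuple the dimension of the germ of the definable holomorphic maps through it, shows this germ-dimension is additive by means of the implicit system of part (b) --- which near an $\mathrm{hcl}$-independent point is nonsingular and can be solved for any chosen coordinate --- and then reads off exchange from additivity.

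Finally, part (b) is the main theorem of \cite{Wilkie} specialised to $\mathbb{R}_{\mathrm{RE}}$, and two specialisations must be carried out. First, since $\mathbb{R}_{\mathrm{RE}}$ is generated over the semialgebraic structure by the restricted complex exponential, Wilkie's implicit system may be taken with defining equations $F_i$ in $\mathcal A_{n+m} = \mathbb{C}[z,e^z,w,e^w]$ and coefficients in $k_0$, the passage from the restricted exponential to the entire $e^z$ being harmless in a fixed neighbourhood of $b$. Second, the hypothesis that $b$ is $\mathrm{hcl}$-independent places $b$ outside every proper $k_0$-definable holomorphic hypersurface, in particular outside the discriminant locus where Wilkie's nonsingular implicit definition degenerates; this is exactly what yields the nonvanishing of $\det\big((\partial F_i/\partial w_j)_{i,j\le m}\big)$ at the point. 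The main obstacle throughout is the exchange property of (a): it is the only axiom not visible from a single implicit definition, and constructing the additive germ-dimension it requires is where Wilkie's theorem and the holomorphic (as opposed to merely o-minimal definable) nature of the closure are indispensable.
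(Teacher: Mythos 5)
Your part (a) is essentially sound and matches the paper's treatment: the closure axioms are routine, and your characterisation of $\mathrm{hcl}$-independence (take a minimal-order nonvanishing derivative of a witnessing $F$, then apply the definable implicit function theorem) is exactly the argument of \cite[Subsection 2.2]{Wilkie}, to which the paper simply defers; deferring exchange to \cite{Kirby-alg, Jones-Kirby-Servi, LGKJS} is also legitimate, although your proposed architecture --- deriving exchange \emph{from} part (b) via germ-dimension additivity --- inverts the logical order of the sources, where the pregeometry property is established before, and used in, the proof of the main local definability theorem.

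The genuine gap is in part (b). Wilkie's main theorem does not output ``an implicit system'' that one then specialises: it states that the germ of a definable holomorphic function at an $\mathrm{hcl}$-generic point is obtained by \emph{finitely many iterated applications} of composition, Schwarz reflection, partial differentiation and implicit definition, starting from the generating functions. Your two ``specialisations'' (restricted versus entire exponential; genericity forcing the Jacobian condition) therefore have nothing to act on yet: the passage from Wilkie's iterated-operations description to a \emph{single} nondegenerate Khovanskii system with equations in $\mathcal A_{n+m} = \mathbb{C}[z,e^z,w,e^w]$ is precisely the refinement proved in \cite[Lemma 4.3]{LGKJS}, which shows that the class of function tuples implicitly defined by such systems is closed under each of Wilkie's four operations. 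This closure is nontrivial --- most visibly for Schwarz reflection, which is anti-holomorphic and which your sketch never mentions, and for composition, where two Khovanskii systems must be merged into one while preserving nonvanishing of the Jacobian determinant. Relatedly, the Jacobian condition is not obtained \emph{a posteriori} by placing the generic point outside a ``discriminant locus'' of a fixed system; it is carried along inductively as part of the definition of $m$-implicit definability at each step of the construction. The fix is simply to invoke \cite[Lemma 4.3]{LGKJS} for (b), as the paper does; your remaining observations (locality making the restricted exponential suffice near $b$, and the $k_0$-rationality of coefficients) then go through.
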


\begin{proof} 
The first property is proved (in greater generality) in \cite[Subsection 2.2]{Wilkie}. Similarly, in an o-minimal structure $\mathcal R$ generated by a class $\mathcal A$ of holomorphic functions, the main result of \cite{Wilkie} states that the germ of a definable function around an $\mathrm{hcl}$-generic tuple $(b_1,\ldots,b_n)$  can be obtained using only finitely many applications of composition, Schwarz reflection, taking partial derivatives and extracting implicit functions. The refinement formulated in (b) for the o-minimal structure $\mathbb{R}_{\mathrm{RE}}$ is obtained in \cite{LGKJS} by showing that the class of holomorphic functions obtained from such Khovanskii systems is stable under all these operations, see \cite[Lemma 4.3]{LGKJS} and its proof. 
\end{proof} 

\subsection{$\mathrm{hcl}$-independent holomorphic functions} 
Based on Fact \ref{fact-Wilkie-complete} (a), we introduce the following definition for holomorphic functions of one variable.

\begin{defn} 
Holomorphic functions $f_1,\ldots,f_n$ defined on the same complex domain $U$ are {\em $\mathrm{hcl}$-independent over $\mathbb{C}$} if for any subdomain $V$ of $U$, any connected open subset $W$ of $\mathbb{C}^n$  such that $(f_1,\ldots,f_n)(V) \subset W$ and any $F$ definable holomorphic function on $W$, we have:
$$ F \circ (f_{1\mid V}, \ldots, f_{n \mid V}) = 0 \Rightarrow F = 0.$$
\end{defn}

The proof of the following lemma is analogous to Proposition 3.3 from \cite{Jones-Kirby-Servi}.

\begin{lem} \label{lemma-generic-points}

Let $f_1,\ldots,f_n$ be holomorphic functions defined on some complex domain $U$ which are $\mathrm{hcl}$-independent over $\mathbb{C}$ and let $k_0$ be a countable subfield of $\mathbb{C}$. Then the analytic curve $(f_1,\ldots,f_n)(U)$ passes through an $\mathrm{hcl}$-generic point, that is, there exists $a \in U$ such that the values
$f_1(a),\ldots,f_n(a)$ are $\mathrm{hcl}$-independent.
\end{lem}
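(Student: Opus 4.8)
The plan is to exhibit the generic parameter by a counting argument. Writing $\gamma = (f_1,\dots,f_n)\colon U \to \mathbb{C}^n$ for the analytic curve, I will show that the set of \emph{bad} parameters $a \in U$ — those for which the value $\gamma(a)$ fails to be $\mathrm{hcl}$-generic — is countable. Since $U$ is a nonempty open subset of $\mathbb{C}$, hence uncountable, this immediately produces a parameter $a$ with $f_1(a),\dots,f_n(a)$ $\mathrm{hcl}$-independent. The whole argument rests on the observation recorded just before the statement that, because $k_0$ is countable, the collection of $k_0$-definable holomorphic functions is countable.

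Concretely, I would enumerate the countably many $k_0$-definable holomorphic functions of $n$ variables and, after restricting each to the connected components of its domain (again only countably many), reduce to the following local claim. Fix a \emph{nonzero} $k_0$-definable holomorphic function $F$ on a connected open set $W \subseteq \mathbb{C}^n$. The preimage $\gamma^{-1}(W) \cap U$ is open in $U$, so it is a countable disjoint union of subdomains $V_j$. On each $V_j$ the composite $F \circ \gamma|_{V_j}$ is a holomorphic function of one variable, and here the key input is the $\mathrm{hcl}$-independence of $f_1,\dots,f_n$ over $\mathbb{C}$: applied to the subdomain $V_j$, the connected set $W$, and the function $F$ (which, being $k_0$-definable, is in particular $\mathbb{C}$-definable), it gives $F \circ \gamma|_{V_j} = 0 \Rightarrow F = 0$, so since $F \neq 0$ we get $F \circ \gamma|_{V_j} \not\equiv 0$. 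By the identity theorem, the zero set of a nonvanishing one-variable holomorphic function is discrete, hence countable; thus the bad set $Z_F = \{ a \in U : \gamma(a) \in W,\ F(\gamma(a)) = 0 \}$ attached to this $F$ is a countable union, over the $V_j$, of countable sets.

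Summing over the countably many functions $F$ and their countably many domain-components, the total bad set $\bigcup_F Z_F$ is countable. Any $a \in U \setminus \bigcup_F Z_F$ then satisfies: for every $k_0$-definable holomorphic $F$ with $\gamma(a) \in \mathrm{dom}(F)$, if $F(\gamma(a)) = 0$ then the relevant restriction of $F$ vanishes identically, which by Fact~\ref{fact-Wilkie-complete}(a) is exactly the assertion that $f_1(a),\dots,f_n(a)$ are $\mathrm{hcl}$-independent. I expect the only genuine subtlety — the hard part — to be the bookkeeping needed to pass from functions $F$ defined on connected sets $W$ (as required by the definition of $\mathrm{hcl}$-independence) to arbitrary $k_0$-definable functions with possibly disconnected domains: one handles this by treating each connected component of each domain separately and discarding those components on which $F$ vanishes identically, which leaves the countability untouched. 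Everything else is the routine combination of the identity theorem with the countability of the $k_0$-definable family.
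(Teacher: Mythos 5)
Your proof is correct and follows essentially the same counting argument as the paper: the set of bad parameters is a countable union, over the countably many $k_0$-definable functions, of the (countable, by the identity theorem) zero sets of the nonvanishing one-variable composites $F\circ(f_1,\ldots,f_n)$, with the $\mathrm{hcl}$-independence hypothesis supplying the nonvanishing on each component. The only divergence is bookkeeping: the paper indexes its countable family by boxes with $\mathbb{Q}^{alg}$-coordinates and restricts $U$ so that the $f_i$ become o-minimally definable in order to get \emph{finitely} many components of each preimage, whereas you simply use that an open subset of $\mathbb{C}$ has at most countably many components, which suffices and avoids that restriction.
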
 

\begin{proof} 
Consider the countable set $\mathcal S$ of all the pairs $(W,F)$ where $W \subset \mathbb{C}^n$ is box with coordinates in $\mathbb{Q}^{alg}$ meeting nontrivially the analytic curve  $(f_1,\ldots,f_n)(U)$ and $F$ is an $k_0$-definable holomorphic function on $W$ which is not identically zero.

Without loss of generality, we may assume up to restricting $U$ that $f_1,\ldots,f_n$ are defined in some o-minimal structure extending semialgebraic geometry. It follows that for any pair $(W,F) \in \mathcal S$
\begin{itemize} 
\item the open subset $\lbrace a \in U \mid (f_1(a),\ldots, f_n(a)) \in W \rbrace$ has finitely many connected components $V_1,\ldots V_p$,
\item on each $V_j$, by $\mathrm{hcl}$-independence of the functions, we have that $F \circ (f_1,\ldots,f_n) \neq 0$. As it is an analytic function, the set of points $a \in V_j$ such that this function vanishes is countable. 
\end{itemize}
It follows that 
$$\mathcal E_{W,F} := \lbrace a \in U \mid f_1(a),\ldots, f_n(a) \in \mathrm{dom}(F) \text{ and } F(f_1(a),\ldots,f_n(a)) = 0 \rbrace$$
is countable and so is the union over all pairs $(W,F) \in \mathcal S$. Taking $b$ outside of this countable set, we conclude that $b$ is $\mathrm{hcl}$-generic.
\end{proof}

The following argument is extracted from Wilkie's proof of Fact \ref{fact-Wilkie-complete} (a). 

\begin{Prop} \label{prop-from relation to functions}
Let $f_1,\ldots,f_n,f$ be holomorphic functions. Assume that $f_1,\ldots,f_n$ are $\mathrm{hcl}$-independent over $\mathbb{C}$ and that $f_1,\ldots,f_n,f$ are not. Then there exists a subdomain $V$ of $U$, $W$ a connected open subset of $\mathbb{C}^n$ and a definable holomorphic function $G$ on $W$ such that
$$ f_{\mid V} = G \circ (f_{1 \mid V},\ldots,f_{n \mid V}) $$
\end{Prop}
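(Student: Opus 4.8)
The plan is to apply the holomorphic implicit function theorem to a witness of the dependence of $f_1,\ldots,f_n,f$, after first arranging that its partial derivative in the last variable does not degenerate along the analytic curve. Since $f_1,\ldots,f_n,f$ are $\mathrm{hcl}$-dependent over $\mathbb{C}$, there is a subdomain $V_0 \subseteq U$, a connected open $W_0 \subseteq \mathbb{C}^{n+1}$ containing $(f_1,\ldots,f_n,f)(V_0)$, and a nonzero $k_0$-definable holomorphic $F = F(z_1,\ldots,z_n,w)$ with $F(f_1,\ldots,f_n,f) \equiv 0$ on $V_0$. Writing $G_j := \partial_w^j F$, each of which is again $k_0$-definable holomorphic, I note that $G_0 = F$ vanishes along the curve. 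The first and crucial step is to produce the least $m \geq 1$ such that $G_m$ does \emph{not} vanish identically along the curve.

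To see that such an $m$ exists, I use Lemma \ref{lemma-generic-points}, whose proof in fact shows that the $\mathrm{hcl}$-generic points form the complement of a countable set and are therefore dense; hence I may pick an $\mathrm{hcl}$-generic point $a \in V_0$, so that $f_1(a),\ldots,f_n(a)$ are $\mathrm{hcl}$-independent. If every $G_j$ vanished along the curve, then evaluating at $z = a$ the holomorphic function $w \mapsto F(f_1(a),\ldots,f_n(a),w)$ would have all its $w$-derivatives vanishing at $w = f(a)$, hence would vanish identically near $f(a)$. Expanding $F$ in powers of $w$ around a fixed base point $w_\ast \in k_0$ lying in the $w$-range of $W_0$, the coefficients $c_k(z) = \tfrac{1}{k!}\partial_w^k F(z,w_\ast)$ are $k_0$-definable holomorphic functions on a domain in $\mathbb{C}^n$; they would all vanish at $(f_1(a),\ldots,f_n(a))$, while $F \not\equiv 0$ on the connected set $W_0$ forces some $c_k \not\equiv 0$. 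This contradicts the characterization of $\mathrm{hcl}$-independence in Fact \ref{fact-Wilkie-complete}(a). Since $G_0$ does vanish along the curve, the least such $m$ satisfies $m \geq 1$.

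With $m$ in hand, $G_{m-1}$ vanishes along the curve while $G_m = \partial_w G_{m-1}$ does not, so there is a point $b \in V_0$ with $G_{m-1}(f_1(b),\ldots,f_n(b),f(b)) = 0$ and $\partial_w G_{m-1}(f_1(b),\ldots,f_n(b),f(b)) \neq 0$. The holomorphic implicit function theorem then yields a connected open $W \subseteq \mathbb{C}^n$ around $(f_1(b),\ldots,f_n(b))$ and a holomorphic function $G$ on $W$ with $G(f_1(b),\ldots,f_n(b)) = f(b)$ and $G_{m-1}(z,G(z)) \equiv 0$ on $W$; this $G$ is definable, since its graph is the relevant local branch of the definable zero set of the definable function $G_{m-1}$. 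Finally, on a small enough subdomain $V \ni b$ with $(f_1,\ldots,f_n)(V) \subseteq W$, both $f$ and $G \circ (f_1,\ldots,f_n)$ solve $G_{m-1}(f_1,\ldots,f_n,\cdot) = 0$ and agree at $b$, so by the local uniqueness of solutions in the implicit function theorem they coincide on $V$, giving $f_{\mid V} = G \circ (f_{1\mid V},\ldots,f_{n\mid V})$.

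The main obstacle is precisely the passage handled in the second and third paragraphs: a naive application of the implicit function theorem to $F$ itself fails whenever $\partial_w F$ vanishes along the curve, and one must descend to the correct derivative $G_{m-1}$. Controlling this degeneration is where the genericity input of Lemma \ref{lemma-generic-points} combines with the coefficientwise use of $\mathrm{hcl}$-independence from Fact \ref{fact-Wilkie-complete}(a); the remainder is a routine application of the implicit function theorem and the uniqueness of local holomorphic solutions.
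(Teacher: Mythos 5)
Your argument follows the paper's proof essentially step for step: both locate the minimal $m \geq 1$ such that $\partial_w^m F$ does not vanish along the curve by combining the genericity supplied by Lemma \ref{lemma-generic-points} with Fact \ref{fact-Wilkie-complete}(a), and then apply the implicit function theorem to $\partial_w^{m-1}F$ (the paper packages this last reduction via the auxiliary functions $F_\pm$, but using $\partial_w^{m-1}F$ directly, as you do, is equivalent and a little cleaner). The one place you diverge is inside the claim: where the paper uses the density of $\mathrm{hcl}$-generic points to pick $b_{n+1}$ near $f(a)$ with $b_1,\ldots,b_n,b_{n+1}$ $\mathrm{hcl}$-independent and contradicts Fact \ref{fact-Wilkie-complete}(a) directly, you instead Taylor-expand $F$ in $w$ about a $k_0$-point $w_\ast$. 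That variant is fine, but as written it needs a small patch: you must first shrink $W_0$ to a polydisc about a point of the curve (and choose $w_\ast \in k_0$ in the corresponding disc, which is possible since $k_0 \supseteq \mathbb{Q}^{\mathrm{alg}}$ is dense), since otherwise $(f_1(a),\ldots,f_n(a),w_\ast)$ need not lie in $W_0$, and the vanishing of $F(f_1(a),\ldots,f_n(a),\cdot)$ to infinite order at $f(a)$ need not propagate across the possibly disconnected $w$-slice to force $c_k(f_1(a),\ldots,f_n(a)) = 0$.
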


\begin{proof} 
Take $F$ a nontrivial definable relation between $f_1,\ldots,f_n,f$.
We first claim that for some $i$
$$  \frac{\partial^i F} {\partial Y_{n+1}^i}(f_1,\ldots,f_n,f) \neq 0$$
Assume otherwise and take by the previous lemma a point $a$ such that $b_1 = f_1(a),\ldots, b_n = f_n(a)$ are exponentially algebraically independent over a countable field $k_0$ of definition of $F$. Then we have 
$$ \frac{\partial^i F} {\partial Y_{n+1}^i}(b_1,\ldots,b_n,b) = 0 \text{ for all } i$$
where $b = f(a)$. It follows that the analytic function 
$$ z \mapsto F(b_1,\ldots,b_n,z) $$
vanishes identically in a neighborhood $U_b$ of $b$. Since $\mathrm{hcl}$-generic elements are dense in $\mathbb{C}$, we can find $b_{n+1}$ in $U_b$ which is $\mathrm{hcl}$-independent with $b_1,\ldots,b_n$ so that $F(b_1,\ldots,b_{n+1}) = 0$. This contradicts Fact \ref{fact-Wilkie-complete} (a) and finishes the proof of the claim.

Now take $i$ minimal with the property that 
$$  \frac{\partial^i F} {\partial Y_{n+1}^i}(f_1,\ldots,f_n,f) \neq 0.$$
Since $F \neq 0$, at least one of the two functions 
$$ F_\pm = \pm F + \frac{\partial^{i-1} F} {\partial Y_{n+1}^{i-1}}$$
is nonzero, vanishes on $f_1,\ldots,f_n,f$ and its last partial derivative applied to $f_1,\ldots,f_n$ does not vanish. So we may assume $i = 1$. The existence of the definable holomorphic function $G$ then follows from the stability of the class of definable functions under the implicit function theorem. 
\end{proof}

\subsection{Proof of Theorem C} Note first that, since the constant field of a differentially closed field $K$ is exponentially algebraically closed, we have that for any holomorphic function $f$ of one variable, $f \in \mathrm{ecl}^{K}(\mathbb{C})$ if and only if $f$ is constant if and only if $f \in \mathrm{hcl}(\mathbb{C})$. This deals with the case $n = 1$ of the following arguments which are proved by induction.

\begin{Prop}\label{o-minimal direction1}
If holomorphic functions of one variable  $f_1,\ldots,f_{n}$ are $\mathrm{hcl}$-dependent over $\mathbb{C}$ then  they are also  exponentially algebraically dependent over $\mathbb{C}$.
\end{Prop}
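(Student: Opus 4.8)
The plan is to argue by induction on $n$, the case $n=1$ having been disposed of above. So suppose $f_1,\ldots,f_n$ are $\mathrm{hcl}$-dependent over $\mathbb{C}$. If the subtuple $f_1,\ldots,f_{n-1}$ is already $\mathrm{hcl}$-dependent, then by the induction hypothesis it is exponentially algebraically dependent over $\mathbb{C}$, and since $\mathrm{ecl}^K(\mathbb{C},-)$ is a pregeometry (Fact \ref{fact-pregeometry}) the larger tuple $f_1,\ldots,f_n$ is dependent too. We may therefore assume $f_1,\ldots,f_{n-1}$ are $\mathrm{hcl}$-independent, and Proposition \ref{prop-from relation to functions} produces a subdomain $V$, a connected open $W\subseteq\mathbb{C}^{n-1}$ and a definable holomorphic $G$ on $W$ with $f_n = G\circ(f_1,\ldots,f_{n-1})$ on $V$. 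The whole task thus reduces to showing $f_n\in\mathrm{ecl}^K(\mathbb{C},f_1,\ldots,f_{n-1})$.

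First I would make $G$ explicit near a generic point. By Lemma \ref{lemma-generic-points} there is $a\in V$ at which $b:=(f_1(a),\ldots,f_{n-1}(a))$ is $\mathrm{hcl}$-generic, so Fact \ref{fact-Wilkie-complete}(b) supplies an $m$, holomorphic functions $g_1=G,g_2,\ldots,g_m$ near $b$ and polynomials $F_1,\ldots,F_m\in\mathbb{C}[z,e^z,w,e^w]$ with
$$F_i\bigl(z,e^z,g_1(z),\ldots,g_m(z),e^{g_1(z)},\ldots,e^{g_m(z)}\bigr)=0 \quad (i=1,\ldots,m)$$
and nonvanishing Jacobian $\det\bigl((\partial F_i/\partial w_j)_{i,j\le m}\bigr)\neq 0$ at $b$. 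Pulling back along $(f_1,\ldots,f_{n-1})$ and shrinking $V$, these become identities of holomorphic functions on $V$ among the $f_j$, the $e^{f_j}$ and the functions $u_i:=g_i\circ(f_1,\ldots,f_{n-1})$, $e^{u_i}$, with $u_1=f_n$, all lying in $\mathcal M(V)\subseteq K$. Restricting $U$ to $V$ does not affect exponential algebraic dependence over $\mathbb{C}$, since that property depends only on the differential field generated by the germs and not on the choice of $K$ (cf. the remark preceding Theorem \ref{theorem-connection-o-minimality}), so we may read all of these as relations inside $K$.

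The heart of the argument is a predimension computation. Since $e^{f_j}\in\mathrm{ecl}^K(\mathbb{C},f_j)$ for each $j$ (adjoining one member of a $\Gamma$-pair to the hull of the other is a predimension-zero extension, cf.\ Corollary \ref{corollary-predimension}(i)), the self-sufficient field $k'':=\mathrm{Hull}_K(\mathbb{C}(f_1,\ldots,f_{n-1},e^{f_1},\ldots,e^{f_{n-1}})^{alg})$ satisfies $\mathrm{ecl}^K(k'')=\mathrm{ecl}^K(\mathbb{C},f_1,\ldots,f_{n-1})$ and contains all the $f_j,e^{f_j}$; hence the $F_i$ become polynomial relations over $k''$ among $u_1,\ldots,u_m,e^{u_1},\ldots,e^{u_m}$. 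Setting $N:=k''(u_1,\ldots,u_m,e^{u_1},\ldots,e^{u_m})^{alg}$, it suffices by Corollary \ref{corollary-predimension}(i) to prove $\delta(N/k'')=0$. The nonvanishing Jacobian in the $w$-variables forces $u_1,\ldots,u_m$ to be algebraic over $k''(e^{u_1},\ldots,e^{u_m})$, so if $r:=\mathrm{td}(N/k'')$ then, after reindexing, $e^{u_1},\ldots,e^{u_r}$ are algebraically independent over $k''$. I then claim the pairs $(e^{u_i},u_i)$, $i\le r$, are $\mathbb{Q}$-linearly independent in $\Gamma(N)/\Gamma(k'')$: a nontrivial relation would, after clearing denominators in the divisible group $\Gamma$, yield a nontrivial monomial $\prod_i(e^{u_i})^{p_i}\in (k'')^{\times}$, contradicting the algebraic independence of $e^{u_1},\ldots,e^{u_r}$ over $k''$. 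Thus $\mathrm{ldim}_\mathbb{Q}(\Gamma(N)/\Gamma(k''))\ge r=\mathrm{td}(N/k'')$, whence $\delta(N/k'')\le 0$; and $\delta(N/k'')\ge 0$ because $k''$ is self-sufficient. Therefore $\delta(N/k'')=0$ and $f_n=u_1\in N$ lies in $\mathrm{ecl}^K(k'')=\mathrm{ecl}^K(\mathbb{C},f_1,\ldots,f_{n-1})$, as required.

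The step I expect to be delicate is precisely this predimension balance: matching the transcendence-degree upper bound coming from the Jacobian with the lower bound on the $\mathbb{Q}$-dimension of the exponential forms, where self-sufficiency of $k''$ is exactly what upgrades the inequality $\delta(N/k'')\le 0$ to the equality $\delta(N/k'')=0$. The auxiliary functions $g_2,\ldots,g_m$ and their exponentials introduced by Wilkie's theorem must be carried along throughout, and the whole point is that the nonsingular Khovanskii system prevents the transcendence degree from outrunning the number of available exponential relations.
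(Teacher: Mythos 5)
Your overall architecture is exactly the paper's: induct on $n$, reduce via Proposition \ref{prop-from relation to functions} to $f_n=G\circ(f_1,\ldots,f_{n-1})$, invoke Lemma \ref{lemma-generic-points} and Fact \ref{fact-Wilkie-complete}(b) to get a nonsingular Khovanskii system for $G=G_1,\ldots,G_m$, and reduce everything to showing $\delta(N/k'')=0$ over a self-sufficient base. The gap is in how you establish $\delta(N/k'')\le 0$. The claim that the nonvanishing Jacobian ``forces $u_1,\ldots,u_m$ to be algebraic over $k''(e^{u_1},\ldots,e^{u_m})$'' is false. The Jacobian in Fact \ref{fact-Wilkie-complete}(b) is the \emph{total} derivative in $w_j$, i.e.\ $\partial_j=\frac{\partial}{\partial w_j}+e^{w_j}\frac{\partial}{\partial e^{w_j}}$ (this is how the paper's own proof reads condition (\ref{equation-nonvanishingoftheJacobian})); it is not the partial derivative with the $e^{w}$-slots frozen, and only the latter would support your implicit-algebraicity deduction. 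Concretely, take $G$ a branch of $\log$, $m=1$, $F_1=e^{w}-z$: the system is nonsingular since $\partial_1F_1=e^{w}\neq0$, yet $u_1=\log f_1$ satisfies $e^{u_1}=f_1\in k''$, so $u_1$ is transcendental over $k''(e^{u_1})=k''$ whenever $\log f_1\notin k''$ (e.g.\ $f_1=z$, $k''=\mathbb{C}(z,e^z)^{alg}$). In this case your $r$ equals $0$ while $\mathrm{td}(N/k'')=1$, so your count would wrongly force $u_1\in k''$; the pair $(e^{u_1},u_1)=(z,\log z)$ is nontrivial in $\Gamma(N)/\Gamma(k'')$ but is invisible to your criterion, which only registers pairs whose exponentials are algebraically independent over $k''$. (If instead you are reading the Jacobian literally as $\partial F_i/\partial w_j$ with $e^w$ an independent variable, then your deduction is sound but the hypothesis is not what Wilkie/\cite{LGKJS} provide, and it fails to cover $\log$ at all.)

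The inequality $\delta(N/k'')\le0$ is true, but the argument has to be infinitesimal rather than field-theoretic. Either follow the paper: show $\mathrm{Der}_{\BE}(N/k'')=0$ by applying a $\BE$-derivation $D$ (so $D(e^{u_i})=e^{u_i}D(u_i)$) to the relations $P_s(u,e^{u})=0$, which yields the linear system $\sum_i\partial_i(P_s)(u,e^{u})\,D(u_i)=0$ with invertible matrix, forcing $D(u_i)=0$; then Lemma \ref{lemma-exponentialalgebra} gives $\delta(N/k'')\le\mathrm{ldim}_N(\mathrm{Der}_{\BE}(N/k''))=0$. Dually, one can differentiate $P_s(u,e^u)=0$ in $\Omega^1(N/k'')$ and use invertibility of $(\partial_i(P_s))$ to show that the exponential forms $\theta_i=d(e^{u_i})/e^{u_i}-du_i$ span $\Omega^1(N/k'')$ over $N$, whence $\mathrm{ldim}_{\mathbb{Q}}(\Gamma(N)/\Gamma(k''))\ge\mathrm{td}(N/k'')$. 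Your remaining steps (the reduction, the choice of self-sufficient base, the final appeal to Corollary \ref{corollary-predimension}(i)) are fine once this is repaired.
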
 

\begin{proof} 
Work by induction and assume that the statement holds for some $n$ and consider holomorphic functions $f_1,\ldots,f_{n+1}$ which are $\mathrm{hcl}$-dependent over $\mathbb{C}$. If already $f_1,\ldots,f_n$ are $\mathrm{hcl}$-dependent over $\mathbb{C}$, then the induction hypothesis implies that $f_1,\ldots,f_{n}$ are exponentially algebraically dependent over $\mathbb{C}$. So we may assume without loss of generality that $f_1,\ldots,f_n$ are both $\mathrm{hcl}$-independent over $\mathbb{C}$ and $\mathrm{ecl}$-independent over $\mathbb{C}$. 

Hence, Proposition \ref{prop-from relation to functions} applies and after replacing $U$ by a subdomain, we can write 
\begin{equation}\label{equality-proof-equivalence} f_{n+1} = G \circ (f_1,\ldots,f_n) 
\end{equation}
where $G$ is a definable holomorphic function defined on some $W \subset \mathbb{C}^n$ containing $(f_1,\ldots,f_n)(U)$. Fix $k_0$ a countable field of definition of $G$. By Lemma \ref{lemma-generic-points}, we can find $a \in U$ such that the values $b_1 = f_1(a), ... , b_n = f_n(a)$ are $\mathrm{hcl}$-independent. Hence Fact \ref{fact-Wilkie-complete} (b) applies at the point $(b_1,\ldots,b_n)$ and we can find $G = G_1,\ldots,G_m$ satisfying the conclusion of Fact \ref{fact-Wilkie-complete} (b). Up to restricting the domain $U$ again, the compositions
$$g_i = G_i \circ (f_1,\ldots, f_n) \text{ and } h_i = e^{g_i}$$
are well-defined holomorphic functions on $U$ for $i =1,\ldots,m$. Set 
$$ k = \mathbb{C}(f_1,\ldots,f_n,e^{f_1},\ldots,e^{f_n})^{alg} \text{ and } L = k(g_1,\ldots,g_m, h_1,\ldots,h_m)^{alg}.$$

\begin{claim} 
$\delta(L/k) = 0.$
\end{claim}

\begin{proof}[Proof of the claim]

By Lemma \ref{lemma-exponentialalgebra}, we need to show that $\mathrm{Der}_{\BE}(L/k)$ is the trivial vector space so take $D \in \mathrm{Der}_{\BE}(L/k)$.  Since $G_1,\ldots,G_m$ are implicitly defined from $\mathbb{C}[z,w,e^z,e^w]$ at the point $b$, it follows that we can find polynomials $$P_1,\ldots,P_m \in k[W_1,\ldots,W_m, E_1,\ldots,E_m]$$ such that $P_s(g_1,\ldots,g_m,h_1,\ldots,h_m) = 0$. Viewing the $P \in k[W_1,\ldots, W_m, E_1,\ldots, E_m]$ as an exponential polynomial given by:
$$ P \mapsto \overline{P} =  P(W_1,\ldots,W_n,e^{W_1},\ldots, e^{W_n})  $$
the partial derivative along the $W_i$ can be read as 
$$ \frac{\partial \overline{P}}{\partial W_i} = \overline{\frac{\partial P} {\partial W_i} + E_i \cdot \frac {\partial P} {\partial E_i}}.$$ 
Setting $\partial_i = \frac{\partial} {\partial W_i} + E_i \cdot \frac {\partial} {\partial E_i} \in \mathrm{Der}(k[W_1,\ldots,W_m,E_1,\ldots,E_m]/k)$ the condition on the nonvanishing of the Jacobian in Fact  \ref{fact-Wilkie-complete} (b) can be written as
\begin{equation}\label{equation-nonvanishingoftheJacobian}
 \mathrm{det}\Big( \partial_i(P_s) (g_1,\ldots,g_m,h_1,\ldots,h_m) \Big) \neq 0 \in L.
 \end{equation}
Now since $D$ is a $\BE$-derivation, we have that $D(h_i) = h_i D(g_i)$ so that differentiating $P_s(g_1,\ldots,g_m,h_1,\ldots,h_m) = 0$ we get 
\begin{eqnarray*}
0  & = &  \sum_{i = 1}^m \frac{\partial P_s}{\partial W_i}(g,h) D(g_i) + \sum_{i = 1}^m \frac{\partial P_s}{\partial E_i}(g,h) h_i D(g_i) \\
& = & \sum_{i = 1}^m \Big( \frac{\partial P_s}{\partial W_i}(g,h) + \frac{\partial P_s}{\partial E_i}(g,h) h_i \Big) D(g_i) \\
& = &  \sum_{i = 1}^m \partial_i(P_s)(g,h) D(g_i).
\end{eqnarray*}
Since this is true for any $s = 1,\ldots, m$, the vector $D(g_1),\ldots, D(g_m)$ lies in the kernel of the matrix $(\partial_i(P_s)(g,h))_{i,s}$ which is trivial by (\ref{equation-nonvanishingoftheJacobian}). It follows that $D(g_1) = \cdots = D(g_m) $ so that $D(h_1) = \ldots = D(h_m) = 0$ and $D = 0$ as required.
\end{proof}
To conclude the proof, note that since $f_1,\ldots,f_n$ are exponentially algebraically independent over $C$, $k$ is a self-sufficient subfield of $K$. Since Equation (\ref{equality-proof-equivalence}) implies that $f_{n+1} = g_1 \in L$ and we have shown that $\delta(L/k) = 0$, we conclude that $f_{n+1} \in \mathrm{ecl}^{K}(\C, f_1,\ldots,f_n)$. This means that $f_1,\ldots, f_{n+1}$ are exponentially algebraically dependent over $\mathbb{C}$.
\end{proof} 

\begin{Prop} \label{o-minimal direction2}
Assume that $f_1,\ldots,f_n$ are exponentially algebraically dependent over $\mathbb{C}$ then they are also $\mathrm{hcl}$-dependent over $\mathbb{C}$.
\end{Prop}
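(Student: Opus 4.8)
The plan is to run an induction on $n$ dual to the one in Proposition \ref{o-minimal direction1}, reducing to the situation where $f_1,\ldots,f_{n-1}$ are exponentially algebraically independent and $f_n$ is exponentially algebraic over them. First I would dispose of the case where some proper subtuple is already exponentially algebraically dependent: reordering, if $f_1,\ldots,f_{n-1}$ are $\mathrm{ecl}$-dependent over $\mathbb{C}$, the induction hypothesis makes them $\mathrm{hcl}$-dependent, and a relation among $f_1,\ldots,f_{n-1}$ is \emph{a fortiori} a relation among $f_1,\ldots,f_n$. So I may assume $f_1,\ldots,f_{n-1}$ are $\mathrm{ecl}$-independent over $\mathbb{C}$; by the contrapositive of Proposition \ref{o-minimal direction1} they are then also $\mathrm{hcl}$-independent, and exponential algebraic dependence of the full tuple forces $f_n \in \mathrm{ecl}^{K}(\mathbb{C}, f_1,\ldots,f_{n-1})$.

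Next I would pass to the self-sufficient field $k = \mathbb{C}(f_1,\ldots,f_{n-1}, e^{f_1},\ldots,e^{f_{n-1}})^{alg}$, whose self-sufficiency follows from the $\mathrm{ecl}$-independence of the $f_i$ exactly as in the proof of Proposition \ref{o-minimal direction1}. Since $e^{f_i} \in \mathrm{ecl}^K(\mathbb{C}, f_i)$, we still have $f_n \in \mathrm{ecl}^K(k)$, so Corollary \ref{corollary-predimension}(i) provides a finite-dimensional substructure $N \ni f_n$ with $\delta(N/k) = 0$; I would take $N = \mathrm{Hull}_K(f_n/k)$. The equality $\delta(N/k) = \mathrm{td}(N/k) - \mathrm{ldim}_{\mathbb{Q}}(\Gamma(N)/\Gamma(k)) = 0$ lets me choose $x_1,\ldots,x_m \in N$ generating $\Gamma(N)/\Gamma(k)$, algebraically independent over $k$ with $N = k(x_1,\ldots,x_m)^{alg}$ and $\mathrm{td}(N/k) = m$, and $y_i = e^{x_i}$ the corresponding exponentials, so that each $y_i$ is algebraic over $k(x)$ and $f_n \in k(x_1,\ldots,x_m)^{alg}$. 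By Lemma \ref{lemma-exponentialalgebra} and self-sufficiency of $k$, the vanishing $\delta(N/k) = 0$ is equivalent to $\mathrm{Der}_{\BE}(N/k) = \{0\}$. Running the computation from the Claim in Proposition \ref{o-minimal direction1} in reverse, I take the defining relations $P_s(x_1,\ldots,x_m,y_1,\ldots,y_m) = 0$ of the $y_s$ over $k(x)$ and read $\mathrm{Der}_{\BE}(N/k) = \{0\}$ as the statement that the linear system $\sum_i \partial_i(P_s)(x,y)\,d_i = 0$, with $\partial_i = \partial/\partial W_i + E_i\,\partial/\partial E_i$, has only the trivial solution, i.e.\ $\det\bigl(\partial_i(P_s)(x,y)\bigr) \neq 0$ in $N$. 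This is precisely a Khovanskii system whose coefficients lie in $k$ and are therefore $\mathbb{R}_{\mathrm{RE}}$-definable (indeed algebraic) holomorphic functions of $(f_1,\ldots,f_{n-1})$ and their exponentials.

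Finally I would invert the extraction of Fact \ref{fact-Wilkie-complete}: solutions of a nondegenerate Khovanskii system built from the restricted exponential are definable in $\mathbb{R}_{\mathrm{RE}}$. By Lemma \ref{lemma-generic-points} applied to the $\mathrm{hcl}$-independent tuple $f_1,\ldots,f_{n-1}$, I choose $a \in U$ with $f_1(a),\ldots,f_{n-1}(a)$ $\mathrm{hcl}$-generic and lying off the nowhere dense vanishing locus of the Jacobian. Near that point the implicit function theorem solves the system for $x_1,\ldots,x_m$ as holomorphic functions of the coefficient data; since $e^{(-)}$ is $\mathbb{R}_{\mathrm{RE}}$-definable on the relevant box and the class of definable holomorphic functions is closed under composition, algebraic extension and implicit functions (Fact \ref{fact-Wilkie-complete} and its proof via \cite{LGKJS}), these are $\mathbb{R}_{\mathrm{RE}}$-definable holomorphic functions of $(f_1,\ldots,f_{n-1})$ on a subdomain $V$. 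By uniqueness of the local branch they agree with the given $x_i$, so $f_n$, being algebraic over $k(x_1,\ldots,x_m)$, equals $G(f_1,\ldots,f_{n-1})$ on $V$ for a definable holomorphic $G$, whence $f_1,\ldots,f_n$ are $\mathrm{hcl}$-dependent.

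The main obstacle I anticipate is the realization step: turning the purely algebraic equality $\mathrm{Der}_{\BE}(N/k) = \{0\}$ into a genuine Khovanskii system carried by \emph{honest holomorphic functions} on a common subdomain of $U$. The auxiliary elements $x_i, y_i$ a priori live only in the abstract extension $K$, so I must argue that the relations $P_s$ and the nonvanishing Jacobian descend to equations satisfied by holomorphic germs, and then check that the implicit-function branch produced inside $\mathbb{R}_{\mathrm{RE}}$ really coincides with the originally given germs rather than with some other solution of the system. The o-minimality of $\mathbb{R}_{\mathrm{RE}}$ and the density of $\mathrm{hcl}$-generic points are what make this branch-matching and the domain bookkeeping go through.
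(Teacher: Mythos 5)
Your strategy is in essence the paper's own: reduce by exchange to $f_n \in \mathrm{ecl}^K(\mathbb{C},f_1,\ldots,f_{n-1})$ with the first $n-1$ functions independent, extract a finite-dimensional extension of predimension zero, read $\delta=0$ as a transversality statement via $\mathrm{Der}_{\BE}=\{0\}$ (equivalently $L\cdot\Theta_{\BE}(L/k)=\Omega^1(L/k)$), and convert transversality into a locally $\mathbb{R}_{\mathrm{RE}}$-definable graph over $\mathbb{C}^{n-1}$ by the implicit function theorem. The paper packages the last step geometrically (the Zariski closure $Z$ of the curve $(f,g,h)$, the pullback of the exponential foliation, and the observation that the leaf through a generic point of the curve is a definable graph over the first factor), where you use a Khovanskii system; these are the same idea. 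But two steps are genuinely missing. The first is the realization step, which you correctly flag as ``the main obstacle'' and then do not close: the generators of $\Gamma(N)/\Gamma(k)$ are abstract elements of $K$, and nothing in your argument turns them into holomorphic functions on a subdomain of $U$. The paper dispatches this with the Seidenberg embedding theorem, which applies because $N$ is a finitely generated \emph{differential} subfield of $K$ by Lemma \ref{lemma-derivation}; without some such invocation, the point $a$, the Jacobian nonvanishing at $a$, the IFT and the branch-matching all have nothing to act on.

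The second gap is that the Khovanskii system you write down does not exist in general. You assume the generators $(y_i,x_i)$ of $\Gamma(N)/\Gamma(k)$ can be chosen so that $x_1,\ldots,x_m$ are algebraically independent over $k$ with $N=k(x_1,\ldots,x_m)^{alg}$ and each $y_s$ algebraic over $k(x)$ with minimal polynomial $Q_s$. This fails already when $N=k(e^{c})^{alg}$ for some $c\in k$ whose exponential is not in $k$: there $m=1$, every generator of $\Gamma(N)/\Gamma(k)$ has its $\Ga$-coordinate in $k$, and no $\mathbb{Q}$-linear change of basis repairs this, so there is no system of the shape you describe and no Jacobian in the $x$-variables to invert. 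In general the transcendence of $N/k$ is distributed arbitrarily between the $x$'s and the $y$'s, so one must work with the full $2m$-tuple and the $m$-dimensional variety it generates, pairing its defining ideal against the exponential distribution $v_i=y_iu_i$; that is precisely why the paper's proof expresses the nondegeneracy as local invertibility of $pr_1$ restricted to a leaf of the pulled-back foliation on $Z$, rather than as the determinant of a system solving for the $x_i$ alone. Your translation of $\mathrm{Der}_{\BE}(N/k)=\{0\}$ into a linear-algebra statement is right in spirit, but the matrix whose nonvanishing you need is not the one you wrote, and the easy degenerate cases (such as the example above) have to be absorbed into the same argument rather than treated as generic.
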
 

\begin{proof}
Assume that $f_1,\ldots,f_{n+1}$ are exponentially algebraically dependent over $\mathbb{C}$, say $$f_{n+1} \in \mathrm{ecl}^K(\mathbb{C},f_1,\ldots,f_{n}).$$
Working by induction, we may assume that $f_1,\ldots,f_{n}$ are exponentially algebraically independent over $\mathbb{C}$.  Hence, $k = \mathbb{C}(f_1,\ldots,f_{n})^{alg}$ is self-sufficient in $K$. Since $f_{n+1} \in \mathrm{ecl}^K(C,f_1,\ldots,f_{n})$, there exists a finite-dimensional $L/k$ with $\delta(L/k) = 0$. Lemma \ref{lemma-exponentialalgebra} applies and we have that 
\begin{equation} \label{equation}
L \cdot \Theta_{\BE}(L/k)  = \Omega^1(L/k).
\end{equation}
Denote by $r = \mathrm{td}(L/k)$ and consider $(g_i,h_i) \in \Gamma(L)$ generating $\Gamma(L)/\Gamma(k)$ for $i = 1,\ldots, r$. By the Seidenberg embedding theorem, we may assume that $g_i,h_i$ are holomorphic functions on a subdomain $V$ of $U$. Consider the analytic curve 
$$ \gamma: z \mapsto \Big( f_1(z),\ldots,f_{n}(z), g_1(z),\ldots,g_r(z),h_1(z),\ldots, h_r(z)\Big) \in \mathbb{C}^n \times TG_m(\mathbb{C})^r $$
and denote by $Z$ its Zariski-closure. Denote the two projections by 
$$ pr_1 : Z \rightarrow \mathbb{C}^n \text{ and } pr_2 : Z \rightarrow TG_m^r (\mathbb{C}). $$ By construction, $Z$ projects dominantly on the first factor $\mathbb{C}^n$ and the analytic curve $\gamma$ lies on a leaf $\mathcal L_0$ of the foliation $\mathcal F := \mathrm{pr_2}^\ast \mathcal E$
obtained by pullback of the exponential foliation on $TG_m(\mathbb{C})^r$ by the second projection\footnote{In other words, $\mathcal F$ is defined by the vanishing of the one-forms $\theta_i = dg_i/g_i - dh_i$ for $i = 1,\ldots, r$ obtained by pullbacks of the one-forms $\omega_i = dy_i/y_i - dx_i$ on $TG_m^r(\mathbb{C})$.}. We have the following:

\begin{itemize} 
\item [(1)] On a Zariski-open subset of $Z$, the leaves of $\mathcal F$ are locally definable in $\mathbb{R}_{\mathrm{RE}}$. Indeed, the leaves of $\mathcal E$ are locally definable in the o-minimal structure $\mathbb{R}_{\mathrm{RE}}$ and  on a nonempty Zariski-open subset of $Z$, the leaves of $\mathcal F$ are the connected components of the analytic sets of the form $pr_2^{-1}(\mathcal L)$ where $\mathcal L$ is a leaf of $\mathcal E$. 

\item[(2)] The equality (\ref{equation}) says that the foliation $\mathcal F$ is generically transverse to the projection $pr_1$, that is, for $a \in Z$ outside of a proper Zariski-closed subset of $Z$, if $\mathcal L$ is the leaf of $\mathcal F$ through $a$ then 
$$ pr_1 : \mathcal L_a \rightarrow \mathbb{C}^n $$
is locally invertible at $a$. It follows that there is a holomorphic function defined in a neighborhood of $pr_1(a)$ 
$$ F: U_a \rightarrow TG_m^r(\mathbb{C}) $$ and a neighboorhood $W_a$ of $a$ such that  
$$\mathcal L_a \cap W_a := \lbrace (z_1,\ldots,z_n, F(z_1,\ldots,z_n)) \mid (z_1,\ldots,z_n) \in U_a \rbrace.$$
\end{itemize}

Now since by construction, the analytic curve $\gamma$ is Zariski-dense in $X$, we can find $b \in U$ such that both (1) and (2) apply to $a = \gamma(b)$. It follows that the function $F$ obtained in (2) is definable in $\mathbb{R}_{\mathrm{RE}}$ in a neighborhood of $a$. Since $\gamma$ lies on the leaf through $\mathcal L_a$, we conclude that on a neighborhood of $a$
$$ F(f_1(z),\ldots,f_n(z)) = \Big( g_1(z),\ldots, g_r(z),h_1(z),\ldots, h_r(z) \Big) $$
As $f \in \mathbb{C}(g_1,\ldots,g_r,h_1,\ldots,h_r)^{alg}$, using postcomposition by an algebraic function, it follows that $$f_{n+1}(z) = \tilde{F}(f_1(z),\ldots, f_n(z))$$
in a neighborhood of $b$. Hence $f_1(z),\ldots, f_{n+1}(z)$ are not hcl-independent over $\mathbb{C}$ as required.
\end{proof}

Putting together Proposition \ref{o-minimal direction1},  Proposition \ref{o-minimal direction2} and Proposition \ref{prop-from relation to functions}, we obtain the statement of Theorem \ref{theorem-connection-o-minimality}.

\section{The theorems about integration}
In this section, we prove Theorem \ref{intro-thmA} and Theorem \ref{intro-thmB} about the integration of equations which are internal to the constants by exponentially algebraic functions. The preliminary subsections 4.1 and 4.2 are refinements of the results of Rosenlicht \cite{Rosenlicht} on invariant one-forms using the Maurer-Cartan equation. We refer the reader to \cite{BCFN} for the use of Maurer-Cartan forms in the differential-algebraic proof of the Ax-Schanuel Theorem for the $j$-function.

\subsection{The Maurer-Cartan equation} Let $k$ be an algebraically closed field of characteristic zero and let $G$ be a connected algebraic group over $k$ of dimension $n$ and let $\mathfrak{g}$ be its Lie algebra.

\begin{defn} 
An {\em invariant one-form} on $G$ is a rational one-form $\omega \in \Omega^1(k(G)/k)$ which is right-invariant in the sense that $R_g^\ast \omega = \omega$ for all $g \in G$ where $R_g : x \mapsto x \cdot g$ denotes the right-translation by $g$ on $G$.
\end{defn} 
It is well-known that every invariant one-form on $G$ is regular at every point $g$ of $G$ and that the space of invariant one-forms on $G$ is a $k$-vector space of dimension $n$, see \cite{Kolchin-abelian} for example. When $G$ is commutative, a fundamental property is that every invariant one-form is {\em closed}. In general, this condition is replaced by the {\em Maurer-Cartan equation} (\ref{equation-MaurerCartan}) described below.

\begin{defn}
Fix $M$ a smooth algebraic variety over $k$. A {\em rational one-form on $M$ with values in $\mathfrak{g}$} is an element $\theta \in \Omega^1(k(M)/k) \otimes_k \mathfrak{g}$. 
\end{defn} 

Choosing a basis $e_1,\ldots, e_n$ of the Lie-algebra $\mathfrak{g}$ and writing $\mathfrak{g} = ke_1 \oplus \cdots \oplus ke_n$ then we can decompose any rational one-form on $M$ with values in $\mathfrak{g}$ as:
$$\theta = \sum_{i = 1}^n \theta^i \otimes e_i \text{ where } \theta^1,\ldots,\theta^n \in \Omega^1(k(M)/k).$$
We say that $\theta$ is regular at some point $p \in M$ if the one-forms $\theta^1,\ldots \theta^n$ are all regular at $p$. For all the points $p$ in the Zariski-open subset of regular points of $\theta$, the induced map
$$\theta_p : T_p M \rightarrow \mathfrak{g}$$ 
given by $v \mapsto \sum_{i = 1}^n \theta^i(v) \cdot e_i$ is $k$-linear. 

{\defn Assume that $\mathrm{dim}(M) = n$ and let $\theta$ be a rational one-form on $M$ with values in $\mathfrak{g}$. We say that $\theta$ is a {\em rational coparallelism} if for all points $p$ in a Zariski-dense open subset of $M$, $\theta_p: T_p M \rightarrow \mathfrak{g}$ is a $k$-linear isomorphism.}

\begin{exam} \label{example-coparallelism}
Assume that we are given a rational parallelism of $M$, that is, $n$ derivations $D_1,\ldots, D_n$ which form a $k(M)$-basis of $\mathrm{Der}(k(M)/k)$ and denote by $\theta^1,\ldots, \theta^n$ the dual basis  of $\Omega^1(k(M)/k)$. Then 
$\theta = \sum_{i = 1}^n \theta^i \otimes e_i$ 
is a coparallelism on $M$.

Conversely, if a $\mathfrak{g}$-valued one-form $\theta$ is a coparallelism then the one-forms $\theta^1,\ldots, \theta^n$ form a $k(M)$-basis of $\Omega^1(k(M)/k)$. Hence, every rational coparallelism is associated with the rational parallelism of $M$ given by the dual basis $D_1,\ldots, D_n$ of $\mathrm{Der}(k(M)/k)$. 
\end{exam}

\begin{lem} \label{lemma-MaurerCartan-prem}
Denote by $c_{i,j}^l \in k$ the structure constants of the Lie-algebra $\mathfrak{g}$ defined by
$$ [e_i,e_j]_\mathfrak{g} = \sum_{l = 1 }^n c_{i,j}^l \cdot e_l.$$
With the notation of Example \ref{example-coparallelism}, the following are equivalent:
\begin{itemize} 
\item[(i)] for all $1 \leq i,j \leq n$, we have $ [D_i,D_j] = \sum_{l = 1 }^n c_{i,j}^l \cdot D_l$ where $[-,-]$ denotes the Lie-bracket of derivations on $\mathrm{Der}(k(M)/k)$,
\item[(ii)] the coparallelism $\theta$ satisfies the Maurer-Cartan equation
\begin{equation}\label{equation-MaurerCartan} 
d\theta^i = - \frac 1 2 \sum_{j,l = 1}^n c_{j,l}^i \cdot \theta^j \wedge \theta^l \in \Omega^2(k(M)/k).
\end{equation}
\end{itemize}
\end{lem}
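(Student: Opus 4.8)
The plan is to deduce the equivalence from the intrinsic (Cartan) formula for the exterior derivative of a rational one-form, combined with the fact that $\theta^1,\ldots,\theta^n$ is the basis of $\Omega^1(k(M)/k)$ dual to $D_1,\ldots,D_n$. Recall that for any $\omega \in \Omega^1(k(M)/k)$ and any $X,Y \in \mathrm{Der}(k(M)/k)$ one has the purely algebraic identity
\[
d\omega(X,Y) = X\big(\omega(Y)\big) - Y\big(\omega(X)\big) - \omega([X,Y]),
\]
valid for K\"ahler differentials on the smooth variety $M$ exactly as in the smooth category. First I would invoke (or verify) this formula and apply it to $\omega = \theta^i$, $X = D_j$, $Y = D_l$. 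Since $\theta^i(D_j) = \delta^i_j \in k$ is a constant, the two derivative terms vanish, $D_j(\theta^i(D_l)) = D_l(\theta^i(D_j)) = 0$, and the formula collapses to
\[
d\theta^i(D_j,D_l) = -\,\theta^i\big([D_j,D_l]\big).
\]

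Next I would pair the Maurer-Cartan equation~(\ref{equation-MaurerCartan}) with the same pair $(D_j,D_l)$. Using the convention $(\theta^a \wedge \theta^b)(D_j,D_l) = \delta^a_j\delta^b_l - \delta^a_l\delta^b_j$ together with the antisymmetry $c_{l,j}^i = -c_{j,l}^i$ of the structure constants, the right-hand side of~(\ref{equation-MaurerCartan}) evaluated on $(D_j,D_l)$ equals
\[
-\tfrac{1}{2}\sum_{a,b} c_{a,b}^i\big(\delta^a_j\delta^b_l - \delta^a_l\delta^b_j\big)
= -\tfrac{1}{2}\big(c_{j,l}^i - c_{l,j}^i\big) = -\,c_{j,l}^i.
\]
Thus~(\ref{equation-MaurerCartan}) is equivalent to the family of scalar identities $d\theta^i(D_j,D_l) = -c_{j,l}^i$ for all $i,j,l$, while the reduced Cartan formula above reads $d\theta^i(D_j,D_l) = -\theta^i([D_j,D_l])$. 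Comparing the two, condition~(ii) holds if and only if $\theta^i([D_j,D_l]) = c_{j,l}^i$ for every $i,j,l$.

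Finally I would observe that this last family of identities is precisely condition~(i): since $\theta^1,\ldots,\theta^n$ is dual to $D_1,\ldots,D_n$, the derivation $[D_j,D_l]$ expands as $[D_j,D_l] = \sum_m \theta^m([D_j,D_l])\,D_m$, so the identities $\theta^i([D_j,D_l]) = c_{j,l}^i$ for all $i$ are equivalent to $[D_j,D_l] = \sum_m c_{j,l}^m D_m$. Reading the comparison in both directions then yields (i) $\Leftrightarrow$ (ii). The one point I would treat with care—rather than any serious obstacle—is justifying the intrinsic Cartan formula in the algebraic category of rational differential forms on a smooth variety, together with the observation that an equality of rational $2$-forms may be tested against the pairs $(D_j,D_l)$ of a parallelism, because the wedge products $\theta^j \wedge \theta^l$ with $j < l$ form a $k(M)$-basis of $\Omega^2(k(M)/k)$ and the induced pairing is perfect.
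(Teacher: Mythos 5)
Your argument is correct. The paper does not actually prove this lemma; it defers to \cite[Section 1]{Blazquez-Casale} and \cite[Chapter 3, Section 3]{Sharpe}, and what you have written is precisely the standard proof found there: evaluate both sides of the Maurer--Cartan equation on the dual parallelism $(D_j,D_l)$ and use the intrinsic formula $d\omega(X,Y)=X(\omega(Y))-Y(\omega(X))-\omega([X,Y])$, which collapses to $d\theta^i(D_j,D_l)=-\theta^i([D_j,D_l])$ because $\theta^i(D_j)=\delta^i_j$ is constant. (Your computation of the right-hand side, $-\tfrac12(c^i_{j,l}-c^i_{l,j})=-c^i_{j,l}$, also matches the one the paper carries out explicitly inside the proof of Lemma \ref{lemma-commutativity}.) The two points you flag are indeed routine: the Cartan formula holds for rational forms because both sides are $k(M)$-bilinear and alternating in $(X,Y)$, so it suffices to check it on forms $\omega=g\,df$, where it is a direct computation; and since $\theta^1,\ldots,\theta^n$ is a $k(M)$-basis of $\Omega^1(k(M)/k)$, the products $\theta^j\wedge\theta^l$ with $j<l$ form a basis of $\Omega^2(k(M)/k)$ and the pairing against pairs of the $D_j$ is perfect, so testing against $(D_j,D_l)$ detects equality of $2$-forms.
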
 

Here, $d: \Omega^1(k(M)/k) \rightarrow \Omega^2(k(M)/k)$ denotes the exterior derivative of one-forms and $\wedge: \Omega^1(k(M)/k) \times \Omega^1(k(M)/k) \rightarrow \Omega^2(k(M)/k)$ the wedge-product of one-forms. See \cite[Section 1]{Blazquez-Casale} and  also \cite[Chapter 3, section 3]{Sharpe} for a proof of this lemma as well as a coordinate-free expression of the equation (\ref{equation-MaurerCartan}).

{\defn Assume that $\mathrm{dim}(M) = n$ and let $\theta$ be a rational one-form on $M$ with values in $\mathfrak{g}$. We say that $\theta$ is a {\em coparallelism of type} $\mathfrak{g}$ if it is a rational coparallelism which satisfies the Maurer-Cartan equation} of Lemma \ref{lemma-MaurerCartan-prem}.

\begin{exam} 
The coparallelism $\theta_G \in \Omega^1(k(G)/k)) \otimes \mathfrak{g}$ associated with the parallelism $D_1,\ldots,D_n$ of $G$ by right-invariant vector fields is a parallelism of type $\mathfrak{g}$. It is the called the (right) {\em Maurer-Cartan form} of $G$.

For $G = \mathrm{GL}_n(k)$ with Lie-algebra $\mathfrak{g} = \mathrm{Mat}_n(k)$. In the coordinates $X_{i,j}$ such that $k[G] = k[X_{i,j}, 1/\mathrm{det}]$, the Maurer-Cartan form of $G$ is given by $$\theta_G = (dX_{i,j})_{i,j} \cdot (X_{i,j})_{i,j}^{-1} \in \Omega^1(G/k) \otimes \mathrm{Mat}_n(k).$$
If $H$ is a connected closed subgroup of $G$ then the pullback $\phi^\ast \theta_G$ of Maurer-Cartan form of $G$ is a $\mathfrak{g}$-valued one-form on $H$ which takes values in the Lie subalgebra $\mathfrak{h} \subset \mathfrak{g}$. The induced $\mathfrak{h}$-valued one-form on $H$ is the Maurer-Cartan form of $H$, see \cite[Chapter 3, Corollary 1.9]{Sharpe}.

As a nonlinear example, if $E$ is the elliptic curve defined by 
$$y^2 = x^3 + ax + b $$
then $\theta = dx/y$ is the Maurer-Cartan form of $E$, see \cite[Chapter 3, Section 5]{Silverman}.
\end{exam}

\begin{lem} \label{lemma-commutativity}
Let $M$ be a smooth algebraic variety over $k$ of dimension $n$ and let $\theta$ be a coparallelism of type $\mathfrak{g}$. Then $\mathfrak{g}$ is abelian if and only if
$$ d\theta = \sum_{i = 1}^n d \theta^i \otimes e_i = 0 \in \Omega^2(k(M)/k) \otimes \mathfrak{g}.$$
\end{lem}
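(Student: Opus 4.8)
The plan is to read off the result directly from the Maurer-Cartan equation of Lemma \ref{lemma-MaurerCartan-prem} combined with the fact, recorded in Example \ref{example-coparallelism}, that the component forms of a coparallelism constitute a frame. Since $\theta$ is a coparallelism of type $\mathfrak{g}$, its components $\theta^1,\ldots,\theta^n$ satisfy
$$ d\theta^i = - \frac 1 2 \sum_{j,l = 1}^n c_{j,l}^i \cdot \theta^j \wedge \theta^l \in \Omega^2(k(M)/k), $$
and, being a coparallelism, they form a $k(M)$-basis of $\Omega^1(k(M)/k)$.

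The forward direction is immediate: if $\mathfrak{g}$ is abelian, then all structure constants $c_{j,l}^i$ vanish, so each $d\theta^i = 0$ and therefore $d\theta = \sum_i d\theta^i \otimes e_i = 0$.

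For the converse, I would argue as follows. Since $e_1,\ldots,e_n$ is a $k$-basis of $\mathfrak{g}$, the vanishing of $d\theta = \sum_i d\theta^i \otimes e_i$ in $\Omega^2(k(M)/k) \otimes_k \mathfrak{g}$ forces $d\theta^i = 0$ for each $i$, whence the Maurer-Cartan equation gives
$$ \sum_{j,l = 1}^n c_{j,l}^i \cdot \theta^j \wedge \theta^l = 0 \in \Omega^2(k(M)/k) \quad \text{for every } i. $$
The key step is now the passage from a frame of $\Omega^1$ to a frame of $\Omega^2$: because $\theta^1,\ldots,\theta^n$ is a $k(M)$-basis of the $n$-dimensional $k(M)$-vector space $\Omega^1(k(M)/k)$, the products $\{\theta^j \wedge \theta^l : j < l\}$ form a $k(M)$-basis of $\Omega^2(k(M)/k)$. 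Using the antisymmetry of the structure constants, $c_{j,l}^i = -c_{l,j}^i$ (coming from $[e_j,e_l] = -[e_l,e_j]$), and the antisymmetry of the wedge product, the relation collapses to $2\sum_{j<l} c_{j,l}^i \cdot \theta^j \wedge \theta^l = 0$. Linear independence of the $\theta^j \wedge \theta^l$ then yields $c_{j,l}^i = 0$ for all $j < l$, and hence (together with the trivial diagonal terms and antisymmetry) for all $j,l$. As this holds for every $i$, all structure constants vanish and $\mathfrak{g}$ is abelian.

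The only substantive point, and the step I would be most careful about, is the upgrade of the frame $\theta^1,\ldots,\theta^n$ of $\Omega^1$ to the frame $\{\theta^j \wedge \theta^l\}_{j<l}$ of $\Omega^2$; everything else is bookkeeping with the antisymmetry of $c_{j,l}^i$ and of $\wedge$. This is a routine fact of exterior algebra over the field $k(M)$, valid precisely because the coparallelism hypothesis guarantees that the $\theta^i$ are a genuine $k(M)$-basis rather than merely a spanning set, so that coefficients of a $2$-form in this basis can be extracted unambiguously.
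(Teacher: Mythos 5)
Your proof is correct and follows essentially the same route as the paper: both directions are read off from the Maurer--Cartan equation using that the $\theta^i$ form a $k(M)$-frame of $\Omega^1(k(M)/k)$. The only cosmetic difference is that the paper extracts the structure constants by evaluating $d\theta^i$ on pairs $(D_s,D_t)$ of the dual parallelism, obtaining $d\theta^i(D_s,D_t)=-c^i_{s,t}$ directly, whereas you invoke the equivalent exterior-algebra fact that $\{\theta^j\wedge\theta^l\}_{j<l}$ is a basis of $\Omega^2(k(M)/k)$; the duality computation in the paper is precisely the proof of that fact.
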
 

\begin{proof} 
Denote by $D_1,\ldots, D_n$ the parallelism associated $\theta$. By definition of the wedge-product of one-forms if $s \neq t$ then 
$$ (\theta^j \wedge \theta ^l) (D_s, D_t) = \theta^j(D_s) \theta^l (D_t) - \theta^j(D_t) \theta^l (D_s) = \begin{cases} 1 \text{ if } j = s \text{ and } l = t, \\ -1 \text{ if } j = t \text{ and } l = s, \\ 0 \text{ otherwise.}
\end{cases}$$
It follows from the Maurer-Cartan equation above that for $s \neq t$
$$ d\theta^i(D_s,D_t) = -\frac 1 2 \sum_{j,l} c^i_{j,l} (\theta^j \wedge \theta^l) (D_s,D_t) = -\frac 1 2 (c^i_{s,t} - c^i_{t,s}) = - c^i_{s,t}  $$
Hence $\mathfrak{g}$ is abelian if and only if all the structure constants of $\mathfrak{g}$ are equal to zero if and only if $d\theta^1 = \ldots = d\theta^n = 0 \in \Omega^2(k(M)/k)$, as required.
\end{proof}

\subsection{Abelian invariant differentials} Let $k$ be an algebraically closed field of characteristic zero, let $G$ be a connected algebraic group over $k$ and let $\omega$ be an invariant one-form on $G$. 

\begin{defn} 
Let $L/k$ be a field extension and let $x \in G(L)$. The {\em differential induced by $\omega$ by $x$ on $L/k$} is the one-form denoted $\omega(x) \in \Omega^1(L/k)$ (or $\omega_{L/k}(x) \in \Omega^1(L/k)$ when greater detail is desirable) obtained by pullback of $\omega$ under the (scheme-theoretic) morphism $x : \mathrm{Spec}(L) \rightarrow G.$ 
\end{defn}

We refer to \cite[Chapter II, Section 8]{Hartshorne} for the definition of the scheme-theoretic pullback of one-forms. In the case where $L/k$ is finitely generated, this can be subsume as follows: we can write $L = k(M)$ where $M$ is a smooth algebraic variety over $k$ and identify the point $x \in G(L)$ with a rational morphism 
$$ x : M \dashrightarrow G .$$
The induced one-form $\omega(x) \in \Omega^1(k(M)/k)$ is the pullback of the regular one-form $\omega$ along the rational morphism $x$ of algebraic varieties over $k$. Since any point $x \in G(L)$ is defined over a finitely generated subfield $L_0$ over $k$, the general case can also be defined from the previous construction using the injection of $L$-vector spaces
$$  \Omega^1(L_0/k) \otimes_{L_0} L \rightarrow \Omega^1(L/k).$$
This notion coincides with the notion used by Kolchin in \cite{Kolchin-abelian} and by the second author in \cite{Kirby-semiab}. In particular, the following proposition is extracted from the proof of the Ax-Schanuel theorem for the exponential of semi-abelian varieties in \cite{Kirby-semiab}.

\begin{Prop}\label{proposition-linear}
Let $L/k$ be a field extension, let $A$ be a simple abelian variety and let $G$ be a connected commutative linear algebraic group both defined over $k$. Consider $\omega$ an invariant one-form on $A$ and $\eta$ an invariant one-form on $G$.

Assume that for some $(x,y) \in A(L) \times G(L)$, we have an equality of induced differentials of the form:
$$ \omega(x) + \eta(y) = 0 \in \Omega^1(L/k).$$
Then $\omega(x) = \eta(y)= 0 \in \Omega^1(L/k).$ 
\end{Prop}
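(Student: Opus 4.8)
The plan is to convert the identity between induced differentials into the vanishing of a single invariant one-form on a connected algebraic subgroup of $A \times G$, and then to play the abelian variety $A$ off against the linear group $G$ using the simplicity of $A$. First I would pass to a finitely generated model: choose $L_0 = k(M) \subseteq L$ with $M$ a smooth $k$-variety over which $x,y$ are defined; since $L/L_0$ is separable, $\Omega^1(L_0/k) \otimes_{L_0} L \hookrightarrow \Omega^1(L/k)$, so the hypothesis already holds in $\Omega^1(L_0/k)$. Set $H = A \times G$ with projections $p_A, p_G$, let $\Xi = p_A^\ast \omega + p_G^\ast \eta$ be the associated invariant one-form on $H$, and let $\phi = (x,y) : M \dashrightarrow H$ be the rational map attached to $(x,y) \in H(L_0)$. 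The hypothesis is exactly $\phi^\ast \Xi = 0$, and the two desired conclusions are $\phi^\ast p_A^\ast \omega = 0$ and $\phi^\ast p_G^\ast \eta = 0$.

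The technical heart is to upgrade this pointwise vanishing to an intrinsic statement about a subgroup. Let $Z = \overline{\phi(M)}$, pick a $k$-point $z_0 \in Z(k)$ (available since $k$ is algebraically closed), and translate: because $\Xi$ is translation-invariant, $\psi = \phi - z_0$ still satisfies $\psi^\ast \Xi = 0$ and is dominant onto the subvariety $Z - z_0$, which passes through the identity. Since pullback along a dominant rational map is injective on one-forms in characteristic zero, $\Xi$ restricts to $0$ on $Z - z_0$. Now I would exploit that $H$ is commutative: the kernel of $\Xi_0 : \mathrm{Lie}(H) \to k$ is then automatically a Lie subalgebra, corresponding to a connected subgroup $B_0 \le H$ on which $\Xi$ vanishes identically; as $Z - z_0$ is everywhere tangent to $B_0$ and passes through the identity, it is contained in $B_0$, hence so is the connected subgroup $B = \langle Z - z_0 \rangle$ it generates. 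Thus $\Xi|_B = 0$ with $B$ a connected algebraic subgroup of $A \times G$.

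With $\Xi|_B = 0$ in hand, the conclusion follows from the structure theory of commutative algebraic groups. I may assume $\omega \neq 0$ and $\dim A > 0$, since otherwise the statement is immediate. The projection $p_A(B)$ is a connected subgroup of the simple abelian variety $A$, so it is either $0$ or $A$. If it is $0$, then $x = p_A \circ \phi$ factors through a single $k$-point of $A$, forcing $\omega(x) = 0$ and therefore $\eta(y) = - \omega(x) = 0$, as desired. To exclude $p_A(B) = A$, set $N = (B \cap (A \times \{0\}))^\circ$, again a connected subgroup of $A$, so $0$ or $A$ by simplicity. If $N = A \times \{0\}$, then $A \times \{0\} \subseteq B$ and restricting $\Xi|_B = 0$ to $A \times \{0\}$ yields $\omega = 0$, a contradiction; if $N = 0$, then $p_G|_B$ has finite kernel, so $B$ is isogenous to a subgroup of the linear group $G$ and is itself linear, whence its homomorphic image $p_A(B) = A$ would be linear --- impossible for a positive-dimensional abelian variety.

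The step I expect to be the main obstacle is the second paragraph: making precise the passage from $\phi^\ast \Xi = 0$ to $\Xi|_B = 0$, that is, correctly producing the subgroup generated by the translated image and justifying that an invariant form vanishing on a generating subvariety vanishes on the whole subgroup; commutativity of $H$ is exactly what makes $\ker \Xi_0$ a subalgebra and renders this clean. By contrast the endgame is elementary once the problem is phrased group-theoretically, and it is there that the hypothesis that $A$ is simple (and an abelian variety, while $G$ is linear) is used decisively. A residue-theoretic alternative avoids $B$ altogether: using that a rational map from a smooth projective model to $A$ extends to a morphism, $\omega(x)$ is a differential of the first kind, while writing $G \cong \Ga^r \times \Gm^s$ expresses $\eta(y)$ as an exact form plus a logarithmic one; a residue computation on the projective model, after reducing to $\mathbb{Q}$-linearly independent coefficients, kills both parts and again forces $\omega(x) = \eta(y) = 0$.
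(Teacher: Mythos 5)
Your proposal is correct and follows essentially the same route as the paper: translate so that the image of $(x,y)$ passes through the identity, observe that it lies in the leaf through $e$ of the invariant distribution $\ker(\pi_A^\ast\omega + \pi_G^\ast\eta)$, pass to the connected algebraic subgroup it generates (on which the form vanishes by invariance), and then exploit simplicity of $A$ together with the abelian-versus-linear dichotomy — the paper packages this endgame via Zilber's indecomposability theorem and Goursat's lemma to get a product decomposition $B\times H$, whereas you argue directly with $p_A(B)$ and $B\cap(A\times\{0\})$, which amounts to the same thing. One caveat: $\ker\Xi_0$ need not correspond to an \emph{algebraic} subgroup of $H$ (e.g.\ an irrational line in $\mathrm{Lie}(\Gm^2)$), so your $B_0$ must be understood as the analytic leaf through the identity, exactly as in the paper's foliation language; this does not damage the argument, since all you use is that $B_0$ is a subgroup containing $Z-z_0$, so that the generated algebraic subgroup $B$ still has $\mathrm{Lie}(B)\subseteq\ker\Xi_0$.
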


\begin{proof} 
An easy reduction shows that it is enough to prove the statement assuming that $k$ has finite transcendence degree and $L/k$ is finitely generated. Embedding $k$ into $\mathbb{C}$, we reduce  to prove the statement for $k = \mathbb{C}$ and $L = k(M)/k$. We may assume that $\omega,\eta \neq 0$ as otherwise the statement is trivial. Note that both forms are closed since the groups are commutative. 

Set $S = A \times G$ and consider the foliation $\mathcal H$ of codimension one on $S$ defined by the vanishing of the closed one-form
$$ \theta = \omega \boxplus \eta := \pi^\ast_A \omega  \oplus \pi^\ast_G \eta \in \Omega^1(S/k).$$
We  can view $(x,y) \in S(L)$ as a rational morphism $\phi = (x,y): M \dashrightarrow S$ which we assume to be regular up to restricting $M$. Since $\omega$ and $\eta$ are invariant forms, we may replace $x$ and $y$ by translation and assume that $\phi(M)$  an irreducible subvariety of $S$ containing the identity element of $S$. 

The equality in the assumption of the lemma states that $$\theta(x,y) = \phi^\ast \theta = 0$$ which means that $\phi(M)$ is contained in a leaf of the foliation $\mathcal H$. Since $e \in \phi(M)$, we conclude that $\phi(M) \subset \mathcal L_e$ where $\mathcal L_e$ is the leaf through the identity element of $G$. It follows from the fact that $\eta$ is an invariant form that $\mathcal L_e$ is an (in general not closed) subgroup of $G$. Zilber's indecomposability Theorem ensures that the (algebraic) group generated by $\phi(M)$
$$ \phi(M) \subset \langle \phi(M) \rangle \subset \mathcal L_e \subsetneq G $$
is a proper {\em connected} algebraic subgroup of $S$. Using that $A$ is abelian and $G$ is linear,  Goursat's lemma implies that  
$$\langle \phi(M) \rangle = B \times H$$ 
where $B$ and $H$ are connected subgroups of $A$ and $G$  respectively. Since the restriction of $\omega$ to $\langle \phi(M) \rangle$ vanishes, we have that $B$ is proper (connected) subgroup of $A$ and, by simplicity of $A$, must be equal to the trivial group.  It follows that $x(M) = \pi_A \circ \phi(M) = \lbrace e_A \rbrace$ and that $\omega(x) = \eta(y) = 0 \in \Omega^1(k(M)/k)$ as stated.
 \end{proof}

\subsection{Relation with differential Galois theory} Assume now that $k$ is an algebraically closed differential field with field of constants $C$ and let $(L,\partial)/k$ be a differential field extension of finite transcendence degree. The Lie derivative: 
$$ \mathcal L_\partial : \Omega^1(L/k) \rightarrow \Omega^1(L/k) $$
defines a differential module structure on $\Omega^1(L/k)$ over $L$.

\begin{lem} 
The differential module $(\Omega^1(L/k),\mathcal L_\partial)$ is the dual of the differential module on $\mathrm{Der}(L/k)$ defined by the Lie bracket with the derivation $\partial$ given by 
$$ D \mapsto  [\partial,D] = \partial \circ D - D \circ \partial.$$
\end{lem}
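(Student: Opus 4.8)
The plan is to recognize both sides as mutually dual differential modules for the tautological evaluation pairing, and to reduce the required compatibility to Cartan's formula on exact forms.

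First I would record the pairing. Since $L/k$ has finite transcendence degree, in characteristic zero a transcendence basis is automatically separating, so $\Omega^1(L/k)$ and $\mathrm{Der}(L/k)$ are finite-dimensional $L$-vector spaces, both of dimension $\mathrm{td}(L/k)$, and the contraction
$$ \langle \omega, D \rangle := \omega(D) \in L $$
is a perfect $L$-bilinear pairing identifying each with the $L$-linear dual of the other. I would also note that $D \mapsto [\partial, D]$ is a genuine connection on $\mathrm{Der}(L/k)$: a one-line computation gives $[\partial, gD] = (\partial g)\, D + g\,[\partial, D]$ for $g \in L$. Recall that for a differential module $(N, \nabla)$ over $(L,\partial)$ the dual connection $\nabla^\ast$ on $N^\ast$ is characterized by $(\nabla^\ast \phi)(m) = \partial(\phi(m)) - \phi(\nabla m)$, so that the evaluation pairing is horizontal. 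Applied to $N = \mathrm{Der}(L/k)$, the lemma therefore reduces to the single identity
$$ (\mathcal L_\partial \omega)(D) = \partial(\omega(D)) - \omega([\partial, D]) $$
for all $\omega \in \Omega^1(L/k)$ and $D \in \mathrm{Der}(L/k)$; this is precisely the statement that $\mathcal L_\partial$ is the connection dual to $[\partial, -]$.

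To prove this identity I would fix $D$ and reduce to exact forms. Both sides are additive in $\omega$, and if the identity holds for $\omega$ then it holds for $g\omega$ with $g \in L$: the left-hand side expands via the Leibniz rule $\mathcal L_\partial(g\omega) = (\partial g)\omega + g\,\mathcal L_\partial \omega$, the right-hand side via the Leibniz rule for $\partial$ applied to $\langle g\omega, D\rangle = g\,\omega(D)$, and the resulting terms coincide. Since $\Omega^1(L/k)$ is generated over $L$ by the exact forms $df$ with $f \in L$, it suffices to treat $\omega = df$. There $\mathcal L_\partial(df) = d(\partial f)$, since $\mathcal L_\partial$ commutes with $d$ and restricts to $\partial$ on functions, so the left-hand side equals $D(\partial f)$, while the right-hand side is $\partial(Df) - [\partial, D](f) = \partial(Df) - \bigl(\partial(Df) - D(\partial f)\bigr) = D(\partial f)$. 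The two agree, which completes the argument.

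I expect the only genuinely delicate step to be the bookkeeping around $\mathcal L_\partial$ on exact forms, namely the equality $\mathcal L_\partial(df) = d(\partial f)$; everything else is formal manipulation of the perfect pairing together with the two Leibniz rules. Equivalently, one could take the invariant formula displayed above as the definition of $\mathcal L_\partial$ and verify directly that it endows $\Omega^1(L/k)$ with a connection, in which case the lemma becomes a matter of matching definitions through the nondegenerate pairing.
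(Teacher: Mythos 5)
Your argument is correct and is essentially the paper's own proof: both reduce the statement to the identity $\langle \mathcal L_\partial(df), D\rangle = \partial(D(f)) - [\partial,D](f) = D(\partial(f))$ on exact forms and then conclude via the characterization of $\mathcal L_\partial$ as the unique differential module structure satisfying $\mathcal L_\partial(df) = d(\partial f)$. Your explicit verification of the Leibniz compatibility for $g\omega$ is just that uniqueness statement unpacked, so there is nothing substantively different to compare.
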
  

\begin{proof} 
First note that since $\partial(k) \subset k$, $D \mapsto  [\partial,D] = \partial \circ D - D \circ \partial$ is well-defined and satisfies the Leibniz rule of differential modules. As defined in \cite{Rosenlicht}, the Lie-derivative is the unique differential module structure on $\Omega^1(L/k)$ satisfying 
$$ \mathcal L_\partial (df) = d (\partial(f)) \text{ for all } f \in L.$$
Therefore it is enough to see that the dual $(\Omega^1(L/k), L_\partial)$ of $(\mathrm{Der}(L/k),[\partial, - ])$ satisfies this condition. Denote by $\langle - , -\rangle: \Omega^1(L/k) \times \mathrm{Der}(L/k) \rightarrow L$ the duality pairing and let $f \in L$. For every derivation $D \in \mathrm{Der}(L/k)$, we have
$$ \partial (\langle df , D \rangle) = \langle L_\partial (df), D  \rangle + \langle df, [\partial, D] \rangle$$
Since $\langle df, D \rangle = D(f)$ for every $D \in \mathrm{Der}(L/k)$, we conclude that 
$$\langle L_\partial(df), D  \rangle = \partial(D(f)) - [\partial, D](f) = D(\partial(f)) = \langle \partial(f), D \rangle.$$
Since this is true for any $D \in \mathrm{Der}(L/k)$, we obtain $L_\partial(df) = d(\partial(f))$ as required. This finishes the proof of the lemma.
\end{proof} 

In the case where $L/k$ is a {\em Picard-Vessiot extension}, we have the following description of the solution sets of these differential modules.

\begin{Prop}\label{proposition-vanderput}
Let $L/k$ be a Picard-Vessiot extension with Galois group $G(C)$. Write $L = k(M)$ for some smooth algebraic variety $M$ over $k$ and denote by $\mathfrak{g}(C)$ the Lie-algebra of $G$ with a $C$-basis $e_1,\ldots, e_n$. Then $M$ admits a coparallelism 
$$ \theta = \sum_{i = 1}^n \theta^i \otimes e_i $$
of type $\mathfrak{g}(k)$ such that $\mathcal L_\partial(\theta^i) = 0$ for $i = 1,\ldots, n$.
\end{Prop}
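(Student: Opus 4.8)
The plan is to use the structure theory of Picard--Vessiot extensions (see \cite{vdP-Singer}) to realize $M$ as a torsor under $G$ and then to produce $\theta$ as a Maurer--Cartan form, for which both required properties become a short computation. First I would fix a faithful representation $G \hookrightarrow \mathrm{GL}_m$ and recall that the Picard--Vessiot ring is generated over $k$ by the entries of a fundamental solution matrix $Z$ and by $\det(Z)^{-1}$, where $Z$ is an invertible matrix satisfying a linear system $\partial Z = A Z$ with $A \in \mathfrak{g}(k)$; here $M$ may be taken to be the $G_k$-torsor whose generic point is $Z$, so that $L = k(M)$ and $\dim M = \dim G = n$.

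With this in hand, I would set $\theta = Z^{-1}\,dZ$, the pullback of the Maurer--Cartan form of $G$ along the generic point $\mathrm{Spec}(L) \to G$ (well-defined since the torsor acquires an $L$-point, hence a trivialization, over its own function field). Because $Z$ takes values in the relevant coset of $G$, the matrix $Z^{-1}\,dZ$ lands in the Lie algebra, so it may be written $\theta = \sum_{i=1}^n \theta^i \otimes e_i$ with $\theta^i \in \Omega^1(L/k)$. That $\theta$ is a coparallelism of type $\mathfrak{g}$ then follows from two standard facts: the $\theta^i$ are $L$-linearly independent in $\Omega^1(L/k)$ (as $Z$ is invertible and $\dim_L \Omega^1(L/k) = \mathrm{td}(L/k) = n$), so $\theta_p \colon T_pM \to \mathfrak{g}$ is a generic isomorphism; and the Maurer--Cartan form satisfies the Maurer--Cartan equation, a property preserved under pullback, so the criterion of Lemma \ref{lemma-MaurerCartan-prem} holds with the structure constants of $\mathfrak{g}$.

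The core of the argument is the verification that $\mathcal{L}_\partial(\theta^i) = 0$, which I would carry out at the level of the matrix $\theta = Z^{-1}\,dZ$. Using the defining property $\mathcal{L}_\partial(df) = d(\partial f)$ of the Lie derivative and $A \in k$ (so that $dA = 0$ in $\Omega^1(L/k)$), one has $\mathcal{L}_\partial(dZ) = d(\partial Z) = d(AZ) = A\,dZ$, while differentiating $ZZ^{-1}=I$ gives $\partial(Z^{-1}) = -Z^{-1}A$. The Leibniz rule for the differential-module structure then yields
$$\mathcal{L}_\partial(Z^{-1}\,dZ) = \partial(Z^{-1})\,dZ + Z^{-1}\,\mathcal{L}_\partial(dZ) = -Z^{-1}A\,dZ + Z^{-1}A\,dZ = 0,$$
so each component $\theta^i$ is horizontal, as required.

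The point I expect to be most delicate is the choice of \emph{side} in the Maurer--Cartan form: the computation above works precisely because I use the left-hand form $Z^{-1}\,dZ$; the right-hand form $dZ\cdot Z^{-1}$ instead satisfies $\mathcal{L}_\partial(dZ\cdot Z^{-1}) = [A,\, dZ\cdot Z^{-1}]$, which does not vanish in general. I would therefore take care to match this convention with the identification of $\mathfrak{g}$ and its structure constants used in Lemma \ref{lemma-MaurerCartan-prem}, and to check that $A$ genuinely lies in $\mathfrak{g}(k)$ (not merely $\mathfrak{gl}_m(k)$) so that $\theta$ is $\mathfrak{g}$-valued; these compatibility checks, together with reducing the intrinsic statement to the chosen matrix realization by naturality, are where the remaining bookkeeping lies.
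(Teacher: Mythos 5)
Your argument is correct, but it takes a genuinely different route from the paper. The paper never chooses a representation or a fundamental matrix: it invokes \cite[Proposition 1.24]{vdP-Singer}, which identifies the $C$-Lie algebra $V=\{D\in\mathrm{Der}(L/k)\mid [\partial,D]=0\}$ with $\mathfrak{g}(C)$, proves by a minimal-relation (Wronskian-type) argument that the corresponding $D_1,\ldots,D_n$ are $L$-linearly independent, hence a parallelism, and then obtains $\mathcal L_\partial(\theta^i)=0$ purely formally from the duality pairing: $0=\partial(\theta^i(D_j))=\mathcal L_\partial(\theta^i)(D_j)+\theta^i([\partial,D_j])=\mathcal L_\partial(\theta^i)(D_j)$. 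Your construction via $\theta=Z^{-1}\,dZ$ in fact produces the \emph{same} form (if $DZ=ZX_D$ with $X_D\in\mathfrak{g}(C)$ constant, then $(Z^{-1}dZ)(D)=X_D$, so your $\theta^i$ are exactly the dual basis to the $D_i$), and your matrix computation of horizontality is cleaner than it might appear. The trade-off is that your route needs two inputs the paper's avoids: (a) the gauge reduction putting $A$ into $\mathfrak{g}(k)$ rather than $\mathfrak{gl}_m(k)$, which rests on triviality of the Picard--Vessiot torsor — available here because $k$ is algebraically closed by the standing convention of Section 4.3, but worth stating, since for general $k$ this step genuinely fails; and (b) the left-versus-right Maurer--Cartan bookkeeping against the paper's right-invariant convention (Example 4.8 uses $dX\cdot X^{-1}$), which at worst flips the sign of the structure constants in the Maurer--Cartan equation and is harmless for the downstream application (Lemma \ref{lemma-commutativity} only tests whether $d\theta=0$). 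You flag both points explicitly, so I regard the proof as complete modulo routine checks; the paper's dual, representation-free argument is the more economical of the two, while yours is the more explicit.
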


\begin{proof} 
Note first that standard results of differential Galois theory imply that $\mathrm{td}(L/k) = \mathrm{dim}(G)$ so that $\mathrm{dim}(M) = \mathrm{dim}(\mathfrak{g}(k)) = n$ and that $G$ is connected. 

\begin{fact}[{\cite[Proposition 1.24]{vdP-Singer}}]
Let $L/k$ be a Picard-Vessiot extension. Then 
$$V= \lbrace D \in \mathrm{Der}(L/k) \mid D \circ \partial = \partial \circ D \rbrace$$
is a $C$-Lie-algebra isomorphic to the Lie-algebra $\mathfrak{g}(C)$ of the Galois group $G(C)$ of $L/k$.
\end{fact} 

Denote by $D_1,\ldots, D_n$ the derivations $\mathrm{Der}(k(M)/k)$ corresponding to the chosen basis $e_1,\ldots, e_n$. We first claim that  $D_1,\ldots, D_n$ are {\em $L$-linearly independent}. Indeed, otherwise consider a nontrivial $L$-linear relation with the minimal number of nonzero coefficients written  (up to reordering the $D_i$) as
$$\lambda_1 D_1 + \cdots + \lambda_m D_m = 0 \text{ where } \lambda_i \in L$$
and with $\lambda_1 = 1.$ Applying $[\partial, -]$ and using the Leibniz rule together with $[\partial, D_i] = 0$ leads to 
$$ \partial(\lambda_1) D_1 + \cdots + \partial(\lambda_m) D_m = \partial(\lambda_2) D_2 + \cdots + \partial(\lambda_m) D_m = 0.$$
By minimality of our choice, this must be the trivial $L$-linear relation so that the $\lambda_i$ are constants. This contradicts the $C$-linear independence of $D_1,\ldots,D_n$. 

By comparing the dimension, it follows from the claim that $D_1,\ldots, D_n$ form a $L$-linear basis of $\mathrm{Der}(L/k)$ and therefore a rational parallelism of $M$ of type $\mathfrak{g}(k)$. Hence, they define a coparallelism $\theta$ of type $\mathfrak{g}(k)$ on $M$ by Lemma \ref{lemma-MaurerCartan-prem}. Finally, it follows from the duality that for all $i,j$
$$ 0 = \partial(\theta^i(D_j)) = \mathcal L_\partial(\theta^i)(D_j) + \theta^i([\partial, D_j]) = \mathcal L_\partial(\theta^i)(D_j).$$
Hence, the one-form $\mathcal L_\partial(\theta^i)$ vanishes on the $L$-basis $D_1,\ldots,D_n$ and is therefore identically equal to zero. This finishes the proof of the proposition.
\end{proof}

In the case of an {\em abelian strongly normal extension}, that is, of a strongly normal extension in the sense of Kolchin with Galois group $A(C)$ where $A$ is a (positive-dimensional) abelian variety defined over $C$, we have the following description of the solution set of these differential modules. 

\begin{Prop}\label{proposition-Kolchin} 
Let $L/k$ be an {\em abelian strongly normal extension} with Galois group $A(C)$. There exists an element $\alpha \in A(L)$ and an invariant one-form $\omega$ on $A$ such that the induced one-form satisfies:  
$$ \omega(\alpha) \neq 0 \in \Omega^1(L/k) \text{ and } \mathcal L_\partial(\omega(\alpha)) = 0 \in \Omega^1(L/k).$$ 
\end{Prop}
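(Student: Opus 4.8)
The plan is to mirror the structure of the Picard--Vessiot case treated in Proposition \ref{proposition-vanderput}, but now exploiting that the Galois group is an abelian variety $A$, so that invariant one-forms are automatically closed and the subtle part is guaranteeing both nonvanishing of the induced differential and its annihilation by the Lie derivative $\mathcal L_\partial$. First I would invoke the analogue of the van der Put--Singer fact used in the previous proof: for an abelian strongly normal extension $L/k$ in Kolchin's sense, the Galois group $A(C)$ is recovered from the $C$-Lie algebra
$$ V = \lbrace D \in \mathrm{Der}(L/k) \mid [\partial, D] = 0 \rbrace, $$
and this $V$ is $C$-isomorphic to the Lie algebra $\mathfrak{a}(C)$ of $A$. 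Writing $L = k(M)$ and choosing a basis $D_1,\ldots, D_n$ of $V$ corresponding to a basis $e_1,\ldots,e_n$ of $\mathfrak{a}$, the exact argument of Proposition \ref{proposition-vanderput} (minimal $L$-linear relation, apply $[\partial,-]$, conclude coefficients are constants contradicting $C$-linear independence) shows the $D_i$ are $L$-linearly independent, hence form a parallelism of $M$, and the dual one-forms $\theta^i$ satisfy $\mathcal L_\partial(\theta^i) = 0$ for all $i$.

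The new content over the linear case is to upgrade these abstract invariant one-forms on $M$ to one of the form $\omega(\alpha)$ for a genuine point $\alpha \in A(L)$ and an invariant one-form $\omega$ on $A$. The natural move is to use the Galois-theoretic description of the extension: a strongly normal extension comes equipped with a ``logarithmic derivative'' type map, concretely an $L$-point $\alpha$ of $A$ (the generic point of the torsor of solutions) such that the induced differentials $\omega(\alpha)$, as $\omega$ ranges over the $n$-dimensional space of invariant one-forms on $A$, span exactly the space $\mathrm{span}_L(\theta^1,\ldots,\theta^n) = \Omega^1(L/k)$ of $\mathcal L_\partial$-horizontal forms. Since $A$ is positive-dimensional, this span is nonzero, so I can pick $\omega$ with $\omega(\alpha) \neq 0$; and because every invariant one-form on the abelian variety $A$ is closed and the pullback by $\alpha$ carries the horizontality, the resulting $\omega(\alpha)$ is $\mathcal L_\partial$-annihilated. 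Concretely, $\omega(\alpha)$ is an $L$-combination of the $\theta^i$ with, a priori, non-constant coefficients, so I must verify that the particular forms coming from $A(L)$ land in the horizontal space rather than merely in $\Omega^1(L/k)$.

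The main obstacle I anticipate is precisely this identification: showing that for the canonical point $\alpha$ attached to the strongly normal extension, the pulled-back invariant forms $\omega(\alpha)$ are exactly the $\mathcal L_\partial$-horizontal one-forms, equivalently that the map $\omega \mapsto \omega(\alpha)$ sends the space of invariant one-forms on $A$ isomorphically onto $\lbrace \eta \in \Omega^1(L/k) \mid \mathcal L_\partial(\eta) = 0\rbrace$. This is the analogue, on the ``coform'' side, of the statement that $V \cong \mathfrak{a}(C)$, and I expect it to follow from the defining property of strongly normal extensions together with the invariance (hence $\partial$-equivariance) of $\omega$: differentiating the relation defining $\alpha$ and using that $A$ is defined over the constants forces $\mathcal L_\partial(\omega(\alpha))$ to be the differential induced by $\omega$ at the ``constant part'' of $\partial \alpha$, which vanishes. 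I would carry this out by choosing an embedding $L = k(M)$, expressing $\alpha : M \dashrightarrow A$ explicitly, and computing $\mathcal L_\partial(\omega(\alpha))$ via the compatibility $\mathcal L_\partial(\alpha^\ast \omega) = \alpha^\ast(\mathcal L_{\partial}\omega)$ for the invariant form $\omega$, reducing nonvanishing to the fact that $\alpha$ is a nonconstant point, i.e. that $L \neq k$ because $A$ is positive-dimensional.
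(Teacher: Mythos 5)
Your high-level picture is the right one (find a point of $A$ over $L$ whose ``logarithmic derivative'' lives in $k$, then exploit that invariant one-forms on a commutative group are closed), and you correctly flag the identification of $\alpha$ as the main obstacle; but the resolution you sketch has two genuine gaps. First, the existence of a suitable $\alpha$: passing from the generic point of the torsor $V$ with $k(V)=L$ to a point of $A(L)$ satisfying the condition $\langle \partial, \omega^i_{L/C}(\alpha)\rangle \in k$ for a basis $\omega^1,\ldots,\omega^n$ of invariant forms, \emph{and} such that $L/k\langle\alpha\rangle$ is finite, is exactly Kolchin's theorem on $A$-primitive elements, which the paper imports as a black box from \cite{Kolchin-abelian}. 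It is not free from the torsor picture, and both halves of the conclusion hinge on it: the nonvanishing argument (if all $\omega^i_{L/k}(\alpha)=0$ then the image of $\alpha: M \dashrightarrow A$ is a point, so $\alpha \in A(k)$) only yields a contradiction because $L/k\langle\alpha\rangle$ is finite and $A$ is positive-dimensional; without that, $\alpha\in A(k)$ is consistent with $L\neq k$.

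Second, your mechanism for horizontality does not work as stated. The compatibility $\mathcal L_\partial(\alpha^\ast\omega)=\alpha^\ast(\mathcal L_\partial \omega)$ is not a meaningful identity for a form $\omega$ on $A$; and if one reads it as $\mathcal L_\partial(\omega(\alpha))=0$ ``because $A$ is defined over the constants,'' it is false for a general point: already for $\Ga$ and $\omega=dx$ one has $\mathcal L_\partial(d\alpha)=d(\partial\alpha)$, which does not vanish in $\Omega^1(L/k)$ unless $\partial\alpha\in k$. The actual computation in the paper works in $\Omega^1(L/C)$ and uses Cartan's formula $\mathcal L_\partial = d\circ i_\partial + i_\partial\circ d$ together with closedness of $\omega^i_{L/C}(\alpha)$ to get $\mathcal L_\partial(\omega^i_{L/C}(\alpha)) = d\bigl(\langle\partial,\omega^i_{L/C}(\alpha)\rangle\bigr)$, which dies in $\Omega^1(L/k)$ precisely because $A$-primitivity puts $\langle\partial,\omega^i_{L/C}(\alpha)\rangle$ in $k$. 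Two smaller points: the space of $\mathcal L_\partial$-horizontal forms is only a $C$-vector space, so identifying it with $\mathrm{span}_L(\theta^1,\ldots,\theta^n)=\Omega^1(L/k)$ conflates the $C$-span with the $L$-span; and your opening paragraph transporting the parallelism construction of Proposition \ref{proposition-vanderput} to the abelian case is a detour the paper's proof does not need.
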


\begin{proof} 
Denote by $\omega^1,\ldots, \omega^n$ a $C$-basis of invariant one-forms on $A$. Recall the terminology of \cite{Kolchin-abelian}: an $A$-primitive element $\alpha$ is an element of $A(L)$ with the property that:
\begin{equation}\label{equation-Kolchin} \langle \partial, \omega^i_{L/C}(\alpha) \rangle \in k \text{ for } i = 1,\ldots, n
\end{equation}
where $\partial \in \mathrm{Der}(L/C)$ is the derivation on $L/C$, $\langle - , - \rangle : \mathrm{Der}(L/C) \times \Omega^1(L/C)$ is the natural pairing and $\omega^i_{L/C}(\alpha) \in \Omega^1(L/C)$ is the induced one-form at $\alpha$. The following result ensures the existence of nontrivial $A$-primitive elements.

\begin{fact}[{\cite[pp. 782]{Kolchin-abelian}}]
There exists an $A$-primitive element $\alpha$ in $A(L)$ with the property that the extension $L/k \langle \alpha\rangle$ is a finite algebraic extension.
\end{fact}

Denote by $\alpha$ such an $A$-primitive element. Note that the induced one-forms $\omega^i_{L/k}(\alpha) \in \Omega^1(L/k)$  induced by $\alpha$ on $L/k$ are the image of the induced forms $\omega^1_{L/C}(\alpha) \in \Omega^1(L/C)$ on $L/C$ under the projection
$$ \Omega^1(L/C) \rightarrow \Omega^1(L/k)$$ 
Using the expression due to Cartan of the Lie-derivative  on $\Omega^1(L/C)$ based on the exterior derivative as
$$ \mathcal L_\partial = d \circ i_\partial + i_\partial \circ d $$
and the fact that each $\omega^i_{L/C}(\alpha)$ is closed, we conclude that
$$ \mathcal L_\partial(\omega^i_{L/C}(\alpha)) = d \Big( \langle \partial, \omega_{L/C}^i(\alpha)\rangle \Big) $$ 
Since the image of the right-hand side vanishes in $\Omega^1(L/k)$ by Equation (\ref{equation-Kolchin}), it follows from the fact that $\Omega^1(L/C) \rightarrow \Omega^1(L/k)$ is a morphism of differential module for the Lie-derivative that
$$\mathcal L_\partial(\omega^i_{L/k}(\alpha)) = 0 \in \Omega^1(L/k)$$  for $i = 1,\ldots, n$.

It remains to show that for some $i$ and $\omega^i_{L/k}(\alpha) \neq 0 \in \Omega^1(L/k)$. Indeed, assume otherwise. Writing $L = k(M)$ and identifying the point $\alpha \in A(L)$ with a rational morphism
$$ \alpha: M \dashrightarrow A.$$
and denoting by $N$ the Zariski-closure of $M$ under $\alpha$, this implies that the the one-form $\omega^1,\ldots, \omega^n$ vanishes on the tangent space $T_\eta N$ at the generic point $\eta$ of $N$ in $T_\eta A$. Since $\omega^1,\ldots, \omega^n$ are linearly independent at every point of $A$, it follows that $N$ is a point and therefore that $\alpha \in A(k)$. This contradicts that the extension $L/k\langle \alpha \rangle$ is finite and concludes the proof of the proposition.
\end{proof}

\subsection{Binding groups of exponentially algebraic functions} Let $k$ be a self-sufficient algebraically closed  differential field with field of constants $C$ and let $K$ be a differentially closed field extending $k$ without new constants.

\begin{lem} \label{lemma-derivation}
Consider $L/k$ a finite dimensional blurred exponential field extension in $K$ satisfying $\delta(L/k) = 0$. Then  $L$ is a differential subfield of $K$.

\noindent In particular, if $k$ is a differential subfield of $K$ then $\mathrm{ecl}^K(k)$ is also a differential subfield of $K$.
\end{lem}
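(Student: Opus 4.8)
The plan is to exploit the fact that, because $\delta(L/k) = 0$, the exponential forms fill up the whole cotangent space $\Omega^1(L/k)$, and to use this to ``correct'' the derivation $\partial$ of $K$ into one that visibly preserves $L$.

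First I would record the geometric consequence of $\delta(L/k) = 0$. Writing $n = \mathrm{td}(L/k)$, the hypothesis gives $\mathrm{ldim}_\mathbb{Q}(\Gamma(L)/\Gamma(k)) = n$; choosing a $\mathbb{Q}$-basis of $\Gamma(L)/\Gamma(k)$ represented by pairs $(y_i,x_i) \in \Gamma_{\BE}(L)$, the associated exponential forms $\theta_i := dy_i/y_i - dx_i$ are $\mathbb{Q}$-linearly independent in $\Theta_{\BE}(L/k)$ by Lemma \ref{lemma-theta-definition}. Since $k$ is self-sufficient in $K$, Proposition \ref{proposition-predimension} upgrades this to $L$-linear independence in $\Omega^1(L/k)$; as this space has dimension $n$, the forms $\theta_1,\ldots,\theta_n$ are an $L$-basis of $\Omega^1(L/k)$. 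The essential feature is that $(y_i,x_i) \in \Gamma_{\BE}(L)$ means precisely $\partial(y_i)/y_i = \partial(x_i)$ computed in $K$.

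The main step is the correction trick. In characteristic zero the derivation $\partial|_k \colon k \to k$ extends to a derivation $\partial' \colon L \to L$ with $\partial'(L) \subseteq L$ (prescribing values freely on a transcendence basis, then extending uniquely over the algebraic part). Setting $D := (\partial - \partial')|_L$, the two derivations agree on $k$, so $D$ is a $k$-linear derivation $L \to K$, i.e.\ an element of $\mathrm{Hom}_L(\Omega^1(L/k), K)$. Pairing $D$ with a basis form gives
\[
\langle \theta_i, D\rangle = \frac{D(y_i)}{y_i} - D(x_i) = \Big(\frac{\partial(y_i)}{y_i} - \partial(x_i)\Big) - \Big(\frac{\partial'(y_i)}{y_i} - \partial'(x_i)\Big) = -\Big(\frac{\partial'(y_i)}{y_i} - \partial'(x_i)\Big),
\]
where the first bracket vanishes because $(y_i,x_i)\in\Gamma_{\BE}(L)$, and the remaining expression lies in $L$ because $\partial'(L)\subseteq L$. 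Thus $D$ takes $L$-values on the $L$-basis $\theta_1,\ldots,\theta_n$ of $\Omega^1(L/k)$, so for every $\ell \in L$, writing $d\ell = \sum_i f_i\theta_i$ with $f_i \in L$ yields $D(\ell) = \sum_i f_i\langle\theta_i,D\rangle \in L$. Hence $\partial(\ell) = D(\ell) + \partial'(\ell) \in L$, proving $\partial(L)\subseteq L$.

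The subtlety I expect to be the crux is exactly that last pairing computation: a priori $\partial$ maps $L$ into the potentially larger field $L(\partial x_1,\ldots,\partial x_n)^{alg}$, and it is not evident that the discrepancy stays inside $L$. What makes it work is that the obstruction is measured against the exponential forms, and on those the genuinely problematic part of $\partial$ (the logarithmic-derivative relation $\partial(y_i)/y_i = \partial(x_i)$) cancels, leaving only the controllable part coming from $\partial'$. For the final assertion, I would note that $\mathrm{ecl}^K(k)$ is, by Corollary \ref{corollary-predimension}(i), the union of the finite-dimensional extensions $L/k$ in $K$ with $\delta(L/k)=0$; submodularity (Lemma \ref{lemma-submodularity}) together with self-sufficiency of $k$ makes this family directed under compositum, so the union is a field, and since each member is a differential subfield by the first part, so is $\mathrm{ecl}^K(k)$.
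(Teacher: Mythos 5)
Your proof is correct, but it reaches the conclusion by a different mechanism than the paper. Both arguments turn on the same two inputs: the relation $\partial(y_i)/y_i=\partial(x_i)$ for pairs in $\Gamma_{\BE}$, and Proposition \ref{proposition-predimension}, which converts $\delta(L/k)=0$ into the statement that the exponential forms $\theta_i=dy_i/y_i-dx_i$ span (indeed base) $\Omega^1(L/k)$ over $L$. The paper, however, works on the side of one-forms: it first embeds $L$ in an auxiliary differential field $M$ of finite transcendence degree over $k$, considers the exact sequence $0\to\Omega^1(L/k)\otimes_L M\to\Omega^1(M/k)\to\Omega^1(M/L)\to 0$, and shows that the Lie derivative $\mathcal{L}_\partial$ stabilizes the subspace $\Omega^1(L/k)\otimes_L M$ because $\mathcal{L}_\partial(\theta_i)=d(\partial(y_i)/y_i-\partial(x_i))=0$; stability of that subspace is then shown to be equivalent to $\partial(L)\subseteq L$. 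You instead work on the dual side of derivations: you correct $\partial|_L$ by an arbitrary derivation $\partial'\colon L\to L$ extending $\partial|_k$ (which exists in characteristic zero), observe that the discrepancy $D=\partial-\partial'$ is a $k$-derivation $L\to K$, hence an element of $\mathrm{Hom}_L(\Omega^1(L/k),K)$, and evaluate it on the basis $\theta_1,\ldots,\theta_n$, where the $\Gamma_{\BE}$-relation kills the $\partial$-contribution and leaves something in $L$. Your route buys a small economy: it needs neither the auxiliary differential field $M$ nor the Lie-derivative formalism and the exact sequence, only the universal property of K\"ahler differentials. The paper's route has the advantage of setting up exactly the Lie-derivative machinery ($\mathcal{L}_\partial(\theta)=0$ for exponential forms) that is reused immediately afterwards in Propositions \ref{proposition2-linear} and \ref{proposition-commutative}. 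Your handling of the final clause (every element of $\mathrm{ecl}^K(k)$ lies in some $L$ with $\delta(L/k)=0$, each of which is a differential subfield) matches the paper's one-line deduction; the directedness observation via submodularity is not strictly needed for that conclusion but is correct.
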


\begin{proof} 
Since $L/k$ satisfies $\delta(L/k) = 0$, $L$ does not contain any differentially transcendental element over $k$ and is contained in a differential field $M$ of finite transcendence degree over $k$. Consider the exact sequence 
$$ 0 \rightarrow \Omega^1(L/k) \otimes_L M \rightarrow \Omega^1(M/k) \overset{\pi}{\rightarrow} \Omega^1(M/L) \rightarrow 0$$
and the action of the Lie-derivative $\mathcal L_{\partial}: \Omega^1(M /k) \rightarrow \Omega^1(M /k)$. We claim that
$$ \mathcal L_\partial (\Omega^1(L/k) \otimes_L M ) \subset \Omega^1(L/k) \otimes_L M$$
if and only if  $L$ is a differential subfield of $M$.
Indeed, the only thing required to see this is the formula $\mathcal L_\partial (df) = d(\partial(f))$ for $f \in L$. For instance, if $f \in L$ then $df \in \Omega^1(L/k)$ and stability under the Lie-derivative and the previous exact sequence implies that $d(\partial(f)) = 0 \in \Omega^1(M/L)$ which means that $\partial(f) \in L$. The direct implication of the previous equivalence follows and the proof of the converse is similar. Now, since $\delta(L/k) = 0$, Proposition \ref{proposition-predimension} implies that 
\begin{equation}\label{equation-expalg}
L \cdot \Theta_{BE}(L/k) = \Omega^1(L/k).
\end{equation}
Hence, any one-form $\omega \in \Omega^1(L/k) \otimes_L M$ can be written as 
$\omega = \sum_{i = 1}^n \lambda_i \cdot \theta_i$
where $\lambda_i \in M$, $\theta_i \in \Theta_{BE}(L/k)$. This means that each $\theta_i$ is of the form $dy_i/y_i - dx_i$ for some $x_i,y_i \in L$ satisfying $\partial(y_i)/y_i = \partial(x_i)$. Hence
$$\mathcal L_\partial(\omega_i) = \mathcal L_\partial(dy_i/y_i - dx_i) = d (\partial(y_i)/y_i - \partial(x_i)) = 0 $$
Using the Leibniz rule, it follows that $\mathcal L_\partial (\Omega^1(L/k) \otimes_L M ) \subset \Omega^1(L/k) \otimes_L M$ as required. SInce any exponentially algebraic element is contained in an extension of predimension zero, the second clause of the lemma follows.
\end{proof}

\begin{Prop}\label{proposition2-linear}
Consider $q \in S^D(k)$ a $\mathcal C$-internal type which is exponentially algebraic. Then the binding group $\mathrm{Aut}^D(q/\mathcal C)$ of $q$ is isomorphic to a linear algebraic group. 
\end{Prop}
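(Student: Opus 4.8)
The plan is to show that the abelian‑variety part of the binding group vanishes, arguing by contradiction. Since $q$ is $\mathcal C$-internal, the binding group $G = \mathrm{Aut}^D(q/\mathcal C)$ is definable in the pure algebraically closed field $\mathcal C$, hence (definably) isomorphic to an algebraic group over $C$; because $k$ is algebraically closed, $G$ is connected and, fixing a fundamental system of realizations $\bar a$, the algebraically closed differential field $L_1 = k\langle \bar a\rangle^{alg}$ is a (generalized) strongly normal extension of $k$ with Galois group $G$ and $\mathrm{td}(L_1/k)=\dim G<\infty$. As $q$ is exponentially algebraic, $\bar a\in\mathrm{ecl}^K(k)$, so by Corollary \ref{corollary-predimension}(i) the hull $L := \mathrm{Hull}_K(\bar a/k)$ satisfies $\delta(L/k)=0$; by Lemma \ref{lemma-derivation} it is an algebraically closed differential subfield of $K$, and since it is a differential field containing $k$ and $\bar a$ it contains $L_1$. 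Finally, because $k$ is self-sufficient, Proposition \ref{proposition-predimension} gives $L\cdot\Theta_{\BE}(L/k)=\Omega^1(L/k)$.

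Now suppose toward a contradiction that $G$ is not linear. By Chevalley's structure theorem there is an exact sequence $1\to H\to G\to B\to 1$ with $H$ connected linear and $B$ a nontrivial abelian variety; composing $G\twoheadrightarrow B$ with a quotient of $B$ onto a simple abelian variety $A$ (which exists by Poincaré reducibility) yields a normal algebraic subgroup $N\triangleleft G$ with $G/N\cong A$. Via the Galois correspondence, the fixed field $L_A = L_1^N$ is an abelian strongly normal extension of $k$ with Galois group $A(C)$, $A$ simple, and $L_A\subseteq L_1\subseteq L$. Proposition \ref{proposition-Kolchin} then provides $\alpha\in A(L_A)$ and an invariant one-form $\omega$ on $A$ with
$$ \omega(\alpha)\neq 0\in\Omega^1(L_A/k)\quad\text{and}\quad \mathcal L_\partial(\omega(\alpha))=0. $$
Since $L/L_A$ is separable (characteristic zero), the canonical map $\Omega^1(L_A/k)\otimes_{L_A}L\to\Omega^1(L/k)$ is injective and compatible with $\mathcal L_\partial$ (as $L_A$ is a differential subfield), so the image of $\omega(\alpha)$ is a nonzero, $\mathcal L_\partial$-closed one-form in $\Omega^1(L/k)$.

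The heart of the argument is to rewrite this abelian differential in exponential terms. Using $L\cdot\Theta_{\BE}(L/k)=\Omega^1(L/k)$, I would fix an $L$-basis $\theta_1,\ldots,\theta_r$ of $\Omega^1(L/k)$ consisting of exponential forms $\theta_i = dy_i/y_i - dx_i$ with $(y_i,x_i)\in\Gamma_{\BE}(L)$, and write $\omega(\alpha)=\sum_{i}\lambda_i\theta_i$ with $\lambda_i\in L$. Each $\theta_i$ is $\mathcal L_\partial$-closed, since $\partial(y_i)/y_i=\partial(x_i)$ gives $\mathcal L_\partial(\theta_i)=d(\partial(y_i)/y_i-\partial(x_i))=0$ as in the proof of Lemma \ref{lemma-derivation}; applying $\mathcal L_\partial$ to the expansion and using $\mathcal L_\partial(\omega(\alpha))=0$ leaves $\sum_i\partial(\lambda_i)\theta_i=0$, so $\partial(\lambda_i)=0$ and hence $\lambda_i\in C$ because $L$ has no new constants. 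With the $\lambda_i\in C\subseteq k$ now constant, $\eta=\sum_i\lambda_i(dY_i/Y_i-dX_i)$ is a genuine invariant one-form on the connected commutative linear group $(\Gm\times\Ga)^r$, and $\eta(y)=\sum_i\lambda_i\theta_i=\omega(\alpha)$ for the point $y=\big((y_1,x_1),\ldots,(y_r,x_r)\big)$. Thus $\omega(\alpha)+(-\eta)(y)=0$, and Proposition \ref{proposition-linear} (with $A$ simple abelian and $(\Gm\times\Ga)^r$ linear) forces $\omega(\alpha)=0$, contradicting $\omega(\alpha)\neq 0$. Therefore $G$ is linear.

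I expect the main obstacle to be the middle step: producing the simple abelian strongly normal subextension $L_A/k$ and certifying that it is a Kolchin abelian strongly normal extension to which Proposition \ref{proposition-Kolchin} applies (this rests on the Galois correspondence for the connected group $G$ over the algebraically closed base $k$ with no new constants), together with the bookkeeping — injectivity of $\Omega^1(L_A/k)\to\Omega^1(L/k)$ and the descent of the coefficients $\lambda_i$ to the constants — that allows the exponential forms to be repackaged as a single induced form from a linear group.
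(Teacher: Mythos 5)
Your proof is correct, and the back end of the argument --- producing a nonzero $\mathcal L_\partial$-closed induced form $\omega(\alpha)$ via Proposition \ref{proposition-Kolchin}, expanding it in an $L$-basis of exponential forms, descending the coefficients to the constants using $\mathcal L_\partial$-closedness, repackaging the result as an induced form from the commutative linear group $(\Gm\times\Ga)^r$, and concluding with Proposition \ref{proposition-linear} --- coincides with the paper's computation almost line for line. Where you genuinely diverge is the reduction to the case of a simple abelian variety. The paper does not touch the Chevalley decomposition of the given binding group at all: it argues that if \emph{some} exponentially algebraic internal type had a non-linear binding group, then (using that $\mathrm{ecl}^K(k)$ is a differential field, Lemma \ref{lemma-derivation}) the abelian reduction theorem of Jaoui--Moosa would produce a possibly \emph{different} exponentially algebraic type over $k$ whose binding group is exactly a simple abelian variety, and then runs the form computation for that type. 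You instead stay inside the strongly normal extension $L_1=k\langle\bar a\rangle^{alg}$ of the \emph{given} type, apply Chevalley's theorem and a simple quotient to get a surjection $G\twoheadrightarrow A$ onto a simple abelian variety, and use Kolchin's Galois correspondence for the normal subgroup $N=\ker(G\to A)$ to extract an abelian strongly normal subextension $L_A/k$ with group $A(C)$ sitting inside the hull $L$. This is legitimate: the correspondence between closed normal subgroups and intermediate strongly normal extensions is classical Kolchin theory, $A$ and $N$ are defined over $C$ since $C$ is algebraically closed of characteristic zero, and you only need $L_A\subseteq L$ with $\delta(L/k)=0$ (not that $L_A$ itself arises from an exponentially algebraic type), which you get for free from $L_A\subseteq L_1\subseteq \mathrm{Hull}_K(\bar a/k)$. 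What each approach buys: yours is more self-contained and elementary, replacing the Jaoui--Moosa black box by Chevalley plus the classical Galois correspondence, and it even spares the need to verify that the auxiliary type is exponentially algebraic; the paper's route operates at the level of types rather than field extensions, which fits the stability-theoretic framing of the rest of the section and avoids any discussion of fixed fields. Your careful remarks on the injectivity of $\Omega^1(L_A/k)\otimes_{L_A}L\to\Omega^1(L/k)$ and its compatibility with the Lie derivative are correct and in fact make explicit a step the paper leaves implicit.
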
 

Here, we say that $q$ is {\em exponentially algebraic type} to mean that $q$ is weakly orthogonal to the constants and that some/any realization of $q$ in a differentially closed field $K$ extending $k$ without new constants belongs to $\mathrm{ecl}^K(k)$. The additional assumption that $q$ is internal to the constants implies that in the saturated model $\mathcal U \models \DCF$ with field of constants $\mathcal C$, the action of $\mathrm{Aut}^D(q/\mathcal C)$ on $q(\mathcal U)$ coincides with the action of a definable permutation group $G(\mathcal U)$ of $q(\mathcal U)$. The binding group theorem \cite[Appendix B]{Hrushovski-Galois}  states that we have a definable isomorphism of permutation groups:
$$ (G(\mathcal U), q(\mathcal U)) \simeq (H(\mathcal C), X(\mathcal C)) $$
where $(H,X)$ is an algebraic permutation group defined over $C$. The content of the proposition is that, under the assumption that $q$ is exponentially algebraic, the algebraic group $H$ is linear.

\begin{proof} For the sake of a contradiction, assume that we can find an exponentially algebraic type whose binding group is not linear in the previous sense. Since $\mathrm{ecl}^K(k)$ is a differential field by Lemma \ref{lemma-derivation}, the main theorem of \cite{Jaoui-Moosa} ensures the existence of an exponentially algebraic type $q \in S(k)$ whose binding group is isomorphic to a simple abelian variety $A$ over the field of constants $C$ of $k$. 

Consider $a \models q$. Since the binding group of $q$ is commutative, $L = k\langle a\rangle/k$ is the strongly normal extension associated to $q$. By Proposition \ref{proposition-Kolchin}, we can find a point $\alpha \in A(L)$ and an invariant differential $\omega$ on $A$ such that the induced differential $\omega(\alpha) \in \Omega^1(L/k)$ satisfies 
$$ \omega(\alpha) \neq 0 \text{ and }  \mathcal L_\partial(\omega(\alpha)) = 0.$$
Now, by exponential algebraicity of $x$ over $k$, we can find a finitely generated $M/k$ satisfying $\delta(M/k) = 0$ and containing $L$. Proposition \ref{proposition-predimension} implies that $M \cdot \Theta_{BE}(M/k) = \Omega^1(M/k)$. Hence $\omega$ can be decomposed as
$$ \omega(\alpha) = \sum_{i = 1}^n m_i \cdot  \theta_i \in \Omega^1(M/k)$$
where $m_i \in M$ and $\theta_i \in \Theta_{\BE}(M/k)$ are $M$-linearly independent. Using that $\mathcal L_\partial(\omega(\alpha)) = 0$, the properties of the Lie-derivative gives: 
$$ 0 = \mathcal L_\partial(\omega(\alpha)) = \sum_{i = 1}^n (\partial(m_i) \cdot \theta_i + m_i \cdot \mathcal L_\partial(\theta_i)) = \sum_{i = 1}^n \partial(m_i) \cdot \theta_i .$$
Since $\theta_1,\ldots, \theta_n$ are $M$-linearly independent, it follows that $m_1,\ldots, m_n \in C$ and hence that 
$$ \omega = \sum_{i = 1}^n \lambda_i \cdot  \theta_i \in \Omega^1(M/k)$$
where the $\lambda_i \in C$ and $\theta_i \in \Theta_{\BE}(M/k)$.
 Expanding each $\theta_i = dy_i/y_i - dx_i$, we can rewrite the previous equality as
$$ \omega(\alpha) = \sum_{i = 1}^n \lambda_i (dy_i/y_i - dx_i) = \sum_{i = 1}^n \lambda_i dy_i/y_i - d \Big(\sum_{i = 1}^n \lambda_i x_i\Big) $$
which is an equality between two induced forms of the form described by Proposition \ref{proposition-linear}. It follows from Proposition \ref{proposition-linear} that $\omega(\alpha) = 0 \in \Omega^1(M/k)$ and therefore in $\Omega^1(L/k)$ too. This is a contradicts our assumption on $\omega(\alpha)$ and finishes the proof. 
\end{proof}

\begin{Prop}\label{proposition-commutative}
Consider $q \in S^D(k)$ a $\mathcal C$-internal type which is exponentially algebraic. Then the binding group $\mathrm{Aut}^D(q/\mathcal C)$ of $q$ is commutative. 
\end{Prop}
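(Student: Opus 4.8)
The plan is to build on Proposition~\ref{proposition2-linear}: since $q$ is internal to the constants and exponentially algebraic, its binding group $G = \mathrm{Aut}^D(q/\mathcal C)$ is already known to be linear, so it remains only to prove commutativity. Fix a realization $a \models q$ in $K$ and set $L = k\langle a\rangle$. As $q$ is weakly orthogonal to the constants, $L/k$ acquires no new constants, and internality makes $L/k$ a strongly normal extension with Galois group $G$; linearity of $G$ then means $L/k$ is a Picard--Vessiot extension. Because $k$ is algebraically closed, the fixed field $L^{G^0}$ of the identity component is a finite algebraic extension of $k$, hence equal to $k$, so $G = G^0$ is connected (exactly as in the proof of Proposition~\ref{proposition-vanderput}). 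Since we are in characteristic zero, it therefore suffices to show that the Lie algebra $\mathfrak g = \mathrm{Lie}(G)$ is abelian, because a connected algebraic group in characteristic zero is commutative precisely when its Lie algebra is.

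To access $\mathfrak g$ I would invoke Proposition~\ref{proposition-vanderput}. Writing $L = k(M)$ for a smooth variety $M$, it supplies a coparallelism $\theta = \sum_{i=1}^n \theta^i \otimes e_i$ of type $\mathfrak g(k)$ with the crucial extra property that $\mathcal L_\partial(\theta^i) = 0$ in $\Omega^1(L/k)$ for every $i$. By Lemma~\ref{lemma-commutativity}, commutativity of $\mathfrak g$ is exactly the assertion that $d\theta^i = 0$ for all $i$, so the entire problem reduces to proving that each of the forms $\theta^i$ is closed.

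This is where exponential algebraicity enters, and it is the technical heart of the argument, closely parallel to the computation in the proof of Proposition~\ref{proposition2-linear}. By hypothesis there is a finite-dimensional extension $N/k$ with $L \subseteq N$ and $\delta(N/k) = 0$; by Lemma~\ref{lemma-derivation} this $N$ is a differential subfield of $K$, and by Proposition~\ref{proposition-predimension} we have $N\cdot \Theta_{\BE}(N/k) = \Omega^1(N/k)$. I would then choose an $N$-basis $\theta_1,\dots,\theta_r$ of $\Omega^1(N/k)$ consisting of exponential forms $\theta_j = dy_j/y_j - dx_j$, push each $\theta^i$ forward along the injection $\Omega^1(L/k) \hookrightarrow \Omega^1(N/k)$ (valid in characteristic zero), and expand $\theta^i = \sum_j m_{ij}\,\theta_j$ with $m_{ij}\in N$. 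Exponential forms satisfy $\mathcal L_\partial(\theta_j) = 0$ (as computed in the proof of Lemma~\ref{lemma-derivation}), so applying $\mathcal L_\partial$ to this expansion and using $\mathcal L_\partial(\theta^i)=0$ gives $\sum_j \partial(m_{ij})\,\theta_j = 0$; the $N$-linear independence of the $\theta_j$ forces $m_{ij}\in C$. Hence each $\theta^i$ is a constant-coefficient combination of the closed forms $\theta_j$, so $d\theta^i = 0$ in $\Omega^2(N/k)$, whence also in $\Omega^2(L/k)$ by injectivity of $\Omega^2(L/k)\hookrightarrow\Omega^2(N/k)$.

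The main obstacle I anticipate is the bookkeeping around the base change from $L$ to the larger exponentially algebraic field $N$: I must verify that the natural maps on $\Omega^1$ and $\Omega^2$ are injective and that they intertwine both the exterior derivative $d$ and the Lie derivative $\mathcal L_\partial$, so that the identity $d\theta^i=0$ established in $\Omega^2(N/k)$ genuinely descends to $\Omega^2(L/k)$. All of this is routine in characteristic zero once one records that $L$ and $N$ are differential subfields of $K$, but it is the place where care is needed. Once it is in place, Lemma~\ref{lemma-commutativity} yields that $\mathfrak g$ is abelian, and the connectedness of $G$ upgrades this to commutativity of the binding group, completing the proof.
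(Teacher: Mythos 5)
Your proposal is correct and follows essentially the same route as the paper's proof: linearity and connectedness from Proposition~\ref{proposition2-linear}, the coparallelism with $\mathcal L_\partial$-flat forms from Proposition~\ref{proposition-vanderput}, the expansion in exponential forms over an extension of predimension zero forcing constant coefficients, closedness of exponential forms, descent along the injective pullback on $\Omega^2$, and Lemma~\ref{lemma-commutativity}. The only difference is cosmetic: you spell out the bookkeeping on the base change $L\hookrightarrow N$ that the paper leaves implicit.
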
 

\begin{proof} 
Denote by $L/k$ the strongly normal extension associated to $q$ and fix $\overline{a}$ a fundamental system of realizations of $q$. 
By Proposition \ref{proposition2-linear}, $\mathrm{Aut}^D(L/k)$ is definably isomorphic  to a {\em connected linear} algebraic group $G$ and hence $L/k$ is a Picard-Vessiot extension of $k$. Hence, Proposition \ref{proposition-vanderput} applies and we conclude that $L/k$ admits a coparallelism 
$$ \omega = \sum_{i = 1}^n \omega^i \otimes e_i  $$
of type $\mathfrak{g}(k)$ where $\mathfrak{g}(C)$ is the Lie-algebra of $G(C)$ such that each $\theta^i$ satisfies $\mathcal L_\partial(\omega^i) = 0$.

We claim that $d\omega = 0 \in \Omega^2(L/k) \otimes \mathfrak{g}$, that is, $d\omega^1 = \ldots = d\omega^n = 0$. Indeed, by Lemma \ref{lemma-derivation}, the extension $L/k$ is exponentially algebraic as it is generated by exponentially algebraic elements. Hence, we can find a finitely generated differential field extension $M/k$  without new constants with $\delta(M/k) = 0$ containing $L$. Proposition \ref{proposition-predimension} implies that $M \cdot \Theta_{BE}(M/k) = \Omega^1(M/k)$. Hence each $\omega^i$ can be decomposed as 
$$ \omega^i = \sum_{i = 1}^n m_i \cdot  \theta_i \in \Omega^1(M/k)$$
where $m_i \in M$ and $\theta_i \in \Theta_{\BE}(M/k)$ are $M$-linearly independent. The same argument as in the previous proposition shows that $m_1,\ldots, m_n \in C$. It follows that 
$$d \omega = d\Big(\sum_{i = 1}^n m_i \cdot \omega_i\Big) = \sum_{i = 1}^n m_i \cdot d\omega_i =  0 \in \Omega^2(M/k).$$
Since the pull-back morphism $\Omega^2(L/k) \rightarrow \Omega^2(M/k)$ is injective, it follows that $d\omega^i = 0 \in \Omega^2(L/k)$ for every $i$ as required. Commutativity of $G$ then follows from Lemma \ref{lemma-commutativity}. 
\end{proof}

\subsection{Proofs of Theorem A and Theorem B} Let $k$ be a differential field. Recall that a type $p \in S^D(k)$ is {\em analyzable in the constants} (or $\mathcal C$-analyzable) if it is a definable image of a type $p_n$ obtained from a sequence of definable maps
$$ p_n \rightarrow p_{n-1} \rightarrow \cdots \rightarrow p_1 $$
where at each step the type of the fiber is either algebraic or stationary and internal to the constants. The elimination of imaginaries in the theory $\DCF$ ensures that the this condition is equivalent to the seemingly stronger condition asserting that $p$ itself can be obtained from such a sequence of definable maps. See \cite{Jin} for background on analyzability in the constants.

\begin{theorem}[Theorem A] \label{theoremA}
Let $k$ be a self-sufficient differential field and assume that $p \in S^{D}(k)$ is   a type which is weakly orthogonal to the constants. Then 

\begin{quote}
some/any realization is realized in an elementary differential field extension of $k$ without new constants if and only if $p$ is both analyzable in the constants and exponentially algebraic.
\end{quote}
\end{theorem}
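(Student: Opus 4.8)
The plan is to prove the two directions separately, treating the forward implication as bookkeeping of predimension and of fibre types, and reserving the substantial work for the converse, which is the generalized form of Abel's theorem. Throughout, weak orthogonality to the constants is a standing hypothesis, so that the ``no new constants'' condition and the binding-group apparatus of Subsection 4.4 apply. For the forward direction, suppose a realization $a$ of $p$ lies in an elementary tower $k = E_0 \subset E_1 \subset \cdots \subset E_m$ (algebraically closed at each stage) without new constants, each step adjoining an algebraic element, an exponential $t$ with $\partial(t)/t = \partial(s)$, or a logarithm $t$ with $\partial(t) = \partial(s)/s$, for some $s \in E_i$. In every case $\delta(E_{i+1}/E_i) = 0$: the algebraic steps change neither the transcendence degree nor the $\mathbb{Q}$-rank of $\Gamma$, while the exponential and logarithmic steps raise $\mathrm{td}$ by at most one and the rank of $\Gamma$ by exactly one, since $(t,s)$, respectively $(s,t)$, lies in $\Gamma_{\BE}$. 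By additivity $\delta(E_m/k) = 0$, so Corollary \ref{corollary-predimension}(i) gives $a \in \mathrm{ecl}^K(k)$, i.e. $p$ is exponentially algebraic. Moreover each exponential step presents $t$ as a point of a $\Gm$-torsor over $E_i$ (two solutions of $\partial(t)/t = \partial(s)$ differ by a constant) and each logarithmic step as a point of a $\Ga$-torsor, so every nonalgebraic fibre is internal to the constants and $a$, hence $p$, is $\mathcal{C}$-analyzable.

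For the converse I would first reduce to the internal case by induction on the length of a $\mathcal{C}$-analysis of $p$. Peeling off the top layer yields $a' \in \acl(k,a)$ with $\tp(a'/k)$ analyzable of smaller depth and $\tp(a/k\langle a'\rangle)$ internal to the constants; since $\mathrm{ecl}^K$ is a pregeometry and $a \ind_k \mathcal{C}$, the type $\tp(a'/k)$ is again exponentially algebraic and weakly orthogonal to the constants, so by induction $a'$ is realized in an elementary extension $k'/k$ without new constants. As $\delta(k'/k)=0$, the field $k'$ is self-sufficient (Example \ref{example-selfsufficient}), and $\tp(a/k')$ remains internal to the constants, exponentially algebraic (by monotonicity of $\mathrm{ecl}^K$) and weakly orthogonal. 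It therefore suffices to treat a single internal, exponentially algebraic, weakly orthogonal type $q = \tp(a/k)$.

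In the internal case, Proposition \ref{proposition2-linear} and Proposition \ref{proposition-commutative} show that the binding group is linear and commutative; after a finite algebraic (hence elementary) extension of $k$ fixing its connected component, we may assume it is connected, so isomorphic over $C$ to a product of copies of $\Gm$ and $\Ga$, and $L = k\langle a\rangle$ is the associated Picard--Vessiot extension. Proposition \ref{proposition-vanderput} supplies invariant one-forms $\omega^1,\dots,\omega^n$ spanning $\Omega^1(L/k)$ with $\mathcal{L}_\partial(\omega^i) = 0$. Running the decomposition argument of Proposition \ref{proposition-commutative} — using $L \cdot \Theta_{\BE}(L/k) = \Omega^1(L/k)$ from exponential algebraicity and Proposition \ref{proposition-predimension} — rewrites each invariant form as a $C$-linear combination of exponential forms, $\omega^i = \sum_j \lambda_{ij}(dy_j/y_j - dx_j)$ with $\lambda_{ij} \in C$ and $(y_j,x_j) \in \Gamma(L)$, that is $\partial(y_j)/y_j = \partial(x_j)$. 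Since the $\omega^i$ span $\Omega^1(L/k)$ and all lie in the image of $\Omega^1(F/k)$ for $F = k(\{x_j,y_j\})$, we get $\Omega^1(L/F) = 0$, whence $L \subseteq F^{alg}$ in characteristic zero.

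The final and hardest step is to convert this into an explicit elementary tower without new constants. The relations $\partial(y_j)/y_j = \partial(x_j)$ identify each $y_j$ with an exponential of $x_j$ and each $x_j$ with a logarithm of $y_j$, so that $L$ is contained in $k(\{x_j,y_j\})^{alg}$; but a priori the $x_j,y_j$ lie in $L$ rather than in $k$, so the relations are intertwined. I expect the main obstacle to be precisely this descent: one must run a Rosenlicht-style induction on $\mathrm{td}(F/k)$, at each stage isolating an \emph{innermost} generator whose logarithmic derivative or derivative already lies in the differential subfield generated by the remaining ones, thereby peeling off a single exponential or logarithmic step. Self-sufficiency of $k$ is what excludes the degenerate cancellations — of the type of Singer's example in Example \ref{example-selfsufficient} — that would otherwise obstruct the descent, and the ambient differentially closed field with constant field $\mathcal{C}$ guarantees that no new constants appear at any stage. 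The induction terminates because $\mathrm{td}(F/k)$ is finite, exhibiting $a$ in an elementary extension of $k$ without new constants and completing the proof.
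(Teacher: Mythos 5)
Your overall architecture matches the paper's: the forward direction is predimension bookkeeping (the paper dismisses it as immediate, and your $\delta(E_{i+1}/E_i)=0$ accounting is a correct way to fill that in), and the converse proceeds by induction on the length of the $\mathcal C$-analysis, reducing to a single $\mathcal C$-internal type, to which Propositions \ref{proposition2-linear}, \ref{proposition-commutative} and \ref{proposition-vanderput} are applied to get a commutative linear binding group and Lie-derivative-killed invariant forms decomposed as $C$-linear combinations of exponential forms. Up to that point you are reproving Theorem \ref{theoremB} parts (1)--(2) and the first half of (3), which is exactly how the paper deduces Theorem \ref{theoremA}.

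However, there is a genuine gap at the step you yourself flag as ``the final and hardest.'' Knowing $L\subseteq F^{alg}$ with $F=k(\{x_j,y_j\})$ and $\partial(y_j)/y_j=\partial(x_j)$ does \emph{not} exhibit $a$ in an elementary extension of $k$: adjoining the pairs $(x_j,y_j)$ in any order is not an elementary tower, because nothing so far forces any $x_j$ to be a logarithm of an element of $k$ (or of a previously adjoined field) --- each $x_j$ is merely some element of an extension $M$ with $\delta(M/k)=0$, and a priori the $x_j$'s could be mutually entangled transcendentals none of whose derivatives or logarithmic derivatives descends. Your proposed remedy, ``isolating an innermost generator whose (logarithmic) derivative already lies in the subfield generated by the rest,'' presupposes exactly the ordering whose existence is the content of the theorem. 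The paper closes this gap in the proof of Theorem \ref{theoremB}(3) by two moves you omit: first, the Galois correspondence together with the vanishing of $H^1(k,\Ga)$ and $H^1(k,\Gm)$ reduces to a \emph{single} generator $t$ with $t'\in k$ (or with $t'/t$ an antiderivative's exponential); second, writing $dt=\sum_i\lambda_i(dy_i/y_i-dx_i)$ with the $\lambda_i$ chosen $\mathbb{Q}$-linearly independent, Rosenlicht's \cite[Proposition 4]{Rosenlicht} forces $dy_i=0$ and $d(t-\sum_i\lambda_i x_i)=0$ in $\Omega^1(L/k)$, i.e.\ $y_i\in k$ and $t-\sum_i\lambda_i x_i\in k$. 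Only then is each $x_i$ a logarithm of an element \emph{of $k$}, so that $k(x_1,\ldots,x_n)\ni t$ is genuinely an elementary extension without new constants. Without the $\mathbb{Q}$-linear-independence normalization and the appeal to Rosenlicht's proposition, the descent does not go through, and this is precisely the Liouville-type content of the theorem rather than a routine verification.
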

As stated in the introduction, this theorem can be thought as a model-theoretic generalization of a statement of Abel concerning the indefinite integral of algebraic functions: 
\begin{quote} 
if the integral of an algebraic function can be expressed in terms of implicitly or explicitly defined elementary functions then it is elementary.
\end{quote}
Indeed, an equivalent form of Theorem \ref{theoremA} is the following corollary stated in the introduction, from which one recovers Abel's claim using that the equations $y' = a$ are internal to the constants.
\begin{cor} 
Assume that $y(z)$ is an holomorphic solution of an algebraic differential equation
$$F(y,y',\ldots, y^{(n)}) = 0$$
whose coefficients are elementary functions and assume that the equation is {\em internal to the constants}. Then $y(z)$ is exponentially algebraic if and only if it is an elementary function.
\end{cor}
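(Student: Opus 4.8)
The plan is to obtain the corollary as the special case of Theorem \ref{theoremA} in which $p = \tp^D(y/k)$ and $k$ is the algebraic closure of the elementary differential field generated over $\mathbb{C}(z)$ by the coefficients of $F$. First I would verify the hypotheses of Theorem \ref{theoremA}. Since $k$ is built from $\mathbb{C}(z)^{alg}$ by finitely many exponential, logarithmic and algebraic steps, each of which is a blurred exponential extension of predimension $0$, the preservation statements in Example \ref{example-selfsufficient} show that $k$ is self-sufficient. The assumption that the equation is internal to the constants makes $p$ internal, hence analyzable, in the constants; and because $y$ is a genuine holomorphic function in $\mathcal{M}(U)$, whose field of constants is $\mathbb{C} = C$, the extension it generates has no new constants, so $p$ is weakly orthogonal to the constants. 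Finally, ``$y$ is an elementary function in the sense of Liouville'' is by definition the assertion that $y$ lies in an elementary differential field extension of $k$ without new constants. With these identifications, Theorem \ref{theoremA} reads ``$y$ is elementary iff $p$ is analyzable in the constants and exponentially algebraic'', and as internality already supplies analyzability this collapses to the corollary.

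The substance therefore lies in Theorem \ref{theoremA}, whose proof I would organize around the two implications. The easy direction (membership in an elementary extension without new constants yields analyzability and exponential algebraicity) proceeds by induction along the defining tower $k = k_0 \subset \cdots \subset k_m$: an algebraic step is an algebraic fibre, an exponential step $k_{i+1} = k_i(\exp f)$ with $f \in k_i$ has fibre a $\Gm(C)$-torsor hence internal to the constants, and a logarithmic step has fibre a $\Ga(C)$-torsor, so the whole tower is analyzable in the constants; as each step has predimension $0$, additivity gives $\delta(k_m/k) = 0$, whence $k_m \subseteq \mathrm{ecl}^K(k)$ by Corollary \ref{corollary-predimension}, and both properties pass to any subtype.

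For the hard direction I would first reduce, by induction on the length of the analysis and using that intermediate fields of a predimension-$0$ extension over a self-sufficient base again have predimension $0$, to the case where $p$ is internal to the constants, working over a base that is inductively an elementary, self-sufficient, constant-preserving extension of $k$. For such an internal exponentially algebraic type, Proposition \ref{proposition2-linear} and Proposition \ref{proposition-commutative} force the binding group to be commutative and linear, so its connected component is $\Ga^s \times \Gm^t$; a flag of subgroups then places a realization inside a tower of one-dimensional $\Ga$-extensions ($\theta' = a$, an indefinite integral) and $\Gm$-extensions ($e'/e = b$, an exponential of such an integral). The crux is to upgrade each predimension-$0$ step to a genuinely elementary one: writing $d\theta = \sum_i c_i(dy_i/y_i - dx_i)$ via the spanning of $\Omega^1$ by exponential forms (Proposition \ref{proposition-predimension}) and applying the Lie derivative $\mathcal{L}_\partial$, horizontality of the exponential forms forces each coefficient $c_i$ to be constant, which is exactly the differential form of Liouville's criterion and exhibits $\theta$ as a sum of a base element and $C$-multiples of logarithms.

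I expect the main obstacle to be precisely this last step: descending the relation, which a priori lives in $\Omega^1$ of the large field $L$ and involves $x_i, y_i \in L$ rather than the base, to a bona fide Liouville expression over the base, all while tracking the no-new-constants and self-sufficiency hypotheses. This is the content of generalizing Abel's theorem, and it is where the injectivity of $\theta$ in Lemma \ref{lemma-theta-definition}, the abelian/linear dichotomy of Proposition \ref{proposition-linear}, and the Maurer--Cartan computations of the preceding subsections must be brought together; the delicate point is ruling out, via exponential algebraicity, the non-elementary integrals (such as the error function) that a purely Galois-theoretic analysis would otherwise permit.
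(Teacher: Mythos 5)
Your reduction of the corollary to Theorem \ref{theoremA} is correct and essentially the paper's own argument: the only (cosmetic) difference is that you absorb the coefficients of $F$ into the base field, making $\tp^D(y/k)$ directly internal to the constants, whereas the paper keeps $k = \mathbb{C}(z)^{alg}$ and obtains analyzability of $\tp^D(y/k)$ by transitivity through the (analyzable) type of the coefficients; both routes then invoke Theorem \ref{theoremA} in the same way, and your verifications of self-sufficiency and weak orthogonality are exactly what the paper relies on implicitly. Your accompanying sketch of Theorems \ref{theoremA} and \ref{theoremB} also tracks the paper's induction on the semiminimal analysis and the Lie-derivative/horizontality argument faithfully.
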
 

\begin{proof} 
Since the coefficients $a_0,\ldots,a_n$ of $F$ are elementary functions, $\mathrm{tp}^D(a_0,\ldots,a_n/k)$ is analyzable in the constants. Moreover, by assumption, $\mathrm{tp}^D(y/a_0,\ldots,a_n,k)$ is internal to the constants, it follows by transitivity that $\mathrm{tp}^D(y,a_0,\ldots,a_n/k)$ and hence $\mathrm{tp}^D(y/k)$ are analyzable in the constants. Under this assumption, Theorem \ref{theoremA} states the $y$ is exponentially algebraic if and only if $y$ is contained in an elementary extension of $\mathbb{C}(z)^{alg}$, that is, if and only if $y$ is an elementary function.
\end{proof}

Before proving Theorem \ref{theoremA}, we state the second theorem which is a slightly refined version of Theorem \ref{intro-thmB} from the introduction. Recall that self-sufficient differential fields are algebraically closed by convention.

\begin{theorem}[Theorem B]\label{theoremB}
Let $k$ be a self-sufficient differential field and let $p$ be an exponentially algebraic type which is $\mathcal C$-internal. Then: 
\begin{itemize}
\item[(1)] the binding group $\mathrm{Aut}^D(p/\mathcal C)$ is definably isomorphic to $\Ga^r(C) \times \Gm^s(C)$ for some $r,s \in \mathbb{N}$,
\item[(2)] the type $p$ is interdefinable with a product $$p_1 \otimes \cdots \otimes p_r \otimes  q_1  \otimes \cdots \otimes q_s$$ where the $p_i$ are elementary antiderivatives of elements of $k$ and the $q_j$ are exponentials of elementary antiderivatives,
\item[(3)] in the decomposition given by (2), the realizations of the $p_i$ and of the $q_j$  are respectively of the form 
\begin{equation}\label{equation-theoremB-section4}  
a_i + \sum_{s = 1}^{n_i} \lambda_{i,s} \cdot  \mathrm{ln}(b_{i,s}) \text{ and } e^{a_j} \cdot  \prod_{t = 1}^{m_j} (b_{j,t})^{\lambda_{j,t}}
\end{equation}
where the $\lambda_{i,s}$ and the $\lambda_{j,t}$ are constants and the $a_i,b_{i,s}$ and the $a_j, b_{j,t}$  are elements of $k$.
\end{itemize}
\end{theorem}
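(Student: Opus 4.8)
The plan is to bootstrap from the two structural propositions already proved. Proposition \ref{proposition2-linear} shows that the binding group $G = \mathrm{Aut}^D(p/\mathcal C)$ is linear, and Proposition \ref{proposition-commutative} shows that it is commutative. Since $k$ is algebraically closed (self-sufficient fields are algebraically closed by convention), the type $p$ is stationary, so $G$ is connected. A connected commutative linear algebraic group over the algebraically closed field $C$ of characteristic zero splits as the product of its unipotent part, which in characteristic zero is a vector group $\Ga^r$, and its maximal torus $\Gm^s$; this gives the definable isomorphism $G \simeq \Ga^r(C) \times \Gm^s(C)$ asserted in part (1).

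For part (2) I would run the Galois correspondence for the Picard--Vessiot extension $L = k\langle a\rangle$ attached to $p$ (it is Picard--Vessiot since $G$ is linear). Because $\mathrm{Hom}(\Gm,\Ga) = \mathrm{Hom}(\Ga,\Gm) = 0$ in characteristic zero, the $r+s$ coordinate characters of $\Ga^r \times \Gm^s$ split cleanly: each additive character $G \to \Ga$ cuts out an intermediate Picard--Vessiot subextension with group $\Ga$, hence generated by an antiderivative $y_i$ with $\partial y_i = a_i \in k$, and each multiplicative character $G \to \Gm$ cuts out one with group $\Gm$, generated by an exponential $z_j$ with $\partial z_j / z_j = c_j \in k$. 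As these characters separate points, $L$ is the free compositum of these subextensions (no new constants appear), and $p$ is interdefinable with $\tp^D(y_1/k)\otimes\cdots\otimes\tp^D(y_r/k)\otimes\tp^D(z_1/k)\otimes\cdots\otimes\tp^D(z_s/k)$; I set $p_i = \tp^D(y_i/k)$ and $q_j = \tp^D(z_j/k)$.

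Part (3) is where exponential algebraicity is spent, and it is the heart of the argument. Fix an antiderivative generator $y_i$, so that its invariant one-form is $dy_i \in \Omega^1(L/k)$ with $\mathcal{L}_\partial(dy_i) = d(a_i) = 0$. Embedding $L$ into a predimension-zero extension $M/k$ (Corollary \ref{corollary-predimension}) and invoking Proposition \ref{proposition-predimension}, we have $\Omega^1(M/k) = M \cdot \Theta_{\BE}(M/k)$; writing $dy_i = \sum_s m_s\theta_s$ with $\theta_s \in \Theta_{\BE}(M/k)$ and applying $\mathcal{L}_\partial$ exactly as in the proof of Proposition \ref{proposition-commutative} forces the coefficients $m_s$ to be constants $\lambda_{i,s} \in C$. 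Expanding $\theta_s = dw_s/w_s - dv_s$ then yields $d\bigl(y_i + \sum_s \lambda_{i,s}v_s\bigr) = \sum_s \lambda_{i,s}\,dw_s/w_s$ in $\Omega^1(M/k)$, which is the differential-form shadow of the desired shape $y_i = a_i + \sum_s \lambda_{i,s}\ln(b_{i,s})$. The multiplicative case is dual: for $z_j$ one starts from the logarithmic invariant form $dz_j/z_j$ and reaches an expression $z_j = e^{a_j}\prod_t b_{j,t}^{\lambda_{j,t}}$.

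The main obstacle is the final descent in part (3): the $\Theta_{\BE}$-decomposition only produces logarithmic arguments $w_s, v_s$ lying in the enlarged field $M$, whereas (3) demands that the $a_i, b_{i,s}$ (and $a_j, b_{j,t}$) lie in $k$ itself. Establishing this is precisely a differential-algebraic form of Liouville's theorem on elementary integration; I expect to obtain it by combining Rosenlicht's Proposition 4 on one-forms (already used in Lemma \ref{lemma-theta-definition}) with the no-new-constants hypothesis and the $\mathcal{L}_\partial$-closedness and invariance of the relevant forms, in the same spirit as the Goursat-and-simplicity control carried out in Proposition \ref{proposition-linear}. Once the descent is in place the explicit realizations of (3) follow, and they retroactively certify that the $p_i$ of (2) are genuine \emph{elementary} antiderivatives and the $q_j$ exponentials thereof.
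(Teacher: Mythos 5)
Your parts (1) and (2) follow the paper's route: connectedness from stationarity over the algebraically closed $k$, commutativity and linearity from Propositions \ref{proposition-commutative} and \ref{proposition2-linear}, the splitting $\Ga^r\times\Gm^s$ from the structure theory of connected commutative linear groups, and the decomposition of $p$ via the Galois correspondence (the paper phrases this through the vanishing of $H^1(k,\Ga)$ and $H^1(k,\Gm)$ rather than through characters, but the content is the same). Your setup for (3) — embed $L$ in a predimension-zero extension $M/k$, write $dy_i=\sum_s m_s\theta_s$ with $\theta_s\in\Theta_{\BE}(M/k)$, and apply $\mathcal L_\partial$ to force $m_s\in C$ — is also exactly the paper's argument.

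The gap is that you leave the descent to $k$ as an expectation, and the toolbox you propose for it is partly the wrong one: Proposition \ref{proposition-linear} (Goursat plus simplicity of an abelian variety) plays no role here, since by this point the binding group is already known to be linear. The descent is in fact a two-line consequence of ingredients you already have, but one normalization is essential and is missing from your sketch: since $\Theta_{\BE}(M/k)$ is a $\mathbb{Q}$-vector space (it is the image of the divisible group $\Gamma(M)/\Gamma(k)$), you may regroup the terms of $dy_i=\sum_s\lambda_s\theta_s$ so that the constants $\lambda_s$ become \emph{$\mathbb{Q}$-linearly independent}. Only then does Rosenlicht's Proposition 4 apply to the relation $d\bigl(y_i+\sum_s\lambda_s v_s\bigr)-\sum_s\lambda_s\,dw_s/w_s=0$, yielding $dw_s=0$ for every $s$ and $d\bigl(y_i+\sum_s\lambda_s v_s\bigr)=0$ in $\Omega^1(M/k)$. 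An element of $M$ whose differential vanishes in $\Omega^1(M/k)$ is algebraic over $k$, hence lies in $k=k^{alg}$; so $b_{i,s}:=w_s\in k$ and $a_i:=y_i+\sum_s\lambda_s v_s\in k$, which is precisely the shape demanded by (3). (No separate appeal to the no-new-constants hypothesis is needed at this step.) For the multiplicative types the paper does not argue "dually" as you suggest but reduces to the additive case: a realization $t\models q_j$ has $t'/t\in k$, so choosing $s$ with $(t,s)\in\Gamma_{\BE}(K)$ gives $s'\in k$ with $s$ exponentially algebraic by Lemma \ref{lemma-derivation}, and the additive case applied to $s$ finishes. Without the $\mathbb{Q}$-linear-independence normalization, Rosenlicht's Proposition 4 simply does not apply and your argument stalls exactly where you flagged the obstacle.
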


Note that for some element $b \in k$, we used the suggestive notation $\mathrm{ln}(b), e^{b}$ and $b^\lambda$ to denote any element of $K\models \DCF$ with the same field of constants as $k$ satisfying respectively
$$ (b,y) \in \Gamma_\BE(K), (y,b) \in \Gamma_\BE(K) \text{ and } y'/y = \lambda \cdot b'/b.$$
Alternatively, an equivalent (differential) form of (\ref{equation-theoremB-section4}) is that for any $x \models p_i$ and any $y \models q_j$, the derivative $x'$ of $x$ and the log-derivative $y'/y$ of $y$ are respectively of the form 
$$ a' + \sum_{ i  = 1}^{n} \lambda_i \cdot \frac {b'_i}{b_{i}}$$
for some $\lambda_1,\ldots, \lambda_n \in C$ and some $a,b_1,\ldots,b_n \in k$. 
When  $p$ is the type of the indefinite integral of an algebraic function $f$ then the binding group of $p$ is a subgroup of the additive group $\Ga$ and  the classical theorem of Liouville \cite{Liouville2} asserts that $\int f dz$ is an elementary function if and only if it is of the special form 
$$ \int f(z) dz = a_0(z) + \sum_{i = 1}^n \lambda_i \cdot \mathrm{ln}(a_i(z))$$
where $\lambda_1,\ldots,\lambda_n$ are constants and $a_1(z),\ldots,a_n(z) \in \mathbb{C}(z,f(z))$. In that case, the characterization obtained from the Theorem \ref{theoremB} is less precise  as (3) only asserts that the $a_i(z)$ live in $\mathbb{C}(z)^{alg}$. Nevertheless, the weaker form given by (3) can not be improved at the level of generality, see \cite{Davenport-Singer}.

We now turn to the proofs of Theorem \ref{theoremA} and Theorem \ref{theoremB} and we first deduce Theorem \ref{theoremA} from Theorem \ref{theoremB}.

\begin{proof}[Proof of Theorem \ref{theoremA}]
The direct implication is immediate: if $p$ is realized in an elementary extension (in the sense of Liouville) of $k$ then $p$ is analyzable in the constants and exponentially algebraic. We prove the converse by induction on the number of steps $l$ of the semiminimal analysis of $p$ which are not algebraic extensions.

If $l = 0$ then the statement is trivial, so assume that the statement is known for all exponentially algebraic types analyzable in the constants in at most $l$ steps for some $l \geq 0$. Consider $q$ an exponentially algebraic type analyzable in $l + 1$ steps and take $a \models q$ in some $K \models \DCF$ extending $k$ without new constants. The subset
$$ S = \lbrace  b \in \mathrm{dcl}^D(k,a) \mid \mathrm{tp}^D(b/k) \text{ is } \mathcal C-\text{internal} \rbrace \subset  \mathrm{dcl}^D(k,a)   $$
is definably closed and not equal to $k = \mathrm{acl}^D(k)$. This follows the existence of a largest internal quotient in the semiminimal analysis of a type in a superstable theory which eliminates imaginaries and the hypothesis that $\mathrm{tp}^D({a}/k)$ is analyzable in the constants. Hence we can pick $b \in S \setminus \mathrm{acl}(k)$ so that $p = \mathrm{tp}^{D}(b/k)$ is $\mathcal C$-internal and not algebraic. By Lemma \ref{lemma-derivation}, every element of $\mathrm{dcl}^D(k,a)$ is exponentially algebraic over $k$ so that $p$ is an exponentially algebraic type as well. Hence, Theorem \ref{theoremB} applies and shows that $p$ is realized in an elementary extension $L$ of $k$. 

Note also that $L$ is a self-sufficient differential field by Example \ref{example-selfsufficient}.  Since, by construction, $\mathrm{tp}^D(a/L)$ is exponentially algebraic and analyzable in at most $l$ steps in the constants, the inductive hypothesis implies that $a$ belongs to some elementary extension of $L$. It follows that $a$ belongs to an elementary extension of $k$. This concludes the proof of the theorem by induction. \end{proof}

\begin{proof}[Proof of Theorem \ref{theoremB}]
(1). Since $k$ is algebraically closed, $\mathrm{Aut}^D(p/\mathcal C)$ is isomorphic to $G(\mathcal C)$ where $G$ is a {\em connected} algebraic group defined over the field of constants $C$ of $k$. Proposition \ref{proposition-commutative} and Proposition \ref{proposition2-linear} further imply that $G$ is a connected commutative linear algebraic group. Hence (1) follows from the structure theory of commutative connected linear algebraic groups \cite{Borel}. 

(2). By (1), the binding group of $p$ is of the form $\Ga^r(C) \times \Gm^s(C)$. Using the Galois correspondence and the vanishing of the cohomology groups $H^1(k,\Gm)$ and $H^1(k,\Ga)$, it follows that $p$ is interdefinable with a type of the form
$p_1 \otimes \cdots \otimes p_r \otimes q_1 \otimes \cdots \otimes q_s$ where the $p_i$ are indefinite integrals of elements in $k$ and the $q_j$ are exponential of such indefinite integrals. To obtain (2), it remains to show that these indefinite integrals belong to elementary extensions of $k$. This is a weak form of (3) so that it is enough to prove (3) to get (2).

(3). Note first that the $p_i$ and the $q_j$ are exponentially algebraic types. This follows from the fact that $p$ is exponentially algebraic and Lemma \ref{lemma-derivation}. We now distinguish between the cases of the $p_i$ and of the $q_j$. 

In the first case, take $t \models p_i$ in some $K \models \DCF$ extending $k$ without new constants. Since $t \in \mathrm{ecl}^{K}(k)$, there exists a finite-dimensional subextension $M/k$ in $K$ such that $\delta(M/k) = 0$. Proposition \ref{proposition-predimension} implies that $M \cdot \Theta_{\BE}(M/k) = \Omega^1(M/k)$. Furthermore, since the derivative $t'$ belongs to $k$, we have that $\mathcal L_\partial(dt) = 0  \in \Omega^1(M/k)$. It follows that we can write 
$$ dt = \lambda_1 \cdot \theta_1 + \ldots + \lambda_n \cdot \theta_n  \in \Omega^1(L/k) $$ 
where $\lambda_1,\ldots, \lambda_n$ belongs to the common field $C$ of constants of $k$ and $K$. Taking such a combination of minimal length, we may assume that the $\lambda_i$ are $\mathbb{Q}$-linearly independent. Writing $\theta_i = dy_i/y_i - dx_i$, \cite[Proposition 4]{Rosenlicht} implies that 
$$d(t - \lambda_1 x_1 + \ldots + \lambda_n x_n) = dy_1/y_1 = dy_n/y_n = 0 \in \Omega^1(L/k).$$
Setting $a = t - \lambda_1 x_1 + \ldots + \lambda_n x_n \in k$ and $b_i = y_i \in k$, we obtain that $$t = a + \sum_{i = 1}^n \lambda_i \cdot x_i = a + \sum_{i = 1}^n \lambda_i \cdot \mathrm{ln}(b_i) $$
since $(b_i,x_i) \in \Gamma_{\BE}(M/k)$ by definition of $\Theta_{\BE}(M/k)$.

The second case can be deduced from the first one: indeed, any realization $t \models q_j$ satisfies $t'/t \in k$. Taking $s \in K$ such that $(t,s) \in \Gamma_{BE}(K)$, we see that $s' \in k$ and it follows from the fact that $tp^{D}(t/k)$ is exponentially algebraic that $tp^{D}(s/k)$ is also exponentially algebraic. Applying the first case to $s$, we obtain that 
$$ t'/t = s' = a + \sum_{i = 1}^n \lambda_i \cdot \frac{b'_i}{b_i}.$$
As noticed after the statement of Theorem \ref{theoremB}, this is equivalent to the integral form given by (\ref{equation-theoremB-section4}). This finishes the proof of the theorem.
\end{proof}

\section{Applications}

\subsection{The problem of integration in finite terms} In the terminology of Bronstein \cite{Bronstein}, the problem of {\em $(A,B)$-integration (in finite terms)} is to determine whether a given expression in the class $A$ has an antiderivative in the class $B$ and to calculate it if it is. We first describe Risch's solution of the classical problem of Liouville where $A=B$ is the class of {\em elementary functions}: there are two obvious number-theoretic obstruction to decidability: 
\begin{itemize}
\item[(1)] we can not decide whether an indefinite integral of the form
 $$\int P(e,\pi) \cdot e^{z^2} dz \text{ with }  P \in \mathbb{Q}[X,Y]$$
is elementary or not since we do not know whether $e$ and $\pi$ are algebraically independent, 

\item[(2)]  indefinite integrals of the form
$$ \int (\mathrm{ln}(e^z) - z) e^{z^2} dz $$
are ambiguous since the answer depends on the chosen branch of the logarithm.
\end{itemize} 
It can be shown that both phenomena provide a source of undecidability for the problem of integration in finite terms (see \cite[Proposition 2.2]{Risch} and the introduction of \cite{Davenport}). To circumvent these problems, we work under the following natural assumptions: 

\begin{Assumption}[Risch] \label{assumption-Risch} \text{ }

\begin{itemize}
\item[(1)] The field of constants $C_0$ is an algebraically closed extension of $\mathbb{Q}$ of finite transcendence degree equipped with an explicit transcendence basis $\mathcal B$. Each constant from $C_0$ will be prescribed by giving its minimal polynomial over $\mathbb{Q}(\mathcal B)$.

\item[(2)] Every elementary function is presented as an element $f$ belonging to an elementary extension $L = C_0(z,\theta_1,\ldots,\theta_n)$ where for each $i \geq 1$ 
\begin{itemize} 
\item either $\theta_i$ is algebraic over $K_{i - 1} = C_0(z,\theta_1,\ldots, \theta_{i-1})$ and its minimal polynomial is explicitly given,
\item or $\theta_i$ is the exponential or the logarithm of an element in $K_{i - 1}$  which does not already have an exponential or a logarithm in $K_{i - 1}^{alg}$.
\end{itemize} 
\end{itemize}
\end{Assumption}
Under these assumptions, one can compute effectively with the elements of $L$: assign distinct symbols to the elements of the field, construct bases for finitely generated subfields, tell when two elements are equal and factor polynomials over it.
Risch's solution of the problem of integration in finite terms asserts that this problem becomes decidable in this setting.

 \begin{fact}[Risch's algorithm]\label{fact-Risch-algo}
Let $L/C_0(z)$ be an elementary differential field extension of $C_0(z)$ satisfying Assumption \ref{assumption-Risch}. Then the problem of integration in finite terms for elements of $L$  is decidable. 
 \end{fact}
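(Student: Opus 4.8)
The plan is to establish the fact by induction on the height $n$ of the tower $C_0(z) = K_0 \subset K_1 \subset \cdots \subset K_n = L$ provided by Assumption \ref{assumption-Risch}, combining a Liouville-type structure theorem with effective computations at each level. The structural input is the classical theorem of Liouville---the additive (that is, $\Ga$) specialization of the analysis behind Theorem \ref{theoremB}---which asserts that for $f \in L$ the antiderivative is elementary if and only if it can be written as
$$ \int f\, dz = v_0 + \sum_{i=1}^m \lambda_i \ln(v_i) $$
with $v_0, v_1,\ldots,v_m$ lying in a finite algebraic extension of $L$ and $\lambda_1,\ldots,\lambda_m \in C_0$. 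Deciding integrability thus reduces to deciding the existence of such a datum $(v_0, \lambda_i, v_i)$ and, when it exists, exhibiting it; the whole difficulty is to make this search effective, which is possible because Assumption \ref{assumption-Risch} makes all arithmetic in each $K_i$ (equality testing, gcd's, squarefree and irreducible factorization, and linear algebra over $C_0$) effective.

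The base case $n=0$ is rational-function integration over $C_0(z)$: \emph{Hermite reduction} peels off the part of the integral that is itself rational, leaving an integrand with squarefree denominator whose logarithmic part is recovered by the \emph{Rothstein--Trager resultant} $\mathrm{Res}_z\!\left(\cdots\right)$, whose roots supply the constants $\lambda_i$ and the arguments $v_i$; integrability then becomes a decidable condition on the location of these roots in $C_0$. For the transcendental step, suppose $\theta = \theta_n$ is transcendental over $K = K_{n-1}$, hence an exponential or a logarithm of an element of $K$. Writing $f$ in its partial-fraction decomposition in $\theta$ over $K$, Hermite reduction again removes the part with an antiderivative rational in $\theta$; the logarithmic part is treated by a Rothstein--Trager computation with coefficients now in $K$, reducing the determination of the $\lambda_i$ and the $v_i$ to equality and factorization tests one level down; and the residual \emph{polynomial part} (a genuine polynomial in $\theta$ in the logarithmic case, a Laurent polynomial in the exponential case) reduces, coefficient by coefficient, to solving instances of the \emph{Risch differential equation} $y' + \eta' y = g$ over $K$, where $\eta$ is the argument of the exponential.

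The Risch differential equation is the engine of the recursion. Given $f, g \in K$, one bounds the degree in $\theta$ of any putative solution $y$, reduces to finding its coefficients in $K_{n-1}$, and then solves these lower instances by the \emph{same} mutual recursion---invoking both the integration problem and the Risch equation over $K_{n-1}$. The descent terminates at the decidable rational base case, and Assumption \ref{assumption-Risch} guarantees that the accompanying operations (degree bounds, the linear systems over $C_0$ coming from the constants $\lambda_i$, and factorization) are all carried out effectively.

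The hard part will be the case in which some $\theta_i$ is \emph{algebraic} over $K_{i-1}$, so that $L$ is built over an algebraic function field. There the logarithmic part of Liouville's theorem is governed by the divisor class group of the associated curve: the arguments $v_i$ may exist only after a controlled finite algebraic extension, and deciding whether the relevant logarithmic derivatives assemble into a combination $\sum_i \lambda_i\, dv_i / v_i$ amounts to testing whether prescribed divisors are torsion in the Jacobian and, if so, computing the torsion order. Making this effective---through the theory of algebraic function fields, an effective Riemann--Roch, and a priori bounds on the torsion that can occur---is the substantial technical content (due to Risch and, in an implementable form, to Trager and Davenport). By contrast, once the Risch-equation recursion is in place, the purely transcendental tower is a comparatively routine descent.
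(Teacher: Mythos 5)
The paper does not actually prove this statement: it is recorded as a \textbf{Fact}, with the proof deferred to the references (Raab--Singer and Bronstein for the general elementary case, Trager and Davenport for the algebraic case). Your outline --- the strong Liouville theorem, Hermite reduction plus the Rothstein--Trager resultant for the rational base case and the transcendental steps, the Risch differential equation driving the mutual recursion down the tower, and effective torsion testing in the Jacobian for algebraic extensions --- is a faithful summary of exactly the argument given in those references, so it matches the intended proof in approach; as written it is necessarily a sketch that delegates the genuinely hard effective content (degree bounds for the Risch equation, a priori torsion bounds on the divisor classes) to the cited literature, which is consistent with how the paper itself treats the result.
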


We refer to \cite{Singer-Raab} for Risch's proof of this result and to \cite{Bronstein} for another version valid at this level of generality. In the case of algebraic functions, practical implementation of Risch's algorithm are described in Davenport's book \cite{Davenport}. See also  \cite{Pila-Tsimerman} for a description of this algorithm based on the Ax-Schanuel Theorem. These results  provide an important source of (practical) decidability for the problems of exponential algebraicity. For instance, the following corollary states the $(A,B)$-problem of integration where $A$ is the class of elementary functions and $B$ is the class of exponentially algebraic functions is decidable.

 \begin{cor}\label{cor-Rischalgo}
Let $L/C_0(z)$ an elementary differential field extension satisfying Assumption \ref{assumption-Risch}, then 
\begin{quote} 
the problem of integration in finite terms for elements of $L$ by {\em exponentially algebraic functions} is decidable. 
\end{quote}
 \end{cor}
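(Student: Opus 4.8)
The plan is to reduce this $(A,B)$-problem, with $A$ the elementary functions and $B$ the exponentially algebraic functions, to the classical Risch problem of Fact \ref{fact-Risch-algo}, by showing that the two problems return the \emph{same} answer on each input $f \in L$. An antiderivative of $f$ is a solution of the first-order equation $y' = f$, which is internal to the constants --- the equations $y' = a$ being the basic examples of such equations --- and whose single coefficient $f$ is an elementary function. I would carry out the argument over the base $k = C_0(z)^{alg}$. This field is self-sufficient: by Assumption \ref{assumption-Risch}(1) the constant field $C_0$ is algebraically closed, so the Ax--Schanuel property yields the self-sufficiency of $C_0(z)^{alg}$ exactly as in Example \ref{example-selfsufficient} with $C_0$ in place of $\mathbb{C}$.

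The key step is then the Abel-type equivalence furnished by Theorem \ref{theoremA} and its corollary, whose proof applies verbatim over the constant field $C_0$. Because $f \in L$ is elementary, the type $\mathrm{tp}^{D}(f/k)$ is analyzable in the constants; since $y' = f$ is internal to the constants, transitivity shows that an antiderivative $t$ has $\mathrm{tp}^{D}(t/k)$ analyzable in the constants, and this type is weakly orthogonal to the constants since passing to a primitive of an element of the elementary field $L$ creates no new constants over $C_0$. Theorem \ref{theoremA} therefore applies and gives that $t$ lies in an elementary extension of $k$ without new constants, that is, $t$ is an elementary function, if and only if $t$ is exponentially algebraic. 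Consequently $f$ has an exponentially algebraic antiderivative if and only if it has an elementary one: the forward implication is immediate since every elementary function is exponentially algebraic, and the nontrivial converse is exactly the content of Theorem \ref{theoremA}.

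Finally I would invoke Risch's algorithm to conclude. By Fact \ref{fact-Risch-algo} it is decidable whether $f \in L$ has an elementary antiderivative, and the algorithm exhibits one explicitly when the answer is positive. On output \emph{yes} this antiderivative is elementary, hence exponentially algebraic; on output \emph{no} the equivalence of the previous paragraph shows that $f$ has no exponentially algebraic antiderivative either. This decides the $(A,B)$-problem. The crux of the matter --- the coincidence of the two decision problems --- is entirely absorbed into the already-established Theorem \ref{theoremA}, so at this stage the proof is a direct reduction; the only points demanding care are the self-sufficiency of $C_0(z)^{alg}$ over the algebraically closed field $C_0$ and the harmless observation that two antiderivatives of $f$ differ by a constant, so that the property of possessing an exponentially algebraic antiderivative is well-posed.
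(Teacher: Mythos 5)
Your proposal is correct and follows exactly the paper's route: the paper proves this corollary in one line as "an immediate consequence of Fact \ref{fact-Risch-algo} and Theorem \ref{theoremA}," i.e.\ run Risch's algorithm and use the Abel-type equivalence of Theorem \ref{theoremA} to identify exponentially algebraic antiderivatives with elementary ones. Your write-up merely makes explicit the supporting details (self-sufficiency of $C_0(z)^{alg}$, analyzability and weak orthogonality of the type of the antiderivative) that the paper leaves implicit.
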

\noindent This is an immediate consequence of Fact \ref{fact-Risch-algo} and Theorem \ref{theoremA}. 

\begin{exam} 
Classical examples of this decision procedure going back to Liouville include the {\em elliptic} (indefinite) {\em integrals} and the {\em Chebychev integrals} given respectively by
$$\int \frac{dz}{\sqrt{z^3 + az + b}} \text{ with } a,b \in C \text{ and } \int z^p (1-z)^q dz \text{ with } p,q \in \mathbb{Q}.$$
They are exponentially algebraic if and only if respectively $4a^3 + 27b^2 = 0$ and at least one of $p,q$ and $p+q$ are integers respectively, see \cite[p.37]{Ritt-book}. Other classical examples from statistics and optics are the {\em error function} and the {\em Fresnel integrals}:
$$ \mathrm{erf}(z) = \frac 2 {\sqrt{\pi}} \int e^{-z^2}dz\text{, } C(z) = \int \mathrm{cos}(z^2)dz\text{, } S(z) = \int \mathrm{sin}(z^2)dz$$
which are all exponentially transcendental.
\end{exam} 

In some cases, we show that decidability of the problem of integration by exponentially algebraic functions remains even after allowing in the integrand other exponentially algebraic functions such as the solutions of Lambert's and Kepler's equations:
$$ w \cdot e^w = z \text{ and } w - \mathrm{sin}(w) = z.$$

\begin{cor} \label{cor-Lambert}
Let $F(w)$ be an elementary function presented as in Risch's algorithm and consider $W(z)$a local inverse of $F(w)$, that is, a solution of the functional equation
$$F(w) = z.$$
The $(A,B)$-integration problem where $B$ is the class of exponentially algebraic functions and $A$ is the set of expressions  of the form $$\int G(z,W(z)) dz$$ 
with $G \in C_0(x,y)^{alg}$ an {\em algebraic function of two variables} is decidable.
\end{cor}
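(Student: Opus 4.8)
The plan is to reduce the problem to the $(A,B)$-integration problem already solved in Corollary \ref{cor-Rischalgo}, by means of the change of variables $z = F(w)$ that trades the non-elementary inverse $W$ for the elementary function $F$.

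First I would perform this substitution. Since $W$ is a local inverse of $F$ we have $W(F(w)) = w$ and $dz = F'(w)\, dw$, so that
$$ \int G(z, W(z))\, dz = \int G(F(w), w)\, F'(w)\, dw =: \int g(w)\, dw. $$
The gain is that, whereas the original integrand $G(z, W(z))$ involves the inverse $W$, the new integrand $g(w) = G(F(w), w)\, F'(w)$ is an \emph{elementary} function of $w$: indeed $F(w)$ and $F'(w)$ lie in the elementary tower $L' = C_0(w, \theta_1, \ldots, \theta_n)$ in which $F$ is presented, and $G(F(w), w)$ is algebraic over $C_0(F(w), w) \subset L'$, so $g$ lies in the elementary extension $L'(\beta)$ of $C_0(w)$ obtained by adjoining the root $\beta$ prescribed by the minimal polynomial of $G$ over $C_0(x,y)$. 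Given the Risch presentation of $F$ and this minimal polynomial, a presentation of $g$ meeting Assumption \ref{assumption-Risch} can be computed effectively.

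Second I would show that this change of variables preserves exponential algebraicity, in the precise sense that $I(z) := \int G(z, W(z))\, dz$ is exponentially algebraic as a function of $z$ if and only if $J(w) := \int g(w)\, dw$ is exponentially algebraic as a function of $w$. The two antiderivatives are related by $J = I \circ F$ and $I = J \circ W$ up to an additive constant, since a direct computation gives $\frac{d}{dw} I(F(w)) = G(F(w), w) F'(w) = g(w) = J'(w)$. Now $F$, being elementary, is locally definable in $\mathbb{R}_{\mathrm{RE}}$, and so is its local inverse $W$ by the implicit function theorem (this is the prototypical exponentially algebraic construction, exhibited by the Lambert $W$-function). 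As $\mathbb{R}_{\mathrm{RE}}$-definable holomorphic germs are closed under composition, $I$ is $\mathbb{R}_{\mathrm{RE}}$-definable if and only if $J$ is, which is the desired equivalence.

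Finally, since $g(w)$ is an elementary function of $w$ presented as in Assumption \ref{assumption-Risch}, Corollary \ref{cor-Rischalgo} decides whether $\int g(w)\, dw$ is exponentially algebraic; combined with the equivalence above, this decides whether $\int G(z, W(z))\, dz$ is exponentially algebraic, as required. I expect the main obstacle to be the bookkeeping in the first step: one must verify that the substitution genuinely produces an elementary differential field extension of $C_0(w)$ whose presentation satisfies Assumption \ref{assumption-Risch} (in particular the non-redundancy condition on the exponentials and logarithms of $F$), so that Risch's algorithm applies uniformly. By contrast, the preservation of exponential algebraicity in the second step is comparatively soft, relying only on closure of $\mathbb{R}_{\mathrm{RE}}$ under composition and local inversion.
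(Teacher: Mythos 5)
Your proposal is correct and follows essentially the same route as the paper: substitute $z = F(w)$ to obtain an elementary integrand, apply Corollary \ref{cor-Rischalgo}, and transfer exponential algebraicity across the change of variables using closure of exponentially algebraic (locally $\mathbb{R}_{\mathrm{RE}}$-definable) functions under composition and local inversion, which is precisely the first of the paper's two arguments for the equivalence. The only thing the paper adds is an alternative differential-algebraic justification of that equivalence via the blurred exponential field isomorphism induced by precomposition with $F$ and the exchange property of $\mathrm{ecl}$.
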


\begin{proof} 
After the change of variable $z = F(w)$ the indefinite integral becomes: 
\begin{equation} \label{eq-intermediate-Lambert}
\int G(F(w),w) \cdot F'(w) dw 
\end{equation}
Following Assumption \ref{assumption-Risch}, we can write $F \in C_0(w, \theta_1,\ldots, \theta_n)$ satisfying the properties (1) and (2) of Assumption  \ref{assumption-Risch}. The element
$$ h(w) = G(F(w),w) \cdot F'(w) \in C_0(w, \theta_1,\ldots, \theta_n)$$ 
belongs to the elementary differential field $L = C_0(w, \theta_1,\ldots, \theta_{n+1})$ where $\theta_{n + 1} = G(F(w),w)$. It follows from Corollary \ref{cor-Rischalgo} that the problem of integration in finite terms for the indefinite integral (\ref{eq-intermediate-Lambert}) by exponentially algebraic functions is decidable. 

It remains to relate exponential algebraicity of (\ref{eq-intermediate-Lambert})  and exponential algebraicity of $ \int G(z,W(z)) dz$ independently of the choice of the solution $W(z)$. This uses the {\em change of variables} formula:
\begin{equation} \label{equation-change of variable} \Big( \int G(z,W(z)) dz \Big) \circ F = \int G (F(w),w) F'(w) dw.
\end{equation} 
We give two different proofs of this result. The first one uses Theorem C and the properties of the o-minimal structures. Restricting if necessary the domain of definitions,  we may assume that
$$ U_w \underset{F}{\overset{W}{\leftrightarrows}} U_z $$
are inverse biholomorphisms where $U_w \subset \mathbb{C}$ and $U_z \subset \mathbb{C}$ are complex disks. On general grounds, the properties of o-minimal structures imply that: 
\begin{itemize}
\item the compositional inverse of an exponentially algebraic biholomorphism is exponentially algebraic, and
\item the class of exponential algebraic functions is stable under composition.
\end{itemize} 
The statement then follows from the formula  (\ref{equation-change of variable}) of change of variables. A  differential algebraic proof accounts of the formula (\ref{equation-change of variable}) as follows: the precomposition $f \mapsto f \circ F$ defines an isomorphism
$$ \mathcal M(U_z)^{alg} \rightarrow \mathcal M(U_w)^{alg} $$
of \textit{blurred exponential fields}. Note that however, this is {\em not a morphism of differential fields}. Setting $i_z = \int G(z,W(z))dz$ and $i_w = \int G(F(w),w) F'(w) dw$, the formula of change of variables implies  that $$\mathrm{tp}^{\BE}(z,i_z/C) = \mathrm{tp}^{\BE}(w,i_w/C)$$
and hence that $\mathrm{etd}(w,i_w/C) = \mathrm{etd}(z,i_z/C)$. Since moreover  $W(z)$ is exponentially algebraic, we also have that $\mathrm{etd}(w,z/C) = 1$. Using the exchange property, we get $$i_w \in \mathrm{ecl}(C,z)\Leftrightarrow i_w \in \mathrm{ecl}(C,w) \Leftrightarrow i_z \in \mathrm{ecl}(C,z).$$ This concludes the proof of the corollary.
\end{proof}

\begin{exam} 
For the classical $W$-function, this algorithm applied to the indefinite $\int \frac{W(z)} z dz$ uses the change of variable $z = we^w$ and computes that
$$ \frac{W(z)} z \cdot dz = \frac w {we^{w}} \cdot e^w(w + 1)  dw  = (w + 1)dw$$ 
admits $w^2/2 + w$ as an indefinite integral and it follows that 
$$ \int \frac{W(z)} z dz = W(z)^2/2 + W(z)$$
which is an expression of the indefinite integral as an exponentially algebraic function. On the other hand, applying the same change of variables to  $\int \frac{W(z)}{z^2} dz$ leads to 
$$ \frac{W(z)} z^2 \cdot dz = \frac w {(w^2e^{2w})} \cdot e^w(w + 1)  dw  = e^{-w} + \frac{e^{-w}} w.$$ 
In that case, the algorithm concludes that the indefinite integral $\int \frac{W(z)}{z^2} dz$ is not exponentially algebraic since Liouville showed that $\int \frac{e^{-w}} w dw$ is not elementary. 
\end{exam}

\subsection{The equation of the pendulum} One of Liouville's motivations to study (pseudo)-elliptic integrals comes from their importance in the resolution of important problems in classical mechanics. In the resolution of the equation of the pendulum:
\begin{equation}\label{equation-pendulum}
\theta'' + \omega^2 \cdot \mathrm{sin}(\theta) = 0
\end{equation}
the elliptic integrals play the role of the inverse trigonometric functions in the resolution of the harmonic oscillator $\theta'' + \omega^2 \cdot \theta = 0$. The solutions of the harmonic oscillator
$$ c_1 \cdot \mathrm{cos}(\omega z) + c_2 \cdot \mathrm{sin}(\omega z) $$
are (obviously) exponentially algebraic.
\begin{cor} \label{cor-pendulum}
Consider the equation (\ref{equation-pendulum}) of the pendulum given by $\theta'' + \omega^2 \cdot \mathrm{sin}(\theta) = 0$ where $\omega$ is a nonzero complex number.  The exponentially algebraic solutions of $(E)$  are precisely the solutions with energy
$$H = \frac 1 2 (\theta')^2 - \omega^2 \cdot \mathrm{cos}(\theta) = \pm \omega^2.$$
\end{cor}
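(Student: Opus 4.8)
The plan is to use energy conservation to reduce the second-order pendulum equation to a first-order autonomous equation whose differential Galois group can be identified, and then to distinguish degenerate from nondegenerate energy levels. First I would record that $H = \frac{1}{2}(\theta')^2 - \omega^2\cos(\theta)$ is a first integral, since $H' = \theta'\,(\theta'' + \omega^2\sin(\theta)) = 0$; thus any solution lies on a level set $\{H = h_0\}$ with $h_0 \in \mathbb{C}$. On such a level set the substitution $u = \cos(\theta/2)$ transforms the pendulum equation into
$$\Big(\frac{du}{dz}\Big)^2 = \omega^2(1-u^2)\Big(u^2 - \frac{\omega^2 - h_0}{2\omega^2}\Big),$$
by a direct computation using $\cos(\theta) = 2u^2-1$ and $(\theta')^2 = 2(h_0 + \omega^2\cos\theta)$. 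Since this substitution and its inverse $\theta = 2\arccos(u)$ are built from the complex exponential and its local analytic inverses, they preserve exponential algebraicity in both directions by Theorem \ref{theorem-connection-o-minimality}. It therefore suffices to decide the exponential algebraicity of $u$ for each $h_0$, and we may assume $u$ nonconstant.

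The quartic on the right-hand side factors as $-\omega^2(u^2-1)(u^2-c)$ with $c = (\omega^2-h_0)/(2\omega^2)$, whose roots $\pm 1, \pm\sqrt{c}$ are pairwise distinct exactly when $c \neq 0, 1$, that is, when $h_0 \neq \pm\omega^2$. In the degenerate cases $h_0 = \pm\omega^2$ the curve $v^2 = \omega^2(1-u^2)(u^2-c)$ has genus zero and the equation becomes separable with an elementary antiderivative: for $h_0 = \omega^2$ one integrates $du/(u\sqrt{1-u^2})$, and for $h_0 = -\omega^2$ one integrates $du/(1-u^2)$. Hence $u$, and with it $\theta$, is an elementary function and in particular exponentially algebraic; these include the homoclinic solutions expressible through inverse trigonometric and hyperbolic functions \cite{Chenciner}.

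The substance of the corollary is the nondegenerate case $h_0 \neq \pm\omega^2$. Here the displayed equation is an elliptic differential equation: the curve $E : v^2 = \omega^2(1-u^2)(u^2-c)$ is a genuine elliptic curve, the type $p = \tp^D(u/\mathbb{C}(z)^{alg})$ is $\mathcal C$-internal and weakly orthogonal to the constants, and its binding group $\mathrm{Aut}^D(p/\mathcal C)$ is definably isomorphic to $E(\mathcal C)$, since two solutions at the same energy differ by a constant translation along $E$. The base field $\mathbb{C}(z)^{alg}$ is self-sufficient by Example \ref{example-selfsufficient}. If $u$ were exponentially algebraic over $\mathbb{C}$, then $p$ would be an exponentially algebraic $\mathcal C$-internal type, and Theorem \ref{theoremB}(1) would force its binding group to be of the form $\Ga^r(C) \times \Gm^s(C)$, hence linear. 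This is impossible, since $E$ is a positive-dimensional complete group and therefore not linear. Thus $u$ is exponentially transcendental over $\mathbb{C}$, and so is $\theta$.

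I expect the delicate point to be the identification of the binding group with the full elliptic curve $E$: one must verify that the autonomous equation is genuinely internal to the constants over $\mathbb{C}(z)^{alg}$ and that the associated abelian strongly normal extension realizes all of $E(C)$, which uses the simplicity of $E$ and is the content of the theory of elliptic differential equations recalled earlier. A second proof, which yields the decision procedure of Corollary \ref{cor-Hamiltonian}, replaces the binding-group computation by Theorem \ref{theoremA}: applied to the $\mathcal C$-internal type of $u$, it shows that $u$ is exponentially algebraic if and only if it is elementary, and one then invokes the classical non-elementarity of the elliptic integral $\int du/\sqrt{(1-u^2)(u^2-c)}$ for $c \neq 0, 1$ \cite{Ritt-book} to obtain exponential transcendence in the nondegenerate case.
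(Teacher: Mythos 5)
Your proposal is correct, but your primary argument is not the one the paper uses for this corollary. The paper's proof runs through Theorem~\ref{theoremA}: after the change of variables $u = \mathrm{cos}(\theta/2)$ and separation of variables, it views $z$ as Legendre's elliptic integral of the first kind in the variable $u$; Theorem~\ref{theoremA} reduces exponential algebraicity of this indefinite integral of an algebraic function to its being elementary, and Liouville's classical computation shows the integral is elementary exactly when $\frac{\omega^2-h_0}{2\omega^2}\in\lbrace 0,1\rbrace$, i.e.\ $h_0=\pm\omega^2$; one then inverts, using stability of exponential algebraicity under composition and local compositional inverses. Your main route --- identifying the binding group of the nondegenerate equation with the elliptic curve $E(\mathcal C)$ and invoking Theorem~\ref{theoremB}(1) to exclude a non-linear binding group --- is precisely the ``third, Galois-theoretic proof'' that the paper mentions after the corollary and carries out, in strengthened form, as Corollary~\ref{cor-pendulum-ind}. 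The trade-off is the one you flag yourself: the Theorem~\ref{theoremB} route requires the nontrivial input that the type of $u$ over $\mathbb{C}(z)^{alg}$ is $\mathcal C$-internal, weakly orthogonal to the constants, and has binding group the \emph{full} elliptic curve, whereas the Theorem~\ref{theoremA} route replaces all of this Galois theory by Liouville's nineteenth-century criterion for elliptic integrals and, as a bonus, yields the effective decision procedure of Corollary~\ref{cor-Hamiltonian}. Since your closing paragraph is essentially the paper's actual proof, nothing is missing; just be aware that if you lead with the binding-group argument you owe the reader the verification (or a precise citation) that the Galois group of the elliptic equation is all of $E(\mathcal C)$ and not a proper subgroup.
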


\begin{proof} 
The equation (\ref{equation-pendulum}) and its first integral $H = \frac 1 2 (\theta')^2 - \omega^2 \cdot \mathrm{cos}(\theta) = \pm \omega^2$ are both nonalgebraic. Making the change of variables $u = \mathrm{cos}(\theta/2)$ sends the solutions of $(\ref{equation-pendulum})$ with energy $h_0$ to solutions of the algebraic differential equation: 
\begin{eqnarray} \label{equation-pseudo-elliptic}
 \Big(\frac{du}{dz}\Big)^2 = \omega^2(1-u^2)\Big(u^2 - \frac{\omega^2-h_0}{2\omega^2}\Big).
 \end{eqnarray}
Using the method of separation of variables and solving for $z$ in terms of $u$, we obtain 
$$ z = \int \frac{du}{\omega \sqrt{(1-u^2)(u^2 - \frac{\omega^2-h_0}{2\omega^2})}}$$
which is known as Legendre's elliptic integral of the first kind. Such indefinite integrals were studied by Liouville. He showed (see \cite[p.35]{Ritt-book} and below) that they are elementary if and only if:
\begin{equation}\label{equation-8}
 \frac{\omega^2-h_0}{2\omega^2} \in \lbrace 0,1 \rbrace
 \end{equation} 
which gives $h_0 = \pm \omega^2$. Theorem A states that they are elementary if and only if they are exponentially algebraic and since the class of exponentially algebraic functions is stable under taking local compositional inverse and composition, the corollary follows. 
\end{proof}

The proof presented here is based on Liouville's work and our reformulation of Abel's result on the indefinite integrals of algebraic functions (Theorem A). It was therefore fully accessible at the times of Liouville but we did not find this formulation in Liouville's work. A second proof of this result can be deduced from the results of \cite{Jones-Kirby-Servi}. Finally, a third possibility is a Galois-theoretic proof based on Theorem B presented as Corollary \ref{cor-pendulum-ind} below.  In the Galois-theoretic proof, it is the {\em elliptic equation} (\ref{equation-pseudo-elliptic}) which plays the important role. In that case, the condition (\ref{equation-8}) is interpretated as the set of parameters where the elliptic curve 
$$ v^2 = \omega^2(1-u^2)(u^2 - \frac{\omega^2-h_0}{2\omega^2}) $$
degenerate as a rational curve..

From the first proof, we deduce a general decision procedure for Hamiltonian systems with one degree of freedom.

\begin{cor} \label{cor-Hamiltonian}
Let $V(x)$ be an elementary function described as in Risch's algorithm. Consider the algebraic Hamiltonian system with one degree of freedom
\begin{equation}\label{equation-Hamilton}
 x' = \frac {\partial H} {\partial y}, y' = - \frac {\partial H} {\partial x} \text { where }  H = \frac {y^{2}}{2}  + V(x) 
 \end{equation}
Given $h_0 \in C_0$ in a computable field of constants, one can decide whether all the solutions of the Hamiltonian system with energy $h_0$ are {\em exponentially algebraic} and compute them if they are.
\end{cor}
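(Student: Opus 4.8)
The plan is to reduce the problem to the decidability statement of Corollary \ref{cor-Rischalgo} by separation of variables, in exactly the same spirit as the first proof of Corollary \ref{cor-pendulum}. First I would observe that the system (\ref{equation-Hamilton}) is equivalent to the second-order equation $x'' + V'(x) = 0$ and that on the energy level $H = h_0$ one has $y = x' = \pm\sqrt{2(h_0 - V(x))}$. Separating variables and solving for $z$ yields
$$ z = \pm \int \frac{dx}{\sqrt{2(h_0 - V(x))}} + c, $$
so that any solution $x(z)$ of energy $h_0$ is a local compositional inverse of the indefinite integral $I(x) = \int g(x)\,dx$ of the function $g(x) = (2(h_0 - V(x)))^{-1/2}$, which is algebraic of degree two over the elementary field generated by $V$, while $y(z) = \pm\sqrt{2(h_0 - V(x(z)))}$.

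Second, I would check that $g$ lies in an elementary differential field extension of $C_0(x)$ satisfying Assumption \ref{assumption-Risch}. Presenting $V$ as an element of $L = C_0(x,\theta_1,\ldots,\theta_n)$ as in Risch's setting, the element $2(h_0-V)$ belongs to $L$ (here the hypothesis $h_0 \in C_0$ is used), and adjoining the square root $\eta$ with explicit minimal polynomial $Y^2 - 2(h_0 - V)$ produces an algebraic, hence elementary, extension $L' = L(\eta)$ still satisfying Assumption \ref{assumption-Risch}, inside which $g = 1/\eta$. Applying Corollary \ref{cor-Rischalgo} to $g \in L'$ then decides whether $I(x)$ is exponentially algebraic and computes it when it is.

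The last and most delicate step is to transfer exponential algebraicity from the integral $I$, a function of $x$, to the solutions, which are functions of $z$. Here I would invoke the two closure properties of the class of exponentially algebraic functions already used in the proof of Corollary \ref{cor-Lambert}: stability under taking local compositional inverses and under composition, both of which rest on the o-minimal description of $\mathrm{ecl}$ via $\mathbb{R}_{\mathrm{RE}}$ in Theorem \ref{theorem-connection-o-minimality}. Since $x(z)$ is a local inverse of $\pm I$, it is exponentially algebraic if and only if $I$ is; and then $y(z) = \pm\sqrt{2(h_0 - V(x(z)))}$ is automatically exponentially algebraic as a composition. Conversely, exponential algebraicity of the solution $x(z)$ forces that of its inverse $I$. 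Thus all solutions of energy $h_0$ are exponentially algebraic precisely when $I(x)$ is, which is the decidable condition produced above; when it holds, the explicit expression for $I$ returned by Risch's procedure yields the solutions after a single application of the implicit function theorem. I expect this transfer to be the main obstacle, since it is the only point relying on the o-minimal input rather than on the differential-algebraic reduction, which is otherwise routine.
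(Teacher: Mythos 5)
Your proposal is correct and follows essentially the same route as the paper: separate variables to reduce to the indefinite integral of $(2(h_0-V(x)))^{-1/2}$, adjoin the square root to get an elementary field satisfying Assumption \ref{assumption-Risch}, apply Corollary \ref{cor-Rischalgo}, and transfer back via stability of exponential algebraicity under local compositional inverse and composition (the step the paper makes explicit in the proof of Corollary \ref{cor-pendulum} and leaves implicit here). If anything, your write-up is the more careful one, since the paper's displayed reduction $(dy/dx)^2 = h_0 - V(x)$ appears to contain a typo that your separation-of-variables computation corrects.
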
 

\begin{proof} 
The solutions of (\ref{equation-Hamilton}) with energy $h_0$ satisfies the differential equation 
$$ \Big(\frac{dy}{dx}\Big)^2 = h_0 - V(x).$$
Given the potential $V(x)$ as an element $C(x,\theta_1,\ldots,\theta_n)$ satisfying Assumption \ref{assumption-Risch}, one can determine whether $U(x) = \sqrt{h_0 - V(x)}$ lives in $C(x,\theta_1,\ldots,\theta_n)$ If it does, the algorithm applies Risch's decision procedure to the integration of 
$$ \frac{dy} {dx} = \pm U(x)$$
and otherwise applies Risch's decision procedure to $C(x,\theta_1,\ldots,\theta_n,\sqrt{h_0 - V(x)})$ satisfying (1) and (2) in Assumption \ref{assumption-Risch}.
\end{proof}

\subsection{Exponential algebraic independence through differential Galois theory} \label{section-exponential algebraic independence} We now describe applications of Theorem B which treat of the {\em exponential algebraic independence} of several holomorphic functions. 

\begin{cor}\label{cor-elliptic-error-Airy}
The holomorphic functions
$$\mathrm{sn}(z) = \int \frac{dz}{z^3 + az + b},  \mathrm{erf}(z) = \int e^{-z^2} dz \text{ and } \mathrm{Ai}(z) = \frac 1 \pi \int_0^\infty \mathrm{cos}(tz + t^3/3) dz $$
are {\em exponentially algebraically independent} over $\mathbb{C}(z)$.
\end{cor}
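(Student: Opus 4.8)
The plan is to prove the three functions are $\mathrm{ecl}^K(\mathbb{C},-)$-independent by exploiting the exchange property of the pregeometry (Fact \ref{fact-pregeometry}) to reduce to three successive exponential-transcendence statements, and to control the relevant binding groups using Theorem \ref{theoremA} and Theorem \ref{theoremB}. Since $e^{-z^2}$ and the algebraic function $\sqrt{z^3+az+b}$ both lie in $\mathrm{ecl}^K(\mathbb{C},z)$, I would first replace $\mathbb{C}(z)^{alg}$ by the self-sufficient base $k_0 = \mathrm{Hull}_K(\mathbb{C}(z, e^{-z^2}, \sqrt{z^3+az+b}))$ without changing the exponential closure; self-sufficiency is guaranteed by Example \ref{example-selfsufficient} because each generator is adjoined by a predimension-zero step. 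Over $k_0$ each of the three functions realises a $\mathcal C$-internal type: $\mathrm{sn}(z)$ and $\mathrm{erf}(z)$ are primitives of elements of $k_0$ and so have binding group $\Ga$, while $(\mathrm{Ai}(z),\mathrm{Ai}'(z))$ generates the Picard--Vessiot extension of the Airy equation $y'' = zy$, whose differential Galois group over $\mathbb{C}(z)$ is $\mathrm{SL}_2$.

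By the exchange property it suffices to establish, in this order, that $\mathrm{sn}(z)$ is exponentially transcendental over $k_0$, that $\mathrm{erf}(z)$ is exponentially transcendental over $k_1 = \mathrm{Hull}_K(k_0,\mathrm{sn}(z))$, and that $\mathrm{Ai}(z)$ is exponentially transcendental over $k_2 = \mathrm{Hull}_K(k_1,\mathrm{erf}(z))$; at each stage the previous statement together with Example \ref{example-selfsufficient} makes the new base self-sufficient, so Theorem \ref{theoremA} and Theorem \ref{theoremB} apply. For $\mathrm{Ai}(z)$ the argument is direct: if $\mathrm{Ai}(z)\in\mathrm{ecl}^K(k_2)$ then $\mathrm{tp}^D(\mathrm{Ai},\mathrm{Ai}'/k_2)$ is exponentially algebraic and $\mathcal C$-internal, so Theorem \ref{theoremB}(1) forces its binding group to be of the form $\Ga^r \times \Gm^s$; but a semisimple group such as $\mathrm{SL}_2$ admits no such quotient, contradicting that the binding group over $k_2$ is still $\mathrm{SL}_2$. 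For $\mathrm{sn}(z)$ and $\mathrm{erf}(z)$, whose binding groups are the abelian group $\Ga$, Theorem \ref{theoremB}(3) (equivalently Theorem \ref{theoremA}) only yields that exponential algebraicity would make them \emph{elementary} over the base; I would then invoke the classical results of Liouville and Abel that the elliptic integral of the first kind and $\int e^{-z^2}\,dz$ are not elementary.

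The crux is to show that these binding groups and non-elementarity statements persist over the enlarged bases $k_1$ and $k_2$, i.e.\ that adjoining the other (differentially unrelated) functions neither shrinks $\mathrm{SL}_2$ nor makes the two integrals elementary. The plan is to compute the binding group of the joint $\mathcal C$-internal type $\mathrm{tp}^D(\mathrm{sn},\mathrm{erf},\mathrm{Ai},\mathrm{Ai}'/k_0)$ as the full product $\Ga \times \Ga \times \mathrm{SL}_2$ and then read off the relative groups from the Galois correspondence: the subgroup fixing $\mathrm{sn}$ and $\mathrm{erf}$ is exactly $\{e\}\times\{e\}\times\mathrm{SL}_2$, so the binding group of $\mathrm{Ai}$ over $k_2$ is indeed $\mathrm{SL}_2$. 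The product decomposition follows from Goursat's lemma once one checks the three factors pairwise share no nontrivial common algebraic quotient: $\mathrm{SL}_2$ is perfect and hence shares no abelian quotient with either $\Ga$, while a common $\Ga$-quotient of the two primitive factors would produce a nonzero $C$-linear combination $\lambda\,\omega_E + \mu\,e^{-z^2}dz$ (with $\omega_E$ the first-kind differential) equal to an exact form plus logarithmic differentials over $k_0$ --- impossible since $\omega_E$ has no residues while the Gaussian factor is genuinely transcendental. For the two $\Ga$-cases the remaining input is the corresponding relative Liouville statement, which I would obtain from classical descent results in the style of Ostrowski and Rosenlicht \cite{Rosenlicht}: elementariness of $\int e^{-z^2}\,dz$ over $k_1=\mathbb{C}(z,\mathrm{sn})^{alg}$ would descend to elementariness over $k_0$, contradicting the classical result. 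I expect the main obstacle to be precisely this orthogonality verification --- establishing the product structure of the joint binding group and the relative non-elementarity --- rather than the final invocations of Theorem \ref{theoremB}.
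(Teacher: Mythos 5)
Your overall architecture matches the paper's: reduce to three successive exponential-transcendence statements via the exchange property, keep the intermediate bases self-sufficient via Example \ref{example-selfsufficient}, and use Goursat to show binding groups persist so that Theorem \ref{theoremB} applies. The $\mathrm{Ai}$ step is essentially the paper's argument. But there is a genuine gap in your treatment of $\mathrm{erf}$ (and of the $\mathrm{sn}$--$\mathrm{erf}$ interaction), and it is caused by a tactical choice at the very start: by adjoining $e^{-z^2}$ to the base $k_0$, you collapse the binding group of $\mathrm{erf}$ from $\mathrm{Aff}_2(\mathbb{C})$ down to $\Ga(\mathbb{C})$. Since $\Ga$ \emph{is} of the form $\Ga^r\times\Gm^s$, Theorem \ref{theoremB}(1) then gives no contradiction, and you are forced back onto Theorem \ref{theoremB}(3) plus a ``relative Liouville'' non-elementarity statement over the enlarged base $k_1$. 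That descent statement (elementariness of $\int e^{-z^2}\,dz$ over $\mathbb{C}(z,e^{-z^2},\mathrm{sn})^{alg}$ implies elementariness over $\mathbb{C}(z,e^{-z^2})^{alg}$) is exactly the hard point, it is not an off-the-shelf classical result, and you do not prove it. Relatedly, your Goursat claim that ``the three factors pairwise share no nontrivial common algebraic quotient'' is false for the two $\Ga$ factors --- $\Ga$ and $\Ga$ obviously share the quotient $\Ga$, so the diagonal is a counterexample to the product decomposition --- and the patch you sketch (no $C$-linear relation $\lambda\,\omega_E+\mu\,e^{-z^2}dz \equiv$ exact plus logarithmic) is again precisely the unproved Liouville-type computation.

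The paper avoids all of this by \emph{not} enlarging the base: over $k=\mathbb{C}(z)^{alg}$ one has $e^{-z^2}\in\mathrm{dcl}^D(k,\mathrm{erf})$, so $\mathrm{tp}^D(\mathrm{erf}/k)$ has the non-abelian binding group $\mathrm{Aff}_2(\mathbb{C})$. Goursat is then applied to the pairs $\Ga\times\mathrm{Aff}_2$, $\mathrm{Aff}_2\times\mathrm{SL}_2$ and $\Ga\times\mathrm{SL}_2$, none of which admit a nontrivial common quotient (the quotients of $\mathrm{Aff}_2$ are $\mathrm{Aff}_2,\Gm,1$ and those of $\mathrm{SL}_2$ are $\mathrm{SL}_2,\mathrm{PSL}_2,1$), so the binding groups of $\mathrm{erf}$ and $\mathrm{Ai}$ do not shrink over $\mathbb{C}(z,\mathrm{sn})^{alg}$ and $\mathbb{C}(z,\mathrm{sn},\mathrm{erf},e^{-z^2})^{alg}$ respectively; Theorem \ref{theoremB} then rules out exponential algebraicity outright, with no integral computation needed beyond the single classical input that the elliptic integral $\mathrm{sn}$ is not elementary (Corollary \ref{cor-pendulum}). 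To repair your proof, either revert to the base $\mathbb{C}(z)^{alg}$ so that the $\mathrm{Aff}_2$ group is available, or supply a genuine proof of the relative Liouville descent you currently only assert.
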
 

\begin{proof} 
Fix $K$ a differentially closed field containing these three meromorphic functions with field of constants $\mathbb{C}$. Using that $\mathrm{ecl}^K(\mathbb{C},-)$ is a pregeometry, we want to show that 
\begin{eqnarray*}
\mathrm{sn}(z)  \notin \mathrm{ecl}^K(\mathbb{C},z), \mathrm{erf}(z) \notin \mathrm{ecl}^K(\mathbb{C},z, \mathrm{sn}(z)), \mathrm{Ai}(z) \notin  \mathrm{ecl}^K(\mathbb{C},z, \mathrm{sn}(z), \mathrm{erf}(z)).
\end{eqnarray*}
Since the three functions satisfy order two homogeneous linear differential equations, classical computations following Kovacic's algorithm \cite{Kovacic} show that the types $$p = \mathrm{tp}^D(\mathrm{sn}(z)/\mathbb{C}(z)), q = \mathrm{tp}^D(\mathrm{erf}(z)/\mathbb{C}(z)) \text{ and } r = \mathrm{tp}^D(\mathrm{Ai}(z)/\mathbb{C}(z)) $$
are internal to the constants and that their binding groups are respectively the additive group $\Ga(\mathbb{C})$, the group $\mathrm{Aff}_2(\mathbb{C})$ of affine transformations of the affine line and the special linear group $\mathrm{SL}_2(\mathbb{C})$ respectively. 

Two different proofs of the first statement $\mathrm{sn}(z)  \notin \mathrm{ecl}^K(\mathbb{C},z)$ were described after Corollary \ref{cor-pendulum}. We now claim that $\mathrm{erf}(z) \notin \mathrm{ecl}^K(\mathbb{C},z, \mathrm{sn}(z))$. Indeed, set $k = \mathbb{C}(z,\mathrm{sn}(z))^{alg}$ and observe that $k$ is a self-sufficient differential field because $\mathrm{sn}(z)$ is exponentially transcendental, see Example \ref{example-selfsufficient}. Furthermore, Goursat's lemma implies that no nontrivial subgroup of $$\Ga(\mathbb{C}) \times \mathrm{Aff}_2(\mathbb{C})$$
projects surjectively on both factors. This expresses that the $\mathrm{PV}$-extensions generated fundamental solutions of these types are linearly disjoint over $k$ in $K$. It follows that $q$ has a unique extension to $k$, which is its nonforking extension $q \mid k$, and that its binding group is equal to the binding group as $q$ . This means that the binding group of $tp^{D}(\mathrm{erf}(z)/k)$ is $\mathrm{Aff}_2(\mathbb{C})$ and since $k$ is a self-sufficient differential field, Theorem B ensures that $\mathrm{erf}(z) \notin \mathrm{ecl}^K(k) = \mathrm{ecl}^{K}(z,\mathrm{sn}(z))$. 

Finally, we claim that  $\mathrm{Ai}(z) \notin  \mathrm{ecl}^K(\mathbb{C},z, \mathrm{sn}(z), \mathrm{erf}(z))$ follows from a similar proof. Indeed, since $\mathrm{erf}(z)$ is exponential transcendental and its derivative in the exponential of an algebraic function, the differential field $$l = k \langle \mathrm{erf}(z) \rangle^{alg} = \mathbb{C}(z,\mathrm{sn}(z),\mathrm{erf}(z), e^{-z^2})^{alg}$$ is self-sufficient, see Example \ref{example-selfsufficient}. Furthermore, Goursat's lemma implies that no nontrivial subgroup of $$ \mathrm{Aff}_2(\mathbb{C}) \times \mathrm{SL}_2(\mathbb{C}) \text{ nor of } \Ga(\mathbb{C}) \times \mathrm{SL}_2(\mathbb{C}) $$
projects surjectively on both factors. As previously, this means that $r$ admits a unique extension to $l$ and that its binding group is $\mathrm{SL}_2(\mathbb{C})$.  Since $l$ is a self-sufficient differential field and $tp^{D}(\mathrm{Ai}(z)/l)$ as binding group $\mathrm{SL}_2(\mathbb{C})$, Theorem B ensures that $\mathrm{Ai}(z) \notin  \mathrm{ecl}^K(\mathbb{C},z, \mathrm{sn}(z), \mathrm{erf}(z))$ . This finishes the proof. 
\end{proof} 

The second application treats of the solutions of the pendulum.

\begin{cor} \label{cor-pendulum-ind}
Let $h_1,\ldots, h_n \neq \pm \omega^2$ be complex numbers such that the elliptic curves 
$$ (C_i): v^2 = \omega^2(1-u^2)(u^2 - \frac{\omega^2-h_i}{2\omega^2}) $$
are pairwise not isogeneous. Then if $\theta_1,\ldots, \theta_n$ are solutions of the equation of the pendulum (\ref{equation-pendulum}) with respective energies  $h_1,\ldots, h_n$ then $\theta_1,\ldots, \theta_n$ are {\em exponentially algebraically independent} over $\mathbb{C}(z)$
\end{cor}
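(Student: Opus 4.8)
The plan is to pass to the autonomous elliptic form and run an induction on $n$ using the pregeometry $\mathrm{ecl}^K(\C,-)$, exploiting the dichotomy that the binding group of an elliptic differential equation is \emph{not} linear whereas Theorem~\ref{theoremB} forces every exponentially algebraic $\mathcal C$-internal type to have linear binding group.

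First I would apply the change of variables $u_i = \cos(\theta_i/2)$ of Corollary~\ref{cor-pendulum}. As $\cos(\cdot/2)$ and its local inverse are definable in $\mathbb{R}_{\mathrm{RE}}$, this preserves exponential algebraicity and identifies $\mathrm{ecl}^K(\C,z,\theta_1,\ldots,\theta_n)$ with $\mathrm{ecl}^K(\C,z,u_1,\ldots,u_n)$, so it suffices to prove that $u_1,\ldots,u_n$ are exponentially algebraically independent over $\C(z)$. Each $u_i$ satisfies the elliptic differential equation (\ref{equation-pseudo-elliptic}) with $h_0 = h_i$; since $h_i \neq \pm\omega^2$, this equation is internal to the constants with binding group the elliptic curve $C_i$, a simple abelian variety, and --- the group being commutative --- a single solution generates the associated strongly normal extension $L_i = \C(z,u_i)^{alg}$. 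By the exchange property of $\mathrm{ecl}^K(\C,-)$ it is enough to show $u_{i+1}\notin \mathrm{ecl}^K(k_i)$ at each stage, where $k_i = \C(z,u_1,\ldots,u_i)^{alg}$; the base field $k_0 = \C(z)^{alg}$ is self-sufficient by Example~\ref{example-selfsufficient}.

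For the inductive step, assume $u_1,\ldots,u_i$ are exponentially algebraically independent. Then $k_i$ is self-sufficient: each $u_j$ will be shown to be exponentially transcendental over $k_{j-1}$, and adjoining an exponentially transcendental element to a self-sufficient field preserves self-sufficiency (Example~\ref{example-selfsufficient}); as the equation is first order, $u_j' \in \C(z,u_j)^{alg}$ and no further generators intervene. Next I would identify the binding group of $\mathrm{tp}^D(u_{i+1}/k_i)$. Because $C_1,\ldots,C_{i+1}$ are pairwise non-isogenous simple abelian varieties, they admit no common nontrivial algebraic quotient, so Goursat's lemma shows that the only algebraic subgroup of $C_{i+1}\times(C_1\times\cdots\times C_i)$ projecting onto both factors is the full product. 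Hence the compositum $k_i = (L_1\cdots L_i)^{alg}$ is the strongly normal extension of $\C(z)$ with group $\prod_{j\le i}C_j$, and the binding group of $\mathrm{tp}^D(u_{i+1}/k_i)$ is the kernel $C_{i+1}$ of the projection onto $\prod_{j\le i}C_j$.

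Finally, $k_i$ is self-sufficient and $\mathrm{tp}^D(u_{i+1}/k_i)$ is $\mathcal C$-internal with binding group the elliptic curve $C_{i+1}$, which is commutative but \emph{not} linear; Theorem~\ref{theoremB}(1) therefore forbids it from being exponentially algebraic. This gives $u_{i+1}\notin\mathrm{ecl}^K(k_i)$, indeed $u_{i+1}$ is exponentially transcendental over $k_i$, which both advances the independence and feeds the self-sufficiency bookkeeping for the next stage. I expect the main obstacle to be the binding-group computation for the \emph{relative} type $\mathrm{tp}^D(u_{i+1}/k_i)$: one must check that enlarging the base from $\C(z)$ to $k_i$ does not shrink the Galois group, and this is exactly where pairwise non-isogeny and Goursat's lemma enter. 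Everything else reduces to predimension bookkeeping and a single appeal to Theorem~\ref{theoremB}.
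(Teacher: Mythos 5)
Your proposal is correct and follows essentially the same route as the paper: change variables to the elliptic equation (\ref{equation-pseudo-elliptic}), identify the binding group of each $\mathrm{tp}^D(u_i/\mathbb{C}(z))$ with the elliptic curve $C_i$, use Goursat's lemma together with pairwise non-isogeny to show the binding group does not shrink over the base $k_i$, and invoke Theorem~\ref{theoremB} to rule out exponential algebraicity because an elliptic curve is not linear, with the same self-sufficiency bookkeeping via Example~\ref{example-selfsufficient}. The point you flag as the main obstacle --- that the relative binding group over $k_i$ stays equal to $C_{i+1}$ --- is exactly the step the paper handles by the same Goursat argument.
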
 

\begin{proof} 
Using the same change of variables as in the proof of Corollary \ref{cor-pendulum}, it is enough to prove that $u_1,\ldots,u_n$ given by $u_i = \mathrm{cos}(\theta_i/2)$ are exponentially algebraically independent. The computations following Corollary \ref{cor-pendulum} show that  they are the solutions of elliptic differential equation and the Galois group of $\mathrm{tp}^D(u_i/\mathbb{C}(z))$ is the elliptic curve $(C_i)$ above. 

Using Theorem B, we prove by induction that $k_i = \mathbb{C}(z)(u_1,\ldots,u_i)^{alg}/\mathbb{C}(z)^{alg}$ is a self-sufficient extension of differential fields with exponential transcendence degree $i$. The case $i = 1$ is treated by Corollary \ref{cor-pendulum}. Assume that the result is known for some $i <  n$. Since the $C_i$ are pairwise non isogeneous, Goursat's lemma implies that no nontrivial proper subgroup of
$$ \Big( C_1 \times \cdots \times C_i \Big) \times C_{i + 1} $$ 
projects surjectively on both factors. Hence, $\mathrm{tp}^D(u_{i+1}/k_i)$ has the same binding group as $\mathrm{tp}^D(u_{i+1}/\mathbb{C}(z))$. Using Theorem B, we conclude that $u_{i + 1}$ is exponentially transcendental over $k_i$ so that $k_{i + 1} = k_i(u_i)^{alg}/\mathbb{C}(z)^{alg}$ is a self-sufficient extension of differential fields with exponential transcendence degree $i  + 1$. This concludes the proof of the corollary.
\end{proof}

The third application is an exponential-algebraic version of Kolchin's results \cite{Kolchin} concerning the independence of the Bessel functions
$J_{\alpha_1}(z), \ldots, J_{\alpha_n}(z)$
defined by $$J_\alpha(z) = \sum_{n = 0}^\infty \frac{(-1)^m}{m! \cdot \Gamma(\alpha + m + 1)} (\frac{z}{2})^{2m+ \alpha} $$
and solutions of the linear differential equations \begin{equation}\label{equation-Bessel}  z^2y'' + zy +(z^2 - \alpha^2)y = 0.
\end{equation}
\begin{cor} \label{cor-Bessel}
Let $\alpha_1,\ldots, \alpha_n$ be complex numbers such that 
\begin{center} 
$\alpha_i \notin \mathbb{Z} + 1/2$ and $\alpha_i \pm \alpha_j \notin \mathbb{Z}$ for $i \neq j$. 
\end{center} 
Then the Bessel functions $J_{\alpha_1}(z), \ldots, J_{\alpha_n}(z)$
 and their first derivatives are $2n$ {\em exponentially algebraically independent} holomorphic functions over $\mathbb{C}(z)$. 
\end{cor}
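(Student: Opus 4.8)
The plan is to follow the inductive scheme of Corollary~\ref{cor-pendulum-ind}. Fix a differentially closed $K$ with constants $\mathbb{C}$ containing the $J_{\alpha_i}$ and set
$$ k_0 = \mathbb{C}(z)^{alg}, \qquad k_i = \mathbb{C}\big(z, J_{\alpha_1}, J_{\alpha_1}', \ldots, J_{\alpha_i}, J_{\alpha_i}'\big)^{alg}. $$
The classical input is Kolchin's analysis of Bessel's equation~(\ref{equation-Bessel}): the hypothesis $\alpha_{i+1} \notin \mathbb{Z} + 1/2$ makes $\mathrm{tp}^D(J_{\alpha_{i+1}}/\mathbb{C}(z))$ internal to the constants with binding group $\mathrm{SL}_2(\mathbb{C})$, while the hypotheses $\alpha_i \pm \alpha_j \notin \mathbb{Z}$ (through the contiguity relations that would otherwise link $J_{\alpha_i}, J_{\alpha_i}'$ to $J_{\alpha_j}$) force the joint differential Galois group of the $n$ equations to be the full product $\mathrm{SL}_2(\mathbb{C})^n$. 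Goursat's lemma applied to $\mathrm{SL}_2^{\,i} \times \mathrm{SL}_2$ then shows that the binding group of $\mathrm{tp}^D(J_{\alpha_{i+1}}/k_i)$ is again all of $\mathrm{SL}_2(\mathbb{C})$; I write $P_{i+1}/k_i$ for the associated Picard--Vessiot extension, with $L_{i+1} = k_i(J_{\alpha_{i+1}}, J_{\alpha_{i+1}}')^{alg} \subseteq P_{i+1}$ and $\mathrm{td}(L_{i+1}/k_i) = 2$.

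Assuming inductively that $k_i$ is self-sufficient, the first half of the statement is straightforward: since $\mathrm{SL}_2$ has no nontrivial abelian-by-finite quotient, Theorem~\ref{theoremB} applied to the type of any element of $P_{i+1} \setminus k_i$ (whose binding group is a nontrivial quotient of $\mathrm{SL}_2$ by the core of its stabiliser) yields $\mathrm{ecl}^K(k_i) \cap P_{i+1} = k_i$; in particular $J_{\alpha_{i+1}} \notin \mathrm{ecl}^K(k_i)$. The genuine difficulty is the \emph{derivative}. Over $k_i' = k_i(J_{\alpha_{i+1}})^{alg}$ the element $J_{\alpha_{i+1}}'$ only satisfies the first-order inhomogeneous equation obtained from~(\ref{equation-Bessel}), so the binding group of $\mathrm{tp}^D(J_{\alpha_{i+1}}'/k_i')$ is a subgroup of $\Ga$ --- which \emph{is} abelian-by-finite, so Theorem~\ref{theoremB} does not by itself prevent $J_{\alpha_{i+1}}' \in \mathrm{ecl}^K(k_i')$. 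This is the crux, and it forces us to exploit the global $\mathrm{SL}_2$-structure of $P_{i+1}$ rather than the binding group of a single coordinate.

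To do so I would prove that $P_{i+1}$ carries \emph{no nonzero exponential form over} $k_i$, i.e. $\Theta_{\BE}(P_{i+1}/k_i) = 0$. By Proposition~\ref{proposition-vanderput} the module $(\Omega^1(P_{i+1}/k_i), \mathcal L_\partial)$ is trivialised by a coparallelism of type $\mathfrak{sl}_2$, so its space of $\mathcal L_\partial$-horizontal forms is a $\mathbb{C}$-vector space $H$ of dimension $\dim \mathrm{SL}_2 = 3$ carrying the adjoint action of the binding group. Every exponential form $\theta = dy/y - dx \in \Theta_{\BE}(P_{i+1}/k_i)$ is horizontal, because $\partial(y)/y = \partial(x)$ gives $\mathcal L_\partial \theta = d(\partial(y)/y - \partial(x)) = 0$; and the binding group permutes $\Gamma_{\BE}(P_{i+1})$, hence acts on $\Theta_{\BE}(P_{i+1}/k_i)$. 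Since the adjoint representation of $\mathrm{SL}_2$ is irreducible, a single nonzero $\theta$ would have a $\mathbb{C}$-spanning orbit inside $\Theta_{\BE}(P_{i+1}/k_i)$, giving $\mathrm{ldim}_{\mathbb{Q}} \Gamma_{\BE}(P_{i+1})/\Gamma_{\BE}(k_i) \geq 3$ and hence $\delta(P_{i+1}/k_i) \leq 0$; self-sufficiency of $k_i$ then forces $\delta(P_{i+1}/k_i) = 0$, i.e. $P_{i+1} \subseteq \mathrm{ecl}^K(k_i)$, so that Theorem~\ref{theoremB} would make $\mathrm{SL}_2$ abelian-by-finite --- a contradiction. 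Thus $\Theta_{\BE}(P_{i+1}/k_i) = 0$ and $\delta(L_{i+1}/k_i) = \mathrm{td}(L_{i+1}/k_i) = 2$.

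The vanishing of $\Theta_{\BE}(P_{i+1}/k_i)$ is what controls the derivative. Together with the absence of characters and additive characters of the semisimple group $\mathrm{SL}_2$ --- which rules out the logarithmic extensions that could otherwise make an intermediate field have negative predimension --- it should show that $L_{i+1}$ is self-sufficient in $K$, whence $\mathrm{etd}(J_{\alpha_{i+1}}, J_{\alpha_{i+1}}'/k_i) = \delta(L_{i+1}/k_i) = 2$. In particular $J_{\alpha_{i+1}}'$ is exponentially transcendental over $k_i(J_{\alpha_{i+1}})^{alg}$, so by Example~\ref{example-selfsufficient} the field $k_{i+1} = L_{i+1}$ is self-sufficient of exponential transcendence degree $2(i+1)$. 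After $n$ steps $\mathrm{etd}(k_n/\mathbb{C}(z)^{alg}) = 2n$, which is exactly the claimed exponential algebraic independence. I expect the heart of the work --- and the feature distinguishing this from the pendulum case, where $J_{\alpha_i}'$ would be algebraic over $k_i(J_{\alpha_i})$ --- to be precisely this passage from semisimplicity of the binding group to self-sufficiency of $P_{i+1}$ and the vanishing of its exponential forms, for which the coadjoint-irreducibility computation is the essential new ingredient.
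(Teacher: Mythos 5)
Your inductive skeleton, the use of Kolchin's Galois-group computations together with Goursat/simplicity to keep the binding group of $\mathrm{tp}^D(J_{\alpha_{i+1}}/k_i)$ equal to $\mathrm{SL}_2(\mathbb{C})$, and the deduction $\mathrm{ecl}^K(k_i)\cap P_{i+1}=k_i$ from Theorem~\ref{theoremB} all match the paper. You also correctly isolate the crux (the derivative), and your mechanism for attacking it is genuinely different from the paper's: the paper follows Kolchin's dévissage through the logarithmic derivative $u_{\alpha}=J_\alpha'/J_\alpha$, whose Riccati type over $k_i$ is $\mathcal C$-internal with non-abelian binding group and which generates a genuine \emph{differential} subfield $M_\alpha=k(u_\alpha)$ with $\mathrm{Gal}(L_\alpha/M_\alpha)\cong\mathrm{Aff}_2(\mathbb{C})$; Theorem~\ref{theoremB} is then applied twice, once over $k_i$ and once over $(M_{\alpha_{i+1}}\cdot k_i)^{alg}$. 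Your alternative — showing $\Theta_{\BE}(P_{i+1}/k_i)=0$ by combining the coparallelism of Proposition~\ref{proposition-vanderput}, the inclusion of $\Theta_{\BE}$ in the $3$-dimensional space of $\mathcal L_\partial$-horizontal forms, and irreducibility of the adjoint representation of $\mathrm{SL}_2$ — is correct as far as it goes and is an attractive self-contained computation; it does yield $\delta(L/k_i)=\mathrm{td}(L/k_i)$ for every intermediate field $L$ of $P_{i+1}/k_i$.

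The gap is the last step, which you flag yourself with ``it should show that $L_{i+1}$ is self-sufficient.'' The statement $\Theta_{\BE}(P_{i+1}/k_i)=0$ only controls pairs $(y,x)\in\Gamma_{\BE}$ with $y,x\in P_{i+1}$. To get $\mathrm{etd}(J_{\alpha_{i+1}},J_{\alpha_{i+1}}'/k_i)=2$ you must show that \emph{no} finite-dimensional $N\supseteq L_{i+1}$ inside $K$ has $\delta(N/k_i)<2$, and the witnesses to such a drop are new exponentials and logarithms of elements of $L_{i+1}$ that live \emph{outside} $P_{i+1}$, where the adjoint action gives you no purchase. Concretely, nothing you have proved excludes a Liouville-type identity $zJ_{\alpha_{i+1}}'=a+\sum_s\lambda_s x_s$ with $a,b_s\in k_i(J_{\alpha_{i+1}})^{alg}$ and $(b_s,x_s)\in\Gamma_{\BE}(K)$, $x_s\notin P_{i+1}$, which would put $J_{\alpha_{i+1}}'$ into $\mathrm{ecl}^K(k_i,J_{\alpha_{i+1}})$. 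Ruling this out is exactly the content of the Rosenlicht/Ax normal-form analysis inside the proof of Theorem~\ref{theoremB}, and you cannot simply invoke that theorem here because the relevant base $k_i(J_{\alpha_{i+1}})^{alg}$ is \emph{not a differential field} (it does not contain $J_{\alpha_{i+1}}'$), so neither Theorem~\ref{theoremB} nor Lemma~\ref{lemma-derivation} applies over it. This is precisely why the paper routes through $u_{\alpha}=J_\alpha'/J_\alpha$: since the Riccati equation has order one, $k_i(u_{\alpha_{i+1}})^{alg}$ \emph{is} a self-sufficient differential field, the pair $(J,J')$ is interalgebraic with $(u,J)$ over $k_i$, and the Galois-theoretic input can be applied at each of the two stages. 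To complete your argument you would either need to reinstate this dévissage or prove a separate lemma upgrading $\Theta_{\BE}(P_{i+1}/k_i)=0$ to self-sufficiency of $P_{i+1}$ in $K$; the appeal to ``absence of characters of $\mathrm{SL}_2$'' gestures in the right direction but is not a proof.
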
 

\begin{proof} 
Work in $K \models \DCF$ a differentially closed with field of constants $\mathbb{C}$ and containing the given Bessel functions. We follow the strategy of Kolchin's proof of their algebraic independence to show that
$$J_{\alpha_1}(z), J_{\alpha_1}'(z)/J_{\alpha_1}(z) \ldots, J_{\alpha_n}(z), J_{\alpha_n}'(z)/J_{\alpha_n}(z)$$ 
are exponentially algebraically independent over $\mathbb{C}(z)$. Define  $k = \mathbb{C}(z)^{alg}$, $u_\alpha = J'_\alpha/J_\alpha$ and $B_\alpha$ such that $(J_\alpha, B_\alpha)$ form a fundamental system of solutions of equation (\ref{equation-Bessel}). The following properties are proved by Kolchin in \cite{Kolchin} using that  $\alpha_i \notin \mathbb{Z} + 1/2$ and that $\alpha_i \pm \alpha_j \notin \mathbb{Z}$ for $i \neq j$:
\begin{itemize}
\item[(i)] for every $\alpha_i$, the Galois group of (\ref{equation-Bessel}) is isomorphic to $\mathrm{SL}_2(\mathbb{C})$,
\item[(ii)] for every $\alpha_i$, $L_i = k(J_{\alpha_i}(z), J'_{\alpha_i}(z), B_{\alpha_i}(z))/k$ is the PV-extension associated to  (\ref{equation-Bessel}) and has transcendence degree three. 
\item[(iii)] for every $\alpha_i$, $M_i = k(u_{\alpha_i}(z))$ is a one-dimensional differential subfield and $\mathrm{Gal}(L_\alpha/M_\alpha)$ is isomorphic the affine group $\mathrm{Aff}_2(\mathbb{C})$,
\item[(iv)] for every $i$, the functions $J_{\alpha_1}(z), J_{\alpha_1}'(z), B_{\alpha_1}(z) \ldots, J_{\alpha_n}(z), J_{\alpha_n}'(z), B_{\alpha_n}(z)$ are algebraically independent over $\mathbb{C}(z)^{alg}$. 
\end{itemize}
We show by induction on $i = 0,\ldots, n$ that 
\begin{quote}
$k_i = k (J_{\alpha_j}(z),J'_{\alpha_j}(z) j = 1,\ldots i)^{alg}$ is a self-sufficient differential field with exponential transcendence degree $2i$. 
\end{quote}  
For $i = 0$, the statement is clear so assume it holds for some $i \geq 0$. Using that  $\mathrm{SL}_2(\mathbb{C})$ is a simple algebraic group, we obtain that
\begin{claim}[Kolchin]
Assume that $L/k$ is a strongly normal extension with Galois group $\mathrm{SL}_2(\mathbb{C})$ and $M/k$ is another one both embedded in a differentially closed field $K$ without new constants. Then either $\mathrm{Gal}(L \cdot M/M) = \mathrm{SL}_2(\mathbb{C})$ or $L \subset M$.
\end{claim}
\begin{proof} 
$\mathrm{Gal}(L \cdot M/M)$ can be identified with a normal subgroup of $\mathrm{Gal}(L/k)$ since $M/k$ is strongly normal. The rest follows from simplicity.
\end{proof} 
Now set $l_i = k_i (B_{\alpha_1}, \ldots, B_{\alpha_i})/k$ which is a PV-extension of $k$. Using (i), (iv) together with the previous claim, we obtain that
$$\mathrm{Gal}(L_{i+1} \cdot l_i/l_i) = \mathrm{SL}_2(\mathbb{C}).$$ 
Now we can apply Theorem \ref{theoremB} to $p = \mathrm{tp}^D(u_{\alpha_{i+1}}/k_i)$ which is internal to the constants with binding group $\mathrm{SL}_2(C)$. It follows that the extension $(M_{i}\cdot k_i)^{alg}/k_i$ is exponentially transcendental and hence that $(M_{i}\cdot k_i)^{alg}$ is a self-sufficient differential field. We can now apply Theorem \ref{theoremB} to $q = \mathrm{tp}^D(J_{\alpha_{i+1}}/(M_{\alpha_{i+1}} \cdot k_i)^{alg})$ with binding group $\mathrm{Aff}_2(C)$ and conclude that $k_{i+1}$ is a self-sufficient differential field with exponential transcendence degree $2(i+1)$. This finishes the proof of the corollary by induction.
\end{proof}

\subsection{Lambert's and Kepler's equations} Finally, we end this article with two algebraic independence statements for nonelementary exponentially algebraic functions. The first one concerns the solutions $w = W(z)$ of Lambert's equation: 
$$ w \cdot e^w = z.$$

\begin{cor} \label{cor-algind-Lambert}
Any distinct solutions $W_1(z),\ldots, W_n(z)$ of Lambert's equation are algebraically independent over $\mathbb{C}(z)$. 
\end{cor}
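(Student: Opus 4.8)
The plan is to read off algebraic independence directly from the exponential predimension inequality over the self-sufficient field $k=\mathbb{C}(z)^{alg}$, in the same spirit as the proof of Theorem \ref{theorem-connection-o-minimality}, rather than through a separate analysis of strong minimality and triviality. Differentiating $w\cdot e^{w}=z$ gives the first–order equation $z(1+w)w'=w$, and since $e^{W_i}=z/W_i$ has logarithmic derivative $W_i'$, each pair $(z/W_i,W_i)$ lies in $\Gamma_{\BE}(L)$, where $L=\mathbb{C}(z,W_1,\ldots,W_n)^{alg}$. Fix $K\models\DCF$ with field of constants $\mathbb{C}$ containing all the $W_i$ (choose a subdomain on which the chosen branches are simultaneously holomorphic). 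Because $k=\mathbb{C}(z)^{alg}$ is self-sufficient in $K$ by Example \ref{example-selfsufficient}, we have $\delta(L/k)\geq 0$, that is
$$\mathrm{td}(L/k)\;\geq\;\mathrm{ldim}_{\mathbb{Q}}\big(\Gamma_{\BE}(L)/\Gamma_{\BE}(k)\big).$$
As $L$ is generated over $k$ by the $n$ elements $W_i$ we also have $\mathrm{td}(L/k)\leq n$; so it suffices to prove that the $n$ tuples $(z/W_i,W_i)$ are $\mathbb{Q}$-linearly independent modulo $\Gamma_{\BE}(k)$, which then forces $\mathrm{td}(L/k)=n$, i.e.\ the $W_i$ are algebraically independent over $\mathbb{C}(z)$.

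The heart of the argument is therefore an independence lemma: no nontrivial integer relation $\sum_i m_i\,(z/W_i,W_i)$ lies in $\Gamma_{\BE}(k)$. Suppose such a relation holds. Setting $f=\sum_i m_iW_i$ and $y=\prod_i (z/W_i)^{m_i}$, membership in $\Gamma_{\BE}(k)$ means $(y,f)\in\Gamma_{\BE}(\mathbb{C}(z)^{alg})$; in particular $f\in\mathbb{C}(z)^{alg}$ and $y\in\mathbb{C}(z)^{alg}$, with $f'=y'/y$. But a nonconstant algebraic $y$ has a zero or a pole, hence a simple pole of $y'/y$ with nonzero residue, so its primitive $f=\int y'/y=\log y$ has a genuine logarithmic branch point and is transcendental over $\mathbb{C}(z)$. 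Thus $y$ must be constant, whence $f=\sum_i m_iW_i=c$ is a constant. The lemma is therefore reduced to the statement that distinct branches admit no nontrivial constant linear relation $\sum_i m_iW_i=c$ with $m_i\in\mathbb{Z}$.

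To finish I would use that the branches are specializations of a single master function. For $\mathrm{Re}(\sigma)\gg 0$ the equation $w+\log w=\sigma$ has a unique solution $\mathcal W(\sigma)$ with $\mathcal W(\sigma)=\sigma-\log\sigma+O(\log\sigma/\sigma)$, and for large $z$ the branch of index $k_i$ is $W_{k_i}(z)=\mathcal W(\log z+2\pi i k_i)$. Writing $s=\log z$, the relation $\sum_i m_iW_i=c$ becomes $\sum_i m_i\,\mathcal W(s+2\pi i k_i)=c$ for all $s$ with $\mathrm{Re}(s)$ large. Expanding each summand asymptotically in $s$ — the coefficients of the gauge functions $s,\ \log s,\ 1,\ 1/s,\ 1/s^{2},\ldots$ being polynomials in $k_i$ whose top degree matches the order — and matching successive orders against the constant $c$ yields in turn $\sum_i m_i=0$, $\sum_i m_i k_i=0$, and in general $\sum_i m_i k_i^{j}=0$ for $j=0,1,\ldots,n-1$. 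Since the branch indices are distinct, the Vandermonde determinant forces all $m_i=0$, contradicting nontriviality of the relation.

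The main obstacle is exactly this last step: making the asymptotic comparison rigorous — identifying the master function $\mathcal W$, justifying $W_{k_i}(z)=\mathcal W(\log z+2\pi i k_i)$ and the term-by-term expansion of $\sum_i m_i W_i$ within a common sector as $z\to\infty$, and checking that the order-$1/s^{j}$ coefficient genuinely recovers $\sum_i m_i k_i^{j}$. Everything else is formal once the self-sufficiency of $\mathbb{C}(z)^{alg}$ and the $\Gamma_{\BE}$-membership of the pairs $(z/W_i,W_i)$ are in place. The same scheme, with $\mathcal W$ replaced by the local inverse of $w\mapsto w-\sin w$, handles Kepler's equation and so feeds into Corollary \ref{cor-Kepler}.
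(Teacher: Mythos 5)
Your reduction steps are correct and take a genuinely different route from the paper. You go straight at the $n$-variable statement via the predimension inequality $\delta(L/k)\geq 0$ over the self-sufficient field $k=\mathbb{C}(z)^{alg}$, correctly identify the pairs $(z/W_i,W_i)\in\Gamma_{\BE}(L)$, and correctly reduce $\mathbb{Q}$-linear dependence modulo $\Gamma_{\BE}(k)$ to a relation $(y,f)\in\Gamma_{\BE}(k)$ with $y=\prod(z/W_i)^{m_i}$ and $f=\sum m_iW_i$; the residue argument forcing $y$ constant is sound. The paper instead first proves that the solution set of $(w,z/w)\in\Gamma$ is strongly minimal and \emph{geometrically trivial} (via Zilber's trichotomy for order-one equations, with internality to the constants excluded by Theorem \ref{theoremA} together with Liouville's theorem that $W$ is not elementary), which reduces the whole problem to \emph{pairwise} independence; the pairwise case is then killed by Ax--Schanuel plus a two-line polynomial identity coming from the ODE $w'=w/(z(1+w))$. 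Your scheme avoids the trichotomy and Liouville's non-elementarity result entirely, but in exchange you must rule out an arbitrary $n$-term relation $\sum m_iW_i=c$ directly.

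That last step is where the genuine gap is, and you name it yourself: the asymptotic matching of $\sum m_i\mathcal W(\log z+2\pi ik_i)$ against a constant is only sketched. The gauge scale for $\mathcal W$ is not simply $s,\log s,1,1/s,1/s^2,\ldots$ but involves mixed terms $\log^a s/s^b$, and the claim that the coefficient at each order is a polynomial in $k_i$ of exactly that degree with nonvanishing leading coefficient needs proof; as written the Vandermonde conclusion is not established. The gap can be closed without asymptotics, staying inside your own framework: differentiate $\sum m_iW_i=c$ and use the ODE to get $\sum m_iW_i/(1+W_i)=0$; arguing by induction on the number of nonzero $m_i$, the inductive hypothesis makes $W_1,\ldots,W_{n-1}$ algebraically independent over $\mathbb{C}$, so after eliminating $W_n=(c-\sum_{i<n}m_iW_i)/m_n$ the displayed relation becomes an identity of rational functions in independent variables, which fails (specialize $X_1\to -1$ to produce an unbalanced pole). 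This is exactly the $n$-variable version of the computation the paper performs for $n=2$, where the reduction to $n=2$ was bought by geometric triviality. Note also that the paper's proof yields the stronger structural fact (strong minimality and complete disintegration of the definable set, hence independence of realizations in arbitrary differential field extensions), which your argument does not recover.
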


\begin{proof} 
The functional equation  written as $e^{w} = z/w$ implies that the $W_i(z)$ are solutions of the  $\mathcal L_\Gamma$-formula
\begin{equation}\label{equation-Lambert-Gamma}
\phi(w) := (w,z/w) \in \Gamma 
\end{equation} 
which has coefficients in $\mathbb{C}(z)$. We will show that its solution set is a strongly minimal completely disintegrated set in the theory $\mathrm{DCF}$ which implies the desired algebraic independence statement. 

Indeed, first note the Ax-Schanuel Theorem implies that $\phi(w)$ has no algebraic solutions since $w$ and $z/w$ can not be both constants. Expanding $\Gamma$ explicitly, we obtain the differential equation
\begin{equation}\label{equation-Lambert-differential}
w' = \frac {1}  {(1+1/w)z}
\end{equation}
as an equivalent form of (\ref{equation-Lambert-Gamma}). Since (\ref{equation-Lambert-differential}) has no algebraic solution, this formula isolates a complete type $p \in S^{D}(\mathbb{C}(z)^{alg})$ and Zilber's trichotomy \cite{Hrushovski-Sokolovic} for order one differential equations implies that either $p$ is almost internal to the constants or $p$ is geometrically trivial. 

In the first case, since $p$ is an exponentially algebraic type, it follows from Theorem \ref{intro-thmA} that any realization of $p$ in a field of meromorphic functions is an elementary function. This contradicts Liouville's result that Lambert's functional equation has no elementary solution. We are therefore in the second case and $p$ is geometrically trivial.

It follows that it is enough to prove that any two distinct realizations of $p$ are  algebraically independent over $\mathbb{C}(z)$ to conclude that the solution set of $\phi(w)$ is completely disintegrated. To see this, consider $w_1,w_2 \models p$ which satisfy a nontrivial algebraic relation. Hence,
$$ \mathrm{td}(w_1, z/w_1, w_2, z/w_2 /\mathbb{C}(z)) \leq 1.$$ 
It then follows from the Ax-Schanuel Theorem that $w_1$ and $w_2$ are $\mathbb{Q}$-linearly dependent modulo $\mathbb{C}$ that is
$$q_1 w_1 + q_2 w_2 = c$$ 
for some nonzero $q_i \in \mathbb{Q}$. Taking the derivative and using (\ref{equation-Lambert-differential}), we also obtain 
$$ q_1 \cdot {w_1} \cdot (1 + w_2) + q_2 \cdot {w_2}\cdot (1 + w_1) = 0 .$$
These two equations together lead to 
$$ q_1 w_1 (q_2 + c - q_1 w_1) + q_2\cdot (c - q_1 w_1) (1 + w_1) = 0  $$ 
which must vanishes identically since $w_1$ transcendental over $\mathbb{C}$. It follows that $c = 0$ and $q_1 = -q_2$ so that $w_1 = w_2$ as required. This finishes the proof of the corollary for Lambert's equation.
\end{proof}

An even older example is Kepler's equation (1609) which relates the mean anomaly to the eccentric anomaly in orbital mechanics
$$ w - \mathrm{sin}(w) = z.$$
After writing this equation, Kepler writes:
\begin{quote} 
I am sufficiently satisfied that it cannot be solve a priori, on account of the different nature of the arc and the sine. But if I am mistaken, and any shall point out the way to me, he will be in my eyes the great Apollonius.
\end{quote} 
Kepler's expectation was made precise by Liouville\cite{Liouville} by showing that Kepler's equation admits no elementary solution. 
\begin{cor} \label{cor-Kepler}
Any distinct solutions $W_1(z),\ldots, W_n(z)$ of Kepler's equation are algebraically independent over $\mathbb{C}(z)$. 
\end{cor}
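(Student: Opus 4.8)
The plan is to mirror the proof of Corollary \ref{cor-algind-Lambert}: show that the solution set of Kepler's equation defines a strongly minimal, completely disintegrated set in $\mathrm{DCF}$, which forces distinct solutions to be algebraically independent over $\mathbb{C}(z)$. The first task is to package Kepler's equation as a first-order condition in the blurred language. Writing $\mathrm{sin}(w) = (u - u^{-1})/2i$ with $u = e^{iw}$, the equation $w - \mathrm{sin}(w) = z$ becomes the $\mathcal{L}_\Gamma$-formula
\[
\phi(w) := \exists u\, \big[(u,iw) \in \Gamma \;\wedge\; u^2 - 2i(w-z)\,u - 1 = 0\big],
\]
whose coefficients lie in $\mathbb{C}(z)$. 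Expanding $\Gamma$ (that is, $u'/u = iw'$) and eliminating $u$ by means of the quadratic, $\phi(w)$ is equivalent in $\mathrm{DCF}$ to the order-one algebraic differential equation
\[
(w-z)^2 (w')^2 - 2w' + 1 = 0,
\]
where the constraint $(u,iw) \in \Gamma$ singles out the branch $u = e^{iw}$ (the conjugate root $-u^{-1}$ has log-derivative $-iw'$ and so fails $\Gamma$), so that the non-algebraic solutions of $\phi$ comprise a single complete type $p \in S^{D}(\mathbb{C}(z)^{alg})$.

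First I would check that $\phi$ has no algebraic solutions. If $w \in \mathbb{C}(z)^{alg}$ then $u$, being quadratic over $\mathbb{C}(z,w)$, also lies in $\mathbb{C}(z)^{alg}$, so $\mathrm{td}(u,iw/\mathbb{C}) \le 1$; the Ax-Schanuel property applied to the single pair $(u,iw) \in \Gamma$ then forces both $w$ and $u$ to be constant, contradicting $w - \mathrm{sin}(w) = z$. Hence $p$ is a non-algebraic minimal type, and, exactly as for Lambert, each solution $w$ is exponentially algebraic: the extension $\mathbb{C}(z,w,u)^{alg}/\mathbb{C}(z)^{alg}$ has predimension $\le 0$, hence $0$ by self-sufficiency of $\mathbb{C}(z)^{alg}$ (Example \ref{example-selfsufficient}), so $w \in \mathrm{ecl}^K(\mathbb{C}(z))$ by Corollary \ref{corollary-predimension}. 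Zilber's trichotomy for order-one types \cite{Hrushovski-Sokolovic} gives that $p$ is either almost internal to the constants or geometrically trivial. In the first case Theorem \ref{theoremA} would make a realization of $p$ an elementary function, contradicting Liouville's theorem \cite{Liouville} that Kepler's equation has no elementary solution; therefore $p$ is geometrically trivial.

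It then remains to upgrade geometric triviality to complete disintegration, for which, as for a trivial minimal type, it suffices to prove that any two distinct solutions $w_1, w_2$ are algebraically independent over $\mathbb{C}(z)$; mutual independence of $W_1, \ldots, W_n$ follows because independence in a disintegrated strongly minimal set is detected pairwise. Suppose $w_1, w_2$ are algebraically dependent over $\mathbb{C}(z)$; then $\mathrm{td}(u_1, iw_1, u_2, iw_2/\mathbb{C}) \le 2$, and the Ax-Schanuel property applied to the tuple $((u_1,iw_1),(u_2,iw_2)) \in \Gamma^2$ yields integers $(a,b) \neq (0,0)$ with $a w_1 + b w_2 = c_0 \in \mathbb{C}$. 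Differentiating gives $a w_1' + b w_2' = 0$, and substituting into the two copies of $(w_j - z)^2 (w_j')^2 - 2w_j' + 1 = 0$ produces a polynomial identity in the transcendental $w_1$; matching coefficients, and using transcendence of the $w_j$ to discard the degenerate algebraic possibilities $w_j = z \pm c$, forces $w_1 = w_2$, the desired contradiction.

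The main obstacle, relative to the Lambert case, is the degree-two algebraic relation $u^2 - 2i(w-z)u - 1 = 0$ between $w$ and its exponential $u = e^{iw}$: this introduces the two branches $\cos(w) = \pm\sqrt{1-(w-z)^2}$, and one must verify both that the $\Gamma$-constraint pins down a single branch (so that $\phi$ really isolates one type and the set is strongly minimal) and that the concluding Ax-Schanuel-plus-differentiation computation still collapses every admissible integer relation to $w_1 = w_2$. Checking that the extra, purely algebraic solutions of the order-one equation, such as $w = z \pm 1$, do not meet $p$ is precisely what makes the transcendence bookkeeping more delicate than for $w \cdot e^w = z$.
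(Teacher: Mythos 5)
Your proposal is correct and follows essentially the same route as the paper: the same $\mathcal L_\Gamma$-encoding of Kepler's equation, the same derived order-one equation $(w-z)^2(w')^2-2w'+1=0$, the same trichotomy-plus-Liouville argument for geometric triviality, and the same Ax--Schanuel-plus-differentiation step to rule out a $\mathbb{Q}$-linear relation $q_1w_1+q_2w_2=c$. The only difference is that where you say ``matching coefficients forces $w_1=w_2$,'' the paper carries this out concretely by observing that $(1/w_1',w_1)$ would be a generic point of two distinct irreducible conics $(Y-1)^2+(X-z)^2=1$ and its transform under the linear relation, forcing the conics to coincide and hence $q_1=-q_2$, $c=0$.
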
 

\begin{proof} 
Following the same line of reasoning, we see that the $W_i(z)$ are solutions of the $\mathcal L_\Gamma$-formula
$$\psi(w) := \exists y, (iw, y) \in \Gamma \wedge w - \frac{1}{2i}(y - 1/y) = z.$$
By quantifier elimination in the theory $\mathrm{DCF}$, this formula is equivalent to a quantifier-free formula in the language $\mathcal L_D$. To compute explicitly a differential equation vanishing on the solutions of Kepler's equation, we take the derivative
$$ w'  - w' \mathrm{cos}(w) = 1$$
and use $\mathrm{cos}^2(w)  + \mathrm{sin}^2(w) = 1$ to obtain that 
\begin{equation}\label{equation-Kepler-circle}
 (1/w' - 1)^2 + (w - z)^2 = 1.
 \end{equation}
After some simplification, we reach 
$$ (w')^2 = \frac{2w'- 1}{(w - z)^2}.$$ 
Since this is an equation of order one and that a nonalgebraic solution is exponentially algebraic,  the generic type $p \in S^D(\mathbb{C}(z)^{alg})$ of this equation is exponentially algebraic. The same reasoning as for Lambert's equation implies that $p$ is a minimal geometrically trivial type.

It is therefore enough to show that any two distinct solutions of Kepler's equation are algebraically independent to finish the proof of the corollary. Consider two algebraically dependent solutions $w_1,w_2$ Kepler's equation. Hence 
$$ \mathrm{td}(iw_1, iw_2, e^{iw_1}, e^{iw_2}/\mathbb{C}(z)) \leq 1 $$
and the Ax-Schanuel Theorem implies that $w_1$ and $w_2$ are $\mathbb{Q}$-linearly dependent modulo $\mathbb{C}$, that is, $q_1 w_1 + q_2 w_2 = c$
for some nonzero $q_1,q_2 \in \mathbb{Q}$. Hence, $q_1 w'_1 + q_2 w_2' = 0$ too. Setting $\eta_j = 1/w'_j$ for $j =1,2$, Equation (\ref{equation-Kepler-circle}) for $j  =1,2$ implies that $(\eta_1, w_1)$ satisfy the two equations:
$$ (\eta_1 - 1)^2 + (w_1 - z)^2 = 1 \text{ and }  \Big(- \frac {q_2} {q_1} \eta_1 - 1\Big)^2 + \Big(\frac{c - q_1w_1}{q_2} - z\Big)^2 = 1 $$ 
Now that these two equations define two quadrics in the plane with coefficients in $\mathbb{C}(z)^{alg}$ given by 
$$ (Y - 1)^2 + (X - z)^2 = 1 \text{ and }  \Big(- \frac {q_2} {q_1} Y - 1\Big)^2 + \Big(\frac{c - q_1 X}{q_2} - z\Big)^2 = 1 $$  
and since $w_1$ is a transcendental function, they must have an infinite intersection. It follows that $q_1 = - q_2$ and $c = 0$ and hence that $w_1 = w_2$ as required.
\end{proof}


\newcommand{\etalchar}[1]{$^{#1}$}

 \end{document}